\theoremstyle{plain}
\newtheorem{theorem}{Theorem}[subsection]
\newtheorem{proposition}[theorem]{Proposition}
\newtheorem{lemma}[theorem]{Lemma}
\newtheorem{corollary}[theorem]{Corollary}
\newtheorem{condition}[theorem]{Condition}
\theoremstyle{definition}
\newtheorem{definition}[theorem]{Definition}
\newtheorem{example}[theorem]{Example}
\newtheorem{construction}[theorem]{Construction}
\newtheorem{assumption}[theorem]{Assumption}
\theoremstyle{remark}
\newtheorem{remark}[theorem]{Remark}
\newcommand{\dbr}[1]{\left( \! \left( #1 \right) \!  \right)}
\newcommand{\dsq}[1]{\left\llbracket #1 \right\rrbracket}
\newcommand{\se}{\subseteq}
\newcommand{\ph}{\varphi}
\newcommand{\Ac}{\mathcal{A}}
\newcommand{\Bc}{\mathcal{B}}
\newcommand{\Cc}{\mathcal{C}}
\newcommand{\Ec}{\mathcal{E}}
\newcommand{\Gc}{\mathcal{G}}
\newcommand{\Hc}{\mathcal{H}}
\newcommand{\Lc}{\mathcal{L}}
\newcommand{\Oc}{\mathcal{O}}
\newcommand{\Tc}{\mathcal{T}}
\newcommand{\Uc}{\mathcal{U}}
\newcommand{\Xc}{\mathcal{X}}
\newcommand{\Yc}{\mathcal{Y}}
\newcommand{\Zc}{\mathcal{Z}}
\newcommand{\D}{\mathbb{D}}
\newcommand{\F}{\mathbb{F}}
\newcommand{\N}{\mathbb{N}}
\newcommand{\Q}{\mathbb{Q}}
\newcommand{\R}{\mathbb{R}}
\newcommand{\Z}{\mathbb{Z}}
\newcommand{\Fq}{{\F_q}}
\newcommand{\Bf}{\mathfrak{B}}
\newcommand{\af}{\mathfrak{a}}
\newcommand{\ff}{\mathfrak{f}}
\newcommand{\gf}{\mathfrak{g}}
\newcommand{\hf}{\mathfrak{h}}
\newcommand{\mf}{\mathfrak{m}}
\newcommand{\pf}{\mathfrak{p}}
\newcommand{\uf}{\mathfrak{u}}
\newcommand{\mr}[1]{\mathrm{#1}}
\DeclareMathOperator{\id}{id}
\DeclareMathOperator{\Hom}{Hom}
\DeclareMathOperator{\Aut}{Aut}
\DeclareMathOperator{\Spec}{Spec}
\DeclareMathOperator{\Spf}{Spf}
\DeclareMathOperator{\Gal}{Gal}
\DeclareMathOperator{\Frob}{Frob}
\newcommand{\FrobS}{\Frob_S}
\DeclareMathOperator{\GL}{GL}
\DeclareMathOperator{\SL}{SL}
\DeclareMathOperator{\Bun}{Bun}
\DeclareMathOperator{\Sht}{Sht}
\newcommand{\SSht}{\mathscr{S}\mr{ht}}
\DeclareMathOperator{\Hecke}{Hecke}
\newcommand{\Nilp}{\mathcal{N}ilp}
\DeclareMathOperator{\Gr}{Gr}
\begin{document}
	
	\title{Integral models of moduli spaces of shtukas with deep Bruhat-Tits~level~structures}
	
	\author{Patrick Bieker}
	\address{Bielefeld University, Faculty of Mathematics, Postfach 100131, 33501 Bielefeld, Germany}
	
	\email{pbieker@math.uni-bielefeld.de}
	
	\thanks{
		This work was supported by the Deutsche Forschungsgemeinschaft (DFG, German Research Foundation) via the CRC/TRR 326 \textit{Geometry and Arithmetic of Uniformized Structures}, project number 444845124, and the CRC/TRR 358 \textit{Integral Structures in Geometry and Representation Theory}, project number 491392403.
	}
	
	\begin{abstract}
		We construct integral models for moduli spaces of shtukas with deeper Bruhat-Tits level structures.
		We embed the moduli space of global shtukas for a deeper Bruhat-Tits group scheme into the limit of the moduli spaces of shtukas for all associated parahoric group schemes. Its schematic image defines an integral model of the moduli space of shtukas with deeper Bruhat-Tits level with favourable properties: They admit proper, surjective and generically \'etale level maps as well as a natural Newton stratification.
		In the Drinfeld case, this general construction of integral models recovers the moduli space of Drinfeld shtukas with Drinfeld level structures.
	\end{abstract}
	
	\maketitle

	\section{Introduction}
	
	Following Langlands' philosophy, the Hasse-Weil $\zeta$-function of a Shimura variety should be computable in terms of automorphic $L$-functions. 
	The Langlands-Kottwitz method for computing the Hasse-Weil $\zeta$-function (or more generally to study the cohomology of Shimura varieties) relies on two geometric inputs. 
	On the one hand one needs a description of the mod-$p$ points of the Shimura variety as conjectured by Langlands and Rapoport \cite{Langlands1987} and on the other hand one needs to identify a certain test function that serves as an input to the Arthur-Selberg trace formula.
	For both steps the existence of good integral models of the Shimura variety is necessary.
	For Shimura varieties with parahoric level structure there has been substantial progress towards both steps. The Langlands-Rapoport conjecture is proved under mild hypotheses in \cite{Hoften2022} (and \cite{Rad2016} in the function field setting), while the test function conjecture is shown in the series of papers \cite{Haines2021, Haines2020a, Anschuetz2022}.
	The goal of this work is to provide a first step towards generalising the results in the function field setting to more general (Bruhat-Tits-)level structures. Namely, we construct analogues of the integral models of Shimura varieties of \cite{Kisin2018} for these deeper level structures in the function field setting.
	
	The analogues of Shimura varieties over function fields are moduli spaces of (global) shtukas.
	They were first introduced in \cite{Drinfeld1987a} for $\GL_n$ and later generalised to arbitrary split reductive groups in \cite{Varshavsky2004} and further to flat affine group schemes of finite type in \cite{Rad2019a}. They are used to great success in establishing a Langlands correspondence over function fields in \cite{Drinfeld1987} for $\GL_2$, \cite{Lafforgue2002} for $\GL_n$ and \cite{Lafforgue2018} for arbitrary reductive groups. 
	Recently, a lot of progress has been made in studying the geometry of moduli spaces of shtukas with parahoric level, compare for example \cite{Rad2015}, \cite{Rad2017}, \cite{Breutmann2019}, \cite{Yun2019} and \cite{Zhu2014}. However, little is recorded for deeper level structures. 	
	A first result beyond the parahoric case is obtained in \cite{Bieker2023}, where Drinfeld $\Gamma_0(\pf^n)$-level structures for Drinfeld shtukas are defined (in the style of \cite{Drinfeld1976, Katz1985}) and it is shown that their moduli spaces admit well-behaved (that is finite flat and generically \'etale) level maps as in the case of modular curves in \cite{Katz1985}.
	The goal of this work is to construct integral models of moduli spaces of shtukas with deep Bruhat-Tits level structures for general reductive groups that generalise both the parahoric case and in the Drinfeld case the moduli space of shtukas with Drinfeld $\Gamma_0(\pf^n)$-level structures of \cite{Bieker2023}, and to study properties of these integral models.
	
	Let $X$ be a smooth, projective and geometrically connected curve over a finite field $\Fq$. 
	Let $G$ be a (connected) reductive group over the function field $F$ of $X$ and let us fix a parahoric model $\Gc \to X$ of $G$. In other words, $\Gc$ is a smooth affine group scheme with connected fibres over $X$ and generic fibre $G$ such that for all closed points $x$ of $X$ the pullback $\Gc_{\Spec(\Oc_x)}$ to the spectrum of the completed local ring $\Oc_x$ at $x$ is a parahoric group scheme in the sense of \cite{Bruhat1984}.
	
	Let us fix a closed point $x_0$ of $X$. 
	Let $\Omega$ be a bounded subset of an apartment in the Bruhat-Tits building of $G_{F_{x_0}}$, where $F_{x_0}$ is the completion of $F$ at $x_0$. 
	By Bruhat-Tits theory, we get a smooth affine $\Oc_{x_0}$-group scheme $\Gc_{\Omega}$ with connected fibres that we glue with $\Gc$ outside of $x_0$ to obtain a (global) Bruhat-Tits group scheme $\Gc_{\Omega} \to X$ which is smooth and affine with connected fibres by construction. 
	Without changing $\Gc_\Omega$, we may assume that $\Omega$ is convex, closed and a union of facets.
	
	Let $I$ be a finite set and let $\underline{\mu} = (\mu_i)_{i \in I}$ be a tuple of conjugacy classes of geometric cocharacters of $G$. For simplicity, we assume in this introduction that $\underline{\mu}$ is defined over the function field $F$ of $X$ (in general it will only be defined over a finite separable extension of $F$). 
	A global $\Gc_\Omega$-shtuka over a scheme $S$ is a $\Gc_\Omega$-bundle $\Ec$ on $X_S$ together with an isomorphism $\ph \colon  \Ec|_{X_S \setminus \Gamma_{\underline{x}}} \xrightarrow{\cong} \sigma^\ast\Ec|_{X_S \setminus \Gamma_{\underline{x}}}$ away from the graph $\Gamma_{\underline{x}}$ of an $I$-tuple $\underline{x} \in X^I(S)$ of points of $X$. 
	We denote by $\Sht^{I, \leq \underline{\mu}}_{\Gc_\Omega, X}$ the moduli space of global $\Gc_\Omega$-shtukas whose zeroes and poles are bounded by $\underline{\mu}$. It is a Deligne-Mumford stack locally of finite type over $\Fq$. 
	
	While for a subset $\Omega'$ of $\Omega$ there is still a natural map $\Sht^{I, \leq \underline{\mu}}_{\Gc_\Omega, X} \to \Sht^{I, \leq \underline{\mu}}_{\Gc_{\Omega'}, X}$ by \cite[Theorem 3.20]{Breutmann2019} (and Theorem \ref{thmLvlMapGen}), already in the Drinfeld case $G = \GL_2$, the level map  $\Sht_{\GL_{2, [0,n]}, X}^{(1,2),\leq ((0,-1), (1,0))} \to \Sht_{\GL_2, X}^{(1,2), \leq ((0,-1), (1,0))}$ is neither proper nor surjective for $n \geq 2$ by \cite[Remark 2.20]{Bieker2023}.
	In order to remedy this issue, we propose the following construction to relatively compactify  $\Sht^{I, \leq \underline{\mu}}_{\Gc_{\Omega}, X}$.
	\begin{definition}[compare Definition \ref{defnIntMod}]
		\label{defnIntroIntMod}
		In the situation above, that is, for a reductive group $G$ over $F$, and a Bruhat-Tits group scheme $\Gc_\Omega \to X$ for a subset $\Omega$ (assumed to be convex, closed and a union of facets) of the Bruhat-Tits building for $G_{F_{x_0}}$ at the fixed point $x_0$ of $X$ as above, 
		the \emph{integral model of the moduli space of shtukas with $\Gc_\Omega$-level structure} $\SSht_{\Gc_\Omega, X}^{I, \leq \underline{\mu}}$ is defined to be the schematic image in the sense of \cite{Emerton2021} of the map
		$$\Sht_{\Gc_\Omega, X}^{I, \leq \underline{\mu}} \to \varprojlim_{\ff \prec \Omega} \Sht_{\Gc_\ff, X}^{I, \leq \underline{\mu}},$$
		where the limit is taken over the set of all facets $\ff$ contained in $\Omega$ partially ordered with respect to inclusion.
	\end{definition}
	
	Clearly, in the parahoric case (that is, when $\Omega$ is a facet) we have 
	$$ \SSht_{\Gc_\Omega, X^I}^{\leq \underline{\mu}} = \varprojlim_{\ff \prec \Omega} \Sht_{\Gc_\ff, X^I}^{\leq \underline{\mu}} = \Sht_{\Gc_\Omega, X^I}^{\leq \underline{\mu}},$$ 
	as the index set has $\Omega$ as its final object. So our construction generalises the parahoric case. 
	Moreover, by \cite[Theorem 6.7]{Bieker2023}, our notion of integral models recovers in the Drinfeld case the moduli space of Drinfeld shtukas with Drinfeld $\Gamma_0(\pf^n)$-level structure at $x_0$.
	
	The main result of this work is to show that our integral models have good formal properties. Namely, that they admit proper, surjective and generically finite \'etale level maps (similar to the integral models of Shimura varieties with parahoric level structures):
	\begin{theorem}[compare Theorem \ref{thmImmersion} and Theorem \ref{thmLvlMapGeneral}]
		\label{thmIntroSht}
		In the situation of Definition \ref{defnIntroIntMod}, the map
		$$\Sht_{\Gc_\Omega, X}^{I, \leq \underline{\mu}} \to \varprojlim_{\ff \prec \Omega} \Sht_{\Gc_\ff, X}^{I, \leq \underline{\mu}} $$
		is schematic and a quasi-compact locally closed immersion. It factors into an open immersion $\Sht_{\Gc_\Omega, X}^{I, \leq \underline{\mu}} \to \SSht_{\Gc_\Omega, X}^{I, \leq \underline{\mu}}$ followed by the closed immersion $\SSht_{\Gc_\Omega, X}^{I, \leq \underline{\mu}} \to \varprojlim_{\ff \prec \Omega} \Sht_{\Gc_\ff, X}^{I, \leq \underline{\mu}}$.
		The restriction of the inclusion
		$$ \Sht_{\Gc_\Omega, X}^{I, \leq \underline{\mu}}|_{(X \setminus \{x_0\})^I} \xrightarrow{\cong} \SSht_{\Gc_\Omega, X}^{I, \leq \underline{\mu}}|_{(X \setminus \{x_0\})^I}$$
		away from $x_0$ is an open and closed immersion. 
		Moreover, for a subset $\Omega' \prec \Omega$, there is a natural level map
		$$\bar{\rho}_{\Omega', \Omega} \colon \SSht_{\Gc_\Omega, X}^{I, \leq \underline{\mu}} \to  \SSht_{\Gc_{\Omega'}, X}^{I, \leq \underline{\mu}} $$
		that is schematic, proper and surjective, and over $(X \setminus \{x_0\})^I$ it is finite \'etale.
	\end{theorem}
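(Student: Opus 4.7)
My plan is to treat the four assertions in order, with the bulk of the technical work in the first.

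For the \textbf{locally closed immersion}, I would pick a top-dimensional facet (alcove) $C \prec \Omega$ and analyse the single level map $\Sht^{I, \leq \underline{\mu}}_{\Gc_\Omega, X} \to \Sht^{I, \leq \underline{\mu}}_{\Gc_C, X}$ first. At the level of group schemes, Bruhat-Tits theory gives a closed immersion $\Gc_\Omega \hookrightarrow \Gc_C$ of smooth affine $X$-group schemes with connected fibres, which is an isomorphism away from $x_0$ and identifies $\Gc_{\Omega, x_0}$ as a closed subgroup of $\Gc_{C, x_0}$. Pushforward of bundles along this embedding induces the level map, and reduction of structure group from $\Gc_C$ to $\Gc_\Omega$ is a locally closed condition on the affine-flag-variety charts of the shtuka stack; hence this single map is already a quasi-compact locally closed immersion, and composing with the schematic projection from the full limit $\varprojlim_{\ff \prec \Omega} \Sht^{I, \leq \underline{\mu}}_{\Gc_\ff, X}$ onto its $C$-factor preserves the property. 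The \textbf{factorization} through $\SSht^{I,\leq\underline{\mu}}_{\Gc_\Omega,X}$ is then formal from the general construction of the schematic image in \cite{Emerton2021}: any quasi-compact morphism of algebraic stacks factors canonically as a scheme-theoretically dense morphism followed by a closed immersion, and for a quasi-compact locally closed immersion the first factor is necessarily an open immersion.

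For the \textbf{restriction away from} $x_0$, I would use that $\Gc_\Omega$ and every $\Gc_\ff$ are obtained by gluing local Bruhat-Tits data at $x_0$ to the same parahoric model $\Gc$ elsewhere. Over $(X \setminus \{x_0\})^I$ I would argue that the locally closed immersion above is actually closed, which forces the open immersion $\Sht \hookrightarrow \SSht$ to be simultaneously closed, i.e.\ an isomorphism, on this open.

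For the \textbf{level map} $\bar{\rho}_{\Omega',\Omega}$, the inclusion of index sets $\{\ff \prec \Omega'\} \subseteq \{\ff \prec \Omega\}$ yields a projection between the two limits, and by functoriality of the schematic image --- the composition $\Sht_{\Gc_\Omega} \to \Sht_{\Gc_{\Omega'}} \to \SSht_{\Gc_{\Omega'}}$ forces the projection to send $\SSht_{\Gc_\Omega}$ into $\SSht_{\Gc_{\Omega'}}$ --- this descends to $\bar{\rho}_{\Omega',\Omega}$. Properness transfers from the properness of the individual parahoric-to-parahoric level maps (Theorem \ref{thmLvlMapGen}) through the closed immersions $\SSht \hookrightarrow \varprojlim$ on both sides. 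Surjectivity then follows from properness (giving closed image) together with the observation that, by the third assertion and the equality $\Gc_\Omega = \Gc_{\Omega'}$ off $x_0$, the image contains the dense open $\Sht^{I,\leq\underline{\mu}}_{\Gc_{\Omega'},X}|_{(X\setminus\{x_0\})^I} \subset \SSht^{I,\leq\underline{\mu}}_{\Gc_{\Omega'},X}$. Finite \'etaleness of $\bar{\rho}_{\Omega',\Omega}$ over $(X \setminus \{x_0\})^I$ is likewise a restatement of the parahoric case via the third assertion.

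The main obstacle will be the first step: making rigorous the claim that $\Sht^{I,\leq\underline{\mu}}_{\Gc_\Omega,X} \to \Sht^{I,\leq\underline{\mu}}_{\Gc_C,X}$ is a locally closed immersion, which requires a careful analysis of the Bruhat-Tits embedding $\Gc_\Omega \hookrightarrow \Gc_C$ on the affine-flag-variety-type charts of the shtuka stack and a verification that the reduction-of-structure-group locus is cut out by a locally closed condition there.
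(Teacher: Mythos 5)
Your plan for the first and central step does not work, and the failure is structural rather than a matter of missing detail. For a single facet $C \prec \Omega$ the homomorphism $\Gc_\Omega \to \Gc_C$ is \emph{not} a closed immersion: it is a morphism of flat affine $\Oc_{x_0}$-group schemes that is an isomorphism on generic fibres, so on coordinate rings it is an injection $\Oc_{x_0}[\Gc_C] \hookrightarrow \Oc_{x_0}[\Gc_\Omega]$ of flat algebras with the same generic fibre, and such a map is surjective only if it is an isomorphism; on special fibres $\Gc_{\Omega,x_0} \to \Gc_{C,x_0}$ typically has a large unipotent kernel. Consequently the induced map $\Sht^{I,\leq\underline{\mu}}_{\Gc_\Omega,X} \to \Sht^{I,\leq\underline{\mu}}_{\Gc_C,X}$ is not a monomorphism: by the paper's Theorem \ref{thmLvlMapGen}(3) it is, away from $x_0$, \'etale-locally the constant scheme $\underline{\Gc_C(\Oc_{x_0})/\Gc_\Omega(\Oc_{x_0})}$, i.e.\ finite \'etale of degree $[\Gc_C(\Oc_{x_0}):\Gc_\Omega(\Oc_{x_0})]>1$ whenever $\Omega$ is strictly larger than $C$. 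A single $\Gc_C$-shtuka does not remember its $\Gc_\Omega$-structure; this is precisely why the paper passes to the limit over \emph{all} facets $\ff \prec \Omega$. The actual argument rests on three inputs you cannot bypass: Theorem \ref{thmBTGS} ($\Gc_\Omega \cong \varprojlim_{\ff\prec\Omega}\Gc_\ff$), the full faithfulness of $\Ec \mapsto (\Ec\times^{\Gc_\Omega}\Gc_\ff)_\ff$ on torsors (Lemma \ref{lemPTor}), which gives the monomorphism property only for the map into the full limit, and the deformation-theoretic openness of the locus where a compatible system of $\Gc_\ff$-bundles is a $\Gc_\Omega$-bundle (Proposition \ref{propCritTors}, Theorem \ref{thmBunGImm}); one then intersects with the closed boundedness condition to get the locally closed immersion of Theorem \ref{thmImmersion}.

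The remaining parts of your proposal are closer to the paper's route but inherit this gap and have two smaller ones. For the identification away from $x_0$, the paper shows $\rho_{\Omega,\ast}$ is there an open immersion (boundedness can be tested on one facet) \emph{and} finite (limit of finite maps, Lemma \ref{lemLim}), hence open and closed; your version is fine once the first step is repaired. For surjectivity of $\bar\rho_{\Omega',\Omega}$ you assert that $\Sht^{I,\leq\underline{\mu}}_{\Gc_{\Omega'},X}|_{(X\setminus\{x_0\})^I}$ is dense in $\SSht^{I,\leq\underline{\mu}}_{\Gc_{\Omega'},X}$; this is true but not formal — it uses that the bound is generically defined together with the local model theorem (Proposition \ref{thmLocModThm}) to produce, for any point over $x_0$, a generalisation over $(X\setminus\{x_0\})^I$, exactly as in the proof of Theorem \ref{thmLvlMapGen}(4). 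You should cite that argument rather than treat the density as given.
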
	
	
	\begin{remark}
		In principle, we expect our methods to apply also to even more general level structures, namely to stabilisers of bounded subsets $\Omega$ of the Bruhat-Tits building that are not necessarily contained in a single apartment. 
		This includes in particular the case that $\Omega$ is a ball of radius $r$ around a point $x$ in the Bruhat-Tits building, whose (connected) stabiliser should be closely connected (up to the centre) to the Moy-Prasad subgroup $G(F_{x_0})_{x,r}$.
		
		However, the results from Bruhat-Tits theory we rely on in the proof of Theorem \ref{thmIntroSht} do not seem to be available in this case. 
		Compare Section \ref{sectOmegaGeneral} for further discussion of this situation and the statements we can prove in the more general setting.
	\end{remark}
	
	The construction of integral models is a first step towards generalising results used in the Langlands-Kottwitz method at parahoric level to the more general level structures considered here.
	In particular, a fine analysis of the (\'etale-local) structure of their special fibres together with the calculation of the trace of Frobenius on the nearby cycles might yield significant progress towards a geometric construction of test functions for deeper Bruhat-Tits level structures generalising the results in the parahoric case of \cite{Haines2021, Haines2020a, Anschuetz2022} (and \cite{Haines2012} in the Drinfeld case with $\Gamma_1(p)$-level structures). 
	Note that the test functions for $\Gamma(p^n)$-level (which are not Bruhat-Tits level structures in our sense) constructed in \cite{Scholze2013b} have no apparent connection to geometry.
	A geometric construction of test functions in our setting might shed some more light onto this situation as well. 
	
	Let us briefly comment on the proof of Theorem \ref{thmIntroSht}.
	In the parahoric case, level maps on moduli spaces of shtukas are also studied in \cite[Theorem 3.20]{Breutmann2019}.
	However, the assumption on the bounds used there does not apply to the bounds given by cocharacters. 
	We provide the relevant background on bound for global (and local) shtukas and generalise the result of \cite[Theorem 3.20]{Breutmann2019} to include bounds in this sense (compare Theorem \ref{thmLvlMapGen}).
	Using the assertion in the parahoric case, we are then able to deduce the result also for level maps at  deeper level.
	
	The first part of Theorem \ref{thmIntroSht} is based on a corresponding result on the moduli space of $\Gc_\Omega$-bundles.
	\begin{theorem}[compare Theorem \ref{thmBunGImm}]
		\label{thmIntroBunBT}
		In the situation of Definition \ref{defnIntroIntMod}, the natural map
		$$ \Bun_{\Gc_\Omega} \to \varprojlim_{\ff \prec \Omega} \Bun_{\Gc_\ff}$$
		is schematic and representable by a quasi-compact open immersion.
	\end{theorem}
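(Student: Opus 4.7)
The plan is to show that the morphism
\[
\Bun_{\Gc_\Omega} \to \varprojlim_{\ff \prec \Omega} \Bun_{\Gc_\ff}
\]
is representable by schemes, a monomorphism, smooth, and quasi-compact; together these properties imply that it is representable by a quasi-compact open immersion. A useful preliminary observation is that $\Omega$ is bounded, so the poset $\{\ff \prec \Omega\}$ is finite and the limit in question is a finite one.

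For representability and quasi-compactness I would fix any facet $\ff_0 \prec \Omega$ and use Bruhat--Tits theory to get a closed immersion of smooth affine group schemes $\Gc_\Omega \hookrightarrow \Gc_{\ff_0}$ whose fppf quotient $\Gc_{\ff_0}/\Gc_\Omega$ is representable by a smooth, finite-type $X$-scheme set-theoretically concentrated at $x_0$ (here one uses that $\Gc_\Omega$ and $\Gc_{\ff_0}$ coincide with the ambient parahoric $\Gc$ away from $x_0$, together with Beauville--Laszlo glueing). The standard interpretation of reductions of structure group as sections of the associated twisted quotient then shows that $\Bun_{\Gc_\Omega} \to \Bun_{\Gc_{\ff_0}}$ is representable by a smooth, finite-type, quasi-compact morphism of schemes. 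Since the composition of our map with the natural projection $\varprojlim_{\ff \prec \Omega} \Bun_{\Gc_\ff} \to \Bun_{\Gc_{\ff_0}}$ is exactly this parahoric level map, a fiber-product argument transfers schematicity, quasi-compactness, and smoothness to our map into the limit.

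The monomorphism property will rest on the scheme-theoretic identification $\Gc_\Omega = \bigcap_{\ff \prec \Omega} \Gc_\ff$, valid precisely because $\Omega$ is convex, closed, and a union of facets; this is essentially the defining feature of $\Gc_\Omega$ in Bruhat--Tits theory, upgraded from a statement on $R$-valued points to a statement about flat group schemes by a Nakayama/descent argument exploiting flatness. From it one deduces that two $\Gc_\Omega$-bundles on $X_T$ whose images in $\varprojlim_\ff \Bun_{\Gc_\ff}$ are isomorphic are already uniquely isomorphic as $\Gc_\Omega$-bundles, and likewise on automorphisms; this is the fully faithfulness of our functor. Combining the monomorphism property with smoothness yields étaleness — a smooth monomorphism of locally finite-type stacks is étale, since its relative cotangent complex must vanish — and a representable, quasi-compact, étale monomorphism of algebraic stacks is a quasi-compact open immersion.

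The main obstacle, in my view, is the scheme-theoretic intersection identity $\Gc_\Omega = \bigcap_{\ff \prec \Omega} \Gc_\ff$ as flat group schemes: extracting it from Bruhat--Tits theory in a manner that works in families (not merely for $\Oc_{x_0}$-valued points) is precisely where the convexity, closedness, and union-of-facets hypotheses on $\Omega$ enter substantively. Once this identity is secured, the remaining steps — representability through $\Bun_{\Gc_{\ff_0}}$, smoothness of $\Gc_{\ff_0}/\Gc_\Omega$, and the étale-monomorphism conclusion — are essentially formal manipulations.
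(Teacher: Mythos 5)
Your reduction to a single parahoric level map $\Bun_{\Gc_\Omega}\to\Bun_{\Gc_{\ff_0}}$ does yield schematicity and quasi-compactness of the map into the limit (via the graph trick, since the limit has schematic quasi-compact diagonal), and your use of the identity $\Gc_\Omega=\varprojlim_{\ff\prec\Omega}\Gc_\ff$ to get the monomorphism property is exactly the paper's route (Theorem \ref{thmBTGS} combined with Lemma \ref{lemPTor}). However, the step ``a fiber-product argument transfers \dots smoothness to our map into the limit'' is false: if $p\circ f$ is smooth it does not follow that $f$ is smooth, because $f$ factors as its graph followed by a projection, and the graph is a base change of the diagonal of $p$, which is not smooth. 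So your argument establishes no form of smoothness or \'etaleness for $\Bun_{\Gc_\Omega}\to\varprojlim_{\ff}\Bun_{\Gc_\ff}$, and that is precisely where the openness of the image must come from.

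Relatedly, you locate the entire difficulty in the group-scheme identity; that identity is indeed substantive (it is Theorem \ref{thmBTGS}, and convexity enters there), but it only buys full faithfulness. The second, independent obstruction is that a compatible system of $\Gc_\ff$-torsors is a priori only a $\Gc_\Omega$-pseudo-torsor: its limit can fail to admit sections even locally (Remark \ref{remCounterLimTors} gives a counterexample already for finite groups). The paper's proof of openness is a separate deformation-theoretic argument: the limit of a compatible system of $\Gc_\ff$-torsors is always a smooth affine scheme (Proposition \ref{propCritTors}, resting on Lemma \ref{lemTorsDefo}, the big open cell, and the root-space combinatorics of the building, where convexity of $\Omega$ enters a second time); hence it is a $\Gc_\Omega$-torsor if and only if it surjects onto the base, a purely topological condition insensitive to nilpotents, which gives formal \'etaleness and then the open-immersion conclusion. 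Your proposal contains no substitute for this input, so the openness claim is not established.
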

	As a first step in the proof of this theorem, we show in the local case (and hence also for the corresponding global Bruhat-Tits group schemes), 
	that the not necessarily parahoric Bruhat-Tits group scheme $\Gc_\Omega$ is the limit of all its associated parahoric group schemes	$ \Gc_\Omega \xrightarrow{\cong} \varprojlim_{\ff \prec \Omega} \Gc_\ff,$
	compare Theorem \ref{thmBTGS}.
	
	Note that given a compatible system of $\Gc_\ff$-torsors for all facets $\ff \prec \Omega$, it is in general not true that their limit is a torsor for $\Gc_\Omega$, as it might be impossible to construct a compatible system of sections.
	By controlling the deformation theory of torsors for the $\Gc_\ff$, we are able to show that the locus where the limit of a compatible system of $\Gc_\ff$-bundles on $X$ is already a $\Gc_\Omega$-bundle on $X$ is open. 
	
	As a first step in studying the structure of the special fibre of $\SSht_{\Gc_\Omega, X}^{I, \leq \underline{\mu}}$, we show that the Newton stratification on the special fibre of the moduli space of shtukas in the parahoric case induces a well-defined Newton stratification on the special fibre in the case of deeper level. 
	For a reductive group $H$ over a local field $E$ we denote by $B(H)$ the set of $\sigma$-conjugacy classes in $H(\breve{E})$, where $\breve{E}$ is the completion of the maximal unramified extension of $k$. 
	Then $B(H)$ classifies quasi-isogeny classes of local shtukas for (an integral model of) $H$. 
	
	We fix a tuple of pairwise distinct closed points $\underline{y} = (y_i)_{i \in I}$ in $X$ and denote by $\SSht_{\Gc_\Omega, X, \F_{\underline{y}}}^{I, \leq \underline{\mu}} = \SSht_{\Gc_\Omega, X}^{I, \leq \underline{\mu}} \times_{X} \F_{\underline{y}}$ the special fibre over $\underline{y}$, where $\F_{\underline{y}}$ is the compositum of the residue fields of the points $y_i$ of $X$. 
	
	Combining our compactification with the results of \cite[Theorem 7.11]{Hartl2011} and \cite[Section 5]{Breutmann2019} in the parahoric case, we get the following result on the Newton stratification for deep level.
	\begin{theorem}[compare Definition \ref{defnNewtonDeep} and Corollary \ref{corNewtonLevel}]
		Let $\ell$ be an algebraically closed extension of $\F_{\underline{y}}$. 
		There is a well-defined map
		$$ \bar\delta_{\Gc_\Omega} \colon  \SSht_{\Gc_\Omega, X, \F_{\underline{y}}}^{I, \leq \underline{\mu}}(\ell) \to \prod_{i \in I} B(G_{F_{y_i}}) $$
		that is compatible with the level maps in the sense that for $\Omega' \prec \Omega$ we have 
		$$ \bar\delta_{\Gc_\Omega} =  \bar\delta_{\Gc_{\Omega'}} \circ \bar \rho_{\Omega', \Omega}.$$
		Moreover, for $\underline{b} = (b_i)_{i \in I} \in B(G_{F_{y_i}})$ the preimage of $\underline{b}$ under $\bar\delta_{\Gc_\Omega}$ is the set of $\ell$-valued points of a locally closed substack 
		$\Sht_{\Gc_\Omega, X, \F_{\underline{y}}}^{I, \leq \underline{\mu}, \underline{b}}$ of $\Sht_{\Gc_\Omega, X, \F_{\underline{y}}}^{I, \leq \underline{\mu}}$ called the \emph{Newton stratum} of $\Sht_{\Gc_\Omega, X, \F_{\underline{y}}}^{I, \leq \underline{\mu}}$ for $\underline{b}$. 
	\end{theorem}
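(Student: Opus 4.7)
The plan is to pull back the Newton stratification from the parahoric case, using the morphism to the limit provided by Theorem~\ref{thmImmersion}. For any facet $\ff \prec \Omega$, composing the closed immersion $\SSht_{\Gc_\Omega, X}^{I, \leq \underline{\mu}} \hookrightarrow \varprojlim_{\ff' \prec \Omega} \Sht_{\Gc_{\ff'}, X}^{I, \leq \underline{\mu}}$ with the projection to the $\ff$-component yields a morphism $\pi_\ff \colon \SSht_{\Gc_\Omega, X}^{I, \leq \underline{\mu}} \to \Sht_{\Gc_\ff, X}^{I, \leq \underline{\mu}}$. I would fix any such $\ff$, write $\bar\delta_{\Gc_\ff}$ for the Newton stratification map provided by \cite[Theorem 7.11]{Hartl2011} together with \cite[Section 5]{Breutmann2019} in the parahoric case, and then define $\bar\delta_{\Gc_\Omega}$ on $\ell$-points of the special fibre over $\underline{y}$ as $\bar\delta_{\Gc_\ff} \circ \pi_\ff$.

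To check independence of the chosen facet $\ff$, I would verify that for any pair of facets $\ff' \prec \ff$ in $\Omega$ the parahoric level map $\rho_{\ff', \ff}$ satisfies $\bar\delta_{\Gc_{\ff'}} \circ \rho_{\ff', \ff} = \bar\delta_{\Gc_\ff}$. The underlying reason is that the $\sigma$-conjugacy class in $B(G_{F_{y_i}})$ attached to a global shtuka is extracted from its local shtuka at $y_i$ regarded as a Frobenius structure on the generic fibre, a datum that is unchanged when passing between different parahoric integral models of the same reductive group. Combined with the identity $\pi_{\ff'} = \rho_{\ff', \ff} \circ \pi_\ff$ forced by the universal property of the limit, and with the observation that any two facets in $\Omega$ can be connected through a chain of sub-/super-facets inside $\Omega$, this gives well-definedness. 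Compatibility $\bar\delta_{\Gc_\Omega} = \bar\delta_{\Gc_{\Omega'}} \circ \bar\rho_{\Omega', \Omega}$ is then essentially formal: for $\ff \prec \Omega' \prec \Omega$ one has $\pi_\ff^{\Omega} = \pi_\ff^{\Omega'} \circ \bar\rho_{\Omega', \Omega}$ directly from the construction of $\bar\rho_{\Omega', \Omega}$ in Theorem~\ref{thmLvlMapGeneral} as the map induced on schematic images inside the compatible system of limits.

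For the locally closed Newton stratum, I would take the preimage (as substacks) of the parahoric stratum $\Sht_{\Gc_\ff, X, \F_{\underline{y}}}^{I, \leq \underline{\mu}, \underline{b}}$ under $\pi_\ff$, intersect it with the open substack $\Sht_{\Gc_\Omega, X, \F_{\underline{y}}}^{I, \leq \underline{\mu}} \subseteq \SSht_{\Gc_\Omega, X, \F_{\underline{y}}}^{I, \leq \underline{\mu}}$ supplied by Theorem~\ref{thmImmersion}, and verify that the result is independent of $\ff$ using the same compatibility argument as above. Since preimages of locally closed substacks under morphisms of algebraic stacks are locally closed, this produces the desired locally closed substack, whose $\ell$-points are exactly $\bar\delta_{\Gc_\Omega}^{-1}(\underline{b})$ by construction. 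The hardest step, I expect, will be the well-definedness: it rests on the intrinsic nature of the Newton class as a generic-fibre invariant of the shtuka and its compatibility with the chosen cocharacter bounds in the existing parahoric theory; once this point has been cleanly isolated, everything else reduces to formal manipulations with the universal property of the limit and with schematic images in the sense of \cite{Emerton2021}.
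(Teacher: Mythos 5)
Your construction is essentially the paper's: the paper defines $\bar\delta_{\Gc_\Omega,\underline{y}}$ as $\delta_{\Gc_\ff,\underline{y}}\circ\rho_{\ff,\Omega}$ for a facet $\ff\prec\Omega$, proves independence of $\ff$ via the compatibility of the Newton map with parahoric level maps (Lemma \ref{lemNewtonStratComp}, quoted from \cite[Section 5.2]{Breutmann2019}, which formalises exactly your observation that the $\sigma$-conjugacy class of the local shtuka is a generic-fibre invariant unaffected by the choice of integral model), uses connectedness of the facet poset, and obtains local closedness by pulling back the parahoric stratum of \cite[Theorem 7.11]{Hartl2011}. The only slip is your final step: you intersect the preimage of the parahoric stratum with the open substack $\Sht^{\Zc}_{\Gc_\Omega,\F_{\underline{y}}}\subseteq\SSht^{\Zc}_{\Gc_\Omega,\F_{\underline{y}}}$, which is both unnecessary and inconsistent with your claim that the resulting $\ell$-points are exactly $\bar\delta_{\Gc_\Omega}^{-1}(\underline{b})$ — that preimage is taken in $\SSht(\ell)$ and in general contains boundary points outside the open part. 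The paper (Definition \ref{defnNewtonDeep}) takes the reduced locally closed substack on the full preimage inside the integral model $\SSht^{\Zc}_{\Gc_\Omega,\F_{\underline{y}'}}$; dropping your intersection with the open part recovers exactly that.
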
 
	In the parahoric case, the map $\bar{\delta}$ is given by associating to a point in the special fibre over $\underline{y}$ the quasi-isogeny classes of its local shtukas at the points $y_i$.
	We use the compatibility of the Newton stratification with the level maps in the parahoric case to extend this result to the case of deep level.
	
	Moreover, we show that in the hyperspecial case the Newton stratification satisfies the strong stratification property (as for Shimura varieties).
	Recall that there is a natural order on $B(H)$ induced by the dominance order on cocharacters.
	It is well-known in the parahoric case that the closure of a Newton stratum
	$$\overline{\Sht^{I, \leq \underline{\mu}, \underline{b}}_{\Gc, X, \F_{\underline{y}}}}  \se \bigcup_{\underline{b'} \leq \underline{b}} \Sht^{I, \leq \underline{\mu},  \underline{b'}}_{\Gc, X, \F_{\underline{y}}}$$  
	is contained in a union of Newton strata.	
	Note that this also generalises to deeper level.
	We say that the Newton stratification satisfies the strong stratification property when we even have equality. 
	However, the inclusion is strict in general. 
	For local shtukas for split reductive groups, the strong stratification property is due to  \cite{Viehmann2011}. We use the Serre-Tate theorem for shtukas of \cite{Rad2015} to deduce the corresponding result in the global case, compare Theorem \ref{thmNewtonHyp}.
	
	\subsection*{Organisation}
	This paper is organised as follows. 	
	Sections \ref{secBounds} and \ref{secSht} provide background on the general theory of moduli spaces of shtukas.
	Our construction of integral models for deeper level relies on functoriality results for moduli spaces of shtukas.
	As the functoriality result of \cite[Theorem 3.20]{Breutmann2019} is not applicable in our setting, 
	we first carefully set up (and slightly generalise) the theory of bounds for (iterated, global and local shtukas) in Section \ref{secBounds}. We show basic properties including local-global compatibility.
	In Section \ref{secSht}, we generalise various well-known results for moduli spaces of (bounded iterated) shtukas to our setting. 
	In particular, we show the relevant functoriality of moduli spaces of shtukas in Theorem  \ref{thmLvlMapGen} which also proves the analogue of Theorem \ref{thmIntroSht} at parahoric level.	
	In Section \ref{sectBT}, we study (torsors under) Bruhat-Tits group schemes as well as their deformation theory and show Theorem \ref{thmIntroBunBT}.
	In Section \ref{sectIntMod}, we give the construction of integral models at deeper Bruhat-Tits level and deduce our main results. We construct a Newton stratification on the integral models with deeper level.

	\subsection*{Notation}
	We fix the following notation.
	For a Dedekind scheme $X$ over a field $k$ (usually $X$ will be a smooth (even projective) curve over $k$ or the spectrum of $k \dsq{\varpi}$ and $k$ will often assumed to be finite) and a closed point $x$ of $X$ we denote by $\Oc_{X,x}$ the local ring at $x$ and by $\Oc_x$ its completion, by $\mf_{x} \se \Oc_{x}$ the maximal ideal with uniformiser $\varpi_{x}$ and by $\F_{x}$ the residue field. Moreover, we denote by $F_x$ the completion of the function field $F = F(X)$ of $X$ at $x$.
	When $k = \Fq$ is a finite field, we denote by $\sigma$ the (absolute) $q$-Frobenius endomorphism $\FrobS$ of some $\Fq$-scheme $S$, and also the map $\sigma = id_X \times \FrobS \colon X_S \to X_S$.

\subsection*{Acknowledgements}
I thank my advisor Timo Richarz for introducing me to this topic, his steady encouragement and his interest in my work. 
I thank the anonymous referee for their careful reading of the paper and their valuable suggestions to clarify the exposition.
I thank Urs Hartl, Alexander Kutzim and Qihang Li for their comments that lead to a significant improvement of the present work.
I thank Gebhard B\"ockle, Paul Hamacher, Jo\~{a}o Louren\c{c}o, Eva Viehmann and Torsten Wedhorn for helpful conversations surrounding this work. 
I thank Catrin Mair and Thibaud van den Hove for their comments on a preliminary version of this paper. 
I thank Urs Hartl for sharing the revised version of \cite{Breutmann2019} and Tasho Kaletha for sharing a preliminary version of \cite{Kaletha}.

	\section{Bounds for (global and local) shtukas}
	\label{secBounds}
	
	Bounds for (global or local) shtukas usually are defined as certain (equivalence classes of) subschemes of affine Beilinson-Drinfeld Grassmannians, compare for example \cite{Varshavsky2004, Lafforgue2018, Rad2015, Rad2017}.
	In this section we generalise the existing notion of bounds and provide a uniform treatment of global and local bounds.
	An important class of bounds are generically defined bounds, which arise as the closure of their generic fibre.
	In this case, the theory simplifies. For example, we show that a bound always admits a representative over its reflex field. 
	Moreover, generically defined bounds are the natural setting for our construction of integral models with deeper level structure in the following.
	
	This includes in particular the bounds given by Schubert varieties inside the affine Grassmannian for parahoric group schemes that are often considered in applications, compare for example \cite{Lafforgue2018, Yun2019, Feng2024}. 
	We discuss the relation of our notion of bounds to various other notions of bounds in the literature.
	
	The discussion of bounds is used to formulate and prove analogues of the functoriality results for moduli spaces of shtukas in the next section.
	In particular, generically defined bounds provide the natural setting to formulate the functoriality of moduli spaces of shtukas under generic isomorphisms of group schemes in Theorem \ref{thmLvlMapGen}.
	
	\subsection{Beilinson-Drinfeld affine Grassmannians}
	\label{subsecBDGrass}
	We recall the definition of Beilinson-Drinfeld affine Grassmannians. 
	We work in the following setting. Let $k$ be a field and let $X$ be either a smooth geometrically connected curve over $k$ or $X = \Spec(k \dsq{\varpi})$.
	Let $I$ be a finite set and $I_\bullet = (I_1, \ldots, I_m)$ a partition of $I$.
	Let $\Gc \to X$ be a flat affine $X$-group scheme of finite type.
	
	We need the following iterated version of Beilinson-Drinfeld affine Grassmannians first introduced by \cite{Beilinson1996} in the case of constant split reductive group schemes.
	\begin{definition}
		We denote by $\Gr_{\Gc, X}^{I_\bullet}$ the functor on $k$-algebras whose $R$-valued points are given by tuples 
		$ ((x_i)_{i \in I}, (\Ec_j)_{j = 0, \ldots, m}, (\ph_j)_{j = 1, \ldots, m}, \varepsilon),$
		where
		\begin{itemize}
			\item $x_i \in X(R)$ are points on $X$,
			\item $\Ec_j$ are $\Gc$-bundles on $X_R$ for $0 \leq j \leq m$,
			\item $\ph_j \colon \Ec_{j}|_{X_R \setminus \bigcup_{i \in I_j} \Gamma_{x_i}} \xrightarrow{ \cong } \Ec_{j-1}|_{X_R \setminus \bigcup_{i \in I_j} \Gamma_{x_i}}$ are isomorphisms of $\Gc$-bundles, and
			\item $\varepsilon \colon \Ec_0 \xrightarrow{\cong} \Gc \times_X X_R$ is a trivialisation of $\Ec_0$.
		\end{itemize}
	\end{definition}
	Then $\Gr_{\Gc,X}^{I_\bullet}$ is representable by a separated (strict) ind-scheme of ind-finite type over $X^I$ by \cite{Heinloth2010, Richarz2014} in the global setting generalising the previous works \cite{Beilinson1996, Gaitsgory2001}, and by \cite{Richarz2016, Richarz2021a} in the local setting. 
	
	As a shorthand notation we denote a point $((x_i)_{i \in I}, (\Ec_j)_{j = 0, \ldots, m}, (\ph_j)_{j = 1, \ldots, m}, \varepsilon) \in \Gr_{\Gc, X}^{I_\bullet}(R)$ by
	$$   \left( \Ec_m \overset{\ph_m}{\underset{\Gamma_{\underline{x}_m}}{\dashrightarrow}} \Ec_{m-1} \overset{\ph_{m-1}}{\underset{\Gamma_{\underline{x}_{m-1}}}{\dashrightarrow}} \ldots \overset{\ph_{1}}{\underset{\Gamma_{\underline{x}_1}}{\dashrightarrow}} \Ec_0 \overset{\varepsilon}{\rightarrow}  \Gc  \right),$$
	where for a point $\underline{x} = (x_i)_{i \in I} \in X^I(R)$ we denote by $\underline{x}_j =(x_i)_{i \in I_j}$ the projection to the $I_j$-components and by $\Gamma_{\underline{x}_j} = \bigcup_{i \in I_j} \Gamma_{x_i} $ the union of the graphs $\Gamma_{x_i}$ of $x_i \colon \Spec(R) \to X$.
	When $I$ is a singleton set, we denote the corresponding Beilinson-Drinfeld affine Grassmannian by $\Gr_{\Gc, X}$.
	In the local setting $X = \Spec(\Oc)$ we also write $\Gr_{\Gc, \Oc} = \Gr_{\Gc, X}$.
	Compare \cite{Richarz2021a} for an alternative description of $\Gr_{\Gc, \Oc}$. Note that when $\Oc = \Oc_{x_0}$ is the complete local ring of a point $x_0 \in X$ of some smooth curve $X$, we have $\Gr_{\Gc_{\Oc}, \Oc} = \Gr_{\Gc, X} \times_X \Spec(\Oc)$, where $\Gc_\Oc = \Gc \times_X \Spec(\Oc)$ is the pullback of $\Gc$ to $\Spec\Oc$ by \cite{Richarz2021a}.
	
	Let $R$ be a $k$-algebra. For a relative effective Cartier divisor $D \se X_{R}$, the formal completion of $X_R$ along $D$ is a formal affine scheme. We denote by $\hat \Oc_D$ the underlying $R$-algebra and by $\hat{D} = \Spec(\hat \Oc_D)$ the corresponding affine scheme. 
	Then $D$ is a closed subscheme of $\hat D$ and we set $\hat{D}^0 = \hat{D} \setminus D$. 
	Moreover, we denote by $D^{(r)} = \Spec(\hat{\Oc}_D/\mf^r)$ the $r$-the formal neighbourhood of $D$, where $\mf \se \hat{\Oc}_D$ is the ideal defining $D$. 
	In particular, when $D$ is the graph $\Gamma_x$ of a section $x \in X(R)$ (or a union thereof), we use the corresponding notation $\hat{\Gamma}_{x}$, $\hat{\Gamma}_x^\circ$ and $\Gamma_x^{(r)}$. 
	
	\begin{remark}
		Using Beauville-Laszlo descent, compare \cite{Beauville1995}, a point of $\Gr_{\Gc, X}^{I_\bullet}$ is given by a tuple 
		$$((x_i)_{i \in I}, (\Ec_j)_{j = 0, \ldots, m}, (\ph_j)_{j = 1, \ldots, m}, \varepsilon),$$
		where the $\Ec_j$ are now $\Gc$-torsors on $\hat{\Gamma}_{\underline{x}}$, the $\ph_j \colon \Ec_j \to \Ec_{j-1}$ are isomorphisms over $\hat{\Gamma}_{\underline{x}} \setminus \Gamma_{\underline{x}_j}$, and $\varepsilon$ is a trivialisation of $\Ec_0$ defined over $\hat{\Gamma}_{\underline{x}}$.
		This also shows that the natural map $\Gr_{\Gc,X}^{I_\bullet}|_U  \xrightarrow{\cong} \left(\prod_{i \in I} \Gr_{\Gc, X}\right)|_U$ is an isomorphism, where $U \se X^I$ is the complement of all diagonals.
		\label{remBLDesc}
	\end{remark}

	We use the following global version of the (positive) loop group. 
	
	\begin{definition}
		The \emph{global loop group} $\Lc_{X^I} \Gc$ is the functor on the category of $k$-algebras 
		$$ R \mapsto \left\{  (\underline x, g)\colon \underline x \in X^I(R), g \in \Gc(\hat{\Gamma}_{\underline{x}}^0) \right\}.$$
		The \emph{positive global loop group} $\Lc_{X^I}^+ \Gc$ is the functor on the category of $k$-algebras
		$$  R \mapsto \left\{  (\underline x, g)\colon \underline x \in X^I(R), g \in \Gc(\hat{\Gamma}_{\underline{x}}) \right\}.$$
		Moreover, for $r \in \N$ let $\Lc_{X^I}^{(r)} \Gc$ be the functor on $k$-algebras
		$$  R \mapsto \left\{  (\underline x, g)\colon \underline x \in X^I(R), g \in \Gc(\Gamma_{\underline{x}}^{(r)}) \right\}.$$
	\end{definition}
	By \cite[Proposition 2]{Heinloth2010}, $\Lc_{X^I} \Gc$ is representable by an ind-affine group ind-scheme over $X^I$ and $\Lc_{X^I}^+\Gc$ is representable by a closed affine group subgroup scheme of $\Lc_{X^I} \Gc$ over $X^I$. 
	Moreover by \cite[Lemma 2.11]{Richarz2016}, all  $\Lc^{(r)}_{X^I} \Gc \to X^I$ are representable by flat affine group schemes of finite type and the transition maps for varying $r$ are affine.
	The canonical map given by the projections
	\begin{equation}	
		\Lc^+_{X^I} \Gc \xrightarrow{\cong} \varprojlim_r \Lc^{(r)}_{X^I} \Gc
		\label{eqnPosLoopGroup}
	\end{equation} 
	is an isomorphism. 
	Assume moreover that $\Gc$ is smooth. Then all $\Lc^{(r)}_{X^I} \Gc$ are smooth and by (\ref{eqnPosLoopGroup}) we have that $\Lc^+_{X^I} \Gc \to X^I$ is flat.

	There is a natural $\Lc^+_{X^I}\Gc$-action on $\Gr_{\Gc, X}^{I_\bullet}$ by changing the trivialisation $\varepsilon$.

	\subsection{Definition of bounds}
	
	We define bounds inside the iterated affine Grassmannian by adapting the definition in the non-iterated setting of \cite[Definition 3.1.3]{Rad2017}.
	We work in the setup of \cite[Section 2.6]{Hartl2023} and allow different types of bounds (global/local/finite) at different legs. More precisely we define bounds in the following cases.
	
	\begin{definition}
		\label{defnGlobalBound}
		
		\begin{enumerate}
			\item 
			For each $i \in I$ a ground field $F^{(i)}$ is fixed, which can be either
			\begin{enumerate}
				\item \emph{of global type} $F^{(i)} = F$ (only in the \emph{global setting} when $X$ is a smooth geometrically connected curve over $k$),
				\label{bdTypeGlobal}
				\item \emph{of local type} $F^{(i)} = F_y$, the completion of $F$ at a closed point $y$ of $X$,
				\label{bdTypeLocal}
				\item or \emph{of finite type} $F^{(i)} = \F_y$, the residue field at $y$. 
				\label{bdTypeFinite}
			\end{enumerate}
			\item
			All finite extension of $F^{(i)}$ are without further mention assumed to be separable and contained in a fixed separable closure $\widebar{F}^{(i)}$ of $F^{(i)}$. 
			We set $\widetilde{X}_{F^{(i)}}$ to be either $X$, $\Spec(\Oc_y)$ or $\Spec(\F_y)$ according to the type of $F^{(i)}$. 
			Moreover, let $\underline{F} = (F^{(i)})_{i \in I}$ and $\widetilde{X}_{\underline F} = \prod_{i \in I} \widetilde{X}_{F^{(i)}}$, where the product is taken over $k$.
			For a finite extension $(F^{(i)})'$ of $F^{(i)}$ we set $\widetilde{X}_{(F^{(i)})'}$ to be the normalisation of $\widetilde{X}_{F^{(i)}}$ in $(F^{(i)})'$. It is either a smooth curve in the global type, the (spectrum of the) valuation ring of $(F^{(i)})'$ in the local type or $\Spec(F^{(i)})'$ in the finite type.
			In each case the natural map $\widetilde{X}_{(F^{(i)})'} \to \widetilde{X}_{F^{(i)}}$ is finite and faithfully flat.
			For an $I$-tuple $\underline F' = ((F^{(i)})')_{i \in I}$ of componentwise finite extensions of $\underline F$ we set $\widetilde{X}^I_{\underline F'} = \prod_{i \in I} \widetilde{X}_{(F^{(i)})'}$.
			\item
			Let $\underline F_1$ and $\underline F_2$ be two $I$-tuples of finite extensions of $\underline F$. 
			Two quasi-compact closed subschemes $Z_1\se \Gr_{\Gc, X}^{I_\bullet}\times_{X^I} \widetilde X_{\underline F_1}^I$ and $Z_2\se \Gr_{\Gc, X}^{I_\bullet}\times_{X^I} \widetilde X_{\underline F_2}^I$ are called \emph{equivalent} if there is an $I$-tuple of finite extensions $\underline{F}'$ of $F$ such that componentwise $\underline{F}'$ is a finite extension of both $\underline{F}_1$ and $\underline{F}_2$ with $Z_1 \times_{\widetilde X_{\underline F_1}^I} \widetilde X_{\underline F'}^I=Z_2 \times_{\widetilde X_{\underline F_2}^I} \widetilde X_{\underline F'}^I$ in $\Gr_{\Gc, X}^{I_\bullet}\times_{X^I} \widetilde X_{\underline F'}^I$. 
			
			\item 
			\label{reflexscheme}
			Let $\Zc$ be an equivalence class of quasi-compact closed subschemes $Z_{\underline F'} \se \Gr_{\Gc, X}^{I_\bullet}\times_{X^I} \widetilde X_{\underline F'}^I$ and let us fix a representative $Z_{\underline{F}'}$ defined over some $\underline{F}'$ such that all components $(F^{(i)})'$ are Galois extensions of $F^{(i)}$. 
			We set 
			$ \Aut_\Zc(\underline{F}') =  \{(g_i)_{i \in I} \in \Gal((F^{(i)})'/F^{(i)}): (g_i)_{i \in I}^\ast(\Zc)=\Zc\}$. The \emph{reflex scheme} $\widetilde{X}^I_\Zc$ of $\Zc$ is defined to be the quotient 
			$$ \widetilde{X}^I_\Zc = \widetilde{X}^I_{\underline{F}'}/\Aut_\Zc(\underline{F}').$$
			
			\item
			By Lemma \ref{lemBdSchImage} the reflex scheme is independent of the choice of representative.
			We say that $\Zc$ \emph{admits a representative over $\widetilde{X}^I_\Zc$} if  $Z_{\underline F'}$ (or by Lemma \ref{lemBdSchImage} any other representative of $\Zc$) descends to $\widetilde{X}^I_\Zc$. 
			
			\item
			A \emph{bound} is an equivalence class $\Zc$ of quasi-compact closed subschemes $Z_{\underline F'}\subset \Gr_{\Gc, X}^{I_\bullet}\times_{X^I} \widetilde X_{\underline F'}^I$, such that all its representatives $Z_{\underline F'}$ are stable under the left $\Lc_{X^I}^+\Gc \times_{X^I} \widetilde X_{\underline F'}^I$-action on $\Gr_{\Gc, X}^{I_\bullet}\times_{X^I} \widetilde X_{\underline F'}^I$. 
		\end{enumerate}
	\end{definition}
	
	\begin{remark}
		The definition of the reflex scheme is due to \cite[Definition 2.6.3]{Hartl2023}. We thank U.Hartl for suggesting to use it in the present work.
		\begin{enumerate}
			\item 
			Later only global bounds (i.e. when $X$ is a curve) of global type and local bounds (i.e. when $X = \Spec(k \dsq{\varpi}))$) of local type will be relevant and the reader who is only interested in these situations can safely ignore the extra generality we allow in the definition. 
			However, restricting to this situation does not significantly simplify the arguments in this section.
			Bounds (in the global setting) with legs of different types are for example used in \cite{Hartl2023}.
			\item 
			In general, the reflex scheme can be complicated, but in all cases relevant later it will be a product $\prod_{i \in I} \tilde{X}_{F_i'}$ for finite separable extensions $F_i'/F$.
			In particular, when $I = \{\ast\}$ contains only one element $\widetilde X_\Zc = \widetilde{X}_{F_\Zc}$ where $F_\Zc$ is the field of definition of the bound $\Zc$ in the sense of \cite[Definition 3.1.3]{Rad2017}.
			In many cases the reflex scheme of a bound can be determined as in \cite[Remark 3.1.4]{Rad2017} (and \cite[Remark 4.7]{Rad2015} in the local setting).  
			It is however not clear if there always exists a representative over the reflex scheme in general. 
			\item 
			By construction, the reflex scheme comes equipped with maps $\widetilde{X}^I_{\underline{F}'} \to \widetilde{X}^I_\Zc \to \widetilde X_{\underline F}^I$ for any $I$-tuple $\underline{F}'$ of finite extensions of $\underline F$ such that $\Zc$ admits a representative over $\underline{F}'$.
		\end{enumerate}
	\end{remark}

	The following lemma is an analogue of \cite[Remark 4.6]{Rad2015}.
	\begin{lemma}
		Let $\Zc$ be an equivalence class of closed subschemes of $\Gr_{\Gc, X}^{I_\bullet}$ and let $\underline F' \se \underline F''$ be two $I$-tuples of finite extensions of $F$ such that $\Zc$ admits representatives $Z_{\underline F'}$ and $Z_{\underline F''}$ over $\underline F'$ and $\underline F''$, respectively. Then $Z_{\underline F'}$ is the scheme-theoretic image of the map $Z_{\underline F''} \to \Gr_{\Gc, X}^{I_\bullet} \times_{X^I} \tilde{X}^I_{\underline F''} \to  \Gr_{\Gc, X}^{I_\bullet} \times_{X^I} \tilde{X}^I_{\underline F'}$. 
		In particular, if $\Zc$ has a representative defined over some $\underline F'$, it is unique.
		\label{lemBdSchImage}  
	\end{lemma}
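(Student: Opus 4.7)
The plan is to reduce to an application of faithfully flat descent of closed subschemes, combined with the fact that the formation of the scheme-theoretic image commutes with flat base change for quasi-compact morphisms.

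First, by the definition of the equivalence relation in Definition \ref{defnGlobalBound}, one chooses an $I$-tuple $\underline{F}'''$ of componentwise finite extensions of $\underline{F}$ that contains both $\underline{F}'$ and $\underline{F}''$ componentwise, such that the pullbacks of $Z_{\underline{F}'}$ and $Z_{\underline{F}''}$ to $\widetilde{X}^I_{\underline{F}'''}$ agree; denote the common pullback by $Z_{\underline{F}'''}$. Because each individual extension $\widetilde{X}_{(F^{(i)})'''} \to \widetilde{X}_{(F^{(i)})'}$ is finite faithfully flat (as recorded in part~(2) of Definition \ref{defnGlobalBound} in each of the three possible types), the induced maps $\widetilde{X}^I_{\underline{F}'''} \to \widetilde{X}^I_{\underline{F}''} \to \widetilde{X}^I_{\underline{F}'}$ are also finite and faithfully flat.

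Since $Z_{\underline{F}''}$ is quasi-compact, the scheme-theoretic image $\bar{Z} \subseteq \Gr_{\Gc, X}^{I_\bullet} \times_{X^I} \widetilde{X}^I_{\underline{F}'}$ of the composition
\[
Z_{\underline{F}''} \hookrightarrow \Gr_{\Gc, X}^{I_\bullet} \times_{X^I} \widetilde{X}^I_{\underline{F}''} \to \Gr_{\Gc, X}^{I_\bullet}\times_{X^I} \widetilde{X}^I_{\underline{F}'}
\]
is well-defined, and its formation commutes with flat base change along $\widetilde{X}^I_{\underline{F}'''} \to \widetilde{X}^I_{\underline{F}'}$. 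After base change to $\widetilde{X}^I_{\underline{F}'''}$ the morphism becomes the closed immersion $Z_{\underline{F}'''} \hookrightarrow \Gr_{\Gc,X}^{I_\bullet} \times_{X^I} \widetilde{X}^I_{\underline{F}'''}$, whose scheme-theoretic image is $Z_{\underline{F}'''}$ itself. Hence
\[
\bar{Z} \times_{\widetilde{X}^I_{\underline{F}'}} \widetilde{X}^I_{\underline{F}'''} \; = \; Z_{\underline{F}'''} \; = \; Z_{\underline{F}'} \times_{\widetilde{X}^I_{\underline{F}'}} \widetilde{X}^I_{\underline{F}'''},
\]
and since $\widetilde{X}^I_{\underline{F}'''} \to \widetilde{X}^I_{\underline{F}'}$ is faithfully flat, fpqc descent of the corresponding quasi-coherent ideal sheaves forces $\bar{Z} = Z_{\underline{F}'}$, which is the first claim.

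The uniqueness statement is then immediate: if $Z_1$ and $Z_2$ are two representatives of $\Zc$ over the same $\underline{F}'$, the definition of equivalence provides an $\underline{F}'''$ over which they pull back to the same closed subscheme, and the same faithfully flat descent argument yields $Z_1 = Z_2$. I do not anticipate a real obstacle here; the only point requiring attention is that the quasi-compactness built into the definition of a bound is exactly what is needed to invoke the flat base change formula for scheme-theoretic images.
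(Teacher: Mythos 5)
Your overall strategy (pass to a common extension $\underline F'''$ and exploit flatness of $\widetilde{X}^I_{\underline F'''} \to \widetilde{X}^I_{\underline F'}$) is sound, and your uniqueness paragraph is correct, but the key step of the first part contains a genuine error. The base change of the morphism $Z_{\underline F''} \to \Gr_{\Gc,X}^{I_\bullet} \times_{X^I} \widetilde{X}^I_{\underline F'}$ along $\widetilde{X}^I_{\underline F'''} \to \widetilde{X}^I_{\underline F'}$ has source $Z_{\underline F''} \times_{\widetilde{X}^I_{\underline F'}} \widetilde{X}^I_{\underline F'''}$ --- the fibre product is taken over $\widetilde{X}^I_{\underline F'}$, not over $\widetilde{X}^I_{\underline F''}$ --- and this is not $Z_{\underline F'''} = Z_{\underline F''} \times_{\widetilde{X}^I_{\underline F''}} \widetilde{X}^I_{\underline F'''}$: it is a finite flat cover of $Z_{\underline F'''}$ of degree $\prod_{i}[(F^{(i)})'':(F^{(i)})']$, and the induced map to $\Gr_{\Gc,X}^{I_\bullet} \times_{X^I} \widetilde{X}^I_{\underline F'''}$ is not a closed immersion. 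Already for $I$ a singleton, $\underline F' = F$ and $\underline F'' = \underline F'''$ a nontrivial extension $F''$, the source is $Z_{F''} \times_{\widetilde X_{F}} \widetilde X_{F''}$, which has $[F'':F]$ sheets over $Z_{F''}$ and whose image a priori is the union of the Galois translates of $Z_{F''}$. So the displayed identity $\bar Z \times_{\widetilde{X}^I_{\underline F'}} \widetilde{X}^I_{\underline F'''} = Z_{\underline F'''}$ does not follow as written.

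The gap is repairable, but the repair renders the base-change-of-images detour unnecessary: first run the descent argument from your uniqueness paragraph to deduce from $Z_{\underline F'} \times_{\widetilde X^I_{\underline F'}} \widetilde X^I_{\underline F'''} = Z_{\underline F''} \times_{\widetilde X^I_{\underline F''}} \widetilde X^I_{\underline F'''}$ that $Z_{\underline F''} = Z_{\underline F'} \times_{\widetilde X^I_{\underline F'}} \widetilde X^I_{\underline F''}$ as closed subschemes of $\Gr_{\Gc,X}^{I_\bullet} \times_{X^I} \widetilde X^I_{\underline F''}$. Then $Z_{\underline F''} \to Z_{\underline F'}$ is finite and faithfully flat, hence schematically dominant, and since it is followed by the closed immersion $Z_{\underline F'} \hookrightarrow \Gr_{\Gc,X}^{I_\bullet} \times_{X^I} \widetilde X^I_{\underline F'}$, the scheme-theoretic image of the composite is $Z_{\underline F'}$. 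This is in substance the paper's proof, which establishes the schematic dominance of $Z_{\underline F'''} \to Z_{\underline F'}$ affine-locally from the universal injectivity of the finite faithfully flat map $R \to R'$ and then concludes by functoriality of scheme-theoretic images.
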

	\begin{proof}
		Let $\underline F'''$ be componentwise a common extension of $\underline F'$ and $\underline F''$ such that $Z_{\underline F'} \times_{\tilde X^I_{\underline F'}} \tilde{X}^I_{\underline F'''} =Z_{\underline F''} \times_{\tilde X^I_{\underline F''}} \tilde{X}^I_{\underline F'''}$.  
		Then the projection $Z_{\underline F'''} \hookrightarrow \Gr_{\Gc, X}^{I_\bullet} \times_{X^I} \tilde{X}^I_{\underline F'''}\to \Gr_{\Gc, X}^{I_\bullet} \times_{X^I} \tilde{X}^I_{\underline F'}$ factors through $Z_{\underline F'}$. But as the question is Zariski-local on $Z_{\underline F'}$, we may assume that $\tilde{X}^I_{\underline F'} = \Spec(R)$ and $Z_{\underline F'} = \Spec(A)$ are affine. 
		Then both $\tilde{X}^I_{\underline F'''} = \Spec(R')$ and $Z_{\underline F'''} = \Spec(A \otimes_R R')$ are affine as well. The map $R \to R'$ is a finite and faithfully flat and thus a universally injective map of $R$-modules.
		Hence, the map $A \to A \otimes_R R'$ is injective. This shows that $Z_{\underline F'}$ is the scheme-theoretic image of $Z_{\underline F'''} \to Z_{\underline F'}$. 
		By the functoriality properties of scheme-theoretic images the projection $Z_{\underline F''} \to \Gr_{\Gc, X}^{I_\bullet} \times_{X^I} \tilde{X}^I_{\underline F'}$ factors through $Z_{\underline F'}$ and has $Z_{\underline F'}$ as its scheme-theoretic image.
	\end{proof} 
	
	One way to construct bounds is to prescribe a bound for each leg as follows. 
	\begin{construction}
		\label{consBoundSingleLeg}
		Assume $\Lc^+_{X^I}\Gc \to X^I$ is flat. Recall that this is satisfied when $\Gc \to X$ is smooth by \cite[Lemma 2.11]{Richarz2016}.
		Let $(\Zc^{(i)})_{i \in I}$ be an $I$-tuple of bounds in $\Gr_{\Gc, X}$ with reflex schemes $\widetilde{X}_{\Zc^{(i)}}$. 
		From the $(\Zc^{(i)})_{i \in I}$ we construct a bound $\Zc = \prod_{i \in I} \Zc^{(i)}$ in $\Gr_{\Gc, X}^{I_\bullet}$ as follows.
		Let $\underline F' = ({F^{(i)}}')_{i \in I}$ be an $I$-tuple of finite extensions of $F$ such that $\Zc^{(i)}$ admits a representative over ${F^{(i)}}'$.
		Let $U \se X^I$ be the complement of all diagonals and set $U_{\underline F'} = U \times_{X^I} \tilde{X}^I_{\underline F'}$. 
		If no two different components $\tilde{X}_{F^{(i)}}$ are of the finite type (\ref{bdTypeFinite}) in Definition \ref{defnGlobalBound}, then $U_{\underline F'}$ is non-empty. 
		In this situation we define
		$$  Z_{\underline F'} = \mr{image}\left( \prod_{i \in I} Z^{(i)}_{{F^{(i)}}'}|_{U_{\underline F'}} \hookrightarrow \Gr_{\Gc, X}^{I_\bullet} \times_{X^I} \tilde{X}_{\underline F'}^I \right)$$
		as the scheme-theoretic closure of the product of the $ Z^{(i)}_{{F^{(i)}}'}$ away from the diagonals via the map of Remark \ref{remBLDesc}.
		We denote by $\Zc$ the equivalence class of $Z_{\underline F'}$.
		
		In a similar fashion, we can define a bound $\Zc$ in $\Gr_{\Gc, X}^{I_\bullet}$ for any refinement $I_\bullet' = (I'_{j'})_{1 \leq j' \leq m'}$ of $I_\bullet$ and bounds $\Zc^{(I'_{j'})}$ in $\Gr_{\Gc, X}^{(I'_{j'})}$. 
	\end{construction}
	
	\begin{lemma}
		\label{lemBoundCons}
		In the situation of Construction \ref{consBoundSingleLeg}, let $\underline F'$ and $\underline F''$ be two $I$-tuples of finite extensions of $\underline F$ such that the $\Zc^{(i)}$  admit representatives over both $(F^{(i)})'$ and $(F^{(i)})''$. 
		Let $Z_{\underline F'}$ and $Z_{\underline F''}$ be the corresponding subschemes of the affine Grassmannian as constructed in Construction \ref{consBoundSingleLeg}. 
		\begin{enumerate}
			\item Then $Z_{\underline F'}$ and $Z_{\underline F''}$ are equivalent and stable under the $\Lc_{X^I}^+\Gc$-action. In particular, $\Zc$ is a bound and independent of the choice of $\underline F'$.
			The reflex scheme of $\Zc = \prod_{i \in I} \Zc^{(i)}$ is 
			$\widetilde{X}^I_\Zc = \prod_{i \in I} \widetilde{X}_{\Zc^{(i)}}$.
			\item 
			Assume that $\Zc$ admits a representative $Z_{\underline F'''}$ over $\underline F'''$. Then for every $i \in I$ the bound $\Zc^{(i)}$ admits a representative $Z^{(i)}_{(F^{(i)})'''}$ over $(F^{(i)})'''$ and $Z_{\underline F'''}$ arises via Construction \ref{consBoundSingleLeg} from the $Z^{(i)}_{ (F^{(i)})'''}$.
		\end{enumerate}
	\end{lemma}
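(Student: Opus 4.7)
The plan is to first establish the assertions of part (1) by a uniform argument based on faithfully flat base change, and then to deduce part (2) by Galois descent. Throughout I will use the scheme-theoretic image description of $Z_{\underline F'}$ from Construction \ref{consBoundSingleLeg} together with the product decomposition on the open locus $U$ provided by Remark \ref{remBLDesc}.

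For part (1), fix an $I$-tuple $\underline F'''$ whose components contain both those of $\underline F'$ and $\underline F''$ and over which each $\Zc^{(i)}$ admits a representative (obtained by pulling back an existing one). Applying Lemma \ref{lemBdSchImage} componentwise identifies the pullbacks of $Z^{(i)}_{(F^{(i)})'}$ and $Z^{(i)}_{(F^{(i)})''}$ to $\widetilde X_{(F^{(i)})'''}$ with $Z^{(i)}_{(F^{(i)})'''}$, so by Remark \ref{remBLDesc} the three products restricted to $U_{\underline F'''}$ all agree inside $\Gr^{I_\bullet}_{\Gc,X} \times_{X^I} U_{\underline F'''}$. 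Since formation of the scheme-theoretic image of a quasi-compact morphism commutes with flat base change, and the transition maps between $\widetilde X^I_{\underline F'}$, $\widetilde X^I_{\underline F''}$ and $\widetilde X^I_{\underline F'''}$ are finite and faithfully flat, both $Z_{\underline F'}$ and $Z_{\underline F''}$ pull back to the same subscheme over $\widetilde X^I_{\underline F'''}$, which proves equivalence. Stability under $\Lc^+_{X^I}\Gc$ holds automatically on $U_{\underline F'}$ from the product decomposition and the stability of each $\Zc^{(i)}$ under $\Lc_X^+\Gc$; because $\Lc^+_{X^I}\Gc$ is flat over $X^I$ by assumption, the action morphism is flat and scheme-theoretic closure commutes with it, so stability propagates to all of $Z_{\underline F'}$. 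For the reflex scheme, after replacing $\underline F'$ by a componentwise Galois extension, an element $(g_i) \in \prod_i \Gal((F^{(i)})'/F^{(i)})$ preserves $Z_{\underline F'}$ on $U_{\underline F'}$ if and only if each $g_i$ preserves $Z^{(i)}_{(F^{(i)})'}$ (assuming all $\Zc^{(i)}$ non-empty, as is tacit here); hence $\Aut_\Zc(\underline F') = \prod_i \Aut_{\Zc^{(i)}}((F^{(i)})')$ and therefore $\widetilde X^I_\Zc = \prod_i \widetilde X_{\Zc^{(i)}}$.

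For part (2), assume $\Zc$ has a representative $Z_{\underline F'''}$ and choose $\underline F''''$ componentwise Galois over $\underline F'''$ such that each $\Zc^{(i)}$ already admits a representative $Z^{(i)}_{(F^{(i)})''''}$. By Lemma \ref{lemBdSchImage}, $Z_{\underline F''''}$ is the pullback of $Z_{\underline F'''}$, and by Construction \ref{consBoundSingleLeg} its restriction to $U_{\underline F''''}$ equals $\prod_i Z^{(i)}_{(F^{(i)})''''}|_{U_{\underline F''''}}$. The descent data along the Galois cover $\widetilde X^I_{\underline F''''} \to \widetilde X^I_{\underline F'''}$ with group $G = \prod_i G_i$, where $G_i = \Gal((F^{(i)})''''/(F^{(i)})''')$, amounts to $G$-invariance of $Z_{\underline F''''}$. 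Restricting to $U_{\underline F''''}$, the element $g_i \in G_i$ acts only on the $i$-th factor of the product decomposition, so $G$-invariance forces $g_i^\ast Z^{(i)}_{(F^{(i)})''''}|_{U_{\underline F''''}} = Z^{(i)}_{(F^{(i)})''''}|_{U_{\underline F''''}}$ for every $g_i$. Lemma \ref{lemBdSchImage} then propagates this open invariance to the full scheme-theoretic closure $Z^{(i)}_{(F^{(i)})''''}$, and Galois descent applied factor by factor produces a representative $Z^{(i)}_{(F^{(i)})'''}$ of $\Zc^{(i)}$ over $\widetilde X_{(F^{(i)})'''}$. Finally, the equality of $Z_{\underline F'''}$ with the scheme-theoretic image built from the $Z^{(i)}_{(F^{(i)})'''}$ via Construction \ref{consBoundSingleLeg} follows from faithfully flat descent, as the two subschemes coincide after pullback to $\widetilde X^I_{\underline F''''}$.

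The most delicate step I foresee is the transfer of componentwise $G$-invariance from the open locus $U_{\underline F''''}$, where the product decomposition of Remark \ref{remBLDesc} trivialises the action, to the full ambient scheme $\widetilde X_{(F^{(i)})''''}$, where no such decomposition exists. The key point is Lemma \ref{lemBdSchImage}, which says that a representative of a bound is determined by its restriction to the schematically dense open part via scheme-theoretic closure, so componentwise invariance on the open set automatically extends to the closure. A related minor subtlety is to verify that scheme-theoretic image (as used in Construction \ref{consBoundSingleLeg}) commutes with flat pullback along the finite flat extensions of $\widetilde X^I_{\underline F}$ occurring throughout, which is a standard fact about quasi-compact morphisms.
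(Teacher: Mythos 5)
Your part (1) is correct and follows essentially the same route as the paper: equivalence via flat base change of scheme-theoretic images, stability under $\Lc^+_{X^I}\Gc$ via flatness of the loop group and functoriality of images, and the reflex scheme via the identification $\Aut_\Zc(\underline F') = \prod_i \Aut_{\Zc^{(i)}}((F^{(i)})')$ read off on $U_{\underline F'}$ and propagated to closures by Lemma \ref{lemBdSchImage}.

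Part (2), however, has a genuine gap at the Galois descent step. You assert that descent data for a closed subscheme along the cover $\widetilde X^I_{\underline F''''} \to \widetilde X^I_{\underline F'''}$ ``amounts to $G$-invariance.'' That equivalence holds for finite \emph{\'etale} Galois covers, where $\widetilde X^I_{\underline F''''} \times_{\widetilde X^I_{\underline F'''}} \widetilde X^I_{\underline F''''}$ splits as $G$ copies of the total space; but the normalisation maps $\widetilde X_{(F^{(i)})''''} \to \widetilde X_{(F^{(i)})'''}$ are in general \emph{ramified} finite flat covers (a separable extension of function fields or of local fields need not be unramified), and there $G$-invariance of a closed subscheme does not yield descent data. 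A minimal illustration: for $k\dsq{s}/k\dsq{t}$ with $t=s^2$ and $G=\Z/2$ acting by $s\mapsto -s$, the $G$-invariant closed subscheme $V(s)$ does not arise as the pullback of any closed subscheme downstairs, since $V(t)$ pulls back to $V(s^2)$. Since Lemma \ref{lemBoundCons} is stated for arbitrary bounds (not only generically defined ones, for which one could descend the generic fibre along the \'etale locus and then take closures as in Lemma \ref{lemGenRefl}), this step does not go through as written. The paper avoids descent entirely: it \emph{constructs} the candidate $Z^{(i)}_{(F^{(i)})'''}$ directly as the scheme-theoretic image of the projection $Z_{\underline F'''}|_{U_{\underline F'''}} \to \Gr_{\Gc,X} \times_X \widetilde X_{(F^{(i)})'''}$ (as in Construction \ref{consBdProj}), checks by flat base change of scheme-theoretic images that this is equivalent to the known representative $Z^{(i)}_{(F^{(i)})'}$, and then concludes the product statement from uniqueness of representatives (Lemma \ref{lemBdSchImage}). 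Replacing your descent step by this projection construction repairs the argument.
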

	\begin{proof}
		\begin{enumerate}
			\item That $Z_{\underline F'}$ and $Z_{\underline F''}$ are equivalent follows from the fact that the formation of scheme-theoretic images commutes with flat base change.
			
			By assumption, the action map $\Lc_{X^I}^+\Gc|_{U_{\underline F'}} \times_{U_{\underline F'}} \prod_{i \in I} Z^{(i)}_{(F^{(i)})'}|_{U_{\underline F'}} \to \Gr_{\Gc, X}^{I_\bullet}|_{U_{\underline F'}}$ factors through $Z_{U_{\underline F'}}$. 
			By the functoriality of scheme-theoretic images together with the fact that $\Lc^+_{X^I} \Gc \to X^I$ is flat, also $Z_{\underline F'}$ is stable under the action of $\Lc^+_{X^I} \Gc$. 
			\item We first note that the representative $Z_{{F^{(i)}}'}^{(i)}$ of $\Zc^{(i)}$ can be recovered from $Z_{\underline F'}$ as the scheme-theoretic image
			$$ Z_{(F^{(i)})'}^{(i)} = \mr{image} \left( Z_{\underline F'}|_{U_{\underline F'}}  \to \prod_{i \in I} \Gr_{\Gc, X} |_{U_{\underline F'}} \xrightarrow{\mr{pr^{(i)}}} \Gr_{\Gc, X} \times_{X} \tilde{X}_{(F^{(i)})'} \right),$$
			where $\mr{pr}^{(i)}$ denotes the projection to the $i$-th component.
			
			Let now $\underline F'''$ be an $I$-tuple of extensions of $\underline F$ as in the assertion and let $Z_{\underline F'''}$ be a representative of $\Zc$ over $\underline F'''$. As above, we denote by $Z_{(F^{(i)})'''}^{(i)}$ the scheme-theoretic image of the map $Z_{\underline F'''}|_{U_{\underline F'''}} \xrightarrow{\mr{pr}^{(i)}} \Gr_{\Gc, X} \times_X \tilde{X}_{(F^{(i)})'''}$. Without loss of generality, we may assume that $F'$ is an extension of $F'''$. As the formation of scheme-theoretic images commutes with flat base change, $Z_{(F^{(i)})'''}^{(i)}$ and $Z_{(F^{(i)})'}^{(i)}$ are equivalent. In particular, $\Zc^{(i)}$ admits a representative over $(F^{(i)})'''$. That $Z_{\underline F'''}$ arises from Construction \ref{consBoundSingleLeg} then follows from the fact that representatives of a bound over a given field extension are unique by Lemma \ref{lemBdSchImage}.
		\end{enumerate}
	\end{proof}

	\begin{remark}
		Note that we crucially used the the flatness of $\Lc^+_{X^I} \Gc$ to show that the representatives of $\Zc$ are stable under the $\Lc^+_{X^I} \Gc$-action. 
		It does not seem to be clear whether or not $\Lc^+_{X^I} \Gc \to X^I$ is flat in general for flat affine group schemes $\Gc \to X$ of finite type.
	\end{remark}

	On the other hand, from a bound in $\Gr_{\Gc, X}^{I_\bullet}$ we can for every subset $I' \se I$ construct a bound in $\Gr_{\Gc, X}^{(I'_\bullet)}$ essentially by projection to the $I'$-components, where $I'_\bullet = (I' \cap I_j)_{1 \leq j \leq m}$. This generalises the construction from \cite[proof of Proposition 4.3.3]{Rad2017}.
	\begin{construction}
		\label{consBdProj}
		Let $\Zc$ be a bound in $\Gr_{\Gc, X}^{I_\bullet}$ with a representative $Z_{\underline F'}$ defined over $\underline F'$. Let $I' \se I$.
		We denote by $Z_{\underline F'}^{(I')}$ the scheme-theoretic image of the projection $Z_{\underline F'}|_{U_{\underline F'}} \hookrightarrow (\prod_{i \in I} \Gr_{\Gc, X} )|_{U_{\underline F'}} \to \Gr_{\Gc,X}^{I'_\bullet} \times_{X^{I'}} \widetilde{X}_{\underline F'}^{I'}$ to the $I'$-components.
		We denote by $\Zc^{(I')}$ the equivalence class of $Z_{\underline F'}^{(I')}$. Clearly, $\Zc^{(I')}$  does not depend on the choice of representative $Z_{\underline F'}$.
		
		For the computation of the reflex scheme of $\Zc^{(I')}$ we note that  $\Aut_{\Zc^{(I')}}(\underline{F}')$ is just the projection of $\Aut_\Zc(\underline{F}')$ to the $I'$-components.
		In particular, when $\Zc = \prod_{i \in I} \Zc^{(i)}$ as in \ref{consBoundSingleLeg}, we find $\Zc^{(I')} = \prod_{i \in I'} \Zc^{(i)}$ and the map $\prod_{i \in I} \widetilde{X}^I_{\Zc^{(i)}}  = \widetilde{X}^I_\Zc \to \widetilde{X}^{I'}_{\Zc^{(I')}} = \prod_{i \in I'}\widetilde{X}^I_{\Zc^{(i)}}$ is projection to the $I'$-components.
		
		Assume additionally that $\Lc^+_X \Gc \to X$ is flat. By a similar argument as in Lemma \ref{lemBoundCons}, we see that $Z^{(I')}_{\underline F'}$ is stable under $\Lc^+_{X^{I'}} \Gc \times_{X^{I'}} \widetilde{X}_{\underline F'}$. Hence, $\Zc^{(I')}$ is a bound.
	\end{construction}

	We give two conditions for points on the affine Grassmannian to factor through (representatives of) a bound. These lemmas will be helpful to check boundedness of (global or local) shtukas in the next section.
	\begin{lemma}
		Let $\Zc$ be a bound in $\Gr_{\Gc, X}^{I_\bullet}$. Let $S$ be a $k$-scheme and let $g \in \left( \Gr_{\Gc, X}^{I_\bullet} \times_{X^I} \widetilde{X}^I_{\Zc} \right) (S)$. Then $g \times \mr{id}_{\widetilde X_{\underline F'}} \colon S \times_{\widetilde X_\Zc^I} \widetilde{X}^I_{\underline F'} \to  \Gr_{\Gc, X}^{I_\bullet} \times_{X^I} \widetilde{X}^I_{\underline F'}$ factors through $Z_{\underline F'}$ for one $I$-tuple of finite extensions $\underline F'$ for which a representative $Z_{\underline F'}$ of $\Zc$ exists if and only if $g \times \mr{id}_{\tilde X_{\underline F'}}$ factors through $Z_{\underline F'}$ for all such $I$-tuples of finite extensions $\underline F'$.   
		\label{lemBoundAllReps}
	\end{lemma}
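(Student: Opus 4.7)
The implication from ``all'' to ``one'' is immediate, so the content lies in the converse. My plan is to bridge between two representatives $Z_{\underline F'}$ and $Z_{\underline F''}$ of $\Zc$ by passing through a common refinement and then descending along a faithfully flat map.

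First, assume that $g \times \mr{id}_{\widetilde X_{\underline F'}^I}$ factors through $Z_{\underline F'}$. By the definition of equivalence of closed subschemes given in Definition \ref{defnGlobalBound}, I can choose an $I$-tuple of finite extensions $\underline F'''$ of $\underline F$ that is componentwise an extension of both $\underline F'$ and $\underline F''$ and for which
$$ Z_{\underline F'''} := Z_{\underline F'} \times_{\widetilde X_{\underline F'}^I} \widetilde X_{\underline F'''}^I = Z_{\underline F''} \times_{\widetilde X_{\underline F''}^I} \widetilde X_{\underline F'''}^I.$$
Base changing the given factorization along the (flat) map $\widetilde X_{\underline F'''}^I \to \widetilde X_{\underline F'}^I$ shows that
$g \times \mr{id}_{\widetilde X_{\underline F'''}^I}$ factors through $Z_{\underline F'''}$, hence also through $Z_{\underline F''} \times_{\widetilde X_{\underline F''}^I} \widetilde X_{\underline F'''}^I$.

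It remains to descend this factorization along the map $\widetilde X_{\underline F'''}^I \to \widetilde X_{\underline F''}^I$ to conclude that $g \times \mr{id}_{\widetilde X_{\underline F''}^I}$ itself factors through $Z_{\underline F''}$. Since each component $\widetilde X_{(F^{(i)})'''} \to \widetilde X_{(F^{(i)})''}$ is finite and faithfully flat by Definition \ref{defnGlobalBound}, and products of finite faithfully flat morphisms over $k$ remain finite faithfully flat, the map $\widetilde X_{\underline F'''}^I \to \widetilde X_{\underline F''}^I$ is faithfully flat, and hence so is the base change $S \times_{\widetilde X_\Zc^I} \widetilde X_{\underline F'''}^I \to S \times_{\widetilde X_\Zc^I} \widetilde X_{\underline F''}^I$.

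The main (and standard) point is the following descent statement: given a morphism $h \colon T \to Y$, a closed subscheme $Z \se Y$ cut out by an ideal sheaf $\Ic$, and a faithfully flat morphism $p \colon T' \to T$, the composition $h \circ p$ factors through $Z$ if and only if $h$ does. Indeed, $h$ factors through $Z$ iff $h^{-1}(\Ic)\cdot \Oc_T = 0$, and by faithful flatness of $p$ the vanishing of this quasi-coherent ideal can be checked after pulling back to $T'$. Applying this with $p$ the faithfully flat map constructed above, $Y = \Gr_{\Gc, X}^{I_\bullet} \times_{X^I} \widetilde X_{\underline F''}^I$ and $Z = Z_{\underline F''}$ yields the desired factorization. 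As $\underline F''$ was arbitrary, this completes the argument. The only potential subtlety is the faithful flatness of the map obtained by taking fibre products over $k$, which is immediate from the fact that finite faithfully flat morphisms are stable under arbitrary base change.
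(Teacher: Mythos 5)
Your proof is correct and follows essentially the same route as the paper's: pass to a common componentwise extension $\underline F'''$, transport the factorization there by flat base change, and then descend along the finite faithfully flat map $\widetilde X_{\underline F'''}^I \to \widetilde X_{\underline F''}^I$. The only cosmetic difference is that the paper delegates the descent step to Lemma \ref{lemBdSchImage} (via scheme-theoretic images and universal injectivity of finite faithfully flat ring maps), whereas you verify the equivalent statement directly by checking the vanishing of the ideal sheaf after faithfully flat pullback.
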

	\begin{proof}
		Assume that $g \times \mr{id}_{\widetilde X_{\underline F'}}$ factors through $Z_{\underline F'}$ for some $\underline F'$ and let $\underline F''$ be a second $I$-tuple of finite extensions such that $\Zc$ admits a representative over $\underline F''$.
		By replacing $\underline F'$ by the componentwise compositum of $\underline F'$ and $\underline F''$, we may assume that $\underline F'$ is a componentwise extension of $\underline F''$.
		The claim then follows from Lemma \ref{lemBdSchImage}.
	\end{proof}

	\begin{lemma}
		\label{lemSectBoundRefl}
		Let $\Zc$ be a bound in $\Gr_{\Gc, X}^{I_\bullet}$. 
		Let $S$ be a $k$-scheme 
		and let $g \colon S \to \Gr_{\Gc, X}^{I_\bullet} \times_{X^I} \widetilde X^I_{\underline F'}$ be a point in the affine Grassmannian for some $I$-tuple of finite extensions $\underline F'$ such that $\Zc$ admits a representative over $\underline F'$.
		\begin{enumerate}
			\item Assume that $ g \times \mr{id}_{\widetilde{X}_{\underline F'}^I} \colon S \times_{Y} \tilde X^I_{\underline F'} \to \Gr_{\Gc, X}^{I_\bullet} \times_{X^I}  \tilde{X}^I_{\underline F'} $ factors through $Z_{\underline F'}$ for some scheme $Y \to \widetilde{X}^I_\Zc$ such that the map $\widetilde{X}^I_{\underline{F}''} \to \widetilde{X}^I_\Zc$ factors through $Y$. Then $g$ factors through $Z_{\underline F'}$. 
			\label{lemSectBoundReflExt}
			\item Assume that $\Zc$ admits a representative over $\widetilde{X}^I_\Zc$. If $g$ factors through $Z_{\underline F'}$, then the induced map
			$$ g \times \mr{id}_{\tilde{X}_{\underline F''}^I} \colon S \times_{\widetilde X^I_\Zc} \tilde X^I_{\underline F''} \to \Gr_{\Gc, X}^{I_\bullet} \times_{X^I} \widetilde X^I_{\underline F''} $$
			factors through $Z_{\underline F''}$ for all finite extensions $\underline F''$ such that $\Zc$ admits a representative $Z_{\underline F''}$ over $\underline F''$.
			\label{lemSectBoundReflDesc}
		\end{enumerate}
	\end{lemma}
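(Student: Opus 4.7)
The plan is to treat both parts using the universal properties of the fiber products in question, avoiding any flatness hypothesis on $\widetilde X^I_{\underline F'} \to Y$ in part (\ref{lemSectBoundReflExt}).

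For part (\ref{lemSectBoundReflExt}), the key observation is that the first projection $\mr{pr}_S \colon S \times_Y \widetilde X^I_{\underline F'} \to S$ admits a tautological section. Indeed, $g$ induces via projection a morphism $h \colon S \to \widetilde X^I_{\underline F'}$, and the structure morphism $S \to Y$ used to form the fiber product is by definition the composition of $h$ with $\widetilde X^I_{\underline F'} \to Y$. Hence the pair $(\mr{id}_S, h)$ defines a section $\Delta \colon S \to S \times_Y \widetilde X^I_{\underline F'}$ of $\mr{pr}_S$. By construction, the composition $(g \times \mr{id}_{\widetilde X^I_{\underline F'}}) \circ \Delta$ recovers $g$, so if $g \times \mr{id}_{\widetilde X^I_{\underline F'}}$ factors through the closed subscheme $Z_{\underline F'}$, then so does $g$.

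For part (\ref{lemSectBoundReflDesc}), let $Z_\Zc$ denote the assumed representative of $\Zc$ defined over $\widetilde X^I_\Zc$. The base change $Z_\Zc \times_{\widetilde X^I_\Zc} \widetilde X^I_{\underline F'}$ is a representative of $\Zc$ defined over $\underline F'$, and so by the uniqueness part of Lemma \ref{lemBdSchImage} it coincides with $Z_{\underline F'}$; similarly, $Z_{\underline F''} = Z_\Zc \times_{\widetilde X^I_\Zc} \widetilde X^I_{\underline F''}$. Composing $g$ with the projection $\widetilde X^I_{\underline F'} \to \widetilde X^I_\Zc$ therefore yields a morphism $\bar g \colon S \to \Gr_{\Gc, X}^{I_\bullet} \times_{X^I} \widetilde X^I_\Zc$ which factors through $Z_\Zc$. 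The map $g \times \mr{id}_{\widetilde X^I_{\underline F''}}$ is, by definition, the base change of $\bar g$ along $\widetilde X^I_{\underline F''} \to \widetilde X^I_\Zc$, and so it factors through $Z_\Zc \times_{\widetilde X^I_\Zc} \widetilde X^I_{\underline F''} = Z_{\underline F''}$, as required.

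I do not anticipate any serious obstacle; the only point requiring a moment of care is checking the compatibility of the structure morphism $S \to Y$ in part (\ref{lemSectBoundReflExt}) with the map induced by $g$, which is immediate from the definition of the fiber product.
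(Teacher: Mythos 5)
Your proof is correct and follows essentially the same route as the paper: part (\ref{lemSectBoundReflExt}) is the paper's observation that $g$ factors through the tautological section of $S \times_Y \widetilde X^I_{\underline F'} \to S$ followed by $g \times \mathrm{id}$, and part (\ref{lemSectBoundReflDesc}) rests on the same inputs (the representative over the reflex scheme together with uniqueness of representatives from Lemma \ref{lemBdSchImage}). Your phrasing of part (\ref{lemSectBoundReflDesc}) as a base change of the descended representative $Z_\Zc$ is a slightly more direct packaging of what the paper delegates to ``the argument of Lemma \ref{lemBoundAllReps}'', but it is the same argument.
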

	\begin{proof}
		\begin{enumerate}
			\item 
			By construction, $g$ factors through $S \to S \times_{Y} \tilde X^I_{\underline F'} \xrightarrow{g \times \mr{id}_{\tilde{X}_{\underline F'}^I}} \Gr_{\Gc, X}^{I_\bullet} \times  \tilde X^I_{\underline F'}$.
			\item 
			Assume that $g \colon S \to \Gr_{G, X^I, I_\bullet} \times_{X^I} \widetilde{X}_{\underline F'}^I$ factors through $Z_{\underline F'}$. 
			By Lemma \ref{lemBdSchImage}, the induced map $g \colon S \to \Gr_{G, X^I, I_\bullet} \times_{X^I} X_{\Zc}^I$ factors through the representative $Z_{\widetilde X_\Zc}$ over the reflex scheme. Hence, $ g \times \mr{id}_{\tilde{X}_{\tilde F}^I} \colon S \times_{X^I_\Zc} \widetilde X^I_{\underline F''} \to \Gr_{\Gc, X}^{I_\bullet} \times_{X^I} \tilde X^I_{\underline F''} $ factors through $Z_{\underline {F}''}$ for all finite extensions $\underline{F}''$ by the argument of Lemma \ref{lemBoundAllReps}.
		\end{enumerate}
	\end{proof}
	
	\begin{remark}
		We do not know whether (\ref{lemSectBoundReflDesc}) holds without the assumption that $\Zc$ admits a representative over the reflex field. 
		However, this assumption is satisfied for all examples of bounds considered in practice, in particular for all bounds considered in the following two subsections.
	\end{remark}

	\subsection{Generically defined bounds}
	We now discuss a special class of bounds, namely those that are \emph{generically defined} in the sense that they are given as the scheme-theoretic closure of their generic fibre.
	In this case, the theory simplifies significantly, for example we show that in this case representatives always exist over the reflex field.
	In this subsection we always assume without further mention that $\Lc_{X^I}^+\Gc$ is flat over $X^I$, which is the case for example if $\Gc \to X$ is smooth by \cite[Lemma 2.11]{Richarz2016}.  
	
	For this subsection we restrict ourselves to bounds of global type in the global setting or bounds of local type in the local setting.
	
	\begin{definition}
		Let $\Zc$ be a bound in $\Gr_{\Gc, X}^{I_\bullet}$. We say that $\Zc$ is \emph{generically defined} if there is an $I$-tuple of finite extensions $\underline F'$ of $F$ such that there exists a representative $Z_{\underline F'}$ over $\underline F'$ which is the scheme-theoretic closure of its generic fibre over every connected component of $\widetilde{X}^I_{\underline F'}$.
	\end{definition}
	
	\begin{remark}
		A bound with a representative $Z_{\underline F'}$ that is flat over $\widetilde{X}_{\underline F'}$ is clearly generically defined, and all its representatives will then be flat.
		When $I$ has only one element, any representative of a generically defined bound in $\Gr_{\Gc,X}$ is automatically flat over $\widetilde{X}_{F'}$ (as the base is a Dedekind scheme in this case). 
	\end{remark}
	
	\begin{lemma}
		\label{lemGen}
		Let $\Zc$ be a generically defined bound in $\Gr_{\Gc, X}^{I_\bullet}$. Then every representative of $\Zc$ arises as the closure of its generic fibre.
	\end{lemma}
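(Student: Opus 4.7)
The plan is to show that any representative of $\Zc$ inherits the property of being the scheme-theoretic closure of its generic fibre by comparing it to the distinguished representative $Z_{\underline F'}$ from the definition via a common componentwise extension, and then invoking flat base change and faithfully flat descent for scheme-theoretic closures.

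First, for an arbitrary representative $Z_{\underline F''}$ over some $I$-tuple $\underline F''$, I would pick an $I$-tuple $\underline F'''$ of finite extensions of $\underline F$ that componentwise extends both $\underline F'$ and $\underline F''$. By Lemma \ref{lemBdSchImage} applied to each representative, the base changes $Z_{\underline F'} \times_{\widetilde X^I_{\underline F'}} \widetilde X^I_{\underline F'''}$ and $Z_{\underline F''} \times_{\widetilde X^I_{\underline F''}} \widetilde X^I_{\underline F'''}$ agree as closed subschemes of $\Gr_{\Gc, X}^{I_\bullet} \times_{X^I} \widetilde X^I_{\underline F'''}$, so they define a common representative $Z_{\underline F'''}$ of $\Zc$.

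Second, since the map $\pi' \colon \widetilde X^I_{\underline F'''} \to \widetilde X^I_{\underline F'}$ is finite, faithfully flat, and surjective, it sends connected components to connected components and, being flat and dominant on each component, sends generic points to generic points. Because the formation of scheme-theoretic closure commutes with flat base change, the hypothesis that $Z_{\underline F'}$ is the scheme-theoretic closure of its generic fibre over every connected component of $\widetilde X^I_{\underline F'}$ transfers along $\pi'$: the pullback $Z_{\underline F'''}$ is the scheme-theoretic closure of its generic fibre over every connected component of $\widetilde X^I_{\underline F'''}$.

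Third, let $\bar Z \hookrightarrow Z_{\underline F''}$ denote the scheme-theoretic closure of the generic fibre of $Z_{\underline F''}$ over each connected component of $\widetilde X^I_{\underline F''}$. Applying the same flat-base-change argument to $\pi'' \colon \widetilde X^I_{\underline F'''} \to \widetilde X^I_{\underline F''}$, the pullback $\bar Z \times_{\widetilde X^I_{\underline F''}} \widetilde X^I_{\underline F'''}$ equals the closure of the generic fibre of $Z_{\underline F'''}$, which by the second step is all of $Z_{\underline F'''}$. Hence the closed immersion $\bar Z \hookrightarrow Z_{\underline F''}$ becomes an isomorphism after pullback along the faithfully flat $\pi''$; by faithfully flat descent of closed immersions, $\bar Z = Z_{\underline F''}$. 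The main subtlety to verify is the compatibility of the "generic fibre over each connected component" with the base-change maps $\pi'$ and $\pi''$, but this is routine given that both maps are finite, faithfully flat, and surjective, after which everything reduces to standard flat base change and faithfully flat descent arguments.
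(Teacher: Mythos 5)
Your proof is correct and follows essentially the same route as the paper's: pass to a common extension $\underline F'''$, use that scheme-theoretic closure commutes with flat base change to propagate the property upward, and then descend it to an arbitrary representative $Z_{\underline F''}$. The only cosmetic difference is in the final step, where you conclude by faithfully flat descent of the closed immersion $\bar Z \hookrightarrow Z_{\underline F''}$ becoming an isomorphism, whereas the paper observes directly that the scheme-theoretically surjective composite $Z_{\underline F', \eta} \to Z_{\underline F'} \to Z_{\underline F''}$ factors through the generic fibre of $Z_{\underline F''}$; both are valid.
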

	\begin{proof}
		Let $\underline F'$ be such that the representative $Z_{\underline F'}$ of $\Zc$ is the closure of its generic fibre. As taking the scheme theoretic image commutes with flat base change, this property remains true for all representatives defined over some finite extension of $\underline F'$. 
		Let $\underline F''$ be a second $I$-tuple of field extensions over which a representative of $\Zc$ exists. Without loss of generality we may assume that $\underline F'$ is a componentwise extension of $\underline F''$. 
		Then the induced map of the generic fibre $Z_{\underline F', \eta_{\underline F'}^I} \to Z_{\underline F''}$ factors through $Z_{\underline F'', \eta_{\underline F''}^I}$ and both the maps $Z_{\underline F', \eta_{\underline F'}^I} \to Z_{\underline F'} \to Z_{\underline F''}$ are scheme-theoretically surjective. This shows the claim.
	\end{proof}

	We show that Constructions \ref{consBoundSingleLeg} and \ref{consBdProj} preserve the property of being generically defined.
	\begin{lemma}
		Let $\Zc$ be a bound in $\Gr_{\Gc, X}^{I_\bullet}$. Then $\Zc$ is generically defined if and only if there exist generically defined bounds $(\Zc^{(i)})_{i \in I}$ in $\Gr_{\Gc, X}$ such that $\Zc$ arises from $(\Zc^{(i)})_{i \in I}$ as in Construction \ref{consBoundSingleLeg}.
		\label{lemGenDefProd}
	\end{lemma}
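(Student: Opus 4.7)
The plan is to treat the two implications separately, with the forward direction being essentially immediate from Construction \ref{consBoundSingleLeg} and the reverse direction requiring the projection construction of Construction \ref{consBdProj}.

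For the forward direction, suppose each $\Zc^{(i)}$ is generically defined with a representative $Z^{(i)}_{(F^{(i)})'}$ equal to the scheme-theoretic closure of its generic fibre. By Construction \ref{consBoundSingleLeg}, the representative $Z_{\underline F'}$ of $\prod_i \Zc^{(i)}$ is by definition the scheme-theoretic image of $\prod_i Z^{(i)}_{(F^{(i)})'}|_{U_{\underline F'}}$ in $\Gr_{\Gc,X}^{I_\bullet} \times_{X^I} \widetilde X^I_{\underline F'}$, using the Beauville--Laszlo identification of Remark \ref{remBLDesc}. Since $U_{\underline F'}$ is dense in $\widetilde X^I_{\underline F'}$ and contains its generic points, the generic fibre of $Z_{\underline F'}$ is identified with the product of the generic fibres of the $Z^{(i)}_{(F^{(i)})'}$. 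Using that the product of closures of generic fibres is again the closure of a generic fibre (finite product over $k$ of scheme-theoretic closures), it follows that $Z_{\underline F'}$ is the scheme-theoretic closure of its generic fibre on each connected component of $\widetilde X^I_{\underline F'}$, so $\prod_i \Zc^{(i)}$ is generically defined.

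For the reverse direction, suppose $\Zc$ is generically defined, and set $\Zc^{(i)} := \Zc^{(\{i\})}$ via Construction \ref{consBdProj}. I would first verify that each $\Zc^{(i)}$ is generically defined. Since the formation of scheme-theoretic images of quasi-compact morphisms commutes with flat base change, the construction of $Z^{(i)}_{(F^{(i)})'}$ as the scheme-theoretic image of $Z_{\underline F'}|_{U_{\underline F'}}$ under the $i$-th projection is compatible with restriction to the generic point of $\widetilde X_{(F^{(i)})'}$. As $Z_{\underline F'}$ is the closure of its generic fibre, it dominates every connected component of $\widetilde X^I_{\underline F'}$; projecting, every irreducible component of $Z^{(i)}_{(F^{(i)})'}$ then dominates a component of $\widetilde X_{(F^{(i)})'}$, so $Z^{(i)}_{(F^{(i)})'}$ is the closure of its generic fibre.

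It remains to show that $\Zc$ agrees with the product $\prod_i \Zc^{(i)}$ constructed from these projections via Construction \ref{consBoundSingleLeg}. The inclusion $Z_{\underline F'}|_{U_{\underline F'}} \subseteq \prod_i Z^{(i)}|_{U_{\underline F'}}$ is formal from the definition of the projections, and passing to scheme-theoretic closures yields one containment of the representatives. By Lemma \ref{lemGen} together with the forward direction, both candidates are closures of their generic fibres, so the desired equality reduces to the equality on the generic fibre. I expect this last comparison to be the main obstacle: the nontrivial inclusion at the generic point should follow from combining the $\Lc_{X^I}^+ \Gc$-invariance of $Z_{\underline F', \eta}$, which over $\eta \in U_{\underline F'}$ factors through $\prod_i \Lc_X^+ \Gc|_\eta$ via Beauville--Laszlo, with the orbit structure on $\prod_i \Gr_{\Gc,X,\eta}$ to force the product decomposition $Z_{\underline F', \eta} = \prod_i (Z^{(i)})_\eta$; once this is established, taking scheme-theoretic closures delivers $\Zc = \prod_i \Zc^{(i)}$ as required.
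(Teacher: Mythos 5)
Your overall route coincides with the paper's: the forward direction via flatness/closure of generic fibres for the product, and the reverse direction by taking the components $\Zc^{(i)} = \Zc^{(\{i\})}$ from Construction \ref{consBdProj}, checking they are generically defined, and reducing the identity $\Zc = \prod_{i \in I} \Zc^{(i)}$ to an identity of generic fibres. The forward direction and the verification that the components are generically defined are essentially the paper's arguments (the paper phrases the former by observing that each $Z^{(i)}_{(F^{(i)})'}$ is automatically flat over the Dedekind scheme $\widetilde X_{(F^{(i)})'}$, so the product is flat over $\widetilde X^I_{\underline F'}$ and its restriction to $U_{\underline F'}$ is the closure of its generic fibre; your argument via flatness over $k$ and compatibility of scheme-theoretic images with flat base change amounts to the same thing).

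The genuine gap is the step you yourself flag and leave open: the equality $Z_{\underline F', \eta} = \prod_{i} Z^{(i)}_{\eta}$ of generic fibres. You establish only the inclusion $\subseteq$ and ``expect'' the converse to follow from $\Lc^+_{X^I}\Gc$-invariance and the orbit structure. That mechanism cannot work as stated: a closed subscheme of a product of affine Grassmannians that is invariant under the product of the positive loop groups need not be the product of its projections. For instance, for two incomparable Schubert varieties $\overline{O_1}, \overline{O_2}$ the union $\left(\overline{O_1}\times\overline{O_2}\right) \cup \left(\overline{O_2}\times\overline{O_1}\right)$ is closed and invariant, projects onto $\overline{O_1}\cup\overline{O_2}$ in each factor, yet is strictly smaller than the product of its projections; so invariance alone does not force a product decomposition, and some additional input is needed to rule such configurations out for the bound at hand. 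For what it is worth, the paper's own proof of this direction simply asserts that ``the generic fibre of any representative of $\Zc$ is the product of the corresponding representatives of the $\Zc^{(i)}$'' without further justification, so your proposal isolates exactly the point where the published argument is thinnest — but as written your proposal does not close it, and the route you suggest for closing it would fail.
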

	The $\Zc^{(i)}$ are unique by the Lemma \ref{lemBoundCons}. We call them the components of $\Zc$.
	\begin{proof}
		Assume first that $\Zc$ arises from generically defined bounds $\Zc^{(i)}$ via Construction \ref{consBoundSingleLeg}.
		Let $\underline F'$ be an $I$-tuple of finite extensions of $F$ such that $\Zc^{(i)}$ admits a representative over $(F^{(i)})'$ for every $i \in I$. 
		As the generic point of $X^I$ lies in the complement $U$ of all diagonals and $\prod_{i \in I} Z^{(i)}_{(F^{(i)})'}$ is flat over $X^I$,
		$\left( \prod_{i \in I} Z^{(i)}_{F'} \right)|_{U_{\underline F'}}$ is the closure of its generic fibre.
		
		Conversely, assume that $\Zc$ is generically defined and let $(\Zc^{(i)})_{i \in I}$ be the components of $\Zc$ from Construction \ref{consBdProj}. By construction, all $\Zc^{(i)}$ are generically defined and the generic fibre of any representative of $\Zc$ is the product of the corresponding representatives of the $\Zc^{(i)}$.
		This shows that $\Zc$ arises from the $\Zc^{(i)}$ via Construction \ref{consBoundSingleLeg}.
	\end{proof}
	
	\begin{lemma}
		Let $\Zc$ be a generically defined bound. Its reflex scheme is given by $\widetilde{X}^I_\Zc = \prod_{i \in I} \widetilde{X}_{\Zc^{(i)}}$ where the $\Zc^{(i)}$ are the components of $\Zc$ as in Lemma \ref{lemGenDefProd}.
		Moreover, $\Zc$ admits a representative over its reflex scheme.
		\label{lemGenRefl}
	\end{lemma}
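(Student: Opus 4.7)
The plan is to reduce to the single-leg case via Lemmas \ref{lemGenDefProd} and \ref{lemBoundCons}, and there to perform Galois descent on the generic fibre followed by taking scheme-theoretic closure. By Lemma \ref{lemGenDefProd}, $\Zc$ arises via Construction \ref{consBoundSingleLeg} from generically defined components $\Zc^{(i)}$ in $\Gr_{\Gc,X}$, and then Lemma \ref{lemBoundCons}(1) gives the reflex scheme $\widetilde{X}^I_\Zc = \prod_{i\in I}\widetilde{X}_{\Zc^{(i)}}$, proving the first assertion. Once we have produced representatives $Z^{(i)}$ of each $\Zc^{(i)}$ over $\widetilde{X}_{\Zc^{(i)}}=\widetilde{X}_{F_{\Zc^{(i)}}}$, we feed them into Construction \ref{consBoundSingleLeg} with $(F^{(i)})' = F_{\Zc^{(i)}}$ to produce a closed subscheme of $\Gr_{\Gc,X}^{I_\bullet}\times_{X^I}\widetilde{X}^I_\Zc$ whose equivalence class is $\Zc$ by Lemma \ref{lemBoundCons}(2). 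Hence everything reduces to the single-leg statement.

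For a generically defined bound $\Zc'$ in $\Gr_{\Gc,X}$, the plan is to choose a finite Galois extension $L/F$ over which a (necessarily unique by Lemma \ref{lemBdSchImage}) representative $Z_L\se\Gr_{\Gc,X}\times_X\widetilde{X}_L$ exists, and to set $H=\Aut_{\Zc'}(L)\le\Gal(L/F)$ so that $F_{\Zc'}=L^H$. By definition of $H$ together with uniqueness of representatives, $H$ preserves $Z_L$ compatibly with its action on $\widetilde{X}_L$. Since $Z_L$ is quasi-compact, it lies in a finite-type closed subscheme of $\Gr_{\Gc,X}\times_X\widetilde{X}_L$ that is quasi-projective over $\widetilde{X}_L$; restricting the $H$-action to the generic fibre $Z_{L,\eta}$ over $\Spec L$ and applying ordinary Galois descent for quasi-projective schemes over a field then produces a closed subscheme $Z_\eta\se(\Gr_{\Gc,X})_\eta\times_F F_{\Zc'}$ whose base change to $L$ recovers $Z_{L,\eta}$.

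We then define $Z_{F_{\Zc'}}$ to be the scheme-theoretic closure of $Z_\eta$ inside $\Gr_{\Gc,X}\times_X\widetilde{X}_{F_{\Zc'}}$. Since scheme-theoretic closures commute with flat base change and the finite morphism $\widetilde{X}_L\to\widetilde{X}_{F_{\Zc'}}$ is faithfully flat (by miracle flatness, as both bases are Dedekind), the pullback of $Z_{F_{\Zc'}}$ to $\widetilde{X}_L$ is the scheme-theoretic closure of $Z_{L,\eta}$, which equals $Z_L$ by Lemma \ref{lemGen}. Stability of $Z_{F_{\Zc'}}$ under $\Lc^+_X\Gc$ will follow from stability of $Z_L$ by the same closure argument, exploiting flatness of $\Lc^+_X\Gc\to X$ so that $\Lc^+_X\Gc\times_X Z_{F_{\Zc'}}$, being flat over the Dedekind scheme $\widetilde{X}_{F_{\Zc'}}$, coincides with the closure of its generic fibre. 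The main delicate point will be running Galois descent in the ind-scheme setting; this is handled cleanly by first passing to a finite-type Schubert-style subscheme containing $Z_L$, after which the classical Galois descent suffices.
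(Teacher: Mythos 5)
Your proposal is correct and follows essentially the same route as the paper: reduce to the single-leg case via the product decomposition of Lemmas \ref{lemGenDefProd} and \ref{lemBoundCons}, perform Galois descent of the (Galois-fixed, hence effectively descendable) generic fibre of a representative along the finite \'etale cover $\eta_{F'} \to \eta_{F_{\Zc}}$, and take scheme-theoretic closure, with Lemma \ref{lemGen} and flat base change guaranteeing that the closure pulls back to the original representative. You simply make explicit some steps the paper leaves implicit (the finite-type reduction for descent, the flat-base-change check, and loop-group stability); the only nitpick is that the equivalence class of the output of Construction \ref{consBoundSingleLeg} applied to the single-leg representatives is identified with $\Zc$ by part (1) of Lemma \ref{lemBoundCons} together with Lemma \ref{lemGenDefProd}, rather than by part (2).
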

	
	\begin{proof}
		The description of the reflex scheme follows from Lemma \ref{lemBoundCons}. Moreover, it suffices to show that $\Zc$ admits a representative over its reflex scheme in the case $I = \{\ast\}$. In this case $\widetilde{X}_\Zc = \widetilde{X}_{F_\Zc}$ for some finite extension $F_\Zc/F$. 
		In particular there is a finite Galois extension $F'/F_\Zc$ such that there exists a representative $Z_{F'}$ of $\Zc$ over $F'$.
		By assumption it suffices to descend $Z_{F', \eta_{F'}}$ to some subscheme $Z_{{F_\Zc}, \eta_{{F_\Zc}}}$ of the affine Grassmannian over $\eta_{F_\Zc}$, the representative of $\Zc$ over $F_\Zc$ will then be given by the scheme-theoretic closure of $Z_{{F_\Zc}, \eta_{{F_\Zc}}^I}$.
		But $\eta_{F'} \to \eta_{{F_\Zc}}$ is a finite \'etale Galois cover with Galois group $\Gal(F'/{F_\Zc})$ and $Z_{F', \eta_{F'}}$ is fixed by the Galois action by assumption. Thus,  $Z_{F', \eta_{F'}}$ descends to ${F_\Zc}$.
	\end{proof}
	
	\begin{remark}
		By Lemma \ref{lemGenDefProd}, a generically defined bound is uniquely determined by an $I$-tuple $(Z^{(i)})$ of (equivalence classes of) $\Lc_X^+\Gc \times_X \eta$-stable closed subschemes of the generic fibre $\Gr_{\Gc,X} \times_X \eta$ of the Beilinson-Drinfeld affine Grassmannian.
		
		If the generic fibre $G$ of $\Gc$ is reductive, the generic fibre of Beilinson-Drinfeld Grassmannian for $\Gc$ can be identified (non canonically) with the (classical) affine Grassmannian $\Gr_G$ for $G \times_{\Spec(F)} \Spec(F\dsq{\varpi})$ by \cite[Section 0.2]{Richarz2021a}.
		Hence, $I$-tuples of equivalence classes of closed subschemes (that are stable under the loop group action) in $\Gr_G$ determine bounds in $\Gr_{\Gc, X}^{I_\bullet}$ for all smooth models $\Gc \to X$ of $G$ by taking closures. 
		Note that the bounds do not depend on the choice of isomorphism $\Gr_{\Gc,X} \times_X \eta \cong \Gr_G$. 
		
		By a slight abuse of notation, we call such an equivalence class $\Zc = (\Zc^{(i)})_{i \in I}$ in $\Gr_G$ a \emph{generically defined bound for $G$} and denote the resulting bound in $\Gr_{\Gc, X}^{I_\bullet}$ for any smooth model $\Gc$ of $G$ by $\Zc$ as well.
		\label{remBdGenFibre}
	\end{remark}

	\subsection{Examples of bounds}
	\subsubsection{Beilinson-Drinfeld Schubert varieties}
	The bounds commonly used in practice are bounds given by Beilinson-Drinfeld Schubert varieties.
	We recall and generalise their construction following \cite[D\'efinition 1.12]{Lafforgue2018}.
	We assume now that the generic fibre $G$ of $\Gc$ is reductive.
	Let $\mu$ be a conjugacy class of geometric cocharacters of $G$ with reflex field $F_\mu$ and let $F'/F$ be a finite separable extension that splits $G$.
	By a slight abuse of notation, we denote by $\Gr_G$ the (classical) affine Grassmannian $\Gr_{G \times_F \Spec(F\dsq{t})}$ over $F$.
	We denote by $\Gr_{G_{F'}}^{\leq \mu} \se \Gr_{G_{F'}} = \Gr_{G} \times_F F'$ the Schubert variety corresponding to $\mu$, which descends to the reflex field $F_\mu$ of $\mu$.
	For an $I$-tuple of conjugacy classes of geometric cocharacters $\underline{\mu} = (\mu_i)_{i \in I}$ the (iterated) BD-Schubert variety $\Gr_{\Gc, X}^{I_\bullet, \leq \underline{\mu}}$ is the bound induced by the $\Gr_{G}^{\leq \mu_i}$ as in the previous section, compare Remark \ref{remBdGenFibre}. More explicitly, it is defined as follows.
	
	\begin{definition}[compare {\cite[Definition 3.1]{Zhu2014}, \cite[D\'efinition 1.12]{Lafforgue2018}}]
		\begin{enumerate}
			\item  Let $\mu$ be a conjugacy class of cocharacters of $G$. Then 
			$\Gr^{\leq \mu}_{\Gc, X}$ is defined as the scheme-theoretic image 
			$$ \Gr^{\leq \mu}_{\Gc, X} = \mr{image} \left(\Gr_{G}^{\leq \mu} \hookrightarrow \Gr_{\Gc, X} \times_X X_\mu \right)$$
			where $X_\mu = \widetilde{X}_{F_\mu}$ is the reflex curve of $\mu$. 
			\item Let $\underline{\mu} = (\mu_i)_{i \in I}$ be an $I$-tuple of conjugacy classes of geometric cocharacters of $G$ and let $F_{\underline{\mu}} = (F_{\mu_i})_{i \in I}$ be the $I$-tuple of reflex fields $F_{\mu_i}$ of the $\mu_i$. Then $\Gr_{\Gc, X}^{I_\bullet,\leq \underline \mu}$ is defined from the $(\Gr_{\Gc, X}^{\leq \mu_i})_{i \in I}$ as in Construction \ref{consBoundSingleLeg}. More precisely, it admits a representative defined over $F_{\underline{\mu}}$ given by 
			$$ \Gr_{\Gc, X}^{I_\bullet, \leq \underline \mu} = \mr{image} \left( \prod_{i \in I} \Gr^{\leq \mu_i}_{\Gc, X}|_{U_{F_{\underline{\mu}}}} \hookrightarrow \Gr_{\Gc, X}^{I_\bullet} \times _{X^I} X^I_{\underline{\mu}}\right),$$
			where $U$ denotes the complement of all diagonals in $X^I$, $U_{F_{\underline{\mu}}}  = U \times_{X^I} X_{\underline{\mu}}^I$ and the map $ \prod_{i \in I} \Gr^{\leq \mu_i}_{\Gc, X}|_{U_{F_{\underline{\mu}}}} \hookrightarrow \Gr_{\Gc, X}^{I_\bullet} \times _{X^I} X^I_{\underline{\mu}}$ is induced by the isomorphism of Remark \ref{remBLDesc}. 
		\end{enumerate}
	\end{definition}
	This construction generalises many of the boundedness conditions used in applications. When $\Gc$ is constant split reductive, these bounds are constructed and used in \cite{Lafforgue2018}. In particular, when $\Gc = \GL_n$, $I_{\bullet} = (\{1\}, \{2\})$ and $\underline{\mu} = ((1,0,\ldots,0),(0,\ldots,0,-1))$ this bound gives rise to the notion of Drinfeld shtuka as defined in \cite{Drinfeld1987a}.
	Further examples include the shtukas for unitary groups considered in \cite{Feng2024} for $\Gc = U(n)$  and $\underline{\mu} = ((1,0,\ldots,0), \ldots, (1,0, \ldots,0))$.
	Moreover, in the local setting (in the case that $\Gc$ is constant split reductive these bounds are considered in  \cite{Genestier2018, LiHuerta2023}.
	
	Both  $\Gr^{\leq \mu}_{\Gc, X}$ and $\Gr^{ I_\bullet, \leq \underline \mu}_{\Gc, X}$ are bounds in the sense of Definition \ref{defnGlobalBound} with reflex fields $F_{\mu}$ and $F_{\underline{\mu}}$, respectively. In particular, they are generically defined. For $\Gr^{I_\bullet, \leq \underline \mu}_{\Gc, X}$ this follows from Lemmas \ref{lemBoundCons} and \ref{lemGenDefProd}.

	\subsubsection{Further examples of global bounds}
	Let now $X$ be a smooth projective and geometrically connected curve over $k$. We call bounds (in the sense of Definition \ref{defnGlobalBound}) defined in this situation also \emph{global bounds}. 
	We now discuss various other constructions of global bounds used in the literature.
	\begin{example}
		\begin{enumerate}
			\item 
			Let $W$ be an $E$-linear representation of $(^LG)^I$, where $^LG$ denotes the $L$-group of $G$ and where $E$ is a finite extension of $\Q_\ell$ such that all highest weights of the dual group of $G$ are defined over $E$. 
			Then $\bigcup_{\underline{\mu}}\Gr_{\Gc, X}^{I_\bullet, \leq \underline{\mu}}$, where $\underline{\mu}$ runs over all highest weights appearing in $W$, is defined over the compositum of all reflex fields of all the highest weights appearing and naturally descends to a closed subscheme $\Gr_{\Gc, X}^{I_\bullet, W}$ of $\Gr_{\Gc, X}^{I_\bullet}$ defined over $X^I$.
			\label{exBoundLaf}
			\item 
			In the case that $\Gc$ is constant split reductive, \cite{Varshavsky2004} defines bounds as follows. For a $\Gc$-torsor $\Ec$ and a dominant coweight $\lambda$ of $\Gc$ we denote by $\Ec_\lambda = \Ec \times^\Gc V_\lambda$, where $V_\lambda$ is the Weyl module for $\lambda$.
			Then $\Gr_{G,X}^{I_\bullet, \lesssim \underline{\mu}}$ is defined to be the closed subscheme of $\Gr_{\Gc, X}^{I_\bullet}$ parametrising points $ ((x_i)_{i \in I}, (\Ec_j)_{j = 0, \ldots, m}, (\ph_j)_{j = 1, \ldots, m}, \varepsilon)$ such that
			$ \ph_{j, \lambda}(\Ec_{j, \lambda}) \se \Ec_{j-1, \lambda} \left(\sum_{i \in I_j} \langle \lambda, \mu_i \rangle \Gamma_{x_i} \right)$
			for all $1 \leq j \leq m$ and all dominant weights $\lambda$ of $\Gc$.
			
			In contrast to the Beilinson-Drinfeld Schubert varieties defined above, these bounds admit a moduli description by definition. However, $\Gr_{\Gc,X}^{I_\bullet, \lesssim \underline{\mu}}$ might fail to be flat or reduced.
			\cite[Definition 3.3.1]{Neupert2016} gives a variant of this definition also for quasi-split reductive groups over $\Fq$. 
			\label{exBoundVar}
			\item For general flat affine group schemes of finite type $\Gc \to X$, \cite{Rad2019a} give a boundedness condition by choosing a closed immersion $f \colon \Gc \hookrightarrow \SL(\mathcal V)$ for some vector bundle $\mathcal V$ on $X$ and an $I$-tuple of cocharacters $\underline{\mu} = (\mu_i)_{i \in I}$ of $\SL_n$, where $n$ is the rank of $\mathcal{V}$.
			The condition given in corresponds to the bound $\Gr_{\Gc, X}^{I_\bullet, \lesssim \underline{\mu}} := \Gr_{\SL(\mathcal V), X}^{ I_\bullet, \lesssim \underline{\mu}} \times_{\Gr_{\SL(\mathcal V), X}^{ I_\bullet}} \Gr_{\Gc, X}^{I_\bullet}$ defined via pullback of the corresponding bound for $\SL_n$ along the map $f_\ast \colon \Gr_{\Gc, X}^{I_\bullet} \to \Gr_{\SL(\mathcal V), X}^{I_\bullet}$.
			
			As noted in \cite[Remark 3.20]{Rad2019a}, when $\Gc$ is constant split reductive, this definition of bounds agrees with the bound $\Gr_{\Gc, X}^{I_\bullet, \lesssim f \circ \underline{\mu}}$ of \cite{Varshavsky2004} as in (\ref{exBoundVar}) for the $I$-tuple of cocharacters $f \circ \underline{\mu} = (f \circ \mu_i)_{i \in I}$ of $\Gc$.
			\label{exBoundHar}
		\end{enumerate}
		\label{exBound}
		All the bounds considered in this example clearly have reflex scheme $X^I$ and a representative defined over $X^I$.
		Moreover, $\Gr_{\Gc, X}^{I_\bullet, W}$ is generically defined by construction.
	\end{example}

	\subsubsection{Local bounds}
	
	By a \emph{local bound} we mean a bound in the sense of Definition \ref{defnGlobalBound} when $X = \Spec(k \dsq{\varpi})$ for a finite field $k$. Below, these are used as bounds for local shtukas.
	\cite[Definition 3.5]{Hartl2011} give a definition of local bounds in the style of \cite{Varshavsky2004} for constant split reductive groups when $I = \{\ast\}$, compare also \cite[Example 4.11]{Rad2015}.
	
	A different definition of local bounds is given in \cite[Definition 4.5]{Rad2015}, where bounds are not defined as subschemes of $\Gr_{\Gc, X}$, but rather as formal subschemes of $\Gr_{\Gc} \hat \times_{\Spec k} \Spf(k \dsq{\varpi})$. 
	By \cite[Lemma 1.4]{LiHuerta2023}, we have 
	$$\Gr_{\Gc} \hat{\times}_{\F} \Spf(\Oc) \cong \Gr_{\Gc, k \dsq{\varpi}} \hat{\times}_{\Spec k \dsq{\varpi}} \Spf(k \dsq{\varpi}),$$ 
	so the local bounds in our setting (for $I = \{ \ast\})$) define bounds in the sense of \cite{Rad2015} by pullback under this isomorphism.

	\subsection{Construction of local bounds from global bounds}
	\label{secLocGlobBounds}
	We explain how to construct local bounds from global ones in our setting following the construction in \cite[Proposition 4.3.3]{Rad2017}. 
	We generalise their construction to the iterated setting and moreover also construct local bounds at points on the diagonal of $X^I$.
	
	We consider the global setting. In other words, $X$ denotes a smooth geometrically connected curve over $k$ for the remainder of this section. Let $\Zc$ be an equivalence class of quasi-compact closed subschemes in $\Gr_{\Gc, X}^{I_\bullet}$.
	We fix a closed point $y^0$ of $X$ and assume there is a closed point $\underline y$ of the reflex scheme $\widetilde{X}_{\Zc}^{I}$ of $\Zc$ lying over $\Delta_{I}(y^0)$, where $\Delta_{I} \colon X \to X^{I}$ denotes the diagonal embedding.
	This is the case if and only if all components of local or finite type in $\Zc$ are defined at $y^0$. This is satisfied in particular in the case that $\Zc$ only has components of global type that will be relevant later. 
	
	Let $\underline F'$ be an $I$-tuple of finite extensions of $\underline F$ such that $\Zc$ admits a representative $Z_{\underline F'}$ over $\underline F'$. 
	We fix a closed point $\underline{y}'$ of $\widetilde{X}_{\underline{F}'}$ over $\underline y$ and let us denote by $y_i' \in \widetilde{X}_{(F^{(i)})'}$ its $i$-th component.
	We set $Z_{\underline F', \underline y'} = Z_{\underline F'} \times_{\tilde{X}_{\underline F'}^{I}} \Spec(\bigotimes_{\F_{y^0}} \Oc_{y'_i})$ where we denote by $\Oc_{y'_i}$ the completed local ring at $y_i'$ in $\widetilde{X}_{(F^{(i)})'}$.
	We denote by $\Zc_{\underline y}$ the equivalence class of $Z_{\underline F', \underline y'}$ in  $\Gr_{\Gc, \Spec(\Oc_{y^0})}^{I_\bullet}$ (which will be of local type in the sense of Definition \ref{defnGlobalBound}).  
	Note that components of global type for $\Zc$ become components of local type for $\Zc_{\underline{y}}$ while components of local (respectively finite) type for $\Zc$ remain of local (respectively finite) type for $\Zc_{\underline{y}}$.
	
	\begin{lemma}[compare {\cite[Proposition 4.3.3]{Rad2017}}]
		The equivalence class $\Zc_{\underline y}$ does neither depend on the choice of  representative $Z_{\underline F'}$ nor the choice of the point $\underline y'$.
		Moreover, when $\Zc$ is a (global) bound, $\Zc_{\underline y}$ is a (local) bound.
	\end{lemma}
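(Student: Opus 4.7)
The plan is to reduce both assertions to the compatibility of base change with completed local rings, together with the defining property of $\Aut_\Zc(\underline{F}')$.

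\emph{Independence of the choice of $\underline{y}'$.} First I would enlarge $\underline{F}'$ so that each $(F^{(i)})'/F^{(i)}$ is Galois; by Lemma \ref{lemBdSchImage} this enlargement does not alter the resulting equivalence class $\Zc_{\underline{y}}$. By definition $\widetilde{X}_\Zc^I = \widetilde{X}_{\underline{F}'}^I/\Aut_\Zc(\underline{F}')$, so any two lifts $\underline{y}_1', \underline{y}_2'$ of $\underline{y}$ differ by some $g \in \Aut_\Zc(\underline{F}')$. Because $g^\ast Z_{\underline{F}'} = Z_{\underline{F}'}$ and $g$ induces an isomorphism of completed local rings $\bigotimes_{\F_{y^0}} \Oc_{y'_{1,i}} \xrightarrow{\cong} \bigotimes_{\F_{y^0}} \Oc_{y'_{2,i}}$, one obtains an isomorphism $Z_{\underline{F}', \underline{y}_1'} \cong Z_{\underline{F}', \underline{y}_2'}$ as closed subschemes.

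\emph{Independence of the representative.} Given two representatives $Z_{\underline{F}_1}, Z_{\underline{F}_2}$ of $\Zc$, I would choose a componentwise Galois $I$-tuple $\underline{F}''$ extending both; by Lemma \ref{lemBdSchImage} both base change to a common representative $Z_{\underline{F}''}$. After fixing lifts $\underline y_j'$ and a common lift $\underline y''$ of $\underline y_1'$ in $\widetilde{X}_{\underline{F}''}^I$, the projection of $\underline y''$ to $\widetilde{X}_{\underline{F}_2}^I$ gives a point $\underline{y}_2''$ over $\underline{y}$; by the first step $Z_{\underline F_2, \underline y_2''}$ is equivalent to $Z_{\underline F_2, \underline y_2'}$. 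Using that fibre products commute with fibre products, one checks $Z_{\underline{F}_j, \underline{y}_j'} \times_{\Spec(\bigotimes_{\F_{y^0}} \Oc_{y'_{j,i}})} \Spec(\bigotimes_{\F_{y^0}} \Oc_{y''_i}) = Z_{\underline{F}'', \underline{y}''}$ for $j = 1, 2$, which exhibits $Z_{\underline{F}_1, \underline{y}_1'}$ and $Z_{\underline{F}_2, \underline{y}_2'}$ as equivalent local subschemes.

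\emph{Boundedness.} Quasi-compactness and being a closed subscheme are stable under base change. Moreover, $\Lc^+_{\Spec(\Oc_{y^0})^I}\Gc_{\Oc_{y^0}}$ is, via its modular description in terms of formal completions along graphs of sections, the base change of $\Lc^+_{X^I}\Gc$ along $\Spec(\Oc_{y^0})^I \to X^I$. Therefore, the stability of $Z_{\underline{F}'}$ under the action of $\Lc^+_{X^I}\Gc \times_{X^I} \widetilde X_{\underline{F}'}^I$ restricts to stability of $Z_{\underline{F}', \underline{y}'}$ under the local positive loop group action, so that $\Zc_{\underline y}$ is indeed a (local) bound.

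The main subtlety will be keeping careful track of the components of global, local, and finite type in $\underline F$ and checking that completion at the chosen point $\underline y$ commutes with the finite flat base changes that appear in the equivalence relation on bounds. Since by hypothesis all local and finite-type components of $\Zc$ are defined at $y^0$, the completion construction only modifies the global-type legs, where it corresponds to ordinary completion at a closed point of a smooth curve; with this in hand the remaining arguments are purely formal manipulations of fibre products.
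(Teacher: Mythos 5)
Your proposal is correct and follows essentially the same route as the paper's proof: reduce to componentwise Galois extensions, use the $\Aut_\Zc(\underline{F}')$-invariance of the representative to handle different lifts $\underline{y}'$, observe that completion commutes with the finite flat base changes appearing in the equivalence relation to handle different representatives, and restrict the global loop group action to get the local one. Your write-up is in fact slightly more careful than the paper's (which says ``Galois-invariant'' where it means invariant under $\Aut_\Zc(\underline{F}')$), but the content is the same.
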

	\begin{proof}
		We first assume that $\underline F'$ is a componentwise Galois extension of $\underline F$.  
		Let $\underline y''$ be a second place of $\tilde{X}_{\underline F'}$ lying over $\underline y$.
		As $Z_{\underline F'}$ is Galois-invariant, $Z_{\underline F', \underline y'}$ and $Z_{\underline F', \underline y''}$ are equivalent.
		Let now $\underline F''$ be a componentwise finite extension of $\underline F'$ and let $\underline y''$ be a closed point of $\underline F''$ lying over $\underline y'$. Then clearly $Z_{\underline F'', \underline y''} = Z_{\underline F', \underline y'} \times_{\Spec(\bigotimes_{\F_{y^0}} \Oc_{y'_i})} \Spec(\bigotimes_{\F_{y^0}} \Oc_{y''_i})$.
		This shows that $\Zc_{\underline y}$ is independent of the auxiliary choices. 
		Moreover, when $\Zc$ is stable under the loop group action so is $\Zc_{\underline y}$. 
	\end{proof}
	
	We slightly change notation and fix an $I$-tuple $\underline{y}^0 = (y^0_i)_{i \in I}$ of closed points of $X$ for the remainder of this subsection.
	For a closed point $y^0$ of $X$ we write $y^0 \in \underline{y}^0$ if $y^0 = y^0_i$ for some $i \in I$. We set $I^{y^0} = \{i \in I \colon y^{0}_i = y^0\}$ and define $I^{y^0}_\bullet = (I_j^{y^0})_{j \in J}$ analogously.
	
	\begin{definition}
		\begin{enumerate}
			\item 
			An \emph{$I$-tuple of local bounds at $\underline{y}^0$} is a tuple of (local) bounds $(\Zc_{y^0})_{y^0 \in \underline{y}^0}$, where $\Zc_{y^0}$ is defined in $\Gr_{\Gc, \Spec(\Oc_{y^0})}^{I^{y^0}_\bullet}$.
			\item Let $\Zc$ be a global bound and assume that $\Lc^+_{X^I} \Gc \to X^I$ is flat. For each ${y^0} \in \underline{y}^0$ we fix a closed point $\underline y$ on $\widetilde X^{I^{y^0}}_{\Zc^{(I^{y^0})}}$ lying over $\Delta_{I^{y^0}}(y^0)$. Its associated $I$-tuple of local bounds at $\underline{y}^0$ is $(\Zc_{\underline y})_{y^0 \in \underline y^0}$, where $\Zc_{\underline y}$ is as constructed above (using Construction \ref{consBdProj}).
		\end{enumerate}
		\label{defITupLocBounds}
	\end{definition}
	Note that when all $y^0_i$ are pairwise distinct, an $I$-tuple of bounds in the sense of the previous definition is an actual $I$-tuple of bounds $(\Zc_{y^0_i})_{i \in I}$, where each $\Zc_{y^0_i}$ lives inside $\Gr_{\Gc, \Oc_{y^0_i}}$. Our definition gives the natural generalisation to the case where $\underline{y}^0$ is also allowed to lie on a diagonal of $X^I$.

	\begin{example}
		By construction, the local bounds attached to a Schubert variety in the global setting is again a Schubert variety (in the local setting).
		Compare also \cite[Theorem 3.3.9]{Neupert2016} for a local-global compatibility for global bounds in the style of \cite{Varshavsky2004} and local bounds in the style of \cite{Hartl2011}.
	\end{example}


\section{Moduli spaces of bounded, iterated, global shtukas}
\label{secSht}

Global shtukas for $\GL_n$ were first introduced in \cite{Drinfeld1987a} and generalised to split reductive groups (respectively to flat affine group schemes of finite type) by \cite{Varshavsky2004} and \cite{Rad2019a}, respectively.
In this section, we recall the definition and basic properties of moduli spaces of (bounded, iterated, global) shtukas.
We rely of the definition of bounds given in the previous section and generalise the Serre-Tate theorem, the local model theorem and functoriality results to this setting.
The functoriality results will be crucial to the construction of integral models with deeper level structure below.

\subsection{Global shtukas}

We consider the ``global setting'' of the previous section:
Let $X$ be a smooth projective and geometrically connected curve over $k = \Fq$, let $\Gc \to X$ be a smooth affine group scheme,  
let $I$ be a finite set, and let $I_\bullet = (I_1, \ldots, I_m)$ be a partition of $I$.

We denote by $\Bun_\Gc$ the stack of $\Gc$-bundles on $X$. It is representable by a smooth Artin stack locally of finite type over $\Fq$ by \cite[Proposition 1]{Heinloth2010}.

\begin{definition}
	We denote by $\Hecke_{\Gc, X}^{I_\bullet}$ the stack fibred in groupoids over $\Fq$ whose $S$ valued points are given by tuples 
	$ ((x_i)_{i \in I}, (\Ec_j)_{j = 0, \ldots, m}, (\ph_j)_{j = 1, \ldots, m}),$
	where
	\begin{itemize}
		\item $x_i \in X(S)$ are points on $X$ called the \emph{characteristic sections} (or \emph{legs}) for $i \in I$,
		\item $\Ec_j \in \Bun_\Gc(S)$ are $\Gc$-bundles on $X_S$ for $0 \leq j \leq m$, and
		\item $\ph_j \colon \Ec_{j}|_{X_S \setminus \bigcup_{i \in I_j} \Gamma_{x_i}} \xrightarrow{ \cong } \Ec_{j-1}|_{X_S \setminus \bigcup_{i \in I_j} \Gamma_{x_i}}$ are isomorphisms of $\Gc$-bundles away from the graphs $\Gamma_{x_i} \se X_S$ of the sections $x_i$.
	\end{itemize}
	As above, we use the shorthand notation 
	$ \left( \Ec_m \overset{\ph_m}{\underset{\Gamma_{\underline{x}_m}}{\dashrightarrow}} \Ec_{m-1} \overset{\ph_{m-1}}{\underset{\Gamma_{\underline{x}_{m-1}}}{\dashrightarrow}} \ldots \overset{\ph_{1}}{\underset{\Gamma_{\underline{x}_1}}{\dashrightarrow}} \Ec_0 \right) $
	for points of $\Hecke_{\Gc, X}^{I_\bullet}$.
\end{definition}
The projection to $\Ec_j$ defines a map $p_j \colon \Hecke_{\Gc, X}^{I_\bullet} \to \Bun_{\Gc}$ for all $0 \leq j \leq m$. 

\begin{definition}[{\cite[Definition 3.3]{Rad2019a}}]
	The moduli space of shtukas $\Sht_{\Gc, X}^{I_\bullet}$ is defined by the Cartesian diagram
	\begin{center}
		\begin{tikzcd}
			\Sht_{\Gc, X}^{I_\bullet} \arrow[r]\arrow[d] & \Hecke_{\Gc, X}^{I_\bullet} \arrow{d}{(p_m, p_0)} \\
			\Bun_\Gc \arrow{r}{(\id, \sigma^\ast)} & \Bun_\Gc \times \Bun_\Gc.
		\end{tikzcd}
	\end{center}
	More explicitly, $\Sht_{\Gc, X}^{I_\bullet}$ is the stack fibred in groupoids over $\Fq$ whose $R$-valued points for an $\Fq$-algebra $R$ are given by tuples 
	$ ((x_i)_{i \in I}, (\Ec_j)_{j = 0, \ldots, m}, (\ph_j)_{j = 1, \ldots, m}, \theta),$
	where
	\begin{itemize}
		\item $x_i \in X(R)$ are points on $X$ called the \emph{characteristic sections} (or \emph{legs}) for $i \in I$,
		\item $\Ec_j \in \Bun_\Gc(R)$ are $\Gc$-bundles on $X_R$ for $0 \leq j \leq m$,
		\item $\ph_j \colon \Ec_{j}|_{X_R \setminus \bigcup_{i \in I_j} \Gamma_{x_i}} \xrightarrow{ \cong } \Ec_{j-1}|_{X_R \setminus \bigcup_{i \in I_j} \Gamma_{x_i}}$ are isomorphisms of $\Gc$-bundles away from the graphs $\Gamma_{x_i} \se X_R$ of the sections $x_i$, and
		\item $\theta \colon  \Ec_0 \xrightarrow{\cong}  \sigma^\ast\Ec_m$ is an isomorphism of $\Gc$-bundles on $X_R$.
	\end{itemize}
	We denote a point of $\Sht_{\Gc, X}^{I_\bullet}$ by 
	$ \left(   \Ec_m \overset{\ph_m}{\underset{\Gamma_{\underline{x}_m}}{\dashrightarrow}} \Ec_{m-1} \overset{\ph_{m-1}}{\underset{\Gamma_{\underline{x}_{m-1}}}{\dashrightarrow}} \ldots \overset{\ph_{1}}{\underset{\Gamma_{\underline{x}_1}}{\dashrightarrow}} \Ec_0 \xrightarrow{\theta} \sigma^\ast \Ec_m \right) .$
\end{definition}
The projection to the characteristic sections defines a map $\Sht_{\Gc, X}^{I_\bullet} \to X^I$. 
By \cite[Theorem 3.15]{Rad2019a}, $\Sht_{\Gc, X}^{I_\bullet}$ is an ind-Deligne Mumford stack that is separated and locally of ind-finite type over $X^I$.

Let $I_\bullet'$ be a second partition of $I$ that is finer than $I_\bullet$. The forgetful map 
$ \Sht_{\Gc,X}^{I'_\bullet} \to \Sht_{\Gc, X}^{I_\bullet}$
is an isomorphism over the complement $U$ of all diagonals in $X^I$
by the argument in \cite[Lemma A.8 a)]{Varshavsky2004}.
Let us fix a closed point $\underline{y}  \in X^I$. We denote by 
$  \Sht^{I_\bullet}_{\Gc, \underline{y}} = \Sht_{\Gc, X}^{I_\bullet} \times_{X^I} \Spf(\Oc_{\underline{y}})$
the restriction of the moduli space of shtukas to the formal neighbourhood $\Spf\left( \Oc_{\underline{y}} \right)$ of $\underline{y}$. By the previous observation, this stack does not depend on the choice of the partition $I_\bullet$ of $I$ whenever the $y_i$ are pairwise different. 
Usually, when the index set $I$, its partition $I_\bullet$ and the curve $X$ are understood from the context, we drop them from the notation and denote
$ \Sht_{\Gc} = \Sht^{I_\bullet}_{\Gc, X}.$

\begin{definition}
	Let $\Zc$ be a bound in the sense of Definition \ref{defnGlobalBound}. Let 
	$$ \underline{\Ec} = ((x_i)_{i \in I}, (\Ec_j)_{j = 0, \ldots, m}, (\ph_j)_{j = 1, \ldots, m}, \theta) \in (\Sht_{\Gc, X}^{I_\bullet} \times_{X^I} \widetilde X_\Zc^I)(R).$$
	By \cite[Lemma 3.4]{Haines2020a}, there exists an \'etale cover $\Spec(R') \to \Spec(R)$ such that 
	$\hat{\Gamma}_{\underline x_{R'}}\to\hat{\Gamma}_{\underline x}$ trivializes $\Ec_0|_{\hat{\Gamma}_{\underline x}}$.
	Fixing a trivialisation $\alpha \colon \Ec_0|_{\hat\Gamma_{\underline x_{R'}}} \xrightarrow{\cong} \Gc|_{\hat \Gamma_{\underline x_{R'}}}$
	defines a point 
	$$g =   \left( \Ec_m|_{\hat\Gamma_{\underline x_{R'}}} \overset{\ph_m}{\underset{\Gamma_{\underline{x}_m}}{\dashrightarrow}} \Ec_{m-1}|_{\hat\Gamma_{\underline x_{R'}}} \overset{\ph_{m-1}}{\underset{\Gamma_{\underline{x}_{m-1}}}{\dashrightarrow}} \ldots \overset{\ph_{1}}{\underset{\Gamma_{\underline{x}_1}}{\dashrightarrow}} \Ec_0|_{\hat\Gamma_{\underline x_{R'}}} \overset{\varepsilon}{\rightarrow}  \Gc|_{\hat\Gamma_{\underline x_{R'}}}  \right) \in (\Gr_{\Gc, X}^{I_\bullet} \times_{X^I} \widetilde X_\Zc^I)(R'),$$ using the description of the Beilinson-Drinfeld affine Grassmannian of Remark \ref{remBLDesc}. 
	We say that $\underline{\Ec}$ is \emph{bounded by} $\Zc$ if for some (or by Lemma \ref{lemBoundAllReps} equivalently for all) $I$-tuples of finite extensions $\underline F'$ such that $\Zc$ admits a representative over $\underline F'$, the map 
	$$ g \times_{\widetilde X_{\Zc}^I} \id_{\tilde{X}^I_{F'}} \colon \Spec(R') \times_{\widetilde X_\Zc^I} \tilde{X}^I_{\underline F'} \to (\Gr_{\Gc, X}^{I_\bullet} \times_{X^I} \tilde X_{\underline F'}^I) $$
	factors through $Z_{\underline F'}$.	
	As $\Zc$ is invariant under the left $\Lc_{X^I}^+\Gc$-action, the definition is independent of the choices of the \'etale cover $R \to R'$ and the trivialisation $\alpha$.

	We denote the moduli stack of shtukas bounded by $\Zc$ by $\Sht_\Gc^\Zc = \Sht^{I_\bullet, \Zc}_{\Gc, X}$. It is a Deligne-Mumford stack defined over $\widetilde X_\Zc^I$ and a closed substack of $\Sht_{\Gc} \times_{X^I} \widetilde X_\Zc^I$.
	When the bound $\Zc$ is given by Beilinson-Drinfeld Schubert varieties corresponding to an $I$-tuple $\underline{\mu}$ of conjugacy classes of geometric cocharacters of $G$, we write $\Sht_\Gc^{\leq \underline{\mu}} = \Sht_{\Gc, X}^{I_\bullet, \leq \underline \mu} = \Sht_{\Gc, X}^{I_\bullet, \Zc}$. 
\end{definition}

\subsection{Local shtukas with multiple legs and local-global compatibility}

We again consider the local setting.
Let $\Oc$ be the ring of integers of a local field in characteristic $p$ with residue field $k$ a finite extension of $\Fq$. 
The choice of a uniformiser $\varpi$ of $\Oc$ induces an isomorphism $\Oc \cong k\dsq{\varpi}$.
Let $\Gc \to \Spec \Oc$ be a smooth affine $\Oc$-group scheme, let $G$ be its generic fibre.
As before, let $I$ be a finite set and let $I_\bullet = (I_1, \ldots, I_m)$ be a partition of $I$.
We set $\Oc_I = \Oc \hat{\otimes}_k \ldots \hat{\otimes}_k \Oc$, i.e., $\Oc_I \cong k\dsq{\varpi_i}_{i \in I}$.
We denote by $\Nilp_{\Oc_I}$ the category of $\Oc_I$-algebras $R$ such that the images $\zeta_i$ of $\varpi_i$ in $R$ under the structure map $\Oc_I \to R$ are nilpotent. 

Similarly to the global loop group defined in the previous section, the (local) loop group $LG$ of $G$ is defined as the functor on $k$-algebras
$$ R \mapsto G(R \dbr{\varpi}).$$
Analogously, the positive loop group $L^+\Gc$ is defined as the functor on $k$-algebras
$$ R \mapsto \Gc(R \dsq{\varpi}).$$
The functor $LG$ is representable by an ind-affine group ind-scheme while $L^+\Gc$ is representable by an affine group scheme over $k$. 

By \cite[Proposition 2.4]{Rad2015}, the category of $L^+\Gc$-torsors over a $k$-algebra $R$ is equivalent to the category of formal $\Gc$-torsors on $\Spf(R \dsq{\varpi})$. By a slight abuse of notation, we denote by $\Ec\left[\frac 1 \varpi\right] = \Ec \times^{L^+\Gc} LG$ the associated $LG$-torsor of an $L^+\Gc$-torsor $\Ec$. 
Note that for $R \in \Nilp_{\Oc_I}$, we have $R\dbr{\varpi} = R \dsq{\varpi} \left[\frac{1}{\varpi - \zeta_i} \right]$ for all $i \in I$ as the $\zeta_i$ are nilpotent. 

\begin{definition}
	A \emph{local shtuka} (with $I$-legs) over $R \in \Nilp_{\Oc_I}$ is given by a tuple 
	$((\Ec_j)_{j = 0, \ldots, m}, (\ph_j)_{j = 1, \ldots, m}, \theta),$ where 
	\begin{itemize}
		\item the $\Ec_j$ are $L^+\Gc$-torsors on $R$,
		\item the $\ph_j \colon \Ec_j[\frac{1}{\varpi}] \xrightarrow{\cong} \Ec_{j-1}[\frac{1}{\varpi}]$ are isomorphisms of the associated $L\Gc$-torsors,
		\item and $\theta \colon \Ec_0 \xrightarrow{\cong} \sigma^\ast \Ec_m$ is an isomorphism of $L^+\Gc$-torsors on $R$.
	\end{itemize} 
\end{definition}
In a similar fashion as in the global setting, we define bounded local shtukas.
\begin{definition}
	Let $\Zc$ be a (local) bound in the sense of Definition \ref{defnGlobalBound} and let $\underline{y}$ be a closed point of $\widetilde{X}^I_\Zc$. Let $ \underline{\Ec} = (\Ec, \ph)$ be a local shtuka over $R \in \Nilp_{\Oc_{\underline{y}}}$. 
	Let $R \to R'$ be an fppf-cover that trivialises $\Ec_0$ and choose a trivialisation $\alpha$ of $\Ec_0$ over $R'$. 
	Using the equivalence of $L^+\Gc$-torsors and formal $\Gc$-torsors of \cite[Proposition 2.4]{Rad2015}, 
	$   \left( \Ec_m \overset{\ph_m}{\dashrightarrow} \Ec_{m-1} \overset{\ph_{m-1}}{\dashrightarrow} \ldots \overset{\ph_{1}}{\dashrightarrow} \Ec_0 \overset{\alpha}{\rightarrow}  \Gc  \right)$
	defines a point $g \in \left(\Gr_{\Gc, \Oc}^{I_\bullet} \hat \times_{\Spec(\Oc_I)} \Spf(\Oc_I)\right)(R')$.
	We say that $\underline{\Ec}$ \emph{is bounded by $\Zc$} if for some (or equivalently for all) representatives $Z_{\underline F'}$ of $\Zc$ over an $I$-tuple of finite extensions $\underline F'$ such that $\Zc$ admits a representative over $\underline F'$, the map
	$$ g \times \id_{\Oc_I'} \colon \Spec(R') \times_{\Spf(\Oc_{\underline{y}})} \Spf(\Oc_{\underline y'}) \to (\Gr_{\Gc, X}^{I_\bullet} \hat \times_{\Spec(\Oc_I)} \Spf(\Oc_{\underline{y}'}))$$ factors through $Z_{\underline F'} \times_{\tilde X_{\underline{F}'}^I} \Spf(\Oc_{\underline{y}'}) $ for all closed points $\underline{y}'$ of $\widetilde{X}^I_{\underline{F}'}$ lying over $\underline{y}$. As $Z_{\underline F'}$ is invariant under the left $\Lc_{\widetilde{X}^I_{\underline{F}'}}^+\Gc$-action, the definition is independent of the choice of the trivialisation $\alpha$.
\end{definition}

\begin{remark}
	In the literature on local shtukas the Frobenius-linear map is often defined to be an isomorphism $\theta' \colon \sigma^*\Ec_0 \cong \Ec_m$, compare for example \cite{Hartl2011, Rad2015}. In particular, for a local shtuka $\underline \Ec$ in our sense inverting all maps yields a local shtuka 
	$$\underline \Ec^{-1} = \left( \Ec_m \overset{\ph^{-1}_m}{\dashleftarrow} \Ec_{m-1} \overset{\ph^{-1}_{m-1}}{\dashleftarrow} \ldots \overset{\ph^{-1}_{1}}{\dashleftarrow} \Ec_0 \overset{\theta^{-1}}{\leftarrow}  \sigma^*\Ec_m  \right)$$
	in the more classical sense (and vice versa).
	Moreover, $\underline{\Ec}$ is bounded by some bound $\Zc$ if and only if $\underline{\Ec}^{-1}$ is bounded by the inverse bound $\Zc^{-1}$ as defined in \cite[Lemma 2.12]{Hartl2017}.
\end{remark}

We explain how to associate to a global shtuka an $I$-tuple of local shtukas. 
At points away from the diagonal, this is contained in \cite{Rad2015}.
We extend their construction to also include points on the diagonal using local shtukas with multiple legs.

We consider the global setting from above. 
Let $X$ be a smooth, projective, and geometrically connected curve over $\Fq$, and let $\Gc \to X$ be a smooth affine group scheme.
We use the notation following \cite[Remark 5.2]{Rad2015}. Let $y \in X$ be a closed point. We denote by $\Oc_y$ the completed local ring at $y$, and by $\mf_y \se \Oc_y$ and $\F_y = \Oc_y/\mf_y$ its maximal ideal with uniformiser $\varpi_y$ and residue field, respectively. 
Moreover, let $d = [\F_y \colon \Fq]$ be the degree of $y$. 
Then the $\mf$-adic completion of $\Oc_y \otimes_\Fq R$ factors as 
$$\Oc_y {\widehat{\otimes}}_\Fq R = (\F_y \otimes_\Fq R)\dsq{\varpi_y} = \prod_{\ell \in \Z/d\Z} \Oc_y {\widehat \otimes}_{\F_y} R = \prod_{\ell \in \Z/d\Z} R\dsq{\varpi_y},$$ 
where the $\ell$-th factor is defined by the ideal $\af_\ell = \langle a \times 1 - 1 \otimes x(a)^{q^\ell} \colon  a \in \kappa_y \rangle$ in $\Oc_y {\widehat \otimes}_\Fq R$. The Frobenius $\sigma$ cyclically permutes the factors.

Let us fix an $I$-tuple of closed points $\underline y = (y_i)_{i \in I}$ of $X$.
We use the notation of Section \ref{secLocGlobBounds} and say $y \in \underline{y}$ if $y = y_i$ for some $i \in I$. Moreover, let  $I^y = \{ i \in I \colon y_i = y\}$ and let $I_\bullet^y = (I_1^y, \ldots, I_m^y)$. 
Let $\underline{\Ec} \in \Sht_{\Gc, \underline{y}}(R)$. 
By the observation above, the $y$-adic completion of $\Ec_j$ decomposes as 
$$ \Ec_j {\widehat{\times}}_{X_R} \Spf(\Oc_{y} {\widehat{\otimes}}_{\Fq} R) = \coprod_{\ell \in \Z/d\Z} \Ec_j {\widehat{\times}}_{X_R} \Spf(R\dsq{\varpi_{y}}) =: \coprod_{\ell \in \Z/d\Z} \Ec_{j,y}^{(\ell)}.$$
Hence, each component $\Ec_{j,y}^{(\ell)} = \Ec_j {\widehat{\times}}_{X_R} V(\af_\ell) $ is a formal $\hat \Gc_{y} = \Gc \times_X \Spf(\Oc_{y})$-torsor over $R$ and corresponds to a $\Lc^+ \Gc_y$-torsor $\widehat{ \Ec_{j,y} }^{(\ell)}$ over $R$ by \cite[Proposition 2.4]{Rad2015}. 

Since $\sigma$ permutes the components cyclically, we find $\widehat{\Ec_{j,y}}^{(\ell)} = (\sigma^{(\ell)})^{\ast} \widehat{\Ec_{j,y}}^{(0)}$.
By \cite[Lemma 5.3]{Rad2015}, the component $V(\af_0)$ is also given by the completion of $X_R$ along $\Gamma_{x_i}$ for any $i \in I_j$. 
Hence, 
$$\ph_j|_{V(\af_\ell)} \colon (\sigma^{(\ell)})^{\ast} \widehat{\Ec_{j,y}}^{(0)} = \widehat{\Ec_{j,y}}^{(\ell)} \xrightarrow{\cong} \widehat{\Ec_{j-1,y}}^{(\ell)} = (\sigma^{(\ell)})^{\ast} \widehat{\Ec_{j-1,y}}^{(0)}$$ 
is an isomorphism for all $\ell \neq 0$. We define the isomorphism  $\tilde \theta \colon  \widehat{\Ec_{0,y}}^{(0)} \xrightarrow{\cong} (\sigma^{(d)})^\ast\widehat{\Ec_{m,y}}^{(0)}$ by 
$$\tilde{\theta} =  \theta|_{V(\af_{d-1})}   \circ \ph_1|_{V(\af_{d-1})} \circ \ldots \circ \ph_m|_{V(\af_{2})} \circ \theta|_{V(\af_1)} \circ \ph_1|_{V(\af_1)} \circ \ldots  \circ \ph_m|_{V(\af_1)} \circ \theta|_{V(\af_0)}.$$
Then, 
$\widehat{\underline{\Ec}_{y}} = \left( \left( \widehat{\Ec_{j,y}}^{(0)} \right)_{0 \leq j \leq m}, \left(\ph_j|_{V(\af_0)} \right)_{1 \leq j \leq m}, \tilde \theta \right) $ is a local $\Gc_{\Oc_{y}}$-shtuka over $R$.
\begin{definition}
	Let $\underline{y}$ be an $I$-tuple of closed points of $X$.
	The \emph{global-to-local functor} associates to a global shtuka $\underline{\Ec} \in  \Sht_{\Gc, \underline{y}}(R)$ a tuple of local $\Gc_{\Oc_y}$-shtukas for $y \in \underline{y}$ given by 
	$ \widehat{\underline\Ec_{\underline{y}}} =  (\widehat{\underline\Ec_{y}})_{y \in \underline{y}}.$
	Then, $\widehat{\underline\Ec_{y}}$ is called the \emph{local shtuka} of $\underline{\Ec}$ at $y$. 
\end{definition}

\begin{remark}
	\label{remEtLocSht}
	In a similar fashion, for a closed point $y$ of $X$ we can associate to a global shtuka $\underline{\Ec} = ((x_i),(\Ec_j), (\ph_j), \theta) \in \Sht_\Gc|_{(X\setminus\{y\})^I}(R)$ with characteristic sections away from $y$ an \'etale local shtuka at $y$ by \cite[Remark 5.6]{Rad2015} as follows.
	We denote by $\widetilde{\Gc}_{y} = \mr{Res}_{\Oc_y/\F_q\dsq{\varpi_y}} \Gc_{\Oc_y}$. Then  $\widetilde{\Gc}_{y}$ is a smooth affine group scheme over $\Fq \dsq{\varpi_y}$.
	The \emph{\'etale local $\widetilde{\Gc}_{y}$-shtuka} associated to $\Ec$ is then given by $\underline{\widetilde{\Ec}}_y = (\widetilde{\Ec}_y, \ph)$ with
	$\widetilde{\Ec}_y = \mr{Res}_{(\Oc_y \widehat{\otimes}_\Fq R)/R\dsq{\varpi_y}}(\Ec_0 \widehat{\times}_{X_R} \Spf(\Oc_y \widehat{\otimes}_\Fq R))$ and $\ph = \theta \circ \ph_1 \circ \ldots \circ \ph_m$. 
	Note that $\ph$ is an isomorphism by assumption.
\end{remark}

The global-to-local functor is compatible with our notion of bounds in the following sense.
Let us fix a global bound $\Zc= \prod_{y \in \underline{y}} \Zc^{(I^y)}$ in $\Gr_{\Gc, X}^{I_\bullet}$ and for each $y \in \underline{y}$ a closed point $y'$ of $\widetilde{X}^{I^y}_{\Zc^{(I^y)}}$ over $\Delta(y) \in X^{I^y}$. 
We denote by $\Sht^{\Zc}_{\Gc,  \underline{y}'} =  \Sht_{\Gc}^{ \Zc} \times_{\widetilde X_\Zc^I} \prod_{y \in \underline{y}} \Spf(\Oc_{y'})$.
\begin{proposition}
	\label{propBoundLocGlobComp}
	Let $\Zc = \prod_{y \in \underline{y}} \Zc^{(I^y)}$ be a global bound and let $(\Zc_{y'})_{y \in \underline{y}}$ be the associated $I$-tuple of local bounds at $\underline{y}$ as in Definition \ref{defITupLocBounds}.
A global shtuka $\underline{\Ec} \in \Sht_{\Gc, \underline{y}'} (R) $ is bounded by $\Zc$ if and only if for all $y \in \underline{y}$ its associated local shtuka $\widehat{\underline{\Ec}_{y}}$ at $y$ is bounded by $\Zc_{y'}$.
\end{proposition}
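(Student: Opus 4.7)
The plan is to verify the equivalence $y$-by-$y$ by combining Beauville--Laszlo descent with the local-global comparison of Beilinson--Drinfeld Grassmannians. By \cite[Lemma 3.4]{Haines2020a}, after passing to an \'etale cover $R \to R'$ we may fix a trivialisation $\alpha$ of $\Ec_0|_{\hat\Gamma_{\underline{x}_{R'}}}$ and obtain a point $g \in (\Gr_{\Gc, X}^{I_\bullet} \times_{X^I} \widetilde X_\Zc^I)(R')$. Boundedness by $\Zc$ involves only the modifications $\ph_j$ and the trivialisation $\alpha$; the Frobenius $\theta$ does not enter, and the same is true on the local side.

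Since the characteristic sections $\underline{x}$ specialise to $\underline{y}$, distinct closed points $y \in \underline{y}$ give disjoint formal neighborhoods in $X_{R'}$, so $\hat\Gamma_{\underline{x}_{R'}} = \coprod_{y \in \underline{y}} \hat\Gamma_{\underline{x}^y_{R'}}$ with $\underline{x}^y = (x_i)_{i \in I^y}$. By the Beauville--Laszlo description of Remark \ref{remBLDesc}, the data $(\Ec_j,\ph_j,\alpha)$ decompose accordingly and $g$ corresponds to a tuple $(g_y)_{y \in \underline{y}}$ with $g_y \in \Gr_{\Gc, X}^{I^y_\bullet}(R')$ at legs $\underline{x}^y$. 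Using the isomorphism $\Gr_{\Gc_{\Oc_y}, \Oc_y} \cong \Gr_{\Gc, X} \times_X \Spec(\Oc_y)$ of \cite{Richarz2021a} together with the decomposition $\Oc_y \widehat\otimes_\Fq R' = \prod_{\ell \in \Z/d\Z} R'\dsq{\varpi_y}$ recalled just before the proposition, the graphs $\Gamma_{x_i}$ lie in $V(\af_0)$. The equivalence between $L^+\Gc$-torsors and formal $\Gc$-torsors of \cite[Proposition 2.4]{Rad2015} then identifies $g_y$ with the point in the local Beilinson--Drinfeld Grassmannian attached to $\widehat{\underline{\Ec}_y}$ (with $\alpha|_{V(\af_0)}$ as trivialisation).

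It remains to match the bounds. By Construction \ref{consBoundSingleLeg}, the chosen representative $Z_{\underline F'}$ of $\Zc = \prod_{y} \Zc^{(I^y)}$ is the scheme-theoretic closure of $\prod_{y} Z^{(I^y)}_{\underline F'}$ away from all diagonals. Since the formal neighborhood of $\underline{y}'$ in $\widetilde X^I_{\underline F'}$ lies in the complement of the diagonals separating the distinct $y \in \underline{y}$, and since $\Lc^+_{X^I}\Gc \to X^I$ is flat so that taking scheme-theoretic images commutes with the relevant base change, $Z_{\underline F'}$ restricts over $\Spf(\Oc_{\underline{y}'})$ to the fibre product of the $Z^{(I^y)}_{\underline F'}|_{\Spf(\Oc_{y'})}$, which by Definition \ref{defITupLocBounds} are representatives of the local bounds $\Zc_{y'}$. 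Therefore $g$ factors through $Z_{\underline F'}$ if and only if each $g_y$ factors through $Z_{y'}$, which via the second paragraph is the desired equivalence. The main subtlety is this last compatibility of a scheme-theoretic closure with the restriction to the formal neighborhood of $\underline{y}'$; it is precisely to justify it that we use flatness of $\Lc^+_{X^I}\Gc$ together with the fact that the formal neighborhood in question misses the offending diagonals.
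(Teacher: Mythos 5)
Your argument is correct and follows essentially the same route as the paper: trivialise $\Ec_0$ on the formal neighbourhood of the legs, use Beauville--Laszlo to split the resulting point of the Beilinson--Drinfeld Grassmannian into the product of the local points attached to the $\widehat{\underline{\Ec}_y}$, and observe that the representative $Z_{\underline F'}$ restricts over the formal neighbourhood of $\underline y'$ (which avoids the diagonals separating distinct $y\in\underline y$) to the product of the representatives of the local bounds, which is how the $\Zc_{y'}$ are defined. The only cosmetic imprecision is that the closure in Construction \ref{consBoundSingleLeg} for $\Zc=\prod_y\Zc^{(I^y)}$ is taken away from the diagonals separating the groups $I^y$ rather than away from \emph{all} diagonals, and the flatness that matters in the last step is that of the completion map (restriction to an open and a flat base change), not that of $\Lc^+_{X^I}\Gc$.
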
 
\begin{proof}
Let $R' \to R$ be an \'etale cover trivialising $\Ec|_{\hat\Gamma_{\underline x}}$ and let us fix a trivialisation $\alpha \colon \Ec|_{\hat\Gamma_{\underline{x}_{R'}}} \xrightarrow{\cong} \Gc|_{\hat \Gamma_{\underline x_{R'}}}$. 
The pair $(\underline{\Ec}, \alpha)$ defines a point $g \in \left(\Gr_{\Gc, X}^{I_\bullet} \times_{X^I} \widetilde X_{\Zc}^I\right)(R')$ that even lies in $\left(\Gr_{\Gc, X}^{I_\bullet} \times_{X^I} \prod_{y \in \underline{y}} \Spf(\Oc_{y'})\right)(R')$ by assumption on $R$.  
Restricting $\alpha$ yields trivialisations of $\widehat{\underline{\Ec}_y}$ for all $y \in \underline{y}$ and hence points 
$\widehat{g_y} = (\widehat{\underline{\Ec}_y}, \widehat{\alpha_y}) \in \left(\Gr_{\Gc, \Oc_y}^{I^y_\bullet} \times_{\Spec(\Oc_y)^I} \Spf(\Oc_{y'})\right)(R')$.
By Remark \ref{remBLDesc} we find $g  = \prod_{y \in \underline{y}} \widehat{g_y}$. 

Let us fix an $I$-tuple of finite extensions $\underline F'$ of $F$ such that $\Zc$ admits a representative over $\underline F'$. Let us fix for every $y \in \underline{y}$ a closed point $y_{F'}$ of $\widetilde{X}^{I^y}_{\underline F'}$ lying over $y'$.  
By construction, the representative $Z_{\underline{F}'}$ of $\Zc$ factors over $\Spf(\Oc_{\underline{y}_{\underline{F'}}})$ as a product of the corresponding representatives of the local bounds.
The assertion follows. 
	\end{proof}

	\subsection{A Serre-Tate theorem}
	The classical Serre-Tate theorem relates the deformation theory of an abelian variety in characteristic $p$ to the deformation theory of its $p$-divisible group. \cite[Theorem 5.10]{Rad2015} proves an analogue of the Serre-Tate theorem for $\Gc$-shtukas.
	We generalise the Serre-Tate theorem for shtukas to the iterated case using local shtukas with multiple legs.
	The local-global compatibility then also gives a bounded version of the Serre-Tate theorem.
	
	We follow \cite[Section 5]{Rad2015} and start by analysing quasi-isogenies of local and global shtukas.
	\begin{definition}
\begin{enumerate}
	\item 
	Let $\underline{\Ec}$
	and $\underline{\Ec}'$
	be two global shtukas over an $\Fq$-algebra $R$ with the same characteristic sections. 
	A \emph{quasi-isogeny} $f \colon \underline{\Ec} \dashrightarrow \underline{\Ec}'$ is a tuple of isomorphisms $f_j \colon \Ec_j|_{X_R \setminus D_R} \xrightarrow{\cong}  \Ec'_j|_{X_R \setminus D_R}$ for all $0 \leq j \leq m$ away from some effective divisor $D$ of $X$, such that $f_{j-1} \circ \ph_j = \ph'_j \circ f_{j}$ for all $1 \leq j \leq m$ and $f_m \circ \theta = \theta' \circ \sigma^\ast f_0 $.
	\item 
	Let $\underline{\Ec}$ and $\underline{\Ec}'$ be two local shtukas over an $\Fq$-algebra $R$.
	A \emph{quasi-isogeny} $f \colon \underline{\Ec} \dashrightarrow \underline{\Ec}'$ is a tuple of isomorphisms $f_j \colon \Ec_j\left[\frac{1}{\varpi} \right] \xrightarrow{\cong}  \Ec'_j\left[\frac{1}{\varpi} \right]$ of the associated $LG$-torsors for all $0 \leq j \leq m$, such that $f_{j-1} \circ \ph_j = \ph'_j \circ f_{j}$ for all $1 \leq j \leq m$ and $\sigma^*f_m \circ \theta = \theta' \circ  f_0 $.
\end{enumerate}
In either case, we denote by $\mr{QIsog}(\underline{\Ec}, \underline{\Ec}')$ the set of quasi-isogenies between $\underline{\Ec}$ and $\underline{\Ec}'$. 
\end{definition}

\begin{example}
Let $\underline{y}$ be an $I$-tuple of closed points of $X$ and let $\underline{\Ec} \in \Sht_{\Gc, \underline{y}}(R)$ for some $R \in \Nilp_{\Oc_{\underline{y}}}$. Let $D = \sum_{y \in \underline{y}} [y]$ be the corresponding Cartier divisor on $X$. The maps $ (  \sigma^*\ph_j \ldots\circ\sigma^*\ph_m\circ \theta \circ \ph_1 \circ \ldots \ph_{j-1} )|_{X_R \setminus D_R} \colon  \Ec_j|_{X_R \setminus D_R} \xrightarrow{\cong} \sigma^\ast\Ec_j|_{X_R \setminus D_R}$ for $0 \leq j \leq m$ define a quasi-isogeny $\ph \colon\underline{\Ec} \dashrightarrow  \sigma^\ast \underline{\Ec}$. 
\label{exampleQuasiIsogFrob}
\end{example}

We also have the following analogues of \cite[Proposition 5.7, Remark 5.8, Proposition 5.9]{Rad2015} in our setting. Their proofs carry over essentially verbatim. For the convenience of the reader we include sketches of the proofs in our setting.
\begin{proposition}
Let $\underline{y}$ be an $I$-tuple of closed points of $X$, and let $\underline{\Ec} \in \Sht_{\Gc, \underline{y}}(R)$ for some $R \in \Nilp_{\Oc_{\underline{y}}}$.
Let $f_y \colon \underline{\Ec'}_y \dashrightarrow \widehat{ \underline \Ec_{y} }$ be a quasi-isogeny of local shtukas at $y$.
There exists a global shtuka $\underline{\Ec}' \in \Sht_{\Gc,  \underline{y}}(R)$ and a quasi-isogeny $f \colon \underline{\Ec}' \dashrightarrow \underline{\Ec}$ of global shtukas such that $\widehat{ \underline \Ec_{y}' } = \underline \Ec'_{y}$ and $\widehat{f_y} = f_y$. 
\label{propPullQIsog}
\end{proposition}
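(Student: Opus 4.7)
The plan is to adapt the proof of \cite[Proposition 5.9]{Rad2015} to the iterated setting: we will construct $\underline{\Ec}'$ and the quasi-isogeny $f$ by Beauville-Laszlo gluing along the formal neighbourhood of $\Gamma_y$ in $X_R$, using the local quasi-isogeny $f_y$ itself as the gluing datum on the punctured formal neighbourhood. The key point is that a quasi-isogeny of local shtukas is, by definition, only defined after inverting $\varpi_y$, which is exactly the datum needed for Beauville-Laszlo gluing on the punctured formal neighbourhood.

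Concretely, for each $j \in \{0, \ldots, m\}$ I would construct a $\Gc$-bundle $\Ec'_j$ on $X_R$ as follows. Away from $\Gamma_y$, set $\Ec'_j := \Ec_j$. Near $\Gamma_y$, use the decomposition $\Oc_y \widehat{\otimes}_{\Fq} R = \prod_{\ell \in \Z/d\Z} R\dsq{\varpi_y}$ to define a formal $\hat\Gc_y$-torsor componentwise: on the $\ell = 0$-factor take the formal torsor corresponding to the $L^+\Gc_y$-torsor $\Ec'_{j,y}$ via \cite[Proposition 2.4]{Rad2015}, and on the $\ell \neq 0$-factors take the corresponding Frobenius pullbacks, mirroring the cyclic pattern by which $\widehat{\Ec_{j,y}}$ sits inside the $y$-adic completion of $\Ec_j$. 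The gluing to $\Ec_j|_{X_R \setminus \Gamma_y}$ over the punctured formal neighbourhood is given componentwise by the $j$-th component of $f_y$ on the $\ell = 0$-factor (viewed as an isomorphism of associated $L\Gc$-torsors restricted to the punctured neighbourhood) and by its Frobenius pullbacks on the remaining factors.

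Next, we transfer the shtuka structure. By \cite[Lemma 5.3]{Rad2015}, every graph $\Gamma_{x_i}$ lies in the $\ell = 0$-factor of the $y$-adic completion of $X_R$, so on the $\ell \neq 0$-factors each $\ph_j$ is already an isomorphism, and we transport it via the Frobenius pullbacks of $f_y$ to define $\ph'_j$ there. On $X_R \setminus \Gamma_y$ we take $\ph'_j := \ph_j$ under the identification $\Ec'_j = \Ec_j$, and on the $\ell = 0$-factor the $\ph'_j$ are the components of $\underline{\Ec}'_y$. The Frobenius $\theta'$ is assembled from $\theta$ away from $y$ and from the Frobenius $\tilde\theta'$ of $\underline{\Ec}'_y$ (playing the role of the ``return'' isomorphism across the cycle) near $y$. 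The desired quasi-isogeny $f \colon \underline{\Ec}' \dashrightarrow \underline{\Ec}$ is then defined to be the identity on $X_R \setminus \Gamma_y$, so it has pole divisor $D = [y]$, and by construction satisfies $\widehat{\underline{\Ec}_y'} = \underline{\Ec}'_y$ and $\widehat{f_y} = f_y$.

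The main technical point to verify will be the compatibility of $f$ with the new shtuka structure, namely $f_{j-1} \circ \ph'_j = \ph_j \circ f_j$ and $f_m \circ \theta' = \theta \circ \sigma^\ast f_0$. Both relations are trivial on $X_R \setminus \Gamma_y$ (where $f$ is the identity) and, on the formal neighbourhood of $\Gamma_y$, reduce factor-by-factor to the corresponding relations for the quasi-isogeny $f_y$ of local shtukas, which hold by hypothesis. The main obstacle is the careful bookkeeping around the cyclic decomposition into the $\ell \neq 0$-factors and ensuring that the Frobenius twists fit together coherently across the gluing; this is, however, precisely the bookkeeping already carried out in the construction of the global-to-local functor, which we simply run in reverse.
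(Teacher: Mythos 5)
Your proposal is correct and takes essentially the same route as the paper's proof: the paper also constructs each $\Ec'_j$ by Beauville--Laszlo gluing of $\Ec_j|_{(X\setminus\{y\})_R}$ with the formal torsor attached to $\Ec'_{y,j}$ along $f_{y,j}$, citing \cite[Lemma 5.1, Remark 5.5]{Rad2015} for exactly the cyclic $\Z/d\Z$-bookkeeping you spell out, and then transports the $\ph_j$ and $\theta$ to obtain $\ph'_j$, $\theta'$ and the quasi-isogeny $f$ extending the identity off $\Gamma_y$.
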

\begin{proof}
For $0 \leq j \leq m$ we construct a $\Gc$-bundle $\Ec'_j$ on $X_R$ by glueing $\Ec_j|_{(X\setminus \{y\})_R}$ and $\Ec'_{y, j}$ along $f_{y,j}$ using \cite[Lemma 5.1]{Rad2015} together with the argument of \cite[Remark 5.5]{Rad2015}. The map $f_{y, j}$ glues with the identity on $\Ec_j|_{(X\setminus \{y\})_R}$ to an isomorphism $f_j \colon \Ec'_j|_{(X\setminus\{y\})_R} \xrightarrow{\cong} \Ec_j|_{(X\setminus \{y \})_R}$. 
Moreover, the maps $\ph_j$ glue with $\ph'_{y,j}$ to isomorphisms $\ph'_j \colon \Ec_j'|_{X_R \setminus \Gamma_{\underline{x}_j}} \xrightarrow{\cong}\Ec_{j-1}'|_{X_R \setminus \Gamma_{\underline{x}_j}}$ (again using \cite[Remark 5.5]{Rad2015}). Similarly, we obtain $\theta'$. Then $\underline{\Ec}' = ((x_i)_{i \in I}, (\Ec'_j), (\ph'_j), \theta')$ and $f = (f_j) \colon \underline{\Ec}' \dashrightarrow \underline{\Ec}$ clearly satisfies all required properties.
\end{proof}
\begin{proposition}[Rigidity of quasi-isogenies]
Let $R \in \Nilp_{\Oc_{\underline{y}}}$ and let $I \se R$ be a nilpotent ideal with quotient $\bar{R} = R/I$. Let $s \colon \Spec(\bar{R}) \hookrightarrow \Spec(R)$ the corresponding closed immersion on spectra.
Moreover, let $\underline{\Ec}, \underline{\Ec}' \in  \Sht_{\Gc, \underline{y}}(R)$ two global shtukas over $R$ with the same characteristic sections.
Then the pullback along $s$ induces a bijection
$$ \mr{QIsog} (\underline{\Ec}, \underline{\Ec}') \xrightarrow{\cong} \mr{QIsog} (s^\ast\underline{\Ec}, s^\ast\underline{\Ec}'). $$
\label{propRigQIsog}
\end{proposition}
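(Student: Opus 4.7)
My plan is to mimic the Frobenius-descent argument used for the local (non-iterated) case in \cite[Proposition 5.9]{Rad2015}. First I would reduce to the case $I^q = 0$ by d\'evissage: since $I$ is nilpotent, the filtration $I \supseteq I^q \supseteq I^{q^2} \supseteq \cdots \supseteq 0$ terminates after finitely many steps, so treating one step at a time reduces us to the assumption $I^q = 0$. Under this assumption the absolute $q$-Frobenius on $R$ kills $I$ and hence factors uniquely as $\sigma_R \colon R \twoheadrightarrow \bar R \xrightarrow{\tilde\sigma} R$ with $s \circ \tilde\sigma = \sigma_{\bar R}$. Consequently, for any quasi-coherent sheaf (or $\Gc$-torsor) $\Fc$ on $X_R$ there is a canonical identification $\sigma^* \Fc \cong \tilde\sigma^*(s^* \Fc)$, compatible with morphisms defined on suitable opens.

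For injectivity, suppose $f_1, f_2 \colon \underline\Ec \dashrightarrow \underline\Ec'$ are two quasi-isogenies over $R$ with $s^* f_1 = s^* f_2$. The composite $h := f_2^{-1} \circ f_1$ is an automorphism of $\underline\Ec$ (a tuple of isomorphisms $h_j$ of each $\Ec_j$ away from a common effective divisor) whose reduction $s^* h$ is the identity. The Frobenius compatibility $h_m \circ \theta = \theta \circ \sigma^* h_0$, combined with $\sigma^* h_0 = \tilde\sigma^*(s^* h_0) = \id$, forces $h_m = \id$. The compatibilities $h_{j-1} \circ \ph_j = \ph_j \circ h_j$ then give $h_j = \ph_j^{-1} \circ h_{j-1} \circ \ph_j = \id$ for $j = m-1, m-2, \dots, 0$ by descending induction. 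Hence $f_1 = f_2$.

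For surjectivity, given $\bar f \colon s^* \underline\Ec \dashrightarrow s^* \underline\Ec'$ I set $g := \tilde\sigma^* \bar f \colon \sigma^* \underline\Ec \dashrightarrow \sigma^* \underline\Ec'$, which is a quasi-isogeny over $R$ by the identification above. I then define
$$ f_0 := (\theta')^{-1} \circ g_m \circ \theta, \qquad f_j := (\ph'_1 \circ \cdots \circ \ph'_j)^{-1} \circ f_0 \circ (\ph_1 \circ \cdots \circ \ph_j) \quad (1 \le j \le m), $$
each defined away from the union of the characteristic divisor with the divisor where $\bar f$ has poles. All compatibilities of $f = (f_j)$ with the $\ph_j, \ph'_j, \theta, \theta'$ follow automatically from the corresponding compatibilities of $\bar f$, and $s^* f = \bar f$ is verified by reducing the defining formulae modulo $I$ using $s \circ \tilde\sigma = \sigma_{\bar R}$.

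The main obstacle I anticipate is keeping the combinatorics of the iterated partition $I_\bullet$ and the various domains of definition of $\ph_j, \theta, f_j$ and of $\bar f$ in order; conceptually the statement is a straightforward extension of the one-leg local case, but one has to check that the formulae for the $f_j$ glue to a globally defined quasi-isogeny on $X_R \setminus D_R$ for a single effective divisor $D$, rather than only formally at each leg. Alternatively, one can deduce the result from the local rigidity \cite[Proposition 5.9]{Rad2015} via the global-to-local functor and Proposition \ref{propPullQIsog}, using that a quasi-isogeny of global shtukas is reconstructed from its restrictions to formal neighborhoods of its poles together with its generic data on $X_R$, and that on the complement of its polar divisor the Frobenius structures $\theta,\theta'$ make the same argument go through.
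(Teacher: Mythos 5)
Your proposal is correct and follows essentially the same route as the paper's proof: reduce to $I^q=0$, factor the Frobenius through $\Spec(\bar R)$, and observe that $f_0$ (equivalently $f_m$) is determined by the reduction via $\theta,\theta'$ while the remaining $f_j$ are propagated through the $\ph_j$, exactly as in \cite[Proposition 5.9]{Rad2015}. The only cosmetic issue is the direction of the induction in your injectivity step (your formula $h_j=\ph_j^{-1}\circ h_{j-1}\circ\ph_j$ recovers $h_j$ from $h_{j-1}$, so one should start from the leg fixed by the Frobenius compatibility and proceed accordingly), which does not affect the argument.
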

Note that rigidity of quasi-isogenies for local shtukas is shown in \cite[Proposition 2.11]{Rad2015} and \cite[Proposition 2.3]{LiHuerta2023} in the iterated setting.
\begin{proof}
It suffices to treat the case $I^q = 0$, in which case the Frobenius $\sigma \colon \Spec(R) \to \Spec(R)$ factors as $\Spec(R) \xrightarrow{\sigma'} \Spec(\bar{R}) \xrightarrow{s} \Spec(R)$. We can then recover $f_j \colon \Ec_j \dashrightarrow \Ec_j'$ from $f_{j-1}$ (respectively $\sigma^\ast f_m = (\sigma')^\ast s^\ast f_m$) and $\ph_j, \ph_j'$ (respectively $\theta$ and $\theta'$) as in the proof of \cite[Proposition 5.9]{Rad2015}.
\end{proof}

We keep the notation from the previous proposition. Let $\underline{\bar\Ec}$ be a global shtuka over $\bar{R}$.  We denote by $\mr{Defo}_R(\underline{\bar{\Ec}})$ the category of pairs $(\underline{\Ec}, \alpha)$ of a global shtuka $\underline{\Ec}$ over $R$ together with an isomorphism $\alpha \colon s^\ast \underline{\Ec} \xrightarrow{\cong} \underline{\bar\Ec}$. 
Similarly, we define  $\mr{Defo}_R(\underline{\bar{\Ec}})$  for a local shtuka $\underline{\bar{\Ec}}$ over $\bar{R}$. Note that by rigidity of quasi-isogenies the Hom-sets in  $\mr{Defo}(\underline{\bar{\Ec}})$ are trivial (both in the local and in the global setting).
Let $\Zc$ be a global (respectively local) bound such that $\underline{\bar{\Ec}}$ is bounded by $\Zc$. 
Let $\mr{Defo}^\Zc_R(\underline{\bar{\Ec}})$ be the full subcategory of $\mr{Defo}_R(\underline{\bar{\Ec}})$ where $\underline{\Ec}$ is bounded by $\Zc$ as well.
\begin{proposition}[Serre-Tate Theorem for iterated (bounded) shtukas, compare {\cite[Theorem 5.10]{Rad2015}}]
Let $\underline{y}$ be an $I$-tuple of closed points of $X$, let $R \in \Nilp_{\Oc_{\underline{y}}}$ together with a nilpotent ideal $I \se R$ and quotient $\bar{R} = R/I$. Let $\underline{\bar{\Ec}} \in  \Sht_{\Gc, \underline{y}} (\bar{R})$.
The global-to-local functor induces an equivalence of categories
$$\mr{Defo}_R(\underline{\overline{\Ec}}) \to \prod_{y \in \underline{y}} \mr{Defo}_R(\widehat{\underline{\overline{\Ec}}_y})  $$ 
Moreover, let $\Zc = \prod_{y \in \underline{y}} \Zc^{(I^y)}$ be a global bound and for each $y \in \underline{y}$ let $y'$ be a closed point of $\widetilde X^{I^y}_\Zc$ lying over $\Delta(y)$.  
Let $\underline{\overline{\Ec}} \in  \Sht^{\Zc}_{\Gc, \underline{y}'} (\bar{R})$. Then the global-to-local functor induces an equivalence of categories
$$ \mr{Defo}^\Zc_R(\underline{\overline{\Ec}}) \to \prod_{y \in \underline{y}} \mr{Defo}^{\Zc_{y'}}_R(\widehat{\underline{\overline{\Ec}}_y}) .$$
\label{propSerreTate}
\end{proposition}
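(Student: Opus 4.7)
The plan is to follow the strategy of \cite[Theorem 5.10]{Rad2015}, which treats the non-iterated case, and extend it to the iterated setting. The bounded version then follows immediately from the unbounded one combined with Proposition \ref{propBoundLocGlobComp}, because a global deformation is bounded by $\Zc$ precisely when each of its local shtukas at $y$ is bounded by $\Zc_{y'}$; hence the equivalence for the unbounded categories restricts to one between the bounded subcategories.

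For fully faithfulness, I would observe that rigidity of quasi-isogenies (Proposition \ref{propRigQIsog} globally and the local analogue \cite[Proposition 2.11]{Rad2015} in the iterated setting) implies that in both deformation categories every Hom-set has at most one element: any morphism of deformations is in particular a quasi-isogeny whose $s^\ast$-reduction is prescribed to be $(\alpha')^{-1} \circ \alpha$. It thus suffices to show that a global isomorphism of deformations exists if and only if compatible local isomorphisms exist at every $y \in \underline{y}$. The ``only if'' direction is just the application of the global-to-local functor, and for the converse one glues the local isomorphisms with the identity on $(X \setminus \underline{y})_R$ via \cite[Lemma 5.1]{Rad2015}, in the spirit of the proof of Proposition \ref{propPullQIsog}.

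For essential surjectivity, given compatible local deformations $\underline{\Ec}^{(y)}$ of $\widehat{\underline{\overline{\Ec}}_y}$ for each $y \in \underline{y}$, the task is to assemble a global deformation $\underline{\Ec}$ of $\underline{\overline{\Ec}}$ whose local component at $y$ recovers $\underline{\Ec}^{(y)}$. Since $R \in \Nilp_{\Oc_{\underline y}}$, the graphs $\Gamma_{x_i}$ are topologically supported on $\{y_i\} \times |\Spec R|$ and therefore disjoint from $(X \setminus \underline{y})_R$; hence on that complement all modification maps $\ph_j$ are isomorphisms and the iterated structure collapses to a single Frobenius isomorphism $\Psi = \theta \circ \ph_1 \circ \cdots \circ \ph_m \colon \Ec_m \xrightarrow{\cong} \sigma^\ast \Ec_m$ as in Example \ref{exampleQuasiIsogFrob}. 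Such ``étale'' Frobenius data admits a unique lift across the nilpotent thickening $\Spec R \to \Spec \bar{R}$, so that $\underline{\overline{\Ec}}|_{(X \setminus \underline{y})_{\bar R}}$ admits a unique lift to $R$. One then glues this unique étale lift with the given local deformations $\underline{\Ec}^{(y)}$ along the formal neighbourhoods $\Spf(\Oc_y \widehat{\otimes}_{\Fq} R)$ using \cite[Lemma 5.1]{Rad2015} to produce global bundles $\Ec_j$ together with the $\ph_j$ and $\theta$, and checks that the resulting global object is in $\mr{Defo}_R(\underline{\overline{\Ec}})$ with the prescribed local components.

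The main technical subtlety compared to the non-iterated case treated in \cite[Theorem 5.10]{Rad2015} is the bookkeeping required to lift the entire chain $(\Ec_j, \ph_j)_{0 \leq j \leq m}$ coherently rather than a single bundle with Frobenius. Conceptually no new input is needed, because each $\ph_j$ is an isomorphism on $(X \setminus \underline{y})_R$ so that the étale data on the complement of $\underline{y}$ is controlled by $\Ec_m$ and the composite Frobenius $\Psi$ alone; however one must verify that the gluings of the intermediate bundles along the $\Spf(\Oc_y \widehat{\otimes}_{\Fq} R)$ are simultaneously compatible with all $\ph_j$ and $\theta$, which is the main place where care is required.
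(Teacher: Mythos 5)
Your overall strategy is the paper's: reduce the bounded statement to the unbounded one via Proposition \ref{propBoundLocGlobComp}, use the factorisation of Frobenius through the nilpotent thickening, and assemble the inverse functor by Beauville--Laszlo gluing as in Proposition \ref{propPullQIsog}. The full-faithfulness argument via rigidity of quasi-isogenies is fine, and your observation that the data on $(X\setminus\underline{y})_R$ is \'etale and hence rigid is correct (after reducing to $I^q=0$, the composite $\Psi$ forces $\Ec_m \cong \sigma^*\Ec_m = (\sigma')^*s^*\Ec_m$ there, which is exactly the restriction of $(\sigma')^*\underline{\overline{\Ec}}$).

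There is, however, a genuine gap in your essential surjectivity step: to glue the \'etale lift on $(X\setminus\underline{y})_R$ with the given local deformations $\underline{\Ec}^{(y)}$ via \cite[Lemma 5.1]{Rad2015}, you must supply an identification on the punctured formal neighbourhoods that is compatible with all the $\ph_j$ and with $\theta$, and you never construct it. Over $\bar{R}$ this identification is given by the $\alpha'_y$; over $R$ it has to be produced, and this is where the real work lies. The paper's proof does this by setting $\underline{\overline{\Ec}}^{\sigma'}=(\sigma')^*\underline{\overline{\Ec}}$ (a shtuka over $R$ whose restriction away from $\underline{y}$ is precisely your unique \'etale lift), forming the quasi-isogeny $\widehat{\ph_y}\circ\alpha'_y\colon s^*\underline{\Ec}^{(y)}\dashrightarrow s^*\widehat{\underline{\overline{\Ec}}^{\sigma'}_y}$ using the Frobenius quasi-isogeny of Example \ref{exampleQuasiIsogFrob}, and lifting it to a quasi-isogeny $\tilde{f}_y\colon\underline{\Ec}^{(y)}\dashrightarrow\widehat{\underline{\overline{\Ec}}^{\sigma'}_y}$ over $R$ by rigidity of quasi-isogenies for \emph{local} iterated shtukas (\cite[Proposition 2.3]{LiHuerta2023}); the $\tilde{f}_y$ are then exactly the gluing data fed into Proposition \ref{propPullQIsog}, and one checks afterwards that the resulting $\alpha=\ph^{-1}\circ s^*f$ is an isomorphism because its local components recover the $\alpha'_y$. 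You invoke rigidity only in the full-faithfulness discussion; to close the gap you need to invoke it again here, applied to the local quasi-isogenies into the $R$-object $(\sigma')^*\underline{\overline{\Ec}}$, to obtain the Beauville--Laszlo gluing isomorphisms.
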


\begin{proof}
The second statement clearly follows from the first and Proposition \ref{propBoundLocGlobComp}. We prove the first following the proof of \cite[Theorem 5.10]{Rad2015} by constructing an inverse functor.
It suffices to treat the case $I^q = 0$. Then $\sigma$ factors as $\sigma = s \circ \sigma'$.
We set $\underline{\overline{\Ec}}^{\sigma'} = (\sigma')^\ast \underline{\overline{\Ec}}$, hence $s^\ast \underline{\overline{\Ec}}^{\sigma'} = \sigma^\ast \underline{\overline{\Ec}}$ and the quasi-isogeny constructed in Example \ref{exampleQuasiIsogFrob} gives a quasi-isogeny $\ph \colon \underline{\overline\Ec} \dashrightarrow s^\ast \underline{\overline{\Ec}}^{\sigma'}$. 	
Let $(\underline{\Ec}'_y, \alpha'_y)_{y \in \underline{y}}$ be an object of $ \prod_{y \in \underline{y}} \mr{Defo}_R(\widehat{\underline{\overline{\Ec}}_y})$. Then $\widehat{\ph_y} \circ \alpha'_y \colon s^\ast \underline{\Ec}_y' \dashrightarrow s^\ast \widehat{\underline{\overline{\Ec}}^{\sigma'}_y}$ is a quasi-isogeny for all $y \in \underline{y}$ and lifts by rigidity of quasi-isogenies for local shtukas \cite[Proposition 2.3]{LiHuerta2023} to a quasi-isogeny $\tilde{f}_y \colon  \underline{\Ec}_y' \dashrightarrow \widehat{\underline{\overline{\Ec}}^{\sigma'}_y}$.
We define the global shtuka $\underline{\Ec}$ over $R$ as the pullback of $\underline{\overline{\Ec}}^{\sigma'}$ along the $\tilde{f}_y$ as constructed in Proposition \ref{propPullQIsog}, it comes equipped with a quasi-isogeny $f\colon \underline \Ec \to \underline{\overline{\Ec}}^{\sigma'}$ which is an isomorphism outside the characteristic sections.
Moreover, we consider the quasi-isogeny $\alpha = \ph^{-1} \circ s^*f \colon s^* \underline{\Ec} \to \underline{\overline{\Ec}}$. It is an isomorphism outside the characteristic sections by construction and at the characteristic sections we find $\widehat{\alpha_y} = \widehat{\ph_y}^{-1} \circ \widehat{s^*f_y} = \alpha_y'$. Hence, $\alpha$ is an isomorphism and $(\underline{\Ec}, \alpha) \in  \mr{Defo}_R(\underline{\overline{\Ec}})$.
As in \cite[Theorem 5.10]{Rad2015} one checks that the constructions are indeed inverse functors.
\end{proof}

\subsection{Local model theorems}

Local model theorems for moduli spaces of shtukas were shown under varying hypotheses in \cite[Theorem 2.20]{Varshavsky2004} (for $\Gc$ constant split reductive), \cite[Proposition 2.8]{Lafforgue2018}, and \cite[Theorem 3.2.1]{Rad2017} (for smooth affine $\Gc$), compare also \cite[Theorem 4.19]{Feng2020} for a different argument. We slightly generalise their results.
While the results are certainly known to the experts, they do not seem to be contained in the literature in this generality. 

Let $\Bun_{\Gc, \hat{\Gamma}^I}$ be the stack such that for a $\Fq$-algebra $R$ an $R$ point is given by the data $(\underline{x}, \Ec, \alpha)$, where $\underline{x} \in X^I(R)$ is an $I$-tuple of sections of $X$, $\Ec \in \Bun_\Gc(R)$ a $\Gc$-bundle on $X_R$, and $\alpha \colon \Ec|_{\hat{\Gamma}_{\underline{x}}} \xrightarrow{\cong} \Gc$ is a trivialisation of $\Ec$ on $\hat{\Gamma}_{\underline{x}}$. 
For a positive integer $r$ the stack $\Bun_{\Gc, r\Gamma^I}$ is defined to parametrise the same data as $\Bun_{\Gc, \hat{\Gamma}^I}$, but where $\alpha$ is a trivialisation of $\Ec$ on $\Gamma^{(r)}_{\underline{x}}$ where $\Gamma^{(r)}_{\underline{x}}$ is the $r$-th infinitesimal neighbourhood of the union of the graphs $\Gamma_{x_i}$ as introduced in Section \ref{subsecBDGrass}.

Then, $\Bun_{\Gc, \hat{\Gamma}^I}$ admits a $\Lc^+_{X^I} \Gc$-action (given by the operation on $\alpha$), and the forgetful map $\Bun_{\Gc, \hat{\Gamma}^I} \to X^I \times \Bun_\Gc$ is a $\Lc^+_{X^I} \Gc$-torsor.
Similarly, $\Bun_{\Gc, r\Gamma^I} \to X^I \times \Bun_\Gc$ is a $\Lc^{(r)}_{X^I}\Gc$-torsor. 

We also need the following versions of both the Hecke stack and the moduli space of shtukas.
We define $\widetilde{\Hecke}_\Gc = \widetilde{\Hecke}_{\Gc, X}^{I_\bullet} = \Hecke_\Gc \times_{X^I \times \Bun_{\Gc}} \Bun_{\Gc, \hat{\Gamma}^I}$, it parametrises the same data as $\Hecke_{\Gc}$ together with a trivialisation of $\Ec_0$ over $\hat{\Gamma}_{\underline{x}}$.
Let $\Zc$ be a bound such that the $\Lc^+_{X^I} \Gc$-action on $\Zc$ factors through $\Lc^{(r)}_{X^I} \Gc$. In this situation, let $\widetilde{\Hecke}^{(r), \Zc}_{\Gc}= \widetilde{\Hecke}^{(r), I_\bullet, \Zc}_{\Gc, X} = \Hecke^\Zc_\Gc \times_{\widetilde X_\Zc^I \times \Bun_\Gc} \widetilde X_\Zc^I \times \Bun_{\Gc, r\Gamma^I}$. 
In a similar fashion we define $\widetilde{\Sht}_{\Gc}$ and $\widetilde{\Sht}_\Gc^{(r), \Zc}$. 
\begin{lemma}
The map 
\begin{align*}
	\widetilde{\Hecke}_{\Gc}  & \xrightarrow{\cong} \Gr_{\Gc} \times_{X^I} \Bun_{\Gc, \hat \Gamma_I} \\
	\left( \Ec_m \overset{\ph_m}{\dashrightarrow} \Ec_{m-1} \overset{\ph_{m-1}}{\dashrightarrow} \ldots \overset{\ph_{1}}{\dashrightarrow} \Ec_0 \overset{\alpha}{\rightarrow}  \Gc  \right) & \mapsto \left( \Ec_m|_{\hat{\Gamma}_{\underline{x}}} \overset{\ph_m}{\dashrightarrow} \Ec_{m-1}|_{\hat{\Gamma}_{\underline{x}}} \overset{\ph_{m-1}}{\dashrightarrow} \ldots \overset{\ph_{1}}{\dashrightarrow} \Ec_0|_{\hat{\Gamma}_{\underline{x}}} \overset{\alpha}{\rightarrow}  \Gc  \right), (\underline{x}, \Ec_0, \alpha )
\end{align*}
is an isomorphism and for a bound $\Zc$ restricts to an isomorphism 
$$ \widetilde\Hecke_\Gc^{(r), \Zc} \times_{\widetilde X_\Zc^I} \widetilde{X}^I_{\underline F'} \xrightarrow{\cong} Z_{\underline F'} \times_{\widetilde{X}_{\underline F'}^I} \left( \Bun_{\Gc, r \Gamma_I} \times_{X^I} \widetilde{X}^I_{\underline F'} \right),$$
where $\underline F'$ is an $I$-tuple of finite extensions of $\underline F$ such that $\Zc$ admits a representative $Z_{\underline F'}$ over $\underline F'$.
\label{lemLocModHecke}
\end{lemma}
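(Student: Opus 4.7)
The plan is to construct the isomorphism and its inverse by Beauville--Laszlo descent as in Remark \ref{remBLDesc}, then check that the bound and the $r$-th neighbourhood trivialisations are compatible.

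\textbf{Forward map.} Given a point
\[
	\left( (x_i)_{i\in I}, (\Ec_j)_{0\leq j \leq m}, (\ph_j)_{1\leq j\leq m}, \alpha \right) \in \widetilde{\Hecke}_\Gc(R),
\]
I restrict every $\Ec_j$ and every $\ph_j$ to $\hat{\Gamma}_{\underline x}$. This produces a chain of $\Gc$-torsors on $\hat{\Gamma}_{\underline x}$ with modifications over $\hat{\Gamma}_{\underline x}\setminus\Gamma_{\underline x_j}$ and a trivialisation $\alpha$ of $\Ec_0|_{\hat{\Gamma}_{\underline x}}$. By the Beauville--Laszlo description of $\Gr_{\Gc,X}^{I_\bullet}$ recalled in Remark \ref{remBLDesc}, this datum is an $R$-point of $\Gr_{\Gc,X}^{I_\bullet}$, while $(\underline x, \Ec_0, \alpha)$ is an $R$-point of $\Bun_{\Gc, \hat{\Gamma}^I}$, and together they give a point of the target.

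\textbf{Inverse map.} Conversely, a point of $\Gr_{\Gc,X}^{I_\bullet} \times_{X^I} \Bun_{\Gc,\hat{\Gamma}^I}$ consists of $(\underline x, \Ec_0, \alpha)$ with $\alpha$ a trivialisation of $\Ec_0|_{\hat{\Gamma}_{\underline x}}$, plus (by Remark \ref{remBLDesc}) a chain of $\Gc$-torsors $(\widetilde{\Ec}_j)$ on $\hat{\Gamma}_{\underline x}$ with modifications $\widetilde{\ph}_j$ on $\hat{\Gamma}_{\underline x}\setminus\Gamma_{\underline x_j}$ and with $\widetilde{\Ec}_0$ identified with $\Ec_0|_{\hat{\Gamma}_{\underline x}}$ via $\alpha$. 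I construct the $\Ec_j$ inductively: having built $\Ec_{j-1}$, I define $\Ec_j$ by Beauville--Laszlo glueing of $\Ec_{j-1}|_{X_R\setminus\Gamma_{\underline x_j}}$ and $\widetilde{\Ec}_j$ along $\hat{\Gamma}_{\underline x}\setminus\Gamma_{\underline x_j}$ via $\widetilde{\ph}_j$; the glueing datum produces $\ph_j\colon \Ec_j|_{X_R\setminus\Gamma_{\underline x_j}}\xrightarrow{\cong}\Ec_{j-1}|_{X_R\setminus\Gamma_{\underline x_j}}$. Full faithfulness of Beauville--Laszlo descent shows that the two constructions are mutually inverse.

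\textbf{Bounded version.} By the definition of $\Hecke_\Gc^\Zc$, the forward map restricts to an isomorphism between $\widetilde{\Hecke}_\Gc^{\Zc}\times_{\widetilde X_\Zc^I}\widetilde X_{\underline F'}^I$ and $Z_{\underline F'}\times_{\widetilde X_{\underline F'}^I}(\Bun_{\Gc,\hat{\Gamma}^I}\times_{X^I}\widetilde X_{\underline F'}^I)$. The forgetful map $\Bun_{\Gc,\hat{\Gamma}^I}\to\Bun_{\Gc,r\Gamma^I}$ is a torsor under the smooth affine kernel $K := \ker(\Lc_{X^I}^+\Gc\to \Lc_{X^I}^{(r)}\Gc)$, cf.\ (\ref{eqnPosLoopGroup}). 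By hypothesis, the $\Lc_{X^I}^+\Gc$-action on $Z_{\underline F'}$ factors through $\Lc_{X^I}^{(r)}\Gc$, so $K$ acts trivially on $Z_{\underline F'}$. Passing to the $K$-quotient on both sides therefore yields the claimed isomorphism with $\Bun_{\Gc,r\Gamma^I}$ in place of $\Bun_{\Gc,\hat{\Gamma}^I}$. The main point to verify is that the chain $(\Ec_j,\ph_j)$ produced by the inverse construction is independent of the extension of the $r$-th neighbourhood trivialisation to all of $\hat{\Gamma}_{\underline x}$, chosen locally on an \'etale cover trivialising the $K$-torsor; this reduces to the observation that modifying $\alpha$ by an element of $K$ changes the associated Grassmannian point by a $K$-action, and hence does not alter the glueing data up to unique isomorphism because $K$ acts trivially on $Z_{\underline F'}$.
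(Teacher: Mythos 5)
Your proof is correct and follows the same route as the paper: the inverse is constructed by Beauville--Laszlo glueing, and the bounded version is unwound from the definition of $\widetilde{\Hecke}^{(r),\Zc}_{\Gc}$. Your extra care in the last paragraph --- checking that the triviality of the action of $\ker(\Lc^+_{X^I}\Gc \to \Lc^{(r)}_{X^I}\Gc)$ on $Z_{\underline F'}$ makes the passage from formal to $r$-th order trivialisations well defined --- is exactly the point the paper's one-line justification leaves implicit (only note that this kernel is pro-smooth rather than smooth of finite type, which does not affect the argument).
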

Note that in \cite{Lafforgue2018} the former isomorphism is used to define the bounded version of the Hecke stack. 
\begin{proof}
The inverse to the map is constructed by Beauville-Laszlo glueing. The bounded version follows directly from the definition of $\Hecke_\Gc^\Zc$. 	
\end{proof}

We get the following generalisation of \cite[Proposition 2.8]{Lafforgue2018} (and \cite[Proposition 2.9]{Lafforgue2018}). Let $\Zc$ be a bound such that the $\Lc^+_{X^I} \Gc$-action factors through $\Lc^{(r)}_{X^I} \Gc$ with representative $Z_{\underline F'}$ defined over $\underline F'$.
Then by construction we get a local model roof
\begin{center}
\begin{tikzcd}
	& \widetilde{\Sht}^{(r), \Zc}_{\Gc} \arrow[dr] \arrow[dl] \times_{\widetilde X_\Zc^I} \widetilde{X}^I_{\underline F'} & \\
	\Sht^\Zc_\Gc \times_{\widetilde X_\Zc^I} \widetilde{X}^I_{\underline F'} & & Z_{\underline F'}, 
\end{tikzcd}
\end{center}
where the maps are given by the obvious projections.
\begin{proposition}
Let $\Zc$ be a bound and $\underline F'$ an $I$-tuple of finite extensions of $\underline F$ such that $\Zc$ admits a representative over $\underline F'$.
The map $\Sht_{\Gc}^\Zc \times_{\widetilde X_\Zc^I} \widetilde{X}^I_{\underline F'} \to \left[ \Lc^{(r)}_{X^I}\Gc \backslash Z_{\underline F'} \right]$ is a smooth map of Artin stacks.

Moreover, assume that $\Zc$ arises as from bounds $\Zc^{(I_j)}$ defined in $\Gr_{\Gc, X}^{(I_j)}$ via Construction \ref{consBoundSingleLeg}. 
Then the induced map
$\Sht_{\Gc}^\Zc \times_{\widetilde X_\Zc^I} \widetilde{X}^I_{\underline F'} \to \prod_{j = 1}^m \left[ \Lc^{(r)}_{X^{I_j}}\Gc \backslash Z^{(I_j)}_{\underline F'} \right]$ is a smooth map of Artin stacks.
\label{thmLocModThm}
\end{proposition}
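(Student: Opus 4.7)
The plan is the standard local model roof, adapted from \cite[Proposition~2.8]{Lafforgue2018} and \cite[Theorem~3.2.1]{Rad2017} to the bounds of Definition~\ref{defnGlobalBound}. I take as apex of the roof $\widetilde{\Sht}^{(r),\Zc}_\Gc \times_{\widetilde X^I_\Zc} \widetilde X^I_{\underline F'}$. The forgetful map $\widetilde{\Sht}^{(r),\Zc}_\Gc \to \Sht^\Zc_\Gc$ is a torsor under $\Lc^{(r)}_{X^I}\Gc \times_{X^I}\widetilde X^I_{\underline F'}$, which is smooth because $\Gc$ is smooth (cf.\ \cite[Lemma~2.11]{Richarz2016}). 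Combining Lemma~\ref{lemLocModHecke} with the Cartesian definition of $\Sht$ furnishes the second leg of the roof, namely a projection
\[ q \colon \widetilde{\Sht}^{(r),\Zc}_\Gc \times_{\widetilde X^I_\Zc} \widetilde X^I_{\underline F'} \to Z_{\underline F'} \]
that reads off the chain of modifications from a shtuka together with the trivialisation of $\Ec_0$ on $\Gamma^{(r)}_{\underline x}$. Smoothness of the map in the statement then follows from smoothness of $q$ by faithfully flat descent along the torsor.

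To verify the infinitesimal lifting criterion for $q$, fix a square-zero $\Fq$-algebra thickening $R \hookrightarrow R'$ with kernel $J$, a point $(z,(\Ec_0,\alpha),\theta)$ of $\widetilde{\Sht}^{(r),\Zc}_\Gc(R)$, and a lift $\tilde z \in Z_{\underline F'}(R')$. Smoothness of $\Bun_{\Gc, r\Gamma^I}$ (itself a $\Lc^{(r)}_{X^I}\Gc$-torsor over the smooth stack $\Bun_\Gc \times X^I$) produces a lift $(\tilde\Ec_0,\tilde\alpha)$ of $(\Ec_0,\alpha)$; together with $\tilde z$ and Lemma~\ref{lemLocModHecke} this determines a lift $\tilde\Ec_m$ of $\Ec_m$. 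In characteristic $p$ the Frobenius endomorphism $\sigma_{R'}$ annihilates the square-zero ideal $J$, so $\sigma^*\tilde\Ec_m$ depends only on $\Ec_m$ and hence canonically lifts $\sigma^*\Ec_m$. The isomorphism $\theta$ therefore extends, after possibly adjusting $(\tilde\Ec_0,\tilde\alpha)$ within its torsor of lifts, to an isomorphism $\tilde\theta \colon \tilde\Ec_0 \cong \sigma^*\tilde\Ec_m$, establishing smoothness of $q$.

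For the second assertion I iterate the roof. Let $\widetilde{\Sht}^{(r),\mathrm{all},\Zc}_\Gc$ be the variant with trivialisations of $\Ec_j$ on $\Gamma^{(r)}_{\underline x}$ for all $0 \leq j \leq m-1$; its forgetful map to $\widetilde{\Sht}^{(r),\Zc}_\Gc$ is a torsor under $(\Lc^{(r)}_{X^I}\Gc)^{m-1}$ and hence smooth. Applying Lemma~\ref{lemLocModHecke} once at each step of the modification chain gives a natural projection $\widetilde{\Sht}^{(r),\mathrm{all},\Zc}_\Gc \to \prod_{j=1}^m Z^{(I_j)}_{\underline F'}$ that records each single-step modification using the provided trivialisation of $\Ec_{j-1}$; Lemma~\ref{lemBoundCons}\,(2) ensures the image indeed lies in the product of the factor bounds. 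The Frobenius-lifting argument above applies verbatim to show this projection is smooth, and descending along the torsor actions produces smoothness of the asserted map $\Sht^\Zc_\Gc \times_{\widetilde X^I_\Zc}\widetilde X^I_{\underline F'} \to \prod_j [\Lc^{(r)}_{X^{I_j}}\Gc\backslash Z^{(I_j)}_{\underline F'}]$.

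The main obstacle is the Frobenius-lifting step: one must match the $J$-linear torsor of lifts of $(\Ec_0,\alpha)$ with the $J$-linear obstruction to lifting $\theta$, both controlled by cocycles with values in the Lie algebra of $\Gc$. While this is routine over square-zero thickenings in characteristic $p$, some care is required to ensure the lifting remains compatible with the chosen representative $Z_{\underline F'}$ of the equivalence class $\Zc$, since our bounds need not admit a representative over their reflex scheme in the general setting of Definition~\ref{defnGlobalBound}.
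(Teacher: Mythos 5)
Your argument is correct and is essentially the paper's proof: the paper simply asserts that the proofs of \cite[Propositions 2.8 and 2.9]{Lafforgue2018} carry over, and what you have written out — the roof through $\widetilde{\Sht}^{(r),\Zc}_{\Gc}$, the identification of the second leg via Lemma \ref{lemLocModHecke}, and formal smoothness of that leg because Frobenius kills square-zero ideals so that $\sigma^*\tilde{\Ec}_m$ is canonically determined by the $R$-point — is exactly Lafforgue's argument, including the iterated version for the product of local models. Your closing worry about representatives over the reflex scheme is moot here, since the whole statement is already base-changed to $\widetilde{X}^I_{\underline F'}$, where the fixed representative $Z_{\underline F'}$ lives and boundedness is checked against it directly (Lemma \ref{lemBoundAllReps}).
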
 
\begin{proof}
The proofs of \cite[Proposition 2.8 and 2.9]{Lafforgue2018} still apply to our situation.
\end{proof}

We also get an \'etale-local version of the local model theorem, generalising \cite[Theorem 3.2.1]{Rad2017}, \cite[Proposition 2.11]{Lafforgue2018} and \cite[Theorem 2.20]{Varshavsky2004}.
\begin{proposition}
Let $\Zc$ be a bound and $\underline F'$ a finite extension of $F$ such that $\Zc$ admits a representative over $\underline F'$. Then $\Sht_\Gc^\Zc \times_{\widetilde X_\Zc^I} \widetilde{X}^I_{\underline F'}$ and $Z_{\underline F'}$ are \'etale-locally isomorphic.
More precisely, there is an \'etale cover $U \to \Sht_{\Gc}^\Zc \times_{\widetilde X_\Zc^I} \widetilde{X}^I_{\underline F'}$ admitting an \'etale map $U \to Z_{\underline F'}$ over $\widetilde{X}^I_{\underline F'}$. 

Moreover, assume that $\Zc$ arises as from bounds $\Zc^{(I_j)}$ defined in $\Gr_{\Gc, X}^{(I_j)}$ via Construction \ref{consBoundSingleLeg}. Then $\Sht_\Gc^\Zc \times_{\widetilde X_\Zc^I} \widetilde{X}^I_{\underline F'}$ is \'etale-locally isomorphic to $\prod_{j = 1}^m Z^{(I_j)}_{\underline F'}$.

\end{proposition}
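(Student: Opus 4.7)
The plan is to deduce the statement from the smooth local model roof provided by Proposition \ref{thmLocModThm}. Set $V := \widetilde{\Sht}^{(r),\Zc}_{\Gc} \times_{\widetilde X^I_\Zc} \widetilde X^I_{\underline F'}$, so that there are projections
$$ \Sht^\Zc_\Gc \times_{\widetilde X^I_\Zc} \widetilde X^I_{\underline F'} \xleftarrow{p} V \xrightarrow{q} Z_{\underline F'}, $$
in which $p$ is a right $\Lc^{(r)}_{X^I}\Gc$-torsor (inherited from the torsor $\Bun_{\Gc, r\Gamma^I} \to X^I \times \Bun_\Gc$) and, by Proposition \ref{thmLocModThm}, $q$ is smooth of the same relative dimension $d := \dim \Lc^{(r)}_{X^I}\Gc$.

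Since $\Lc^{(r)}_{X^I}\Gc \to X^I$ is a smooth affine group scheme, the torsor $p$ admits sections étale-locally on the base. I would pick an étale cover $U' \to \Sht^\Zc_\Gc \times_{\widetilde X^I_\Zc} \widetilde X^I_{\underline F'}$ together with a section $s \colon U' \to V$ of $p|_{U'}$, and take $q \circ s \colon U' \to Z_{\underline F'}$ as the candidate étale map. A dimension count shows immediately that $q \circ s$ has relative dimension zero, so the remaining content is to verify that it is in fact smooth, equivalently étale.

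The crux is transversality of the section. Both arms of the roof are $\Lc^{(r)}_{X^I}\Gc$-equivariant (the action on $Z_{\underline F'}$ being the left loop-group action coming from the definition of a bound), so the section $s$ can be modified by translating with a section of $\Lc^{(r)}_{X^I}\Gc$ over $U'$. On tangent spaces, this allows the image of $ds$ to sweep out all complements to $\ker dq$ inside $TV$, and a generic translate makes $q \circ s$ étale at every geometric point. Making this precise is the main obstacle I anticipate: the argument amounts to showing that the locus of "good" translates inside $\Lc^{(r)}_{X^I}\Gc \times U'$ is open and dense, and then passing to a further étale refinement of $U'$ on which such a translate admits an honest section. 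Once this is done, flatness combined with relative dimension zero yields étaleness.

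For the "Moreover" statement, the same strategy applies verbatim to the refined roof $\Sht^\Zc_\Gc \leftarrow \widetilde{\Sht}^{(r),\Zc}_{\Gc} \to \prod_{j=1}^m Z^{(I_j)}_{\underline F'}$ furnished by the second half of Proposition \ref{thmLocModThm}, as both arms again have common relative dimension $\dim \Lc^{(r)}_{X^I}\Gc$, and the equivariance argument used to achieve transversality goes through unchanged.
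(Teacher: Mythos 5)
Your reduction to the smooth roof of Proposition \ref{thmLocModThm} and the dimension count are fine, but the step you yourself flag as the crux --- producing a transverse section by a generic $\Lc^{(r)}_{X^I}\Gc$-translate --- is a genuine gap, and I do not believe it can be repaired by that mechanism. Translating a section $s$ by $g\colon U'\to \Lc^{(r)}_{X^I}\Gc$ changes $q\circ s$ only by the left action of $g$ on $Z_{\underline F'}$, so on tangent spaces the extra freedom you gain is confined to the tangent space of the $\Lc^{(r)}_{X^I}\Gc$-orbit through $q(s(u))$. These orbits can be very small: at the base point of the affine Grassmannian (the locus where the modification is an isomorphism near the legs) the orbit is a single point, so translation gives no freedom at all there, and yet the statement must (and does) hold at such points. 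Hence the image of $ds$ cannot in general be made to ``sweep out all complements of $\ker dq$''. Note also that your argument nowhere uses the Frobenius-linear structure $\theta\colon\Ec_0\xrightarrow{\cong}\sigma^*\Ec_m$; formally the same kind of roof exists for $\Hecke^{I_\bullet,\Zc}_{\Gc,X}$ in place of $\Sht^{I_\bullet,\Zc}_{\Gc,X}$, where the conclusion is false, and the only thing distinguishing the two situations in your setup is the dimension count --- which by itself cannot establish flatness or smoothness of $q\circ s$.

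The missing idea is that the conclusion is automatic for \emph{every} choice of trivialisation, not for a generic one, because the differential of the absolute Frobenius $\sigma$ vanishes. The paper's proof constructs an \'etale chart $W$ of $\Hecke^{I_\bullet,\Zc}_{\Gc,X}$ together with an \'etale map $W\to\Bun_\Gc\times Z_{\underline F'}$ (using Lemma \ref{lemLocModHecke}), and then invokes \cite[Lemme 2.13]{Lafforgue2018} (going back to \cite[Theorem 2.20]{Varshavsky2004}): forming the fibre product with the graph of $\sigma$ inside $\Bun_\Gc\times\Bun_\Gc$ kills the $\Bun_\Gc$-directions on tangent spaces precisely because $d\sigma=0$, leaving an \'etale map to $Z_{\underline F'}$. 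Any correct proof has to use this input somewhere; your argument should be reorganised around it rather than around a genericity claim for sections of the torsor $p$.
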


\begin{proof}

We sketch how to adapt the argument of \cite[Proposition 2.11]{Lafforgue2018}. 
By the usual reductions (notably bounding the Harder-Narasimhan slopes appearing and introducing an auxiliary level structure, compare also \cite[Remark 2.9]{Rad2019a} for the generalisation to arbitrary smooth affine group schemes $\Gc$), for example as at the beginning of the proof of \cite[Proposition 2.11]{Lafforgue2018}, we may treat $\Bun_\Gc$ (and thus $\Hecke_{\Gc}^\Zc$) as a scheme of finite type over $\Fq$.

For every $j$ we denote by $(\Ec^{\mr{univ}}, (x_i)_{i \in I})$ the universal object of $\Bun_\Gc \times X^{I}$. Let $U \to \Bun_\Gc \times X^{I}$ be an \'etale cover trivialising $\Ec^{\mr{univ}}$ over $\hat{\Gamma}_{\underline{x}}$.
By \cite[Proposition 3.9]{Rad2019a}, the map $\Hecke_{\Gc, X}^{I_\bullet, \Zc} \to \widetilde X_\Zc^I \times  \Bun_\Gc$ is schematic and of finite type. Hence, the base change $W = \Hecke_{\Gc, X}^{I_\bullet, \Zc} \times_{\widetilde X_\Zc^I \times \prod_j \Bun_\Gc} \widetilde X_{\Zc}^{I} \times_{X^{I}} U$ is a scheme of finite type over $\Fq$, and the projection $W \to \Hecke_{\Gc, X}^{I_\bullet, \Zc}$ is \'etale.
By construction of $U$, the scheme $W$ admits a natural map $W \to \Bun_\Gc \times  \Gr_{\Gc, X}^{I_\bullet}$, that factors through $\Bun_\Gc \times Z_{\underline F'}$ and is \'etale by Lemma \ref{lemLocModHecke}.

The first statement for now follows from \cite[Lemme 2.13]{Lafforgue2018}.	
As in \cite[Proposition 2.11]{Lafforgue2018}, the second statement can be shown inductively using a similar argument.
\end{proof}

\subsection{Functoriality results for moduli spaces of shtukas}
We study functoriality properties of moduli spaces of shtukas under homomorphisms of group schemes. 
As the main result in this section we show that for a generic isomorphism $\Gc \to \Gc'$ the induced map on the moduli stacks of (iterated, bounded) shtukas is schematic, separated and of finite type. Moreover, when $\Gc$ is parahoric and the bound is generically defined, the map is proper and surjective.
The result is an extension of \cite[Theorem 3.20]{Breutmann2019} to our setting.
In particular, the use of global bounds allows us to work on the whole curve $X$ and we need not restrict the legs to a formal neighbourhood of a fixed point as in \cite{Breutmann2019}.
Moreover, we show that the level maps are generically finite \'etale, which is not part of \cite{Breutmann2019}.

In our setting, the notion of a shtuka datum (respectively a map of shtuka data) in the sense of \cite[Definitions 3.1 and 3.9]{Breutmann2019} restricts to the following.
\begin{definition}
\label{defnShtDat}
A \emph{shtuka datum} $(\Gc, \Zc)$ is a pair of a smooth affine group scheme $\Gc \to X$ and a bound $\Zc$ in $\Gr_{\Gc, X}^{I_\bullet}$.
Let $(\Gc, \Zc)$ and $(\Gc', \Zc')$ be two shtuka data such that for every $i \in I$ the $i$-th components of $\Zc$ and $\Zc'$ have the same type.
A \emph{map of shtuka data} $f \colon (\Gc, \Zc) \to (\Gc', \Zc')$ is a map of group schemes $f \colon \Gc \to \Gc'$ such that for all $I$-tuples of finite extensions $\underline F'$ such that both $\Zc$ and $\Zc'$ admit representatives over $\underline F'$ the map 
$$ Z_{\underline F'} \hookrightarrow \Gr_{\Gc, X}^{I_\bullet} \times_{X^I}  \widetilde X^I_{\underline F'} \xrightarrow{f_\ast}  \Gr_{\Gc', X}^{I_\bullet} \times_{X^I}  \widetilde X^I_{\underline F'}$$ factors through $Z'_{\underline F'}$.
\end{definition}
In analogy to the construction of the reflex scheme we set $\widetilde{X}^I_{\Zc.\Zc'} = \widetilde{X}^I_{\underline{F}'}/\Aut_{\Zc.\Zc'}(\underline{F}')$ where $\underline{F}'$ is a componentwise Galois extension such that both $\Zc$ and $\Zc'$ admit representatives over $\underline{F}'$ and  
$$\Aut_{\Zc.\Zc'}(\underline{F}') =  \{(g_i)_{i \in I} \in \Gal(F_i'/F) \colon  (g_i)_{i \in I}^\ast(\Zc)=\Zc, \ (g_i)_{i \in I}^\ast(\Zc')=\Zc' \}.$$ 
Under mild hypotheses, a map of shtuka data $f \colon (\Gc, \Zc) \to (\Gc', \Zc')$ induces a map on the corresponding moduli stacks of shtukas
$$ f_\ast \colon \Sht^{I_\bullet, \Zc}_{\Gc, X} \times_{\widetilde X^I_\Zc} \widetilde X^I_{\Zc.\Zc'} \to \Sht^{I_\bullet, \Zc'}_{\Gc', X} \times_{\widetilde X^I_{\Zc'}} \widetilde X^I_{\Zc.\Zc'}$$
by the following lemma that is an analogue of \cite[Lemma 3.15]{Breutmann2019}.
\begin{lemma}
\label{lemChangeBound}
Let $f \colon (\Gc, \Zc) \to (\Gc', \Zc')$ be a map of shtuka data. Assume that $\Zc'$ admits a representative over its reflex scheme.
Let 
$$\underline{\Ec} \in (\Sht^{\Zc}_{\Gc} \times_{X^I} \widetilde X^I_{\Zc. \Zc'})(R)$$
for some $\Fq$-algebra $R$. 
Then $f_\ast \underline{\Ec} \in (\Sht_{\Gc'} \times_{X^I} \widetilde X^I_{\Zc. \Zc'})(R)$ is bounded by $\Zc'$.
\end{lemma}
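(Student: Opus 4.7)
The strategy is a direct functoriality chase. First, I would trivialise $\Ec_0$ on the formal neighbourhood $\hat{\Gamma}_{\underline{x}}$ via an \'etale cover $R \to R'$ with a trivialisation $\alpha \colon \Ec_0|_{\hat\Gamma_{\underline{x}_{R'}}} \xrightarrow{\cong} \Gc|_{\hat\Gamma_{\underline{x}_{R'}}}$, producing from $\underline{\Ec}$ the canonical point $g \in (\Gr_{\Gc, X}^{I_\bullet} \times_{X^I} \widetilde X^I_{\Zc.\Zc'})(R')$ which, composed with the projection to $\widetilde X^I_\Zc$, witnesses the boundedness of $\underline{\Ec}$ by $\Zc$. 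Composing $\alpha$ pointwise with $f \colon \Gc \to \Gc'$ yields a trivialisation of $(f_\ast\Ec_0)|_{\hat\Gamma_{\underline{x}_{R'}}}$, and the corresponding point encoding $f_\ast\underline{\Ec}$ is precisely the image $f_\ast(g) \in (\Gr_{\Gc', X}^{I_\bullet} \times_{X^I} \widetilde X^I_{\Zc.\Zc'})(R')$ under the morphism of affine Grassmannians induced by $f$.

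Next I would pick an $I$-tuple $\underline F'$ of finite separable extensions over which both $\Zc$ and $\Zc'$ admit representatives $Z_{\underline F'}$ and $Z'_{\underline F'}$; such an $\underline F'$ exists by choosing a componentwise common extension of the fields of definition of the given representatives. The boundedness hypothesis on $\underline{\Ec}$ produces a factorisation of $g \times \id_{\widetilde X^I_{\underline F'}}$ (base-changed via $\widetilde X^I_\Zc$) through $Z_{\underline F'}$. The defining property of a map of shtuka data (Definition \ref{defnShtDat}) states precisely that $f_\ast \colon Z_{\underline F'} \to \Gr_{\Gc', X}^{I_\bullet} \times_{X^I} \widetilde X^I_{\underline F'}$ factors through $Z'_{\underline F'}$. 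Concatenating these factorisations gives that the base change of $f_\ast(g)$ along $\widetilde X^I_{\underline F'} \to \widetilde X^I_{\Zc.\Zc'}$ factors through $Z'_{\underline F'}$.

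The final step translates this into the boundedness condition for $f_\ast\underline{\Ec}$ by $\Zc'$, which asks for a factorisation of the base change of $f_\ast(g)$ along the (generally coarser) fibre product $\Spec(R') \times_{\widetilde X^I_{\Zc'}} \widetilde X^I_{\underline F'}$ rather than along $\Spec(R') \times_{\widetilde X^I_{\Zc.\Zc'}} \widetilde X^I_{\underline F'}$. I would invoke Lemma \ref{lemSectBoundRefl}(\ref{lemSectBoundReflExt}) with the intermediate scheme $Y = \widetilde X^I_{\Zc.\Zc'}$, which sits naturally between $\widetilde X^I_{\underline F'}$ and $\widetilde X^I_{\Zc'}$ by construction of the reflex schemes, to transfer the factorisation from the finer to the coarser base change. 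The hypothesis that $\Zc'$ admits a representative over its own reflex scheme enters at this stage via Lemma \ref{lemSectBoundRefl}(\ref{lemSectBoundReflDesc}), which ensures that the resulting boundedness statement is robust in the auxiliary choice of $\underline F'$; one then appeals to Lemma \ref{lemBoundAllReps} to conclude. No substantive obstacle is expected: the proof is pure formal functoriality, and the only mildly delicate point is the bookkeeping of the various fibre products over $\widetilde X^I_\Zc$, $\widetilde X^I_{\Zc'}$, and $\widetilde X^I_{\Zc.\Zc'}$ and the compatibilities between their base changes along the finite cover $\widetilde X^I_{\underline F'}$.
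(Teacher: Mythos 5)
Your proposal is correct and follows essentially the same route as the paper's proof: trivialise the torsor on the formal neighbourhood to get a point of the affine Grassmannian, use the boundedness by $\Zc$ and the defining property of a map of shtuka data to land in $Z'_{\underline F'}$, and then invoke Lemma \ref{lemSectBoundRefl} (with the hypothesis that $\Zc'$ has a representative over its reflex scheme) to convert the factorisation over the base change via $\widetilde X^I_{\Zc.\Zc'}$ into the boundedness condition over $\widetilde X^I_{\Zc'}$. Your explicit bookkeeping of which part of Lemma \ref{lemSectBoundRefl} is used and where Lemma \ref{lemBoundAllReps} enters is, if anything, slightly more detailed than the paper's.
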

\begin{proof} 
Let $\underline{\Ec} = ((x_i)_{i \in I}, (\Ec_j)_{j=0, \ldots,m}, (\ph_j)_{j = 1, \ldots, m}, \theta) \in (\Sht^{\Zc}_{\Gc} \times_{X^I} X_{\Zc.\Zc'}^I)(R)$. Let $R \to R'$ be an \'etale cover that trivialises $\Ec_m|_{\hat{\Gamma}_{\underline{x}}}$ and choose a trivialisation $\alpha \colon \Ec_m|_{\hat{\Gamma}_{\underline{x}_{R'}}} \xrightarrow{\cong} \Gc|_{\hat{\Gamma}_{\underline{x}_{R'}}}$. Then $(\underline{\Ec}_{R'}, \alpha)$ defines an $R'$-valued point in $\Gr_{\Gc, X}^{I_\bullet} \times_{X^I} X_{\Zc.\Zc'}^I$.
Let $\underline F'$ be an $I$-tuple of finite extensions of $\underline F$ such that both $\Zc$ and $\Zc'$ admit representatives over $\underline F'$.
As $\underline{\Ec}$ is bounded by $\Zc$,  the induced point $\Spec(R') \times_{\widetilde X_\Zc^I} {\widetilde X_{F'}^I} \to \Gr_{\Gc, X}^{I_\bullet} \times_{X^I} {\widetilde X_{F'}^I} $ factors through $Z_{\underline F'}$, hence its image under $f_\ast$ factors through $Z'_{\underline F'}$ by assumption. 
By Lemma \ref{lemSectBoundRefl} 
the map $ \Spec(R') \times_{\widetilde X_{\Zc'}^I} \widetilde X_{\underline F'}^I \to \Gr_{\Gc', X}^{I_\bullet} \times_{X^I} \widetilde X_{\underline F'}^I$ factors through $Z'_{\underline F'}$, too.
\end{proof}

\begin{remark}
\label{remProbFunct}
Note that we used the assumption on the bounds in the last step of the proof. We do not know how to prove the lemma without this assumption, neither in our setting nor in the setting of local boundedness conditions of \cite{Breutmann2019}.
In this sense \cite[Lemma 3.15]{Breutmann2019} which does not make a similar kind of assumption seems to be problematic in general.	
\end{remark}

We need the following lemma on twisted flag varieties in the local setting.
\begin{lemma}
\label{lemTwistedFlagVariety}
Let $k \dbr{t}$ be the field of formal Laurent series over an arbitrary field $k$ and let $\Oc = k \dsq{t}$ the subring of formal power series.
Let $G$ be a smooth affine group scheme over $k \dbr{t}$ and let $\Gc$ and $\Gc'$ be two smooth integral models of $G$ with geometrically connected fibres. 
Let $f \colon \Gc \to \Gc'$ be a homomorphism of $\Oc$-group schemes that is the identity on $G$ over $k$. Then the induced map on loop groups $L^+\Gc \to L^+\Gc'$ is a closed immersion.
\begin{enumerate}
	\item The corresponding twisted flag variety $L^+\Gc'/L^+\Gc$ is representable by a 
	separated scheme of finite type over $k$. 
	If $k$ is finite or separably closed, then 
	$$\left( L^+\Gc'/L^+\Gc \right)(k) = \Gc'(\Oc)/\Gc(\Oc).$$
	\item 
	Assume that $k$ is finite. We equip $G(k \dbr{t})$ with the analytic topology induced by the natural topology on $k \dbr{t}$ (note that $k \dbr{t}$ is locally compact in this case). 
	Then $\Gc(\Oc) \se \Gc'(\Oc)$ are compact open subgroups of $G(k \dbr{t})$.
	In particular, the quotient $\Gc'(\Oc)/\Gc(\Oc)$ is discrete and finite.
	\item Let $S$ be an $k$-scheme. Giving a $L^+ \Gc$-torsor over $S$ is equivalent to giving a $L^+ \Gc'$-torsor $\Ec'$ over $S$ together with an isomorphism $\Ec'/L^+\Gc \xrightarrow{\cong} L^+ \Gc'/L^+ \Gc$. 
\end{enumerate}
\end{lemma}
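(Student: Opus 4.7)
The map $f$ restricts to the identity on the generic fiber $G$, so writing $A = \mathcal{O}(\Gc)$ and $A' = \mathcal{O}(\Gc')$, the injection $f^\# \colon A' \hookrightarrow A$ becomes an isomorphism after inverting $t$. Since $A$ is finitely generated over $\Oc$, each generator $y \in A$ satisfies $t^N y = f^\#(z)$ for some $z \in A'$, with $N$ uniform over the finitely many generators. Expanding both sides as power series in $t$, this identifies the $s$-th coefficient function $y_s$ on $L^+\Gc$ with the pullback of $z_{s+N}$ on $L^+\Gc'$. As the $y_s$ generate $\mathcal{O}(L^+\Gc) = \varinjlim_r \mathcal{O}(L^{(r)}\Gc)$, the induced map of coordinate rings $\mathcal{O}(L^+\Gc') \to \mathcal{O}(L^+\Gc)$ is surjective, yielding the asserted closed immersion on loop groups.

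For Part (1), I first produce an integer $N$ such that the principal congruence subgroup $K_N := \ker(L^+\Gc' \to L^{(N)}\Gc')$ is contained in the image of $L^+\Gc$. Using smoothness of both models, the formal completions at the identity sections are formal $\Oc$-polydiscs, and $f$ restricts to a formal change of variables whose Jacobian has determinant nonzero in $\Oc$ and a unit in $k\dbr{t}$. Taking $N$ to be its $t$-adic valuation, the formal inverse has coefficients with denominators at most a power of $t^N$ that is compensated by the depth of the congruence $g \equiv 1 \pmod{t^N}$, so any such $g \in \Gc'(R\dsq{t})$ admits a unique integral lift in $\Gc(R\dsq{t})$. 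Combined with the closed immersion on loop groups, this identifies the fppf-quotient $L^+\Gc'/L^+\Gc$ with $L^{(N)}\Gc'/H^{(N)}$, where $H^{(N)} \subseteq L^{(N)}\Gc'$ is the closed subgroup image of $L^{(N)}\Gc$. This is a quotient of a finite-type affine $k$-group scheme by a closed subgroup, representable by a quasi-projective $k$-scheme, hence separated and of finite type. When $k$ is finite or separably closed, Lang's theorem applied to the pro-smooth pro-connected group $L^+\Gc$ (using geometric connectedness of fibers to pass through the pro-system) gives vanishing of $H^1(k, L^+\Gc)$, whence $(L^+\Gc'/L^+\Gc)(k) = L^+\Gc'(k)/L^+\Gc(k) = \Gc'(\Oc)/\Gc(\Oc)$.

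Parts (2) and (3) are routine. For (2), the profinite subgroup $\mathcal{H}(\Oc) = \varprojlim_n \mathcal{H}(\Oc/t^n)$ is an inverse limit of finite sets for $k$ finite and $\mathcal{H}$ of finite type, hence compact; it is moreover open in the locally compact topological group $\mathcal{H}_\eta(k\dbr{t})$ for the $t$-adic analytic topology, by the standard fact that congruence subgroups form a neighbourhood basis of the identity. Applied to both $\Gc$ and $\Gc'$, the quotient $\Gc'(\Oc)/\Gc(\Oc)$ is a continuous image of a compact space and discrete by openness of $\Gc(\Oc)$, hence finite. Part (3) is the standard reduction-of-structure-group correspondence for the closed subgroup $L^+\Gc \hookrightarrow L^+\Gc'$: to an $L^+\Gc$-torsor $\Ec$ one associates the extension $\Ec' := \Ec \times^{L^+\Gc} L^+\Gc'$ together with the canonical section of $\Ec'/L^+\Gc$ coming from the identity coset (equivalently, the isomorphism with $L^+\Gc'/L^+\Gc$ viewed as the base-changed trivial quotient bundle), and conversely from $(\Ec', s)$ one recovers $\Ec$ as the preimage of the distinguished section in $\Ec'$. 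The main obstacle I anticipate is the technical production of $N$ and the verification $K_N \subseteq L^+\Gc$ in Part (1), which demands careful bookkeeping of $t$-adic valuations through the formal inverse function argument and a clean passage from the formal completion back to $\Gc$ itself.
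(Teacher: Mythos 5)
Your proposal is correct and covers all parts, but it takes a genuinely different route from the paper on the two substantive points. For the closed immersion and for representability of $L^+\Gc'/L^+\Gc$, the paper simply invokes the argument of Breutmann--Hartl (Lemma 3.17 of \cite{Breutmann2019}), which realises the quotient as a closed subscheme of the affine Grassmannian $\Gr_\Gc$ via $L^+\Gc'/L^+\Gc \hookrightarrow LG/L^+\Gc$; and for the identification of $k$-points it cites the vanishing of $H^1(k, L^+\Gc)$ from \cite{Richarz2019}. You instead give a self-contained argument: surjectivity of $\Oc(L^+\Gc')\to\Oc(L^+\Gc)$ via the relation $t^N y = f^\#(z)$ on coordinate rings (which is a clean and correct way to get the closed immersion), and then a congruence-subgroup containment $K_M \subseteq L^+\Gc$ reducing the quotient to $L^{(M)}\Gc'/H^{(M)}$, a quotient of a finite-type algebraic group by a closed subgroup, hence quasi-projective by Chevalley. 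Your approach buys independence from the external reference and makes the finite-dimensionality of the quotient completely explicit; the paper's approach is shorter and gives the extra information that the quotient embeds into $\Gr_\Gc$. One quantitative caveat in your step producing $M$: with $N = v_t(\det J)$ the congruence $g\equiv 1 \pmod{t^N}$ is not deep enough, since the Newton/successive-approximation iteration $u^{(m+1)} = J^{-1}(a - Q(u^{(m)}))$ only converges integrally when the congruence depth $M$ satisfies $M > 2N$ (each step loses $t^N$ to $J^{-1}$ and gains $t^{M-N}$ from the quadratic terms); taking $M = 2N+1$ repairs this, and you correctly flag that this bookkeeping is the delicate point. Your treatments of the Lang-theorem limit argument, of part (2), and of part (3) coincide with the paper's.
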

Note that giving an isomorphism $\Ec'/L^+\Gc \xrightarrow{\cong} L^+ \Gc'/L^+ \Gc$ in (3) is also clearly equivalent to giving a section in $\left(\Ec'/L^+\Gc  \right)(S)$.
\begin{proof}
\begin{enumerate}
	\item 
	By the argument in the proof of \cite[Lemma 3.17]{Breutmann2019}, the quotient stack $L^+\Gc'/L^+\Gc$ is representable by a separated scheme of finite type over $k$ that is moreover a closed subscheme of the affine Grassmannian $\Gr_\Gc$. 
	For the second claim, it suffices to show that $H^{1}(k, L^+\Gc)$ is trivial by the moduli description of the quotient stack.
	But this is shown in the proof of \cite[Corollary 3.22]{Richarz2019}.
	\item 
	Clearly, both $\Gc(\Oc)$ and $\Gc'(\Oc)$ are compact open subgroups of $G(k \dbr{t})$ by construction. The existence of the map $f$ then means that   $\Gc(\Oc)$ is a subgroup of $\Gc'(\Oc)$.
	The assertion on the quotient then directly follows from basic facts from topology.
	\item 
	Given a $L^+ \Gc$-torsor $\Ec$ on $S$, its associated $L^+ \Gc'$-torsor is given by $\Ec \times^{L^+ \Gc} L^+\Gc'$.
	The map on sections given by $(e,g) \mapsto g$ then induces an isomorphism 
	$\Ec'/ L^+ \Gc \xrightarrow{\cong} L^+ \Gc'/ L^+ \Gc.$
	This construction is an equivalence.
\end{enumerate}
\end{proof}

We are now in a position to prove the main result in this section, an extension of the functoriality result of \cite[Theorem 3.20]{Breutmann2019} to moduli spaces of shtukas with global and generically bounds.
\begin{theorem}
\label{thmLvlMapGen}
Let $\Gc$ and $\Gc'$ be two smooth affine group schemes over $X$ with geometrically connected fibres.
Let $f \colon (\Gc, \Zc) \to (\Gc', \Zc')$ be a map of shtuka data such that the map $f \colon \Gc \to \Gc'$ is an isomorphism over $U = X \setminus \left\{y_1, \ldots, y_n\right\}$ for a finite set of closed points $ \left\{y_1, \ldots, y_n\right\}$ of $X$. Assume that $\Zc'$ admits a representative over its reflex scheme. 
\begin{enumerate}
	\item The induced map 
	$$f_\ast \colon \Sht^{ I_\bullet, \Zc}_{\Gc,X} \times_{\widetilde X^I_\Zc} \widetilde X^I_{\Zc.\Zc'} \to \Sht^{I_\bullet,\Zc'}_{\Gc',X} \times_{\widetilde X^I_{\Zc'}} \widetilde X^I_{\Zc.\Zc'}$$
	is schematic, separated and of finite type.
	\label{thmLvlMapGenRep}
	\item Assume that $\Gc$ is a parahoric Bruhat-Tits group scheme. Then the map $f_\ast$ is proper.
	\item Assume that $Z_{\underline F'}  \to Z'_{\underline F'} $ is an isomorphism over $U^I  \times_{X^I} \widetilde X^I_{\underline F'}$ for some $\underline F'$. Then over $U^I  \times_{X^I} \widetilde X^I_{\Zc.\Zc'}$ the map $f_\ast$ is \'etale locally representable by the constant scheme
	$$ \prod_{i = 1}^n \underline{\Gc'(\Oc_{y_i})/\Gc(\Oc_{y_i})}.$$
	In particular, $f_\ast$ is
	finite \'etale and surjective over $U^I  \times_{X^I} \widetilde X^I_{\Zc.\Zc'}$.
	\item Under the assumptions of (2) and (3) assume additionally that $\Zc'$ is generically defined. Then $f_\ast$ is surjective.
\end{enumerate}
\end{theorem}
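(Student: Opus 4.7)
The plan is to prove the four parts in order, with parts (2) and (3) combining to give the surjectivity in (4). Throughout I would rely on the \'etale-local model theorem (Proposition \ref{thmLocModThm}) and the Serre-Tate theorem (Proposition \ref{propSerreTate}) to describe the fibres of $f_\ast$ purely in terms of the twisted flag varieties $L^+\Gc'_{\Oc_{y_i}}/L^+\Gc_{\Oc_{y_i}}$ at the finitely many points $y_i$ where $f$ fails to be an isomorphism. This reduction, together with Lemma \ref{lemTwistedFlagVariety}, replaces nearly all of the work by local calculations on these flag varieties.

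For part (1), the fibre computation presents $f_\ast$, \'etale locally on the target, as a product over the $y_i$ of twisted flag varieties $L^+\Gc'_{\Oc_{y_i}}/L^+\Gc_{\Oc_{y_i}}$, each of which is a separated scheme of finite type by Lemma \ref{lemTwistedFlagVariety}(1). Combined with the fact that $f$ does not alter the geometry away from the $y_i$, this shows that $f_\ast$ is schematic, separated and of finite type. For part (2), properness reduces via the valuative criterion and the Serre-Tate theorem to the following lifting problem over a DVR $R$ with fraction field $K$: a section of the (twisted) flag variety $L^+\Gc'_{\Oc_{y_i}}/L^+\Gc_{\Oc_{y_i}}$ given over $\Spec K$ must extend uniquely to $\Spec R$. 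For parahoric Bruhat-Tits $\Gc$ this flag variety has the requisite (ind-)properness, which I would invoke following the argument of \cite[Theorem 3.20]{Breutmann2019} adapted to our global-bound framework.

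For part (3), over $U^I$ all legs of a shtuka avoid the $y_i$, so at each $y_i$ the associated local shtuka is \emph{\'etale} in the sense of Remark \ref{remEtLocSht}. Reducing a $\Gc'$-structure to $\Gc$ at $y_i$ amounts, by Lemma \ref{lemTwistedFlagVariety}(3), to a Frobenius-fixed section of the associated twisted flag variety; after passing to an \'etale cover trivialising the torsor, Lemma \ref{lemTwistedFlagVariety}(2) identifies the section space with the discrete finite set $\Gc'(\Oc_{y_i})/\Gc(\Oc_{y_i})$. By Lang's lemma, the Frobenius-fixed locus is represented by the constant scheme $\underline{\Gc'(\Oc_{y_i})/\Gc(\Oc_{y_i})}$ after suitable \'etale base change. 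Taking the product over all $y_i$ yields the claimed \'etale-local description, and in particular shows that $f_\ast$ is finite \'etale and surjective over $U^I \times_{X^I} \widetilde X^I_{\Zc.\Zc'}$.

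Part (4) follows formally: by (2), the image of $f_\ast$ is a closed substack of $\Sht^{I_\bullet, \Zc'}_{\Gc', X} \times_{\widetilde X^I_{\Zc'}} \widetilde X^I_{\Zc.\Zc'}$; by (3) and the hypothesis that $Z_{\underline F'} \to Z'_{\underline F'}$ is an isomorphism over $U^I$, this image contains the entire restriction to $U^I \times_{X^I} \widetilde X^I_{\Zc.\Zc'}$. Since $\Zc'$ is generically defined, Lemma \ref{lemGen} shows that every representative $Z'_{\underline F'}$ is the scheme-theoretic closure of its generic fibre over each component of $\widetilde X^I_{\underline F'}$; as this generic fibre lies over $U^I$, the restriction of $Z'_{\underline F'}$ to $U^I \times_{X^I} \widetilde X^I_{\underline F'}$ is dense. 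Via the \'etale-local model theorem this density transfers to $\Sht^{I_\bullet, \Zc'}_{\Gc', X}$, and a closed substack containing a dense open is the whole stack, giving the desired surjectivity. The main obstacle is part (2): verifying the valuative criterion in our setting of global/generic bounds, in light of the subtlety highlighted in Remark \ref{remProbFunct} concerning the assumption that $\Zc'$ admits a representative over its reflex scheme, which is needed to ensure that reduction-of-structure data behaves well under descent along $\widetilde X^I_{\underline F'} \to \widetilde X^I_{\Zc.\Zc'}$.
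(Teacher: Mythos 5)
Your parts (3) and (4) track the paper's proof closely: (3) is exactly the reduction to Frobenius-fixed points of the twisted flag varieties via the \'etale local shtukas of Remark \ref{remEtLocSht} and Lemma \ref{lemTwistedFlagVariety} (the paper cites \cite[Lemma 3.3]{Varshavsky2004} where you say ``Lang's lemma''), and (4) is the same three-ingredient argument (closed image by properness, generic surjectivity, density of the locus over $U^I$ via the local model theorem and the generically-defined hypothesis), phrased as a density statement rather than as lifting of generalisations.

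The gap is in your treatment of (1) and (2). Your opening reduction --- that the local model theorem and the Serre--Tate theorem describe the fibres of $f_\ast$ ``purely in terms of the twisted flag varieties'' at the $y_i$ --- is only valid over $U^I$, where it is precisely part (3). Over the locus where the legs $x_i$ collide with the points $y_j$, a point of the fibre of $f_\ast$ is a compatible system of $\Gc$-reductions of the bundles $\Ec'_0,\dots,\Ec'_m$ subject to the condition that the modifications $\ph'_j$ and the Frobenius $\theta'$ (which are \emph{not} isomorphisms near $y_j$ there) carry these reductions into one another and that the resulting $\Gc$-shtuka is still bounded by $\Zc$. Neither the Serre--Tate theorem (which controls deformations of a fixed shtuka) nor the local model theorem (which controls the singularities of $\Sht$ relative to $Z$) says anything about this condition, so your proposed tools do not produce the fibre description you assert. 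The paper instead reduces to the map $\Bun_\Gc\to\Bun_{\Gc'}$, which is schematic and quasi-projective (projective for parahoric $\Gc$) by \cite[Proposition 3.18]{Breutmann2019}, and then proves the essential point your sketch omits: inside $S\times_{\prod\Bun_{\Gc'}}\prod\Bun_{\Gc}$ the fibre of $f_\ast$ is a quasi-compact locally closed subscheme, because the $\ph_j$ are uniquely determined by the $\ph'_j$ over a dense open of $X_T$, the locus where they extend over $\hat{D}_R\setminus\Gamma_{\underline{x}_j}$ is closed (positive loop group closed in the loop group), and boundedness by $\Zc$ is a closed condition. Once (1) is established this way, (2) is immediate from projectivity of $\Bun_\Gc\to\Bun_{\Gc'}$ in the parahoric case; your valuative-criterion route through properness of $L^+\Gc'/L^+\Gc$ could be made to work (it is closed in the ind-proper $\Gr_{\Gc}$), but only after the same missing locally-closedness argument identifies the fibre as a closed subscheme of that flag variety.
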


\begin{remark}
The first two statements are direct analogues of the corresponding statements in \cite[Theorem 3.20]{Breutmann2019}, while there is no analogue of the third assertion in \cite[Theorem 3.20]{Breutmann2019}.
In order to get surjectivity of the map $f_\ast$, in \cite{Breutmann2019} it is assumed that the bound $\Zc$ arises as the base change of $\Zc'$ under the map $f_\ast$ on affine Grassmannians. 
This assumption does not seem adequate in our setting, in particular, it is not satisfied for the bounds given by Schubert varieties.

We thus replace this assumption by the condition that the bounds are generically defined and that the map is a generic isomorphism, both of which are satisfied in the examples of interest.
Note that when $\Zc$ arises as a base change, the map $\Zc \to \Zc'$ is clearly an isomorphism over $U^I$.
\end{remark}

\begin{proof}
\begin{enumerate}
	\item 
	We proceed as in the proof of \cite[Theorem 3.20]{Breutmann2019}.
	We consider the projection $\Sht_{\Gc}^{\Zc} \to \prod_{j=1, \ldots, m} \Bun_\Gc$ given by $\underline{\Ec} \mapsto (\Ec_j)_{j=1, \ldots, m}$.
	Let us fix 
	$$\underline{\Ec}' = ((x_i)_{i \in I}, (\Ec'_j)_{j = 0, \ldots, m}, (\ph_j)_{j = 1, \ldots, m}, \theta) \in \left( \Sht^{\Zc'}_{\Gc'} \times_{\widetilde X^I_{\Zc'}} \widetilde X^I_{\Zc. \Zc'}\right) (S).$$
	We claim that the induced map 
	$$  S \times_{\left( \Sht^{\Zc'}_{\Gc'}  \times_{\widetilde X^I_{\Zc'}} \widetilde X^I_{\Zc. \Zc'} \right)} \left( 
	\Sht^{\Zc}_{\Gc}  \times_{\widetilde X^I_{\Zc}} \widetilde X^I_{\Zc. \Zc'} \right)\to S \times_{\prod_{j = 1}^m \Bun_{\Gc'}} \prod_{j = 1}^m \Bun_{\Gc} $$ 
	is a quasi-compact locally closed immersion. This shows the assertion (\ref{thmLvlMapGenRep}) using that $\Bun_\Gc \to \Bun_{\Gc'}$ is schematic and quasi-projective by \cite[Proposition 3.18]{Breutmann2019}.
	
	In order to show the claim, let us fix a point 
	$$(s, (\Ec_j)_{j = 1, \ldots m}, (\psi_j)_{j = 1, \ldots, m}) \in (S \times_{\prod_{j = 1}^m \Bun_{\Gc'}} \prod_{j = 1}^m \Bun_{\Gc})(T),$$ where $s \colon T \to S$ is a map of schemes, the $\Ec_j$ are $\Gc$-bundles and $\psi_j \colon s^\ast \Ec'_j \xrightarrow{\cong} f_\ast \Ec_j$ is an isomorphism of $\Gc'$-bundles over $X_T$. 
	As in the proof of \cite[Theorem 3.20]{Breutmann2019}, there is at most one $T$-valued point $(s, \underline{\Ec}, \psi)$ of $S \times_{\left( \Sht^{\Zc'}_{\Gc'}  \times_{\widetilde X^I_{\Zc'}} \widetilde X^I_{\Zc. \Zc'} \right)} \left( 
	\Sht^{\Zc}_{\Gc}  \times_{\widetilde X^I_{\Zc}} \widetilde X^I_{\Zc. \Zc'} \right)$ mapping to $(s, (\Ec_j)_{j = 1, \ldots m}, (\psi_j)_{j = 1, \ldots, m})$ as the maps $\ph_j$ of $\underline{\Ec}$ are already uniquely determined over an open dense subset by the $\ph_j'$. 
	
	It remains to check that the locus where such an extension exists is closed in $T$.
	Let $D = X \setminus U$ be the effective Cartier divisor in $X$ given by $\underline{y}$. Let $1 \leq j \leq m$. The map $\ph_{j, T}' \colon \Ec'_{j}|_{X_T \setminus  \Gamma_{\underline{x}_j}} \to {\Ec'_{j-1}}|_{X_T \setminus  \Gamma_{\underline{x}_j}} $ defines a map  
	$\ph_{j} \colon \Ec_{j}|_{X_T \setminus (D \cup \Gamma_{\underline{x}_j}) } \to {\Ec_{j-1}}|_{X_T \setminus (D \cup  \Gamma_{\underline{x}_j})}$. 
	We may work \'etale-locally on $T$ and assume that $T = \Spec(R)$ is affine and that both $\Ec_{j-1}$ and $\Ec_j$ are trivial over $\widehat{ D_R \cup \Gamma_{\underline{x}_j}}$.
	After fixing a trivialisation for both $\Ec_{j-1}$ and $\Ec_j$, the map $\ph_j$ defines an element $\ph_j \in \Gc(\widehat{ D_R \cup \Gamma_{\underline{x}_j}}^\circ)$. 
	By the argument that the positive loop group is a closed subscheme of the loop group, the locus where $\ph_j$ can be extended to $\hat{D}_R \setminus \Gamma_{\underline{x}_j}$ is closed.
	Finally, the locus where $\underline{\Ec}$ is bounded by $\Zc$ is representable by a closed immersion. 
	\item 
	This follows from the argument in (\ref{thmLvlMapGenRep}) as in the parahoric case the map $\Bun_\Gc \to \Bun_{\Gc'}$ is projective by \cite[Proposition 3.18]{Breutmann2019}.
	\item 		
	It suffices to show the first claim, namely that the map $f_\ast$ is \'etale locally representable by the constant scheme $ \prod_{\ell = 1}^n \underline{\Gc'(\Oc_{y_\ell})/\Gc(\Oc_{y_\ell})}.$
	We follow the proof of \cite[Proposition 2.16]{Varshavsky2004}. Let $$\underline{\Ec'} = ((x_i), (\Ec'_i), (\ph'_i), \theta) \in \Sht^{ \Zc'}_{\Gc'}|_{U_{\Zc, \Zc'}^I}(S).$$
	For $\ell = 1, \ldots, n$, we denote by $\widetilde{\underline{\Ec'}_{y_k}} = (\widetilde{\Ec'}_{y_\ell}, \ph)$ the associated \'etale local shtuka of $\underline{\Ec'}$ at $y_\ell$ as defined in Remark \ref{remEtLocSht}. 
	The fibre product 
	$$S' =  S \times_{\underline{\Ec}',  \Sht^{\Zc'}_{\Gc'}|_{U_{\Zc, \Zc'}^I}, f_\ast}  \Sht^{\Zc}_{\Gc}|_{U_{\Zc, \Zc'}^I}$$
	is then given by the set of tuples $(\widetilde{\underline{\Ec'}_{y_\ell}})_{\ell = 1, \ldots, n}$ of \'etale local $\widetilde{\Gc_{\Oc_{y_\ell}}}$-shtukas such that $f_\ast \widetilde{\underline{\Ec'}_{y_\ell}}  = \widetilde{\underline{\Ec'}_{y_\ell}}$.
	As the claim is \'etale-local on $S$, we may assume that all $\widetilde{\Ec'}_{y_\ell}$ are trivial $L^+ \widetilde{\Gc'_{\Oc_{y_\ell}}}$-torsors.
	By Lemma \ref{lemTwistedFlagVariety} (3), the fibre product $S'$ is then representable by the scheme of Frobenius fixed points of $\prod_{\ell  = 1}^n \widetilde{L^+\Gc'_{\Oc_{y_\ell}}} / L^+ \widetilde{\Gc_{\Oc_{y_\ell}}}$, which is given by the constant scheme $\prod_{\ell  = 1}^n \underline{ \left(L^+ \widetilde{\Gc'_{\Oc_{y_\ell}}}  / L^+ \widetilde{\Gc_{\Oc_{y_\ell}}}  \right)(\Fq)}  $ by \cite[Lemma 3.3]{Varshavsky2004}. By Lemma \ref{lemTwistedFlagVariety} (1), this scheme can be identified with  $\prod_{\ell = 1}^n \underline{\Gc'(\Oc_{y_\ell})/\Gc(\Oc_{y_\ell})}$, and by Lemma \ref{lemTwistedFlagVariety} (2) it is finite over $\Fq$ using that the residue field of $\Oc_{y_\ell}$ is finite.
	\item 
	Let us fix a point $s \in \Sht^{\Zc'}_{\Gc'}$. If $s$ lies over $U$, it is in the image of $\Sht^{\Zc'}_{\Gc'}$ by (3). 
	Let us thus assume that $s$ maps to $X^I \setminus U$. 
	By the local model theorem (compare Proposition \ref{thmLocModThm}), we have a smooth map $\Sht^{\Zc'}_{\Gc'} \to [\Lc^{(r)}_{X^I}\Gc \backslash Z']$, where $Z'$ is the representative of $\Zc'$ over its reflex scheme. 
	As $\Zc'$ is generically defined, the image of $s$ in $[\Lc^{(r)}_{X^I} \Gc \backslash Z']$ has a generalisation $s'$ over $U$. 
	As the local model map is smooth, $s'$ lifts to a generalisation $s''$ of $s$ in $\Sht^{\Zc'}_{\Gc'}$.
	As $f_\ast$ is generically surjective by (3), there is a point $t \in \Sht_{\Gc}^{\Zc}$ mapping to $s''$. 
	As $f_\ast$ is proper by (2), specialisations lift along $f_\ast$. Hence, $s$ is in the image of $f_\ast$. 
\end{enumerate}

\end{proof}

We will usually apply the theorem in the following setting, which in particular applies to bounds defined by Schubert varieties.
\begin{corollary}
\label{corLvlMapParahoric}
Let $G$ be a reductive group over $F$, let $\Zc$ be a generically defined bound for $G$ (in the sense of Remark \ref{remBdGenFibre}) and let $f \colon \Gc \to \Gc'$ be a map of two smooth affine models of $G$ over $X$ that is an isomorphism over some dense open subset $U$ of $X$. The induced map
$$f_\ast \colon \Sht^{\Zc}_{\Gc} \to \Sht^{\Zc}_{\Gc'}$$
is schematic, separated and of finite type. Moreover, it is finite \'etale and surjective over $U^I \times_{X^I} \widetilde X_{\Zc}^I$.
When $\Gc$ is a parahoric Bruhat-Tits group scheme,  $f_\ast$ is proper and surjective.
\end{corollary}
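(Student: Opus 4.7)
The plan is to reduce the corollary to Theorem \ref{thmLvlMapGen} by verifying its hypotheses for the map of shtuka data $f \colon (\Gc, \Zc) \to (\Gc', \Zc)$, where by Remark \ref{remBdGenFibre} the generically defined bound $\Zc$ for $G$ simultaneously defines bounds in $\Gr_{\Gc, X}^{I_\bullet}$ and $\Gr_{\Gc', X}^{I_\bullet}$ (which we denote both by $\Zc$) by taking closures of a common generic fibre in $\prod_{i \in I} \Gr_G$. Since $\Gc \to \Gc'$ is an isomorphism over $U$, the induced map $f_\ast \colon \Gr_{\Gc, X}^{I_\bullet}|_{U^I} \to \Gr_{\Gc', X}^{I_\bullet}|_{U^I}$ is an isomorphism and hence identifies the representatives of $\Zc$ for $\Gc$ and $\Gc'$ over $U^I \times_{X^I} \widetilde{X}^I_{\underline F'}$, for any $I$-tuple of finite extensions $\underline F'$ over which representatives exist. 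Taking scheme-theoretic closures then shows that $f_\ast \colon Z_{\underline F'} \to Z'_{\underline F'}$ is well-defined, so that $f$ really is a map of shtuka data. Moreover, by Lemma \ref{lemGenRefl} the bound $\Zc$ for $\Gc'$ admits a representative over its reflex scheme, and clearly $\widetilde X^I_{\Zc.\Zc} = \widetilde X^I_\Zc$.

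Since $X$ is a curve, the dense open $U \subseteq X$ has finite complement, so the hypotheses of Theorem \ref{thmLvlMapGen} are satisfied. Part (1) of the theorem gives schematicness, separatedness and finiteness of the type of $f_\ast$. For the behaviour over $U^I$, the identification of representatives explained above is exactly the condition in Theorem \ref{thmLvlMapGen}(3), yielding that $f_\ast$ is finite \'etale and surjective over $U^I \times_{X^I} \widetilde X^I_\Zc$.

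In the parahoric case, Theorem \ref{thmLvlMapGen}(2) directly yields properness, and Theorem \ref{thmLvlMapGen}(4) yields surjectivity, once we observe that $\Zc$ is generically defined by assumption. Since these parts of the theorem are stated under exactly the hypotheses we have verified above, there are no further obstructions; the only substantive point is the scheme-theoretic closure argument showing that a generic isomorphism of group schemes, combined with the generically defined nature of $\Zc$, produces a bona fide map of shtuka data with identical bounds on both sides over $U^I$.
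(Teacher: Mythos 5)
Your proposal is correct and follows exactly the paper's (one-line) proof: the corollary is deduced by checking the hypotheses of Theorem \ref{thmLvlMapGen}, and your verification — that $f$ is a map of shtuka data via the generic identification of representatives and scheme-theoretic closure, that Lemma \ref{lemGenRefl} supplies a representative over the reflex scheme, that $\widetilde X^I_{\Zc.\Zc}=\widetilde X^I_\Zc$, and that the representatives agree over $U^I$ so that parts (3) and (4) apply — is precisely what the paper leaves implicit. No gaps.
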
 
\begin{proof}
The generically defined bound $\Zc$ for $\Gc$ and $\Gc'$ clearly satisfies the conditions of Theorem \ref{thmLvlMapGen}.
\end{proof}


\section{Torsors under Bruhat-Tits group schemes}
\label{sectBT}
We show that a Bruhat-Tits group scheme is the limit of all corresponding parahoric group schemes and use this observation to show that the induced map on the level of $\Bun_\Gc$ is an open immersion.
We first discuss (pseudo-)torsors for limits of groups.

\subsection{Pseudo-torsors for limits of groups}
We use the following result on pseudo-torsors under limits of groups.
For a sheaf of groups $\underline{G}$ on a site $\Cc$ we denote by $\mr{PTor}_{\underline G}$ the category of \emph{$\underline G$-pseudo-torsors} 
for $\underline G$ with $\underline G$-equivariant maps. 
In other words, an object of $\mr{PTor}_{\underline G}$ is given by a sheaf $E$ on $\Cc$ together with a (right) action $E \times \underline G \to E$ of $\underline G$ such that the induced map $E \times \underline G \to E \times E$ given by $(e,g) \mapsto (e, eg)$ is an isomorphism.
A map $f \colon \underline G \to \underline G'$ of sheaves of groups on $\Cc$ induces a functor $f_\ast \colon \mr{PTor}_{\underline G} \to \mr{PTor}_{\underline G'}$ given by $E \mapsto E\times^{\underline G} \underline G'$, where the action of $\underline G'$ is by right multiplication in the second factor. Moreover, the canonical map $(\id_E, \mathbf{1}_{\underline G'}) \colon E \to E \times^{\underline G} \underline G'$ is $\underline G$-equivariant for the $\underline G$-action on $E \times^{\underline G} \underline G'$ via $f$ on the second factor.

A $\underline{G}$-pseudo-torsor $E$ is a \emph{$G$-torsor} if for every object $U$ on $\Cc$ there is a cover $\{U_i \to U \colon i \in I\}$ of $U$ in $\Cc$ such that $\Gamma(U_i, E) \neq \emptyset$. We denote by $\Bf(\underline G)$ the full subcategory of $\mr{PTor}_{\underline G}$ of $\underline G$-torsors on $\Cc$. The map $f_\ast$ for a map of sheaves of groups $f \colon \underline G \to\underline  G'$ restricts to a map $f_\ast \colon \Bf(\underline G) \to \Bf(\underline G')$. 

\begin{lemma}
	\label{lemPTor}
	Let $I$ be a finite partially ordered set and let $(\underline G_i)_{i \in I}$ be a diagram of sheaves of groups over $I$.
	Let $\underline G = \varprojlim_{i \in I} \underline G_i$. Then $\underline G$ is a sheaf of groups on $\Cc$ together with a compatible system of projection maps $f_i \colon \underline G \to \underline G_i$. 
	The functor
	$$ \varprojlim_{i \in I} f_{i, \ast} \colon \mr{PTor}_{\underline G} \to \varprojlim_{i \in I} \mr{PTor}_{\underline G_i}, \qquad E \mapsto (E \times^{\underline G} \underline G_i)_{i \in I}$$
	has a right-adjoint given by 
	$$ \lim \colon \left(\varprojlim_{i \in I} \mr{PTor}_{\underline G_i}\right) \to \mr{PTor}_{\underline G},  \qquad (E_i)_{i \in I} \mapsto \varprojlim_{i \in I} E_i.$$
	Moreover, the restriction $ \varprojlim_{i \in I} f_{i, \ast}\colon \Bf(\underline G) \to \varprojlim_{i \in I} \Bf(\underline{G}_i)$ to the full subcategory of torsors is fully faithful.
\end{lemma}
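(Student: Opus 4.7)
The plan is to establish the adjunction by a direct computation on hom-sets, and then deduce full faithfulness on torsors by checking that the unit of the adjunction is an isomorphism on the subcategory $\Bf(\underline G)$, which is the standard reformulation of full faithfulness of a left adjoint.

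First I would verify that the proposed right adjoint $\lim$ is well-defined: for a compatible system $(E_i)_{i \in I}$ the limit sheaf $\varprojlim_i E_i$ carries a componentwise action of $\underline G = \varprojlim_i \underline G_i$, and the pseudo-torsor isomorphism $\varprojlim_i E_i \times \underline G \xrightarrow{\cong} \varprojlim_i E_i \times \varprojlim_i E_i$ follows from commuting finite limits with the given isomorphisms $E_i \times \underline G_i \cong E_i \times E_i$. For the adjunction itself, the key observation is that for $E \in \mr{PTor}_{\underline G}$ and a pseudo-torsor $E_i$ over $\underline G_i$, specifying a $\underline G_i$-equivariant map $\psi_i \colon E \times^{\underline G} \underline G_i \to E_i$ is equivalent to specifying a $\underline G$-equivariant map $\tilde\psi_i \colon E \to E_i$ (where $\underline G$ acts on $E_i$ through $f_i$), via precomposition with the canonical map $e \mapsto [e,1]$ and, conversely, extension by $[e,g] \mapsto \tilde\psi_i(e) \cdot g$. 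The compatibility with the transition maps in the limit category translates verbatim on both sides, so a morphism in $\varprojlim_i \mr{PTor}_{\underline G_i}$ with source $(E \times^{\underline G} \underline G_i)_i$ amounts to a compatible family of $\underline G$-equivariant maps $E \to E_i$, which by the universal property of the limit is the same as a single $\underline G$-equivariant map $E \to \varprojlim_i E_i$.

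For the second part, by standard adjunction theory the left adjoint $\varprojlim_i f_{i,\ast}$ is fully faithful on a subcategory $\Cc \subseteq \mr{PTor}_{\underline G}$ if and only if the unit $\eta_E \colon E \to \varprojlim_i (E \times^{\underline G} \underline G_i)$ is an isomorphism for every $E \in \Cc$. For $E \in \Bf(\underline G)$, I would use that $E$ admits local sections, so locally $E \cong \underline G$ as $\underline G$-torsors and consequently $E \times^{\underline G} \underline G_i \cong \underline G_i$. Locally, the map $\eta_E$ becomes the natural map $\underline G \to \varprojlim_i \underline G_i$, which is an isomorphism by definition of $\underline G$. Since being an isomorphism of sheaves is a local property, $\eta_E$ is globally an isomorphism, giving the claimed full faithfulness.

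The main potential obstacle is bookkeeping rather than a genuine conceptual difficulty: one must correctly set up the category $\varprojlim_i \mr{PTor}_{\underline G_i}$ (an object is a compatible system of pseudo-torsors equipped with $\underline G_i$-equivariant transition morphisms) and check that the bijections constructed above respect the transition data, which is immediate once the conventions are fixed.
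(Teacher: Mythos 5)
Your proposal is correct and follows essentially the same route as the paper's proof: verify that the limit of a compatible system is a $\underline G$-pseudo-torsor, establish the adjunction via the bijection between $\underline G_i$-equivariant maps out of $E\times^{\underline G}\underline G_i$ and $\underline G$-equivariant maps out of $E$, and deduce full faithfulness on $\Bf(\underline G)$ by checking that the unit is an isomorphism locally, where a trivialisation of $E$ reduces it to the defining isomorphism $\underline G\xrightarrow{\cong}\varprojlim_{i}\underline G_i$.
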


\begin{proof}
	As a first step, we show that $\varprojlim_{i \in I} E_i$ is indeed a pseudo-torsor for $\underline G$. The sheaf of groups $\underline G$ acts on $E_i$ by the action induced by $f_i$, and all these actions are compatible by the observation above that the reduction maps are equivariant. Hence, $\varprojlim_{i \in I} E_i$ carries a canonical $\underline G$-action.
	As all the $E_i$ are pseudo-torsors under $\underline G_i$, the induced map
	\begin{align*}
		\left( \varprojlim_{i \in I} E_i \right) \times \underline G & \to \left( \varprojlim_{i \in I} E_i \right) \times \left( \varprojlim_{i \in I} E_i \right) \\
		((e_i)_{i \in I}, g) & \mapsto ((e_i)_{i \in I}, (e_i f_i(g))_{i \in I})
	\end{align*}
	is an isomorphism, so $\varprojlim_{i \in I} E_i$ is a $\underline G$-pseudo-torsor.
	
	As a next step, we show that the limit is right adjoint to the family of projections.
	Let $(F_i)_{i \in I} \in \varprojlim_{i \in I} \mr{PTor}_{\underline G_i}$. 
	A $\underline G$-equivariant map $E \to F_i$ factors as $E \to E \times^{\underline G} \underline G_i \to F_i$ for a unique $\underline G_i$-equivariant map $E \times^{\underline G} \underline G_i \to F_i$. Hence, we get 
	$$ \Hom_{\mr{PTor}_{\underline G}}(E, \varprojlim_{i \in I} F_i) = \Hom_{\varprojlim_{i \in I} \mr{PTor}_{\underline G_i}}((E \times^{\underline G} \underline G_i )_{i \in I}, ( F_i)_{i \in I}).$$
	
	In order to see that the restriction to $\Bf(\underline G)$ is fully faithful, we check that the unit of the adjunction $E \mapsto \varprojlim_{i \in I} E \times^{\underline G} \underline G_i$ is an isomorphism for $E \in \Bf(\underline G)$. We can do so locally, so we may assume that $E$ is trivial. As all maps $E \to  E \times^{\underline G} \underline G_i$ are $\underline G$-equivariant, choosing a trivialisation of $E$ induces a compatible choice of trivialisations of all $E \times^{\underline G} \underline G_i$. Hence, the map $E \to \varprojlim_{i \in I} E \times^{\underline G} \underline G_i$ is given by $\underline G \to \varprojlim_{i \in I} \underline G_i$, which is an isomorphism by construction.
\end{proof}

\begin{remark}
	\label{remCounterLimTors}
	Note that given a compatible family of $\underline{G}_i$-torsors $(E_i)_{i \in I} \in \varprojlim_{i \in I} \Bf(G_i)$, their limit will in general not be a $\underline G$-torsor, as it might not be possible to produce a compatible system of sections for $(E_i)_{i \in I}$.
	For example, consider $G_1 = G_2 = \{e\}$ the trivial group and $G_3 = \Z/2$. Then $G_1 \times_{G_3} G_2 = \{e\}$ is again the trivial group. Let us moreover consider the sets $E_1 = E_2 = \{ \ast \}$ and $E_3 = \{a_1,a_2\}$. Then $E_i$ is a trivial $G_i$-torsor for all $i = 1,2,3$. However, under the maps $f_i \colon E_i \to E_3, \ast \mapsto a_i$ for $i = 1,2$, the fibre product $E_1 \times_{E_3} E_2$ is empty, hence in particular not a torsor under the trivial group.
\end{remark}

\subsection{Deeper Bruhat-Tits group schemes are limits of parahoric group schemes}

Let us briefly recall some facts from Bruhat-Tits theory. 
In this subsection, let $F$ be a discretely valued henselian field with ring of integers $\Oc$. We denote by $\mf \se \Oc$ its maximal ideal and by $k = \Oc/\mf$ its residue field.
Moreover, we denote by $K^{\mr{ur}}$ the maximal unramified extension inside some fixed algebraic closure of $K$, by  $\Oc^{\mr{ur}}$ its ring of integers and by $\breve{K}$ (respectively $\breve{\Oc}$) the completion of $K^{\mr{ur}}$ (respectively $\Oc^{\mr{ur}}$).

Let $G$ be a (connected) reductive group over $K$ such that $G$ is quasi-split over $K^{\mr{ur}}$. 
Note that $G$ is automatically quasi-split over $K^{\mr{ur}}$ when the cohomological dimension of $K^{\mr{ur}}$ is at most 1 by a theorem of Steinberg \cite[Theorem 1.9]{Steinberg1965} and Borel-Springer \cite[Section 8]{Borel1968}. This includes in particular the case that the residue field of $K$ is perfect and thus the case $K = k\dbr{\varpi}$ for a finite field $k$ we are interested in later.
Let us fix a maximal $K$-split torus $S \se G$.
We denote by $\Bc(G,K)$ the corresponding (reduced) Bruhat-Tits building and by $\Ac = \Ac(G,S, K) \se \Bc(G, K)$ the apartment corresponding to $S$.
Let $\Phi = \Phi(G,S)$ be the set of roots of $G$ with respect to $S$ and let $\Phi^+ \se \Phi$ be a system of positive roots. We denote by $\Phi^- = - \Phi^+$ and by $\Phi_{\mr{nd}}^{+} \se \Phi^+$ (respectively by $\Phi_{\mr{nd}}^{-} \se \Phi^-$) the subset of non-divisible positive (respectively negative) roots.

We consider the space of affine functionals $\Ac^\ast$ on $\Ac$ and  the set of affine roots $\Psi = \Psi(G,S) \se \Ac^\ast$ of $G$ with respect to $S$. For an affine functional $\psi \in \Ac^\ast$, let $\Hc_{\Psi} \se \Ac$ be the vanishing hyperplane for $\psi$ and let $\Hc_{\psi \geq 0} = \{x \in \Ac\colon \psi(x) \geq 0 \}$ (respectively $\Hc_{\psi \leq 0} = \{x \in \Ac\colon \psi(x) \leq 0 \}$) be the corresponding half-spaces. 
For an affine functional $\psi \in \Ac^\ast$, we denote by $\dot{\psi}$ its gradient. By construction, for $\psi \in \Psi$ we have $\dot{\psi} \in \Phi$.

For a non-empty bounded subset $\Omega \se \Ac$, we consider the corresponding (local) Bruhat-Tits group scheme\footnote{In the literature it is often additionally required that $\Omega$ is contained in a facet. We explicitly allow $\Omega$ to not be contained in the closure of a facet (this will be the interesting case later) and call $\Gc_\Omega$ with $\Omega$ contained in the closure of a facet a \emph{parahoric} (Bruhat-Tits) group scheme.}  $\Gc_\Omega$ constructed in \cite[§ 5.1.9 (resp. § 4.6.26)]{Bruhat1984}. It is the unique smooth affine $\Oc$-group scheme with generic fibre $G$, connected special fibre and $\Gc_\Omega(\Oc^{\mr{ur}}) = G(K^{\mr{ur}})^0_\Omega$, where $G(K^{\mr{ur}})^0_\Omega$ is the ``connected'' (pointwise) stabiliser of $\Omega$, by \cite[Remark 4]{Haines2008}.

For a bounded subset $\Omega \se \Ac$, we denote by $\mr{cl}(\Omega) = \bigcap_{\psi \in \Psi, \Omega \subseteq \Hc_{\psi \geq 0}} \Hc_{\psi \geq 0}$ the intersection of all half-spaces containing $\Omega$.
Then the corresponding Bruhat-Tits group scheme does not change when replacing $\Omega$ by $\mr{cl}(\Omega)$, compare \cite[§ 4.6.27]{Bruhat1984}.
Hence, we may always assume $\Omega = \mr{cl}(\Omega)$ in the following.
By construction, $\mr{cl}(\Omega)$ is convex.
For two bounded subsets $\Omega, \Omega'$ of $\Ac(G, S, K)$ with $\Omega = \mr{cl}(\Omega)$, we write $\Omega' \prec \Omega$ if $\Omega'$ is contained in $\Omega$.
In this case, we obtain an induced homomorphism of $\Oc$-group schemes $\rho_{\Omega', \Omega} \colon \Gc_{\Omega} \to \Gc_{\Omega'}$ whose restriction to the generic fibre is given by the identity on $G$.
Below, we often take limits over the partially ordered set $\{\ff \prec \Omega\}$ of facets contained in $\Omega$ ordered by inclusion. This poset is connected as $\Omega = \mr{cl}(\Omega)$ is connected.

For a root $a \in \Phi$ and $\Omega$ as above, we denote by $U_{a, \Omega} \se G(K)$ the corresponding root subgroup and by $\Uc_{a, \Omega}$ its integral model, which is a smooth affine $\Oc$-group scheme. 
As for the $\Gc_\Omega$, the group scheme  $\Uc_{a, \Omega}$ only depends on $\mr{cl}(\Omega)$ and for $\Omega' \prec \Omega$ there is a natural map $\Uc_{a, \Omega} \to \Uc_{a, \Omega'}$. 
These integral models are used to construct the \emph{big open cell}
$$ \prod_{a \in \Phi_{\mr{nd}}^{-}} \Uc_{a, \Omega} \times \Zc \times \prod_{a \in \Phi_{\mr{nd}}^{+}} \Uc_{a, \Omega} \hookrightarrow \Gc_\Omega,$$
which is an open immersion by \cite[§ 4.6.2]{Bruhat1984}, where $\Zc$ is the Bruhat-Tits group scheme of the centraliser $Z$ of $S$ defined over $\Oc$. Note that when $G$ is quasi-split, $Z = T$ is a maximal torus in $G$ and $\Zc = \Tc^0$ is the connected N\'eron model of $T$.

The main result of this section is the following theorem.
\begin{theorem}
	\label{thmBTGS}
	Let $G$ be a reductive group over $K$ such that $G$ is quasi-split over the maximal unramified extension $K^{\mr{ur}}$ of $K$. 
	Let $\Omega \se \Ac(G, S, K)$ be a bounded subset with $\Omega = \mr{cl}(\Omega)$.
	The map 
	$$\rho = \varprojlim_{\ff \prec \Omega} \rho_{\ff, \Omega} \colon \Gc_{\Omega} \to \varprojlim_{\ff \prec \Omega} \Gc_\ff$$
	induced by the $\rho_{\ff, \Omega}$ for facets $\ff \prec \Omega$ is an isomorphism of $\Oc$-group schemes. 
\end{theorem}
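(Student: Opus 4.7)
The plan is to reduce the statement, via the big open cell decomposition of Bruhat--Tits group schemes, to the analogous claim for each root subgroup together with the centralizer $\Zc$ of $S$.

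Since $\Omega$ is bounded, the set $\{\ff \prec \Omega\}$ is finite, so $\varprojlim_\ff \Gc_\ff$ is representable by an affine $\Oc$-group scheme of finite type, and $\rho$ is the identity on the common generic fibre $G$. Because finite limits of flat $\Oc$-schemes commute with flat base change, we may replace $\Oc$ by $\breve\Oc$ and assume $G$ is quasi-split. The big open cell gives an open immersion
\[ \Uc^-_\Omega \times_{\Oc} \Zc \times_{\Oc} \Uc^+_\Omega \hookrightarrow \Gc_\Omega, \qquad \Uc^{\pm}_\Omega := \prod_{a \in \Phi^{\pm}_{\mr{nd}}} \Uc_{a, \Omega}, \]
and the analogous cells for the $\Gc_\ff$ are compatible under the face maps $\rho_{\ff, \Omega}$. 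Since products commute with limits and $\Zc$ is independent of $\Omega$, the limit of the big open cells for $\Gc_\ff$ is the analogous product whose root factors are $\varprojlim_\ff \Uc_{a, \ff}$. The problem therefore reduces to (i) identifying $\Uc_{a, \Omega}$ with $\varprojlim_{\ff \prec \Omega} \Uc_{a, \ff}$ for every non-divisible root $a$, and (ii) extending the resulting isomorphism of big open cells to all of $\Gc_\Omega$.

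For (i), recall that each $\Uc_{a, \ff}$ is a smooth $\Oc$-form of $U_a \cong \Ga$ determined by the real ``depth'' parameter $f_{a, \ff}$ in the affine root filtration, and the face maps come from the subgroup inclusions $\Uc_{a, \ff}(\breve\Oc) \subseteq \Uc_{a, \ff'}(\breve\Oc)$ inside $U_a(\breve K)$. Because $\Omega$ is a compact union of facets and $a$ is affine, the supremum $f_{a, \Omega}$ is attained on some facet $\ff_a \prec \Omega$, giving $\Uc_{a, \Omega} = \Uc_{a, \ff_a}$. For every other $\ff \prec \Omega$ we have $f_{a, \ff} \leq f_{a, \ff_a}$, which provides a canonical scheme-theoretic map $\Uc_{a, \ff_a} \to \Uc_{a, \ff}$; by $\Oc$-flatness of both sides and the triviality of these maps on the generic fibre, the canonical maps agree with every composition of face maps in the diagram, so they assemble into a universal cone exhibiting $\Uc_{a, \ff_a} = \Uc_{a, \Omega}$ as the limit. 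For (ii), use that $\Gc_\Omega$ has geometrically connected fibres and is thus covered by left-translates of its big open cell under $\Gc_\Omega(\breve\Oc)$; since $\Gc_\Omega(\breve\Oc) = G(\breve K)^0_\Omega = \bigcap_{\ff \prec \Omega} G(\breve K)^0_\ff = (\varprojlim_\ff \Gc_\ff)(\breve\Oc)$, the same translates cover the image of $\rho$, so $\rho$ is an isomorphism. The main obstacle is the root-subgroup identification in (i): the transition maps in the diagram are dominant but not closed immersions, and not every pair of facets of $\Omega$ is comparable under the face relation, so verifying the compatibility of the canonical maps across the whole diagram requires the flatness/rigidity argument indicated above.
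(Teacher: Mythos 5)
Your reduction to $\breve{\Oc}$ and the idea of computing the limit directly on the big open cell constitute a genuinely different route from the paper, which instead proves that $\varprojlim_{\ff \prec \Omega}\Gc_\ff$ is \emph{smooth} by a deformation-theoretic induction (slicing $\Omega$ by affine root hyperplanes and lifting sections through square-zero extensions, Lemma \ref{lemTorsDefo}) and then concludes by the uniqueness (``\'etoff\'e'') property of \cite[\S 1.7.3]{Bruhat1984}. However, your argument has two genuine gaps. In (i) you only verify that the canonical maps $\Uc_{a,\ff_a}\to\Uc_{a,\ff}$ form a cone; the universality of that cone is asserted, not proved, and that is where all the content lies. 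On a non-flat test ring such as $k$ or $k[\epsilon]$ the transition maps $\Uc_{a,\ff}\to\Uc_{a,\ff'}$ reduce to the zero map whenever the depth parameters strictly decrease (compare the remark after Lemma \ref{lemLieAlgHyp}), so the limit of $k$-points is computed from a diagram of copies of $k$ in which some arrows are isomorphisms and others are zero; it has the correct size only if every facet not attaining the extremal depth is forced to zero by receiving a zero arrow, and this is exactly where the hypothesis $\Omega=\mr{cl}(\Omega)$ must enter (the non-convex $\GL_3$ configuration discussed after Lemma \ref{lemTorsDefo} is precisely a case where it fails). Your proof never invokes convexity, which signals that the key step is missing.

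The second and more serious gap is step (ii): ``the same translates cover the image of $\rho$, so $\rho$ is an isomorphism'' is a non sequitur, because what must be covered is the \emph{target} $\varprojlim_{\ff\prec\Omega}\Gc_\ff$, not the image of $\rho$. The limit is an iterated fibre product of smooth schemes along non-flat transition maps and can a priori fail to be flat over $\Oc$, with a special fibre of too large dimension whose extra components meet no translate of the (smooth, relative dimension $\dim G$) big cell; the fibre product $\Ga\times_{\varpi,\Ga,\varpi}\Ga=\Spec\Oc[x,y]/(\varpi x-\varpi y)$, whose special fibre is two-dimensional, illustrates the phenomenon. Ruling this out is equivalent to establishing flatness/smoothness of the limit, which is the actual content of the theorem and is what the paper's orbit-intersection argument in the root-space decomposition of the Lie algebra (using convexity) accomplishes. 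As written, your argument would at best exhibit $\rho$ as an open immersion onto a dense open subscheme of the limit, not an isomorphism.
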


We need some results on the deformation theory of torsors under (limits of) Bruhat-Tits group schemes.
For us, torsors are always taken with respect to the fppf-topology. 
However, torsors for smooth affine group schemes are always representable by a (necessarily smooth affine) scheme and thus have sections \'etale locally. 
The deformation theory of such sections of torsors can be controlled by the (dual of) the invariant differentials $\omega_{\Gc/\Oc} = e^{\ast} \Omega_{\Gc/\Oc}$, where $e \colon \Oc \to \Gc$ is the identity section, due to the following result.
\begin{lemma}
	\label{lemDefoLifts}
	Let $\Gc$ be a smooth affine $\Oc$-group scheme and let $R$ be an $\Oc$-algebra with an ideal $I$ of square $I^2 = 0$. We denote by $\overline{R} = R/I$ and $r \colon \Oc \to \overline{R}$ the induced map. Let $\Ec$ be a $\Gc$-torsor over $R$. Let $\gamma \in \Ec(\overline R)$ be a section of $\Ec$. 
	Then the set of all lifts of $\gamma$ to $R$ is a torsor under $\gf_{(R,I)} = r^\ast \omega_{\Gc/\Oc}^\vee \otimes_{\overline R} I$.
\end{lemma}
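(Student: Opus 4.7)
The plan is to reduce the lemma to the standard infinitesimal deformation theory of sections of smooth morphisms, and then use the torsor structure of $\Ec$ to identify the relevant sheaf with $\gf_{(R,I)}$.

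First I would note that $\Ec\to\Spec(R)$ is smooth and affine: since $\Gc_R:=\Gc\times_\Oc R$ is smooth over $R$ by base change and $\Ec$ is an fppf $\Gc_R$-torsor, smoothness (and affineness) descends along an fppf trivialising cover. Because $\Spec(\overline R)\hookrightarrow\Spec(R)$ is a first-order thickening ($I^2=0$), formal smoothness of $\Ec\to\Spec(R)$ implies that the section $\gamma$ lifts to an $R$-point of $\Ec$, so the set of lifts is non-empty.

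Second, I would invoke the general deformation-theoretic fact that if $X\to\Spec(R)$ is smooth and $\Spec(\overline R)\hookrightarrow\Spec(R)$ is a square-zero thickening with ideal $I$, then the set of lifts of any section $\gamma\in X(\overline R)$ is a (non-empty) torsor under the $\overline R$-module $\Hom_{\overline R}(\gamma^\ast\Omega_{X/R},I)$. Concretely, any two lifts differ by a unique $\overline R$-derivation with values in $I$, and the resulting action is simply transitive.

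The main task is then to identify $\gamma^\ast\Omega_{\Ec/R}$ with $r^\ast\omega_{\Gc/\Oc}$. For this I would use the shear isomorphism $\alpha\colon\Ec\times_R\Gc_R\xrightarrow{\sim}\Ec\times_R\Ec$, $(e,g)\mapsto(e,eg)$, coming from the torsor structure. Differentiating $\alpha$ along the identity section of $\Gc_R$ over $\Ec$ produces a canonical $\Oc_\Ec$-linear isomorphism $T_{\Ec/R}\cong\Oc_\Ec\otimes_\Oc\gf$ with $\gf=\omega_{\Gc/\Oc}^\vee$, whose dual is $\Omega_{\Ec/R}\cong\Oc_\Ec\otimes_\Oc\omega_{\Gc/\Oc}$. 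Pulling back along $\gamma$ yields $\gamma^\ast\Omega_{\Ec/R}\cong r^\ast\omega_{\Gc/\Oc}$, and hence
$$\Hom_{\overline R}(\gamma^\ast\Omega_{\Ec/R},I)\;\cong\;r^\ast\omega_{\Gc/\Oc}^\vee\otimes_{\overline R}I\;=\;\gf_{(R,I)},$$
completing the proof. The only mildly non-routine step is the identification $\Omega_{\Ec/R}\cong\Oc_\Ec\otimes_\Oc\omega_{\Gc/\Oc}$ via the shear isomorphism, but this is a standard fact about torsors under smooth affine group schemes and uses nothing specific about Bruhat-Tits theory.
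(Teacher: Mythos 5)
Your proposal is correct and follows essentially the same route as the paper: both reduce to the standard fact (SGA1, Exp.\ III, Cor.\ 5.2) that lifts of a section of a smooth morphism across a square-zero thickening form a torsor under $\Hom_{\overline R}(\gamma^\ast\Omega_{\Ec/R},I)$, and then identify $\gamma^\ast\Omega_{\Ec/R}$ with $r^\ast\omega_{\Gc/\Oc}$. The only (cosmetic) difference is that you make this identification canonically via the shear isomorphism, whereas the paper first uses the existence of a lift to trivialise $\Ec$ and identifies the lift with the unit section of $\Gc_R$.
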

\begin{proof}
	This is essentially a special case of \cite[Expos\'e III, Corollaire 5.2]{SGA1}.
	Recall that $\Ec$ is representable by a smooth affine $\Oc$-scheme. In particular, there exist lifts of $\gamma$ to $R$, so $\Ec$ is trivial. So let us fix a lift $\gamma'$ of $\gamma$ and a trivialisation of $\Ec$ that identifies the section $\gamma'$ with the unit in $\Gc_R$.
	By  \cite[Expos\'e III, Corollaire 5.2]{SGA1}, the set of lifts of $\gamma$ is then a torsor under
	$$ \gamma^{\ast} \Omega^\vee_{\Ec/R} \otimes_{\overline R} I \cong  r^\ast e^{\ast} \Omega^\vee_{\Gc/\Oc} \otimes_{\overline R} I = r^\ast \omega_{\Gc/\Oc}^\vee \otimes_{\overline R} I.$$ 
\end{proof}
We use the following lemma to relate the deformation theory problem to the combinatorics in the Bruhat-Tits building.
\begin{lemma}[{compare \cite[§ 4.6.41]{Bruhat1984}}]
	\label{lemLieAlgHyp}
	Assume that $G$ is quasi-split.
	Let $\psi \in \Ac^\ast$ be an affine functional with gradient $a = \dot{\psi}$. Let $\Omega \se \Ac$ be a bounded subset such that $\Omega \se \Hc_{\psi \leq 0}$. Let moreover $\Omega' \prec \Omega$ such that $\Omega' \se \Hc_{\psi}$. 
	Then the natural map $\omega^{\vee}_{\Uc_{a, \Omega}/\Oc} \to \omega^\vee_{\Uc_{a, \Omega'}/\Oc}$ is an isomorphism.
\end{lemma}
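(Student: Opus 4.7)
The plan is to reduce the claim to a comparison of $\Oc$-lattices inside the common Lie algebra $\mathfrak{u}_a := \mathrm{Lie}(U_a)$ of the ambient (generic) root subgroup $U_a$ over $K$. First I would recall that for $G$ quasi-split and $a \in \Phi$ non-divisible, the Bruhat-Tits root group scheme $\Uc_{a, \Omega}$ is an explicit $\Oc$-form of $U_a$ (a Weil restriction of a vector group in the non-multipliable case, with an extra two-step filtration involving $\Uc_{2a, \Omega}$ in the multipliable case), whose invariant tangent space $\omega^\vee_{\Uc_{a, \Omega}/\Oc}$ is identified with an $\Oc$-sublattice $\Lambda_{a, \Omega} \se \mathfrak{u}_a$. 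By BT II \S4.2--\S4.6, this lattice depends only on the real numbers
\[ f_\Omega(a) := \sup_{x \in \Omega} \bigl(-\tilde\psi(x)\bigr), \qquad \text{and, if } a \text{ is multipliable, } f_\Omega(2a), \]
for any fixed choice of affine functional $\tilde\psi$ with gradient $a$, and the reduction map $\Uc_{a, \Omega} \to \Uc_{a, \Omega'}$ induces on Lie algebras the natural inclusion of these lattices.

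Next I would establish the key combinatorial observation: $f_\Omega(a) = f_{\Omega'}(a)$ (and similarly for $2a$). For any affine functional $\psi'$ with $\dot\psi' = a$, the difference $\psi' - \psi$ is a constant $c \in \R$. Since $\Omega \se \Hc_{\psi \leq 0}$, we have $\psi'(x) \le c$ for all $x \in \Omega$, and since $\emptyset \ne \Omega' \se \Hc_\psi$, we have $\psi'(x) = c$ for all $x \in \Omega'$. Because $\Omega' \se \Omega$, it follows that
\[ \sup_{x \in \Omega} \psi'(x) = c = \sup_{x \in \Omega'} \psi'(x). \]
Applying this to $\psi' = -\tilde\psi$ yields $f_\Omega(a) = f_{\Omega'}(a)$. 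For the multipliable case, the same argument with $2\psi$ in place of $\psi$ (noting that $\Hc_{\psi \leq 0} = \Hc_{2\psi \leq 0}$ and $\Hc_\psi = \Hc_{2\psi}$) gives $f_\Omega(2a) = f_{\Omega'}(2a)$.

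Combining these two steps, $\Lambda_{a, \Omega} = \Lambda_{a, \Omega'}$ as sublattices of $\mathfrak{u}_a$, so the natural inclusion $\omega^\vee_{\Uc_{a, \Omega}/\Oc} \hookrightarrow \omega^\vee_{\Uc_{a, \Omega'}/\Oc}$ is an equality. The main obstacle will be to pin down the precise dictionary between $\Lambda_{a, \Omega}$ and the numerical invariants $f_\Omega(a), f_\Omega(2a)$; in particular, in the multipliable case one has to verify the claim separately on the successive quotients $\Uc_{a, \Omega}/\Uc_{2a, \Omega}$ and $\Uc_{2a, \Omega}$, after which everything assembles by the five lemma on Lie algebras.
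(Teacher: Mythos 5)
Your overall strategy is the same as the paper's: the paper's entire proof consists of the assertion that $U_{a,\Omega}=U_{a,\Omega'}$ as subgroups of $G(K)$ ``by assumption'', from which equality of the integral models and hence of the invariant differentials follows. You are supplying the verification of that group-theoretic equality via the optimization functions $f_\Omega$, and you also treat the multipliable case, which the paper does not mention. The difficulty is a sign slip at exactly the point where the hypotheses enter. Your combinatorial identity $\sup_{x\in\Omega}\psi'(x)=\sup_{x\in\Omega'}\psi'(x)$ is proved for affine functionals $\psi'$ with $\dot{\psi}'=a$ (so that $\psi'-\psi$ is constant), but you then apply it to $\psi'=-\tilde\psi$, whose gradient is $-a$. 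With your own (standard Bruhat--Tits) normalization $f_\Omega(a)=\sup_{x\in\Omega}(-\tilde\psi(x))=-\inf_{x\in\Omega}\tilde\psi(x)$, the hypotheses $\Omega\se\Hc_{\psi\leq 0}$ and $\Omega'\se\Hc_\psi\cap\Omega$ control the \emph{supremum} of $\tilde\psi$ over $\Omega$ (it equals its constant value on $\Omega'$) but not the \emph{infimum}: one has $\inf_\Omega\tilde\psi<\inf_{\Omega'}\tilde\psi$ as soon as $\Omega$ contains a point with $\psi(x)<0$. What your argument actually yields is $f_\Omega(-a)=f_{\Omega'}(-a)$, i.e.\ constancy of the root group for $-a$. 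Concretely, for $\GL_2$ with $a(x)=x$ on $\Ac\cong\R$, $\psi=a-1$, $\Omega=[0,1]$, $\Omega'=\{1\}$, one computes $f_\Omega(a)=0$ but $f_{\Omega'}(a)=-1$, whereas $f_\Omega(-a)=1=f_{\Omega'}(-a)$.

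This is precisely the sign-sensitive point that the paper's one-line proof leaves implicit, so before concluding you must pin down which convention for $U_{a,\Omega}$ is in force, equivalently on which side of $\Hc_\psi$ the subset $\Omega$ may vary without changing $U_{a,\cdot}$; as written, your chain of equalities does not close, and the statement you end up proving is the one with $a$ replaced by $-a$. The remaining ingredients of your proposal --- the identification of $\omega^{\vee}_{\Uc_{a,\Omega}/\Oc}$ with a lattice in $\uf_a$ depending only on $f_\Omega(a)$ (and $f_\Omega(2a)$), and the d\'evissage through $\Uc_{2a,\Omega}$ in the multipliable case --- are reasonable in outline and go beyond what the paper records.
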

\begin{proof}
	By assumption, we have $U_{a, \Omega} = U_{a, \Omega'}$ as subgroups of $G(K)$. Hence, the induced maps on integral models and consequently on invariant differentials are isomorphisms.
\end{proof}
Note that in the situation of the lemma when $\Omega \cap \Hc_{\psi < 0} \neq \emptyset$ the induced map on Lie algebras for the negative root groups
$$\mr{Lie}(\Uc_{-a, \Omega, k}) = \omega^{\vee}_{\Uc_{-a, \Omega}/\Oc} \otimes_\Oc k \to \mr{Lie}(\Uc_{-a, \Omega', k})  = \omega^{\vee}_{\Uc_{-a, \Omega'}/\Oc} \otimes_\Oc k$$ in the special fibre of $\Spec(\Oc)$ typically (in particular when $a$ is non-divisible and $2a$ is not a root) is the zero map by {\cite[§ 4.6.41]{Bruhat1984}}.

Let $(\Ec_\ff)_{\ff \prec \Omega} \in \varprojlim_{\ff \prec \Omega} \Bf(\Gc_\ff)(R)$ be a compatible system of $\Gc_\ff$-torsors.
We use the previous two lemmas to construct compatible lifts of sections of $\Ec_\Omega = \varprojlim_{\ff \prec \Omega} \Ec_\ff$.
This serves two purposes: On the one hand, we use this result for the trivial torsors $\Ec_\ff = \Gc_\ff$ to show that we can lift sections from the special fibre of $\varprojlim_{\ff \prec \Omega} \Gc_\ff$ in the proof of Theorem \ref{thmBTGS} and on the other hand, we use it in the proof of Proposition \ref{propCritTors}, which gives a criterion when $\Ec_\Omega$ is actually a $\Gc_\Omega$-torsor. 
For a subset $\Omega' \prec \Omega$ we denote by $ \Ec_{\Omega'}= \varprojlim_{\ff \prec \Omega'} \Ec_\ff$.

\begin{lemma}
	\label{lemTorsDefo}
	Assume that $G$ is quasi-split. Let $R$ be an $\Oc$-algebra with an ideal $I$ of square $I^2 = 0$. We denote by $\overline{R} = R/I$.
	
	\begin{enumerate}
		\item 
		\label{lemTorsDefoPair}
		Let $\Omega_1, \Omega_2 \prec \Omega$ be two bounded subsets such that $\Omega_1 = \mr{cl}(\Omega_1)$, $\Omega_2 = \mr{cl}(\Omega_2)$ and that $\Omega_1 \cap \Omega_2$ is contained in an affine root hyperplane $\Hc_\psi$ for some $\psi \se \Psi$. Assume moreover that $\Omega_1 \cup \Omega_2$ is convex and that $\Omega_1 \se \Hc_{\psi \geq 0}$ and $\Omega_2 \se  \Hc_{\psi \leq 0}$ lie in different half-spaces.
		
		Assume that the assertion of Theorem \ref{thmBTGS} holds for $\Gc_{\Omega_1}$ and $\Gc_{\Omega_2}$.
		Assume that there is a section $\gamma \in \Ec_{\Omega_1 \cup \Omega_2}(\overline{R})$ and deformations $\gamma_{\Omega_1} \in  \Ec_{\Omega_1}(R)$ and $\gamma_{\Omega_2} \in  \Ec_{\Omega_2}(R)$ of the images of $\gamma$ in $\Ec_{\Omega_1}$ and $\Ec_{\Omega_2}$, respectively. 
		Then there exists a deformation $\gamma_{\Omega_1 \cup \Omega_2} \in  \Ec_{\Omega_1 \cup \Omega_2}(R)$ of $\gamma$.
		
		\item 
		\label{lemTorsDefoSlice}
		Let now $\Omega' = \mr{cl}(\Omega') \prec \Omega$ and let $a \in \Phi^+_{\mr{nd}}$ and let $\psi_1 < \psi_2 < \ldots < \psi_m$ be the affine roots with gradient $\dot{\psi_i} = a$ such that $\Omega \cap \Hc_{\psi_i} \neq \emptyset$.
		We denote by $\Omega_i = \overline{(\Omega \cap \Hc_{\psi_i \leq 0} )\setminus \Omega_{i-1}}$ for $i = 1, \ldots, m$ with $\Omega_0 = \emptyset$ and $\Omega_{m+1} = \Omega \setminus (\Omega_m\setminus \Hc_{\psi_m}) $.
		
		Assume that the assertion of Theorem \ref{thmBTGS} holds for $\Gc_{\Omega_i}$ for $i = 1, \ldots, m+1$.
		Assume that there is a section $\gamma \in \Ec_{\Omega'}(\overline{R})$ and deformations $\gamma_{\Omega_i} \in  \Ec_{\Omega_i}(R)$ of the image of $\gamma$ in $\Ec_{\Omega_i}$ for all $1 \leq i \leq m+1$.
		Then there exists a deformation $\gamma_{\Omega'} \in  \Ec_{\Omega'}(R)$ of $\gamma$.
	\end{enumerate}	
\end{lemma}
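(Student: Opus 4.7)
For part~\ref{lemTorsDefoPair}, my plan is first to identify $\Ec_{\Omega_1 \cup \Omega_2}(R)$ with the fibre product $\Ec_{\Omega_1}(R) \times_{\Ec_{\Omega_1 \cap \Omega_2}(R)} \Ec_{\Omega_2}(R)$. This uses that any facet $\ff \prec \Omega_1 \cup \Omega_2$ is connected and hence lies entirely in $\Omega_1$ or in $\Omega_2$ — in both exactly when $\ff \prec \Omega_1 \cap \Omega_2$ — so the indexing poset for the limit decomposes as a pushout. To produce a point of this fibre product, I would modify $\gamma_{\Omega_1}$ and $\gamma_{\Omega_2}$ so that their restrictions to $\Ec_{\Omega_1 \cap \Omega_2}$ coincide. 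By Lemma~\ref{lemDefoLifts} the two given restrictions differ by an element $\delta \in \gf_{\Omega_1 \cap \Omega_2, (R,I)}$ and admissible modifications of $\gamma_{\Omega_i}$ live in $\gf_{\Omega_i, (R,I)}$, so the task reduces to surjectivity of the difference map
$$\gf_{\Omega_1, (R,I)} \oplus \gf_{\Omega_2, (R,I)} \longrightarrow \gf_{\Omega_1 \cap \Omega_2, (R,I)}, \quad (\epsilon_1, \epsilon_2) \mapsto \epsilon_1|_{\Omega_1 \cap \Omega_2} - \epsilon_2|_{\Omega_1 \cap \Omega_2}.$$

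Using the big open cell decomposition of $\Gc_\Omega$ one obtains $\omega^\vee_{\Gc_\Omega/\Oc} \cong \mr{Lie}(\Zc) \oplus \bigoplus_{a \in \Phi^+_{\mr{nd}} \cup \Phi^-_{\mr{nd}}} \omega^\vee_{\Uc_{a,\Omega}/\Oc}$, so I can verify surjectivity summand by summand. The torus summand is independent of $\Omega$, so nothing is to check. For $a = \pm \dot\psi$, Lemma~\ref{lemLieAlgHyp} applied with $\psi$ or $-\psi$ yields that one of the two maps $\omega^\vee_{\Uc_{a, \Omega_i}/\Oc} \to \omega^\vee_{\Uc_{a, \Omega_1 \cap \Omega_2}/\Oc}$ is already an isomorphism. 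For every other root $a$ I will prove the sharper geometric fact that $\Uc_{a, \Omega_1 \cap \Omega_2} = \Uc_{a, \Omega_i}$ for at least one $i \in \{1,2\}$, which again produces the required isomorphism via Lemma~\ref{lemLieAlgHyp}.

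The hard part will be this geometric dichotomy. Let $\psi_a$ be the maximal affine root with gradient $a$ such that $\Omega_1 \cap \Omega_2 \se \Hc_{\psi_a \leq 0}$. Arguing by contradiction, suppose both $\Omega_1$ and $\Omega_2$ contain points $x_i$ with $\psi_a(x_i) > 0$. Then the segment $[x_1, x_2]$ lies in $\Omega_1 \cup \Omega_2$ by convexity and in $\Hc_{\psi_a > 0}$ because that half-space is convex; at the same time $\psi$ passes from $\geq 0$ at $x_1$ to $\leq 0$ at $x_2$, so some $y \in [x_1, x_2] \cap \Hc_\psi$ still satisfies $\psi_a(y) > 0$. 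Choosing $y' \in \Omega_2$ with $\psi(y') < 0$ (the degenerate sub-case $\Omega_2 \se \Hc_\psi$ admits a symmetric argument), the segment from $y$ towards $y'$ immediately enters $\Hc_{\psi < 0}$; by convexity of $\Omega_1 \cup \Omega_2$ it stays in $\Omega_1 \cup \Omega_2$, and since $\Omega_1 \se \Hc_{\psi \geq 0}$ it must lie in $\Omega_2$. Passing to the limit and using that $\Omega_2$ is closed yields $y \in \Omega_2$, and symmetrically $y \in \Omega_1$, so $y \in \Omega_1 \cap \Omega_2 \se \Hc_{\psi_a \leq 0}$, contradicting $\psi_a(y) > 0$.

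Part~\ref{lemTorsDefoSlice} will follow by iterating part~\ref{lemTorsDefoPair} along the strip decomposition of $\Omega$. Adjacent strips $\Omega_j$ and $\Omega_{j+1}$ intersect in $\Omega \cap \Hc_{\psi_j}$, their union is convex as a slice of the convex set $\Omega$, and they lie on opposite sides of $\Hc_{\psi_j}$, so the hypothesis of part~\ref{lemTorsDefoPair} is satisfied at each merging step. First glueing $\gamma_{\Omega_m}$ with $\gamma_{\Omega_{m+1}}$ and then successively absorbing $\gamma_{\Omega_{m-1}}, \ldots, \gamma_{\Omega_1}$ produces a section of $\Ec_\Omega(R)$ reducing modulo $I$ to the image of $\gamma$, and its restriction along $\Omega' \prec \Omega$ is the desired $\gamma_{\Omega'}$. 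For the iteration to be sound, Theorem~\ref{thmBTGS} is needed for the progressively merged unions $\Omega_m \cup \cdots \cup \Omega_j$; this will be arranged by carrying out the overall proof of Theorem~\ref{thmBTGS} by induction on the number of affine root hyperplanes cutting $\Omega$, so that at each step the present lemma is only invoked for subsets whose merged unions have strictly lower complexity and are already covered by the induction hypothesis.
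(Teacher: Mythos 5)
Your proposal is correct and follows essentially the same route as the paper: reduce via Lemma~\ref{lemDefoLifts} and the big open cell to a root-space-by-root-space statement, use Lemma~\ref{lemLieAlgHyp} for the torus part and for $a=\pm\dot\psi$, and for the remaining roots use convexity of $\Omega_1\cup\Omega_2$ to show that one of the two restriction maps on $\uf_a$ is an isomorphism, then iterate across the slices for part~(2). The only differences are cosmetic: you phrase the gluing as surjectivity of the difference map rather than non-emptiness of the intersection of orbits, you spell out the convexity dichotomy that the paper asserts in one line (a short connectedness argument on the segment $[x_1,x_2]$ would streamline your case analysis and absorb the degenerate sub-cases), and you make explicit the induction bookkeeping needed so that Theorem~\ref{thmBTGS} is available for the partial unions in part~(2).
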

We will prove Theorem \ref{thmBTGS} by induction on $\Omega$ and use this lemma in the inductive step. Hence, it is feasible to assume the validity of Theorem \ref{thmBTGS} for subsets of $\Omega$ here. Once we have established  Theorem \ref{thmBTGS} in full (in particular for the application of the lemma in the proof of Proposition \ref{propCritTors}), these conditions of course are vacuous.
Before we give the proof of the lemma, let us briefly discuss an example that nicely illustrates the main idea.
\begin{example} 
	We consider $G = \GL_2$ over $K = \Fq \dbr{\varpi}$ with $T$ the split maximal diagonal torus. 
	Then $X^\ast(T) \cong \Z^2$ with roots $\Phi = \{\pm (1,-1) \} \se X^\ast(T)$, where the choice of the positive root $a = (1,-1)$ corresponds to the choice of the Borel subgroup given by upper triangular matrices. 
	Let us consider the interval $\Omega = [0,2] \se \R \cong \Ac(\GL_2, T)$ with $\Omega_1 = [0,1]$ and $\Omega_2 = [1,2]$.
	
	\begin{center}
		\begin{tikzpicture}
			\draw (-2,0)-- (6,0);
			\foreach \x in {0,1,2} {
				\draw (2*\x,0.125) -- (2*\x,-0.125) node[below] {\x};
			}
			\draw (1,0.25) node {$\Omega_1$};
			\draw (3,0.25) node {$\Omega_2$};
		\end{tikzpicture}
	\end{center}
	
	Let us consider the case $R = \Fq \dsq{\varpi}/(\varpi^2)$ and $\overline{R} = R/(\varpi) = \Fq$. In this case, for a smooth affine group scheme $\Gc$ over $\Oc$, the module $\gf = e^\ast \omega^\vee_{\Gc/\Oc} \otimes_\Fq (\varpi)/(\varpi^2)$ is given by the tangent space of $\Gc$ at the identity section in its special fibre. 
	Let us assume we are in the situation of Lemma \ref{lemTorsDefo} (\ref{lemTorsDefoPair}).
	We are given a section $\gamma \in \Ec_{[0,2]}(\Fq)$ and sections $\gamma_{[0,1]} \in \Ec_{[0,1]}(\Fq\dsq{\varpi}/(\varpi^2))$ and $\gamma_{[1,2]} \in \Ec_{[1,2]}(\Fq\dsq{\varpi}/(\varpi^2))$ that lift $\gamma$. 
	Recall that by Lemma \ref{lemDefoLifts}, for $\Omega' \prec \Omega$ the set of all lifts of $\gamma$ in $\Ec_{\Omega'}$ is a torsor under $\gf_{\Omega'}$. 
	Hence, after fixing a trivialisation of $\Ec_{\{1\}}$, the images of the lifts $\gamma_{[0,1]}, \gamma_{[1,2]}$ in $\Ec_{\{1\}}$ induce points in $\gf_{\{1\}}$.
	Thus, the question becomes if the intersection of the orbits $\gf_{[0,1]}. \gamma_{[0,1]} \cap \gf_{[1,2]}. \gamma_{[1,2]}$ in $\gf_{\{1\}}$ is non-empty, where $\gf_{[0,1]}$ acts via the natural map $\gf_{[0,1]} \to \gf_{\{1\}}$, similarly for $\gf_{[1,2]}$. 
	
	For $\Omega' \prec \Omega$, we decompose the Lie algebras into its root spaces $\gf_{\Omega'} = \uf_{a, \Omega'} \oplus \hf \oplus \uf_{-a, \Omega'}$, where $a =(1,-1)$ is the positive root. 
	In this situation, the root spaces $\uf_{\pm a, \Omega'}$ are one-dimensional while the Cartan $\hf$ is two-dimensional.
	Then the induced map $\gf_{[0,1]} \to \gf_{\{1\}}$ is the identity on the Cartan algebra $\hf$ as well as on the positive root space $\uf_{a, [0,1]} = \uf_{a, \{1\}}$ by Lemma \ref{lemLieAlgHyp} while it is the zero map $\uf_{-a, [0,1]} \to \uf_{-a, \{1\}}$ on the negative root spaces.
	By a similar argument, for the second facet $\Omega_2 = [1,2]$ the map $\gf_{[1,2]} \to \gf_{\{1\}}$ is the identity on the Cartan and the negative root space, while it is the zero map on the positive root space.
	
	Decomposing the lifts $\gamma_{[0,1]}$ and $\gamma_{[1,2]}$ in their components, this shows that by the action of $\gf_{[0,1]}$ we can guarantee that the $\uf_{a}$-components agree and by the action of $\gf_{[1,2]}$ we can get matching components in the $\uf_{-a}$-component. This shows the non-emptiness of the intersection of the orbits and hence the existence of a compatible set of lifts.
	
	In order to guarantee the correct mapping property in the other directions, it is necessary to have the convexity assumption. This can be seen in the following example in the $\GL_3$-case:
	
	\begin{center}
		\begin{tikzpicture}
			\draw (-0.5,0)-- (4.5,0);
			\draw (-0.5,1.7321)-- (4.5,1.7321);
			\draw (-0.1443, -0.25) -- (1.1443, 1.9821);
			\draw (1.8557, -0.25) -- (3.1443, 1.9821);
			\draw (2.1443, -0.25) -- (0.8557, 1.9821);
			\draw (4.1443, -0.25) -- (2.8557, 1.9821);
			\draw (1,0.6) node {$\Omega_1$};
			\draw (3,0.6) node {$\Omega_2$};
			\draw (5,0) node {$\Hc_\psi$};
			\draw [-stealth](6,0.25) -- (6,1.25);
			\draw (6.25,0.75) node {$a$};
			\draw [gray] (2,1.1) node {$\Omega_3$};
		\end{tikzpicture}
	\end{center}
	
	We are given two chambers $\Omega_1$ and $\Omega_2$ in the standard apartment in the Bruhat-Tits building of $\GL_3$ that intersect in a single vertex. 
	In particular, $\Omega_1 \cup \Omega_2$ is not convex. 
	The base of both of the triangles lies in some affine root hyperplane $\Hc_\psi$ with $\dot{\psi} = a$ while both $\Omega_1$ and $\Omega_2$ are contained in the positive half space $\Hc_{\psi \geq 0}$. 
	But this means that both $\uf_{a, \Omega_1} \to \uf_{a, \Omega_1 \cap \Omega_2}$ and  $\uf_{a, \Omega_2} \to \uf_{a, \Omega_1 \cap \Omega_2}$ are the zero maps.
	Hence, it is in general not possible to lift sections in this situation.	 
	
	The difference to the convex case is the following. We have $\mr{cl}(\Omega_1 \cup \Omega_2) = \Omega_1 \cup \Omega_2 \cup \Omega_3$, where $\Omega_3$ is the triangle ``between'' $\Omega_1$ and $\Omega_2$. For a pair of $\Gc_{\Omega_1}$- (respectively $\Gc_{\Omega_2}$-) torsors $\Ec_{\Omega_1}$ and $\Ec_{\Omega_2}$ the existence of a compatible $\Gc_{\Omega_3}$-torsor $\Ec_{\Omega_3}$ (such a torsor does not exist in general!) can be interpreted as a compatibility condition on the $a$-root spaces, as it will guarantee by Lemma \ref{lemTorsDefo} (\ref{lemTorsDefoPair}) that for two given lifts $\gamma_{\Omega_1} \in \Ec_{\Omega_1}(\Fq \dsq{\varpi}/(\varpi^2))$ and $\gamma_{\Omega_2} \in \Ec_{\Omega_2}(\Fq \dsq{\varpi}/(\varpi^2))$ their image in $\uf_{a, \Omega_1 \cap \Omega_2}$ agrees.
\end{example}

\begin{proof}[Proof of Lemma \ref{lemTorsDefo}]
	\begin{enumerate}
		\item 
		Given some $\Omega' \prec \Omega$ (for which Theorem \ref{thmBTGS} holds), the set of all lifts of $\gamma \in \Ec_{\Omega'}(\overline{R})$ to $\Ec_{\Omega'}(R)$ is a torsor under $\gf_{\Omega'} = \gf_{\Omega', (R,I)}$ (if such lifts exist at all) by Lemma \ref{lemDefoLifts}. Using the decomposition of the big open cell in $\Gc_{\Omega'}$, we can decompose $\gf_{\Omega'}$ into the root spaces as 
		$$\gf_{\Omega'} = \bigoplus_{a \in \Phi^-_{\mr{nd}}} \uf_{a, \Omega'} \oplus \hf \oplus \bigoplus_{a \in \Phi^+_{\mr{nd}}} \uf_{a, \Omega'}.$$ 
		After fixing a trivialisation of $\Ec_{\Omega_1 \cap \Omega_2}$, the images of the lifts $\gamma_{\Omega_1}$ and $\gamma_{\Omega_2}$ in $\Ec_{\Omega_1 \cap \Omega_2}$ thus define elements of $\gf_{\Omega_1 \cap \Omega_2}$. 
		The question whether there exists a lift $\gamma_{\Omega_1 \cup \Omega_2} \in \Ec_{\Omega_1 \cup \Omega_2}(R)$ of $\gamma$, or in other words, a compatible pair of lifts  $\gamma'_{\Omega_1}$ and $\gamma'_{\Omega_2}$ in $\Ec_{\Omega_1}$ (respectively in $\Ec_{\Omega_2}$), 
		is thus the question if the orbits in $\gf_{\Omega_1 \cap \Omega_2}$ have a non-empty intersection
		$$\gf_{\Omega_1}. \gamma_{\Omega_1} \cap \gf_{\Omega_2}. \gamma_{\Omega_2} \neq \emptyset.$$ 
		We treat this question componentwise with respect to the decomposition into root spaces. 
		On the torus part this is clear as the maps $\gf_{\Omega_i} \to \gf_{\Omega_1 \cap \Omega_2}$ restrict to isomorphisms on $\hf$ by construction for $i = 1,2$. 
		It suffices to show that for all roots $a \in \Phi_{\mr{nd}}$ at least one of $\gf_{\Omega_i} \to \gf_{\Omega_1 \cap \Omega_2}$ restricts to an isomorphism $\uf_{a,\Omega_i} \to \uf_{a,\Omega_1 \cap \Omega_2}$.
		For $a = \pm \dot{\psi}$ this directly follows from Lemma \ref{lemLieAlgHyp}.
		
		Let now $a \in \Phi \setminus \{ \pm \dot \psi\}$, and let $\psi' \in \Ac$ the minimal affine functional with gradient $\dot{ \psi}' = a$ such that $\Omega_1 \cap \Omega_2 \se \Hc_{\psi' \leq 0}$. By the convexity assumption, at least one of the $\Omega_i$ is contained in $\Hc_{\psi' \leq 0}$ for $i = 1,2$. 
		But then $\uf_{a, \Omega_i} \xrightarrow{\cong} \uf_{a, \Omega_1 \cap \Omega_2}$ is an isomorphism by Lemma \ref{lemLieAlgHyp}.
		
		\item For each $i = 1, \ldots, m$, the pair of subsets $\bigcup_{1 \leq j \leq i} \Omega_j, \Omega_{i+1}$ of $\Omega'$ satisfies the assumptions of (\ref{lemTorsDefoPair}) by construction (in particular, their intersection is contained in $\Hc_{\psi_i}$).
		Using induction on $i$, we construct lifts of $\gamma$ for all $\Ec_{\bigcup_{1 \leq j \leq i} \Omega_i}$ using (\ref{lemTorsDefoPair}), and hence in particular for $\Ec_{\Omega'}$. 
	\end{enumerate}
\end{proof}

\begin{proof}[Proof of Theorem \ref{thmBTGS}]			
	We first remark that the limit $\varprojlim_{\ff \prec \Omega} \Gc_\ff$ is a finite limit of affine $\Oc$-group schemes of finite type, hence is again an affine $\Oc$-group scheme of finite type. Moreover, as all transition maps are identities on the generic fibres, the generic fibre of the limit is isomorphic to $G$ and $\rho$ induces an isomorphism on the generic fibre. 
	
	By \'etale descent it suffices to work over $\breve K$, the completion of the maximal unramified extension of $K$. We may thus assume that $K = \breve K$, in which case $G$ is quasi-split by assumption. 
	Moreover, we have 
	$$(\varprojlim_{\ff \prec \Omega} \Gc_\ff)(\Oc) = \varprojlim_{\ff \prec \Omega} (\Gc_\ff(\Oc)) = \bigcap_{\ff \prec \Omega} G(K)^0_\ff = G(K)^0_\Omega.$$
	It remains to show that $\varprojlim_{\ff \prec \Omega} \Gc_\ff$ is smooth, as smoothness implies by \cite[§ 1.7.3]{Bruhat1984} that $\varprojlim_{\ff \prec \Omega} \Gc_\ff$ is \'etoff\'e in the sense of \cite[D\'efinition 1.7.1]{Bruhat1984}. But this means that $\rho$ is an isomorphism by the previous observations.
	
	We use induction on $\Omega$ to show that $\varprojlim_{\ff \prec \Omega} \Gc_\ff$ is smooth. 
	Let us fix some enumeration of the set of non-divisible positive roots $\Phi^+_{\mr{nd}} = \{ a_1, \ldots, a_m \}$.
	We inductively cut down $\Omega$ into slices by hyperplanes with gradient $a_i$ and in each step use Lemma \ref{lemTorsDefo} (\ref{lemTorsDefoSlice}) to construct lifts of the section in the special fibre.
	For the start of the induction, note that the theorem clearly is satisfied when $\Omega$ is (the closure of) a facet.
	More concretely, in the last step of the induction we write $\Omega = \bigcup_{1 \leq i \leq m+1} \Omega_i$ using the notation from Lemma \ref{lemTorsDefo} (\ref{lemTorsDefoSlice}) with $a = a_1$. By induction, we assume that the theorem holds for each $\Omega_i$ (that we got by cutting down each $\Omega_i$ using hyperplanes with gradient $a_2$). 
	
	We check that $\varprojlim_{\ff \prec \Omega} \Gc_\ff$ is formally smooth. 
	Let $R$ be an $\Oc$-algebra and let $I \se R$ be an ideal of square zero. We denote by $\overline{R} = R/I$.
	Let us fix a section $\overline g \in \varprojlim_{\ff \prec \Omega} \Gc_{\ff}(\overline{R})$.
	Using the inductive hypothesis, there exist sections $g_i \in \varprojlim_{\ff \prec \Omega_i} \Gc_{\ff}(R) = \Gc_{\Omega_i}(R)$.
	By Lemma \ref{lemTorsDefo} (\ref{lemTorsDefoSlice}), we then obtain a lift $g \in \varprojlim_{\ff \prec \Omega} \Gc_{\ff}(R)$.
	As $ \varprojlim_{\ff \prec \Omega} \Gc_{\ff}$ is an affine scheme of finite presentation over $\Oc$, this shows that $\Gc_\Omega$ is smooth.
	This finishes the proof of the theorem.
\end{proof}

\begin{corollary}
	The Bruhat-Tits group scheme $\Gc_\Omega$ is isomorphic to the closure of the diagonal in the generic fibre 
	$$G \xrightarrow{\Delta}  
	\prod_{\ff \prec \Omega} \Gc_\ff.$$
\end{corollary}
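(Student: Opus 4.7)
The plan is to deduce this directly from Theorem \ref{thmBTGS} together with flatness of $\Gc_\Omega$. First, I would note that the finite limit of the affine $\Oc$-schemes $\Gc_\ff$ is naturally realised as a closed subscheme of the product: concretely, $\varprojlim_{\ff \prec \Omega} \Gc_\ff$ is cut out inside $\prod_{\ff \prec \Omega} \Gc_\ff$ by the equations identifying $\rho_{\ff', \ff}(g_\ff) = g_{\ff'}$ for every $\ff' \prec \ff$ in the index set. So the isomorphism of Theorem \ref{thmBTGS} realises $\Gc_\Omega$ as a closed $\Oc$-subgroup scheme of $\prod_{\ff \prec \Omega} \Gc_\ff$.

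Next, I would identify the generic fibre of this closed immersion with the diagonal. Every transition map $\rho_{\ff', \ff}$ restricts to $\id_G$ on generic fibres, so the compatibility equations cut out exactly the diagonal copy of $G$ inside $\prod_{\ff \prec \Omega} G$. Thus the composition
$$ G \xrightarrow{\cong} (\Gc_\Omega)_K \hookrightarrow \Bigl( \prod_{\ff \prec \Omega} \Gc_\ff \Bigr)_K = \prod_{\ff \prec \Omega} G $$
coincides with $\Delta$.

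Finally, since $\Gc_\Omega$ is smooth over $\Oc$ (in particular flat and with no $\Oc$-torsion in its coordinate ring), its closed immersion into the affine $\Oc$-scheme $\prod_{\ff \prec \Omega} \Gc_\ff$ equals the scheme-theoretic closure of its generic fibre: indeed, for a flat closed subscheme of an affine $\Oc$-scheme, the defining ideal is saturated, so it is the intersection of its generic fibre's defining ideal with the structure sheaf, which is the definition of scheme-theoretic closure. Combined with the previous paragraph, this shows $\Gc_\Omega$ is the scheme-theoretic closure of $\Delta \colon G \hookrightarrow \prod_{\ff \prec \Omega} \Gc_\ff$. There is no real obstacle here beyond Theorem \ref{thmBTGS} itself; the only point worth checking carefully is that the limit of affine schemes over a directed/connected index set can indeed be presented as the claimed closed subscheme of the product, which is a standard fact for finite limits in the category of affine schemes.
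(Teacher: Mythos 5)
Your proposal is correct and follows essentially the same route as the paper: realise $\varprojlim_{\ff \prec \Omega} \Gc_\ff$ as a closed subscheme of the product (using affineness, hence separatedness, of the $\Gc_\ff$), observe that its generic fibre is the diagonal since the transition maps are the identity on $G$, and conclude from flatness of $\Gc_\Omega$ over the DVR $\Oc$ that it equals the closure of its generic fibre, invoking Theorem \ref{thmBTGS}. The extra detail you supply on why the flat closed subscheme is the closure of its generic fibre is fine and matches the paper's one-line appeal to flatness.
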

\begin{proof}
	The inclusion $\varprojlim_{\ff \prec \Omega} \Gc_\ff \to \prod_{\ff \prec \Omega} \Gc_\ff$ is a closed immersion since all $\Gc_\ff$ are affine and thus separated over $\Oc$. Since $\Gc_\Omega$ is in particular flat over $\Oc$, it is the closure of its generic fibre. The claim then follows from Theorem \ref{thmBTGS}.
\end{proof}

We consider torsors for the Bruhat-Tits group schemes above.
Recall that a limit of $\Gc_\ff$-torsors for facets $\ff \prec \Omega$ is a $\Gc_\Omega$-pseudo torsor by Lemma \ref{lemPTor}, but may fail to be a $\Gc_\Omega$-torsor in general.
We give a criterion when a limit of $\Gc_\ff$-torsors is already a $\Gc_\Omega$-torsor.
\begin{proposition}
	\label{propCritTors} 
	Let $\Omega \se \Ac$ be a bounded subset with $\Omega = \mr{cl}(\Omega)$ and let $R$ be an $\Oc$-algebra.
	Let $(\Ec_\ff)_{\ff \prec \Omega} \in \varprojlim_{\ff \prec \Omega} \Bf(\Gc_{\ff})(R)$.
	Then 
	$$ \Ec_\Omega = \varprojlim_{\ff \prec \Omega} \Ec_\ff $$
	is a smooth affine $R$-scheme. 
	In particular, $\Ec_{\Omega}$ is a $\Gc_\Omega$-torsor if and only if $\Ec_{\Omega} \to \Spec(R)$ is surjective.
\end{proposition}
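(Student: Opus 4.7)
The plan is to first show that $\Ec_\Omega$ is a smooth affine $R$-scheme of finite presentation, and then to derive the torsor criterion from a standard faithfully flat descent argument. Since each $\Ec_\ff$ is an fppf-torsor under the smooth affine group scheme $\Gc_\ff$, it is representable by a smooth affine $R$-scheme of finite presentation. Because $\Omega$ is bounded, it contains only finitely many facets, so the index category $\{\ff \prec \Omega\}$ is finite; hence $\Ec_\Omega$, as a finite limit of affine $R$-schemes of finite presentation, will be representable by an affine $R$-scheme of finite presentation.

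For smoothness, I will establish formal smoothness by induction on the combinatorial structure of $\Omega$, closely mimicking the proof of Theorem \ref{thmBTGS}. The base case, when $\Omega$ is (the closure of) a single facet, is immediate because in that case $\Ec_\Omega$ is just the smooth torsor attached to that facet. For the inductive step, fix a square-zero thickening $R \to \overline R$ and a section $\overline\gamma \in \Ec_\Omega(\overline R)$, and choose a non-divisible positive root $a \in \Phi^+_{\mathrm{nd}}$. I will decompose $\Omega = \Omega_1 \cup \cdots \cup \Omega_{m+1}$ via affine root hyperplanes with gradient $a$ as in Lemma \ref{lemTorsDefo}(\ref{lemTorsDefoSlice}); each $\Omega_i$ is strictly simpler (it meets fewer such hyperplanes), so by the inductive hypothesis each $\Ec_{\Omega_i}$ is smooth, and $\overline\gamma$ therefore lifts to some $\gamma_{\Omega_i} \in \Ec_{\Omega_i}(R)$. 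Applying Lemma \ref{lemTorsDefo}(\ref{lemTorsDefoSlice}) then assembles these local lifts into the desired global lift $\gamma \in \Ec_\Omega(R)$. Note that Theorem \ref{thmBTGS} is in force at every step, so the hypothesis on the $\Gc_{\Omega_i}$ in Lemma \ref{lemTorsDefo} is unconditional here.

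The final equivalence is then formal. By Lemma \ref{lemPTor} the scheme $\Ec_\Omega$ is a $\Gc_\Omega$-pseudo-torsor, so the only question is whether it is fppf-locally trivial. If the structure map $\Ec_\Omega \to \Spec(R)$ is surjective, then by smoothness it is faithfully flat, and pulling back along the fpqc cover $\Ec_\Omega \to \Spec(R)$ the pseudo-torsor acquires a tautological section (the diagonal), hence becomes trivial after an fpqc cover; since $\Gc_\Omega$ is smooth affine, this descends to an fppf (in fact \'etale) local triviality, so $\Ec_\Omega$ is a torsor. The converse is immediate, as any torsor under a smooth group scheme is \'etale-locally trivial and therefore has surjective structure map. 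The main obstacle will be the inductive smoothness step, which rests entirely on the deformation-theoretic combinatorics encoded in Lemma \ref{lemTorsDefo}; once that lemma and Theorem \ref{thmBTGS} are in hand, the remainder of the argument is essentially book-keeping.
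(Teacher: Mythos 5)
Your proposal is correct and follows essentially the same route as the paper: finite-limit representability for affineness and finite presentation, formal smoothness by the hyperplane-slicing induction of Theorem \ref{thmBTGS} fed into Lemma \ref{lemTorsDefo}~(\ref{lemTorsDefoSlice}), and the torsor criterion from smoothness plus surjectivity (the paper cites \cite[Expos\'e XI, Proposition 4.2]{SGA1} where you use the tautological section over $\Ec_\Omega$ itself --- the same mechanism). The one detail to make explicit is that smoothness is \'etale-local on $\Spec(R)$, so one may first pass to $\breve{K}$ to assume $G$ quasi-split, which is a standing hypothesis of Lemma \ref{lemTorsDefo}.
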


\begin{proof}
	The second assertion follows from the first one using Lemma \ref{lemPTor}, Theorem \ref{thmBTGS} and \cite[Expos\'e XI, Proposition 4.2]{SGA1}.
	
	The first assertion is \'etale-local on $\Spec(R)$, so we may assume that $G$ is quasi-split.
	We proceed by induction on $\Omega$ as in the proof of Theorem \ref{thmBTGS}.
	In particular, we may write $\Ec_\Omega$ as an iterated fibre product of the $\Ec_{\mathfrak{f}}$. This shows that $\Ec_\Omega$ is affine and of finite presentation over $R$ because the $\Ec_{\mathfrak{f}}$ are.
	It remains to show that $\Ec_\Omega \to \Spec(R)$ is formally smooth, 
	but this follows from Lemma \ref{lemTorsDefo} (\ref{lemTorsDefoSlice}).
\end{proof}

\subsection{Torsors for global Bruhat-Tits group schemes}

The goal of this subsection is to show that the isomorphism of Bruhat-Tits group schemes of Theorem \ref{thmBTGS} induces an immersion on the level of the corresponding moduli stacks $\Bun_\Gc$.
Therefore, let us now change perspective and consider (global) Bruhat-Tits group schemes in the following sense. 
Let again $X$ be a smooth projective and geometrically connected curve over $\Fq$ with function field $F$.

\begin{definition}
	\label{defnBTgrpschm}
	A smooth, affine group scheme $\Gc \to X$ is called a \emph{(global) Bruhat-Tits group scheme} if it has geometrically connected fibres, its generic fibre $\Gc_F = G$ is a reductive group over $F$ and if for all closed points $x$ of $X$ the pullback $\Gc_{\Oc_x} = \Gc \times_X \Spec(\Oc_x)$ is of the form $\Gc_\Omega$ for some bounded subset $\Omega$ contained in an apartment of the Bruhat-Tits building $B(G/F_x)$.
	The group scheme $\Gc$ is called a \emph{parahoric (Bruhat-Tits) group scheme} if moreover all $\Gc_{\Oc_x}$ are parahoric group schemes.
\end{definition}

Let $G$ be a (connected) reductive group over the function field $F$ of $X$. Bruhat-Tits group schemes with generic fibre $G$ can be constructed as follows.

\begin{construction}
	\label{consBTgrpschm}
	\begin{enumerate}
		\item 
		There exists a reductive model $G \to U$ of $G$ over some dense open subset $U \se X$. For each of the finitely many points $x \in X \setminus U$ in the complement of $U$ we choose a parahoric group scheme $\Gc^{(x)} \to \Spec(\Oc_x)$ with generic fibre $\Gc^{(x)}_{F_x} = G_{F_x}$. 
		As $U \amalg \coprod_{x \in X \setminus U} \Spec(\Oc_x) \to X$ is an fpqc-cover, we can glue $G \to U$ with all $\Gc^{(x)}$ using fpqc-descent to obtain a smooth affine group scheme $\Gc \to X$, which is a parahoric group scheme by construction.
		\item 
		\label{consBTgrpschmOmega}
		Let us now fix a parahoric model $\Gc \to X$ and a closed point $x_0$ of $X$. 
		For a connected bounded subset $\Omega$ in an apartment of the Bruhat-Tits building of $G_{F_{x_0}}$ as in the previous paragraph, we denote by $\Gc_\Omega \to \Spec(\Oc_{x_0})$ the corresponding (local) Bruhat-Tits group scheme. 
		We glue $\Gc_{\Omega}$ with $\Gc$ along the identity over $F_{x_0}$ and denote the resulting smooth affine group scheme over $X$ by a slight abuse of notation again by $\Gc_{\Omega}$.
		Then $\Gc_\Omega$ is a Bruhat-Tits group scheme in the sense of the previous definition and parahoric if and only if $\Omega$ is contained in the closure of a facet.
		
		The local homomorphisms $\rho_{\Omega', \Omega} \colon \Gc_\Omega \to \Gc_{\Omega'}$ over $\Spec(\Oc_{x_0})$ for $\Omega' \prec \Omega$ glue with the identity away from $x_0$ to morphisms of group schemes $\rho_{\Omega', \Omega} \colon \Gc_\Omega \to \Gc_{\Omega'}$ on $X$.
	\end{enumerate}
\end{construction}

In particular, the isomorphism of Theorem \ref{thmBTGS} extends to an isomorphism
$ \Gc_\Omega \xrightarrow{\cong} \varprojlim_{\ff \prec \Omega} \Gc_\ff$
of the corresponding global Bruhat-Tits group schemes.

For any smooth affine group scheme $\Hc$ on $X$, we denote by $\Bun_\Hc$ the moduli stack of $\Hc$-bundles on $X$. 
By the functoriality of $\Bun$, the maps $\rho_{\ff, \Omega}$ induce maps  $\rho_{\ff, \Omega, \ast} \colon \Bun_{\Gc_\Omega} \to  \Bun_{\Gc_\ff}$ for all facets $\ff \prec \Omega$.
\begin{theorem}
	\label{thmBunGImm}
	Let $G$ be a reductive group over $F$, let $x_0$ be a closed point of $X$ and let $\Omega = \mr{cl}(\Omega)$ be a  bounded subset of an apartment in the Bruhat-Tits building $\Bc(G_{F_{x_0}}, F_{x_0})$. Let $\Gc_{\Omega} \to X$ be the corresponding Bruhat-Tits group scheme from Construction \ref{consBTgrpschm} (\ref{consBTgrpschmOmega}). 
	The map 
	$$ \rho_{\Omega, \ast}  := \varprojlim_{\ff \prec \Omega}  \rho_{\ff, \Omega, \ast} \colon \Bun_{\Gc_\Omega} \to \varprojlim_{\ff \prec \Omega} \Bun_{\Gc_\ff} $$
	induced by the $\rho_{\ff, \Omega, \ast}$ for facets $\ff \prec \Omega$ is schematic and a quasi-compact open immersion.
\end{theorem}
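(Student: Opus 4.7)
The plan is to combine three ingredients: the isomorphism $\Gc_\Omega \cong \varprojlim_{\ff \prec \Omega} \Gc_\ff$ of Theorem \ref{thmBTGS}, the formalism of Lemma \ref{lemPTor} that identifies torsors for a limit of group sheaves with certain limits of compatible torsor data, and the deformation-theoretic criterion of Proposition \ref{propCritTors} detecting when such a limit is actually a torsor. Together these will identify the fibre product of $\rho_{\Omega,\ast}$ along any test map with a (quasi-compact) open subscheme, cut out by a single surjectivity condition.

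Let $T$ be a scheme and let $T \to \varprojlim_{\ff \prec \Omega} \Bun_{\Gc_\ff}$ correspond to a compatible system $(\Ec_\ff)_{\ff \prec \Omega}$ of $\Gc_\ff$-bundles on $X_T$. The sheaf-theoretic version of Lemma \ref{lemPTor} on $X_T$ gives that $\Ec_\Omega := \varprojlim_{\ff \prec \Omega} \Ec_\ff$ is a $\Gc_\Omega$-pseudo-torsor on $X_T$, and that for any $T' \to T$ the datum of a $\Gc_\Omega$-bundle on $X_{T'}$ whose pushforwards to the $\Gc_\ff$ recover the pulled-back family is equivalent to $\Ec_\Omega|_{X_{T'}}$ being an honest $\Gc_\Omega$-torsor (in which case the resulting bundle is canonically $\Ec_\Omega|_{X_{T'}}$). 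Hence the fibre product $T \times_{\varprojlim_\ff \Bun_{\Gc_\ff}} \Bun_{\Gc_\Omega}$ represents the subfunctor of $T$ consisting of those base changes over which $\Ec_\Omega$ becomes a torsor.

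Away from $x_0$ all the transition maps $\rho_{\ff',\ff}$ are identities, so $\Ec_\Omega|_{(X\setminus\{x_0\})_T}$ coincides canonically with any $\Ec_\ff|_{(X\setminus\{x_0\})_T}$ and is already a $\Gc$-torsor there. Near $x_0$, pulling the system back along $\Spec(\Oc_{x_0}) \times_{\Fq} T \to X_T$ and applying Proposition \ref{propCritTors} shows that $\Ec_\Omega$ is smooth and affine over $X_T$, and that $\Ec_\Omega|_{X_{T'}}$ is a torsor precisely when it is surjective onto $X_{T'}$. Since $\Ec_\Omega \to X_T$ is smooth, its image $V$ is open, and $Z := X_T \setminus V$ is closed; give it its reduced induced structure. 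Because $X$ is projective, $X_T \to T$ is proper, so $W := \mr{image}(Z \to T)$ is closed in $T$. The open complement $U := T \setminus W$ is exactly the locus over which $\Ec_\Omega$ becomes a torsor, yielding $T \times_{\varprojlim_\ff \Bun_{\Gc_\ff}} \Bun_{\Gc_\Omega} \cong U$. This exhibits $\rho_{\Omega,\ast}$ as schematic and representable by an open immersion; quasi-compactness follows from the fact that $Z$, being the non-surjective locus of the finite-presentation morphism $\Ec_\Omega \to X_T$, is cut out by finite-type data, so that $W$ is a closed subscheme cut out by a finitely generated ideal.

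The step I expect to be the most delicate is the middle one, namely transferring the formal-local statement of Proposition \ref{propCritTors} at $x_0$ into smoothness and affineness of $\Ec_\Omega$ on all of $X_T$. This will require a standard fpqc-descent along the faithfully flat morphism $\Spec(\Oc_{x_0}) \to \Spec(\Oc_{X,x_0})$, combined with the observation that the question is trivial away from $x_0$ because the limit system is constant there.
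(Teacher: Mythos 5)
Your argument is correct, and it reaches the conclusion by a genuinely different route than the paper's, even though both rest on the same two pillars: Lemma \ref{lemPTor} (which guarantees that a point of the fibre product over $T'$ exists, and is then unique up to unique isomorphism, exactly when $\varprojlim_{\ff \prec \Omega}\Ec_\ff|_{X_{T'}}$ is an honest torsor) and Proposition \ref{propCritTors} (applied to the $\Oc_{x_0}$-algebra obtained from the neighbourhood of $x_0$ in $X_T$ and glued, by fpqc descent of smoothness, affineness and surjectivity, with the trivial statement away from $x_0$, where the transition maps are isomorphisms). The paper first establishes that $\rho_{\Omega,\ast}$ is schematic, separated and of finite type by combining \cite[Proposition 3.19]{Breutmann2019} with Lemma \ref{lemLim}, observes that it is a monomorphism by Lemma \ref{lemPTor}, and then verifies formal étaleness via the infinitesimal lifting criterion for Artin rings, where Proposition \ref{propCritTors} enters only through the remark that surjectivity over the formal neighbourhood of $x_0$ is insensitive to square-zero thickenings; openness then follows because an étale monomorphism of finite presentation is an open immersion. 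You instead compute the fibre product over a test scheme $T$ directly: it is $T\setminus W$, where $W$ is the image under the proper map $X_T\to T$ of the closed complement of the (open, by smoothness) image of $\Ec_\Omega\to X_T$. This avoids both the citation to Breutmann--Hartl and the infinitesimal criterion and yields an explicit description of the open locus; the price is that you use the full strength of Proposition \ref{propCritTors} (smoothness and affineness over arbitrary bases, not just infinitesimal lifting) and invoke properness of $X$ at this stage. Two points deserve a sentence more than you give them: the passage from the $2$-fibre product of stacks to a subfunctor of $T$ needs the full faithfulness statement of Lemma \ref{lemPTor} to show the relevant groupoids are discrete, and quasi-compactness of the immersion for non-noetherian $T$ requires checking that $T\setminus W$ is retrocompact, which follows since $V$ is a quasi-compact open of the qcqs scheme $X_T$, so $Z$ is constructible and its image under the finitely presented proper map $X_T\to T$ is constructible and closed. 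With these additions your proof is complete and, if anything, more self-contained than the one in the paper.
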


\begin{proof} 
	By \cite[Proposition 3.19]{Breutmann2019}, the maps $\rho_{\ff, \Omega, \ast}$ are schematic and quasi-projective for all facets $\ff \prec\Omega$. By Lemma \ref{lemLim} below, the map $ \rho_{\Omega, \ast} $ is schematic, separated and of finite type.
	By Lemma \ref{lemPTor}, the map $\rho_{\Omega, \ast}$ is a monomorphism. 
	We show that $\rho_{\Omega, \ast}$ is formally \'etale.
	As all $\Bun_{\Gc_\ff}$ are locally of finite type over $\Fq$ by \cite[Proposition 1]{Heinloth2010}, it suffices to check the infinitesimal lifting criterion for local Artin rings over $\Fq$ by \cite[Proposition 17.14.2]{EGA4IV}.
	Let $R$ be a local Artin $\Fq$-algebra and let $I \se R$ be an ideal of square zero. Let moreover $(\Ec_\ff)_{\ff \prec \Omega} \in \varprojlim_{\ff \prec \Omega} \Bun_{\Gc_\ff} (R)$ such that $\varprojlim_{\ff \prec \Omega} \Ec_{\ff}$ is a $\Gc_\Omega$-torsor over $X_{\overline{R}}$, where $\overline{R} = R/I$. 
	We claim that $\varprojlim_{\ff \prec \Omega} \Ec_{\ff}$ is already a $\Gc_\Omega$-torsor over $X_R$. 
	The map $\widehat{(X_R)_{x_0}} \cup (X \setminus \{x_0\})_R \to X_R$ is a fpqc-cover, where $\widehat{(X_R)_{x_0}} = \Spec(\Oc_{x_0} \widehat{\otimes}_\Fq R)$, with $\Oc_{x_0} \widehat{\otimes}_\Fq V$ being the underlying $\Fq$-algebra of the completion of $X_R$ along $x_0$.
	As all maps $\Gc_\Omega \to \Gc_\ff$ for $\ff \prec \Omega$ are the identity away from $x_0$, all transition maps $\Ec_{\ff', R} \times^{\Gc_{\ff'}} \Gc_\ff \to \Ec_{\ff, R}$ are isomorphisms away from $x_0$. 
	Using Proposition \ref{propCritTors}, it remains to check that the pullback to $\varprojlim_{\ff \prec \Omega} \Ec_{\ff} \to \widehat{(X_R)_{x_0}}$ is surjective, but the underlying topological spaces of $\widehat{(X_R)_{x_0}}$ and $\widehat{(X_{\overline{R}})_{x_0}}$ agree.
	
	Hence, $\rho_{\Omega, \ast}$ is formally \'etale and thus a quasi-compact open immersion being a flat monomorphism of finite presentation.
\end{proof}


\section{Level maps and integral models with deep Bruhat-Tits level}
\label{sectIntMod}

We construct integral models for moduli spaces of shtukas with deep Bruhat-Tits level structures and show that these integral models admit proper, surjective and generically \'etale level maps.


\subsection{Some lemmata on algebraic stacks}
We collect some results on finite connected limits of algebraic stacks we use below for which we could not find a reference in the literature.

In this section, $I$ will always denote a finite connected index category and $(\Xc_i)_{i \in I}$ denotes a diagram over $I$ of (fppf-) Artin stacks over some base scheme $S$.

\begin{lemma}
	\label{lemLimProj}
	Assume that all algebraic stacks $\Xc_i$ have a diagonal that is schematic.
	Let all transition maps in $(\Xc_i)_{i \in I}$ be schematic. Then the projections $\varprojlim_{i \in I} \Xc_i \to \Xc_j$ are schematic for all $j \in I$.
	
	Moreover, assume that all $\Xc_i$ are separated over $S$ and that all transition maps have a property $\mathbf{P}$ of morphisms of schemes that is stable under base change and composition and is smooth local on the target such that all proper maps have $\mathbf{P}$. Then the projections $\varprojlim_{i \in I} \Xc_i \to \Xc_j$ have property $\mathbf{P}$ for all $j \in I$.
\end{lemma}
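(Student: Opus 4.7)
The plan is to realise $\varprojlim_{i \in I} \Xc_i \to \Xc_j$ as a finite composition of schematic morphisms, each of which is a base change either of a transition (schematic by assumption) or of a diagonal $\Delta_{\Xc_i}$ (also schematic by assumption). Since $I$ is finite and connected, I will fix $j \in I$ and enumerate the objects as $j = i_0, i_1, \ldots, i_{n-1}$ by a depth-first traversal of the underlying undirected graph of $I$, ensuring that for each $k \geq 1$ the full subcategory $I_k$ on $\{i_0, \ldots, i_k\}$ is connected and $i_k$ admits at least one morphism to or from some $i_l$ with $l < k$. Setting $L_k = \varprojlim_{I_k} \Xc$, the desired projection factors as the tower $L_{n-1} \to L_{n-2} \to \cdots \to L_0 = \Xc_j$, so it suffices to show that each forgetful step $L_k \to L_{k-1}$ is schematic.

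For a single step, given a scheme $T \to L_{k-1}$, a $U$-point of $L_k \times_{L_{k-1}} T$ (over some $U \to T$) is a $U$-point $\xi_{i_k}$ of $\Xc_{i_k}$ together with isomorphisms witnessing compatibility with every morphism of $I_k$ incident to $i_k$. I pick one such morphism as a \emph{pivot}: if it is an ``in'' morphism $\alpha\colon i_l \to i_k$ with $l<k$, then $\xi_{i_k}$ is canonically $f_\alpha(\xi_{i_l}|_U)$; if instead it is an ``out'' morphism $\beta\colon i_k \to i_l$, then $\xi_{i_k}$ varies in $\Xc_{i_k} \times_{\Xc_{i_l}} T$, a scheme since $f_\beta$ is schematic. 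Every remaining compatibility (another ``in'', ``out'', or self-morphism at $i_k$) then amounts to the equality of two given morphisms to some $\Xc_{i_{l'}}$, which is cut out by pulling back along the schematic diagonal $\Delta_{\Xc_{i_{l'}}}$. Since a finite iterated fibre product of schemes along schematic morphisms is again a scheme, $L_k \times_{L_{k-1}} T$ is representable, and composing over $k$ proves the first assertion.

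For the second assertion, the same iterated fibre product description is what I would upgrade. Each transition has $\mathbf{P}$ by hypothesis; each diagonal $\Delta_{\Xc_i}$ is now a closed immersion (as $\Xc_i$ is separated over $S$), hence proper, hence has $\mathbf{P}$ by the hypothesis that proper morphisms do. The ``smooth local on the target'' assumption makes ``having $\mathbf{P}$'' meaningful for schematic morphisms of stacks, and stability of $\mathbf{P}$ under base change and composition propagates $\mathbf{P}$ through the iterated construction to $L_k \to L_{k-1}$ and therefore to the full projection.

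The main technical obstacle I anticipate is the bookkeeping in the single-step analysis when $i_k$ carries several morphisms to or from already-included objects (possibly including self-morphisms): one has to verify that all compatibility conditions can be imposed simultaneously as a well-ordered sequence of schematic fibre products, never silently demanding schematicness of a morphism outside the hypothesised transitions and diagonals. The resolution that makes the whole argument work is precisely that every non-pivot compatibility reduces to an equality of two morphisms into some $\Xc_{i_{l'}}$, i.e., a pullback along $\Delta_{\Xc_{i_{l'}}}$, which is schematic by the standing hypothesis on diagonals.
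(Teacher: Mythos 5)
Your proposal is correct and is essentially the paper's argument made explicit: the paper reduces to the two generating cases of fibre products (projections are base changes of transition maps) and equalisers (the projection is a base change of the diagonal, which is schematic, and a closed immersion hence proper when the stacks are separated), which is exactly the content of your pivot-plus-diagonal analysis at each step of the traversal. The only cosmetic caveat is that if $j$ carries nontrivial endomorphisms in $I$ then $L_0$ is not literally $\Xc_j$ but an equaliser inside it, which is again handled by a pullback of the diagonal.
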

\begin{proof}
	It suffices to show the claim for fibre products and equalisers. For fibre products this is clear.
	Let us thus consider the equaliser diagram
	\begin{center}
		\begin{tikzcd}
			\Xc_1 \arrow[shift left=.75ex]{r}{f} \arrow[shift right=.75ex, swap]{r}{g}
			& \Xc_2.
		\end{tikzcd}
	\end{center}
	
	The equaliser of this diagram is given by the fibre product $\Xc = \Xc_2 \times_{\Delta, \Xc_2 \times_S \Xc_2, (f,g)} \Xc_1$.
	Thus, the projection $\Xc \to \Xc_1$ arises as the base change of the diagonal of $\Xc_1$ and is thus schematic in the first case and moreover proper in the second case (as we assumed $\Xc_1$ to be separated).
	The projection $\Xc \to \Xc_2$ has the required properties as it is the composition $\Xc \to \Xc_1 \to \Xc_2$.
\end{proof}

\begin{lemma}
	\label{lemLim}
	Let $(f_i \colon \Xc \to \Xc_i)_{i \in I}$ be a cone over the diagram $(\Xc_i)_{i \in I}$ such that all maps $f_i$ are schematic.
	Then the limit $f \colon \Xc \to \varprojlim_{i \in I} \Xc_i$ is schematic as well.
	
	Assume moreover that all $f_i$ are separated and have a property $\mathbf{P}$ of morphisms of schemes that is stable under base change and composition and is smooth local on the target such that all closed immersions have $\mathbf{P}$. Then $f$ has $\mathbf{P}$. 
\end{lemma}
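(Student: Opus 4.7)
The plan is to reduce schematicity of $f$ to a computation that takes place entirely within the category of schemes. Given any scheme $T$ with a morphism to $Y := \varprojlim_{i \in I} \Xc_i$, I will identify the stack $\Xc \times_Y T$ with the finite connected limit $\varprojlim_{i \in I} W_i$ of the schemes $W_i := \Xc \times_{\Xc_i} T$. This is a straightforward unwinding of universal properties: an object of either stack over a test scheme $U$ consists of $x \in \Xc(U)$ and $t \in T(U)$ together with a compatible family of $2$-isomorphisms $\alpha_i \colon f_i(x) \xrightarrow{\sim} t|_{\Xc_i}$ in $\Xc_i(U)$ (the compatibility being exactly what identifies the data with a single $2$-isomorphism $\alpha \colon f(x) \xrightarrow{\sim} t|_Y$ in the limit stack $Y$). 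Since the transition maps in the diagram $(W_i)_{i \in I}$ are morphisms of schemes and finite limits of schemes exist in the category of schemes (and agree with the corresponding $2$-limits taken in stacks), the limit $\varprojlim_i W_i$ is a scheme, proving that $f$ is schematic.

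For the second claim, I would apply Lemma \ref{lemLimProj} to the diagram $(W_i)_{i \in I}$ of schemes over $T$. Each structure map $W_i \to T$ inherits separatedness and property $\mathbf{P}$ from $f_i$ by base change. The transition maps $W_i \to W_{i'}$ induced by $i \to i'$ in $I$ also have $\mathbf{P}$, via the factorisation
$$W_i \xrightarrow{\Gamma} W_i \times_T W_{i'} \xrightarrow{\mr{pr}_2} W_{i'},$$
where $\Gamma$ is the graph (a closed immersion because $W_{i'} \to T$ is separated, hence has $\mathbf{P}$ by assumption) and $\mr{pr}_2$ is a base change of $W_i \to T$ (hence has $\mathbf{P}$ by stability under base change). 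Lemma \ref{lemLimProj} then yields that the projection $\varprojlim_i W_i \to W_{j_0}$ has $\mathbf{P}$ for any fixed $j_0 \in I$, and composing with $W_{j_0} \to T$ gives that $\Xc \times_Y T \to T$ has $\mathbf{P}$. As this holds for every scheme $T \to Y$ and $\mathbf{P}$ is smooth-local on the target, $f$ itself has $\mathbf{P}$.

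The main conceptual point is recognising that the fibre product $\Xc \times_Y T$ over a limit stack reorganises as a limit of fibre products that are already schemes, so that the problem becomes purely scheme-theoretic. Once this identification is in hand, both claims follow routinely: the first from the fact that finite limits exist in schemes, and the second from the previous lemma together with the standard graph argument that transports property $\mathbf{P}$ across the transition maps using the separatedness hypothesis on the $f_i$.
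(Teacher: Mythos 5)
Your first claim is proved exactly as in the paper: both identify $\Xc \times_{\varprojlim_i \Xc_i} T$ with $\varprojlim_i (\Xc \times_{\Xc_i} T)$ by commuting limits, and conclude from the existence of finite connected limits of schemes. For the second claim you diverge. The paper argues directly on the diagram $(T_i)$ with $T_i = \Xc \times_{\Xc_i} T$: it writes $\varprojlim_i T_i$ as an equaliser of two maps between the product $\prod_{i} T_i$ taken over $T$, observes that separatedness of the $T_i \to T$ makes the inclusion $\varprojlim_i T_i \hookrightarrow \prod_i T_i$ a closed immersion, and concludes since the product over $T$ has $\mathbf{P}$ by iterated base change and composition. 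You instead route through Lemma \ref{lemLimProj}, which forces you to first transport $\mathbf{P}$ to the transition maps $W_i \to W_{i'}$ via the graph factorisation; that step is correct (the graph is a closed immersion because $W_{i'} \to T$ is separated, and closed immersions have $\mathbf{P}$ by hypothesis), but it is an extra layer the direct argument avoids. One point to be careful about: the second part of Lemma \ref{lemLimProj} as stated assumes that \emph{all proper maps} have $\mathbf{P}$, whereas Lemma \ref{lemLim} only grants $\mathbf{P}$ for closed immersions, so you cannot invoke it as a black box. The repair is immediate — the only use of properness in the proof of Lemma \ref{lemLimProj} is for the diagonal of a separated object, and since your $W_i$ are separated \emph{schemes} over $T$ their diagonals are closed immersions — but this should be said explicitly. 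With that remark added, your argument is complete; the paper's version is simply the more economical one, as it never needs to discuss the transition maps at all.
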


\begin{proof}
	
	Let $T$ be an $S$-scheme. Let us fix a map $T \to \varprojlim_{i \in I} \Xc_i$. As different limits commute, we get that
	$$ T \times_{\varprojlim_{i \in I} \Xc_i} \Xc  = \varprojlim_{i \in I} (T \times_{\Xc_i} \Xc),$$
	which is representable by a scheme by assumption.
	For the second part, let us denote by $T_i =  T \times_{\Xc_i} \Xc$.
	Then $T_i$ is a separated $T$-scheme by assumption.  As $I$ is connected, we may take the limit on the right hand side in the category of $T$-schemes (as opposed to the category of $S$-schemes).
	We represent the limit as an equaliser between products
	\begin{center}
		\begin{tikzcd}
			\varprojlim_{i \in I} T_i = \mr{eq}\left( \prod_{i \in I} T_i \right. \arrow[r, shift left=.75ex] \arrow[r, shift right=.75ex, swap] &  \left. \prod_{i \in I} T_i \right),
		\end{tikzcd}
	\end{center}
	where the products are taken in the category of $T$-schemes.
	As all $T_i$ are separated over $T$, the inclusion of $\varprojlim_{i \in I} T_i \hookrightarrow \prod_{i \in I} T_i$ is a closed immersion. Moreover, as all $T_i \to T$ have property $\mathbf{P}$, so does their product. Hence, $\varprojlim_{i \in I} T_i \to T$ has property $\mathbf{P}$. 
\end{proof}

\begin{lemma}
	\label{lemSubSchm}
	Let $f \colon \Xc \to \Xc'$ be a schematic map of algebraic stacks and let $\Yc \se \Xc$ and $\Yc' \se \Xc'$ be two closed substacks such that $f|_{\Yc}$ factors through $\Yc'$. Then $f|_{\Yc} \colon \Yc \to \Yc'$ is schematic.
\end{lemma}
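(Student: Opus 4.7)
The plan is to verify schematicness directly from the definition: given any scheme $T$ with a morphism $T \to \Yc'$, we must show that the fibre product $T \times_{\Yc'} \Yc$ is representable by a scheme.

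The key observation is that $\Yc' \hookrightarrow \Xc'$ is a closed immersion, hence a monomorphism of algebraic stacks. Therefore, whenever we have two morphisms $T \to \Yc'$ and $\Yc \to \Yc'$ (the latter given by $f|_\Yc$), the canonical map
\[ T \times_{\Yc'} \Yc \xrightarrow{\cong} T \times_{\Xc'} \Yc \]
is an isomorphism (since for a monomorphism $g$, one has $g \circ a = g \circ b \Leftrightarrow a = b$, so adding the condition of agreement in $\Xc'$ instead of $\Yc'$ does not enlarge the fibre product).

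Next I would rewrite this fibre product by inserting $\Xc$: using that $\Yc \to \Xc'$ factors as $\Yc \hookrightarrow \Xc \xrightarrow{f} \Xc'$, we obtain
\[ T \times_{\Xc'} \Yc \;=\; (T \times_{\Xc'} \Xc) \times_{\Xc} \Yc. \]
By the hypothesis that $f$ is schematic, $T \times_{\Xc'} \Xc$ is representable by a scheme. Moreover, $\Yc \hookrightarrow \Xc$ is a closed immersion, and closed immersions are stable under base change, so the projection $(T \times_{\Xc'} \Xc) \times_{\Xc} \Yc \to T \times_{\Xc'} \Xc$ is a closed immersion of schemes. In particular, the source is representable by a scheme. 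Combining with the previous step, $T \times_{\Yc'} \Yc$ is a scheme, proving that $f|_\Yc$ is schematic.

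There is no serious obstacle here; the proof is a formal manipulation of fibre products. The only point to keep in mind is that one needs a monomorphism on the target side to replace $\Yc'$ by $\Xc'$ in the fibre product, which is guaranteed by $\Yc' \hookrightarrow \Xc'$ being a closed immersion.
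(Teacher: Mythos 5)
Your proof is correct and follows essentially the same route as the paper's: both reduce $T \times_{\Yc'} \Yc$ to $T \times_{\Xc'} \Yc$ using that the closed immersion $\Yc' \hookrightarrow \Xc'$ is a monomorphism, and then represent the latter via schematicness of $f$ together with the closed immersion $\Yc \hookrightarrow \Xc$. Your write-up just spells out the intermediate fibre product $(T \times_{\Xc'} \Xc) \times_{\Xc} \Yc$ that the paper leaves implicit.
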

\begin{proof}
	Let $S$ be a scheme and let us fix a map $y' \colon S \to \Yc'$. As $f$ is schematic, the fibre product $T = S \times_{y, \Xc', f} \Yc$ is representable by a scheme. Then $T = S \times_{\Yc'} \Yc$. 
\end{proof}

\subsection{Integral models with deeper Bruhat-Tits level structure}
In this section, we define the integral model of the moduli space of shtukas with deep Bruhat-Tits level structure as the schematic image of the moduli space of shtukas for the Bruhat-Tits group scheme inside the limit of all the corresponding spaces for parahoric level and show that these integral models admit proper surjective and generically \'etale level maps as in the parahoric case.

In this section we work in the following setup that we record for future reference.
\begin{assumption}
	Let $x_0$ be a fixed closed point of $X$. Let $G$ be a reductive group over $F$ and $\Gc_\Omega \to X$ is a Bruhat-Tits group scheme for a bounded subset $\Omega = \mr{cl}(\Omega)$ of a single apartment in the Bruhat-Tits building for $G_{F_{x_0}}$ as in Construction \ref{consBTgrpschm}. Recall that $\Gc_\Omega = \varprojlim_{\ff \prec \Omega} \Gc_\ff$, where the limit runs over all facets $\ff$ contained in $\Omega$.
	
	Moreover, let $\Zc$ be a generically defined bound for $G$ as in Remark \ref{remBdGenFibre}. For example, $\Zc$ can be given by an $I$-tuple of conjugacy classes of geometric cocharacters for $G$. 
	For a smooth model $\Gc \to X$ of $G$ we denote be the representative of $\Zc$ defined over its reflex scheme by $Z_{\Gc}$.
	\label{assBT}
\end{assumption}
\begin{lemma}
	The map
	$ \rho_{\Omega, \ast} \colon \Gr^{I_\bullet}_{\Gc_\Omega, X} \to \varprojlim_{\ff \prec \Omega} \Gr^{I_\bullet}_{\Gc_\ff, X}$
	is an open immersion and an isomorphism over $(X \setminus \{x_0\})^I$.
	Moreover, it restricts to a locally closed immersion   
	$ \rho^\Zc_{\Omega, \ast} \colon Z_{\Gc_{\Omega}}\to \varprojlim_{\ff \prec \Omega} Z_{\Gc_{\ff}},$
	which again is an isomorphism over $(X \setminus \{x_0\})^I \times_{X^I} \widetilde X_\Zc^I$.
	\label{lemFunAffGrassBT}
\end{lemma}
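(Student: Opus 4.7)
The plan is to work with the Beauville-Laszlo description of Remark \ref{remBLDesc}, in which a point of $\Gr_{\Gc, X}^{I_\bullet}(R)$ is a tuple $((x_i),(\Ec_j),(\ph_j),\varepsilon)$ with each $\Ec_j$ a $\Gc$-torsor on $\hat\Gamma_{\underline x}$. The isomorphism over $(X \setminus \{x_0\})^I$ is then immediate: the transition maps $\rho_{\ff,\Omega}$ are the identity away from $x_0$ by Construction \ref{consBTgrpschm}(\ref{consBTgrpschmOmega}), and the Beauville-Laszlo data for sections $\underline x$ avoiding $x_0$ only depends on the group scheme restricted to $X \setminus \{x_0\}$, where all $\Gc_\ff$ agree with $\Gc_\Omega$.

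For the open immersion property, I will check that $\rho_{\Omega,\ast}$ is schematic, a monomorphism, and formally \'etale. Schematicity follows from Lemma \ref{lemLim} together with schematicity of each individual $\rho_{\ff,\Omega,\ast}$, whose fibres are twisted flag varieties as in Lemma \ref{lemTwistedFlagVariety}. The monomorphism property is a direct application of the full-faithfulness part of Lemma \ref{lemPTor}: the torsors $\Ec_j$ as well as the morphisms $\ph_j$ and $\varepsilon$ are uniquely determined by the compatible system $(\Ec_j\times^{\Gc_\Omega}\Gc_\ff,\ph_j^\ff,\varepsilon^\ff)_\ff$. For formal \'etaleness, consider a square-zero extension $R\twoheadrightarrow\bar R$ and a compatible system $(\Ec_j^\ff,\ph_j^\ff,\varepsilon^\ff)_\ff$ over $R$ whose reduction to $\bar R$ lies in the image of $\rho_{\Omega,\ast}$. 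Setting $\Ec_j:=\varprojlim_\ff \Ec_j^\ff$, the question is local around $x_0$ (away from which all transition maps are already identities), where Proposition \ref{propCritTors} shows that $\Ec_j$ is smooth affine and is a $\Gc_\Omega$-torsor if and only if it surjects onto $\hat\Gamma_{\underline x_R}$. The surjectivity holds after reduction to $\bar R$ by hypothesis and lifts to $R$ since $\Spec(R)$ and $\Spec(\bar R)$ have the same underlying topological space. The morphisms $\ph_j$ and $\varepsilon$ then assemble from the parahoric data via Lemma \ref{lemPTor}.

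The bounded version follows by pullback: the map $\rho_{\ff,\Omega,\ast}$ sends $Z_{\Gc_\Omega}$ into $Z_{\Gc_\ff}$ since both are closures of the common generic fibre of $\Zc$ inside the respective Grassmannians (which are canonically identified generically), so $\rho^\Zc_{\Omega,\ast}$ is well-defined. Letting $W$ denote the pullback of $\varprojlim_\ff Z_{\Gc_\ff}$ along $\rho_{\Omega,\ast}\times_{X^I}\id_{\widetilde X^I_\Zc}$, the map factors as
\[
Z_{\Gc_\Omega}\hookrightarrow W\hookrightarrow \varprojlim_\ff Z_{\Gc_\ff},
\]
where the first arrow is a closed immersion (both source and target are closed subschemes of $\Gr^{I_\bullet}_{\Gc_\Omega,X}\times_{X^I}\widetilde X^I_\Zc$, with the source contained in the target) and the second is the base change of an open immersion. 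The generic isomorphism over $(X\setminus\{x_0\})^I\times_{X^I}\widetilde X^I_\Zc$ descends from the one for the Grassmannians. I expect the formal \'etaleness step to be the main technical hurdle, as it relies crucially on the deformation-theoretic analysis of Bruhat-Tits torsors developed in Section \ref{sectBT}, in particular Proposition \ref{propCritTors}.
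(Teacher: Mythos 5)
Your proof is correct and follows essentially the same route as the paper: the generic isomorphism via the Beauville-Laszlo description, and the bounded statement via the identical factorisation of $\rho^\Zc_{\Omega,\ast}$ through the fibre product $W = \Gr_{\Gc_\Omega, X}^{I_\bullet} \times_{\varprojlim_{\ff} \Gr_{\Gc_\ff, X}^{I_\bullet}} \varprojlim_{\ff} Z_{\Gc_\ff}$ as a closed immersion followed by the base change of an open immersion. The only divergence is that for the open-immersion claim the paper simply cites Theorem \ref{thmBunGImm}, whereas you inline its argument (monomorphism via Lemma \ref{lemPTor}, formal \'etaleness via Proposition \ref{propCritTors} together with the agreement of the underlying topological spaces of $\Spec(R)$ and $\Spec(\bar R)$) directly on the Grassmannian data -- the same ingredients applied in the same way, which if anything makes explicit a deduction the paper leaves implicit.
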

\begin{proof}
	The map $\rho_{\Omega, \ast}$ is an open immersion by Theorem \ref{thmBunGImm}. From the description of the affine Grassmannian from Remark \ref{remBLDesc}, we see that the maps $\Gr^{I_\bullet}_{\Gc_\Omega, X} \to \Gr^{I_\bullet}_{\Gc_\ff, X}$ and $\Gr^{I_\bullet}_{\Gc_{\ff}, X} \to \Gr^{I_\bullet}_{\Gc_{\ff'}, X}$ are isomorphisms over $(X \setminus \{x_0\})^I$ for all $\ff \prec \Omega$ and $\ff' \prec \ff$. 
	
	The map $\rho^\Zc_{\Omega, \ast}$ factors as a closed immersion followed by an open immersion
	$$ Z_{\Gc_\Omega} \to \Gr_{\Gc_\Omega, X}^{I_\bullet} \times_{\varprojlim_{\ff \prec \Omega} \Gr_{\Gc_\ff, X}^{I_\bullet}} \varprojlim_{\ff \prec \Omega} Z_{\Gc_\ff} \to \varprojlim_{\ff \prec \Omega} Z_{\Gc_\ff}$$ 
	and is hence locally closed immersion. By construction of the bounds $\Zc$, it is an isomorphism over $(X \setminus \{x_0\})^I$.
\end{proof}

\begin{theorem} 
	\label{thmImmersion}
	In the situation of Assumption \ref{assBT}, the map 
	$ \rho_{\Omega, \ast} \colon  \Sht^{\Zc}_{\Gc_\Omega} \to \varprojlim_{\ff \prec \Omega} \Sht^{\Zc}_{\Gc_\ff}$
	is schematic and representable by a quasi-compact locally closed immersion.  
	Moreover, $\rho_{\Omega, \ast}$ is an open and closed immersion over $(X \setminus x_0)^I$.
	When $\Omega$ is a facet, $\rho_{\Omega, \ast}$ is an isomorphism.
\end{theorem}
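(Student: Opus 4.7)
The plan is to factor $\rho_{\Omega,\ast}$ as a closed immersion into an intermediate stack followed by an open immersion into the full limit, using Theorem~\ref{thmBunGImm} (the analogue on $\Bun$) and Lemma~\ref{lemFunAffGrassBT} (the analogue on the affine Grassmannian). First, I would establish that $\rho_{\Omega,\ast}$ is schematic, separated and of finite type by applying Lemma~\ref{lemLim} to the cone of level maps $\rho_{\ff,\Omega,\ast}\colon\Sht^\Zc_{\Gc_\Omega}\to\Sht^\Zc_{\Gc_\ff}$ for $\ff\prec\Omega$; by Corollary~\ref{corLvlMapParahoric}, applied to the generic isomorphism $\Gc_\Omega\to\Gc_\ff$ and the generically defined bound $\Zc$, each of these level maps has the required properties. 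This already gives the schematicness and quasi-compactness parts of the claim.

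For the locally closed immersion property, I would introduce the intermediate stack
$$Y \;=\; \Bigl(\varprojlim_{\ff\prec\Omega}\Sht^\Zc_{\Gc_\ff}\Bigr)\times_{\prod_{j=0}^{m}\varprojlim_{\ff\prec\Omega}\Bun_{\Gc_\ff}}\prod_{j=0}^{m}\Bun_{\Gc_\Omega},$$
parametrising compatible systems in the limit whose $m+1$ underlying bundles descend to $\Gc_\Omega$-bundles. The projection $Y\to\varprojlim_\ff\Sht^\Zc_{\Gc_\ff}$ is a quasi-compact open immersion, being a base change of the product of quasi-compact open immersions $\Bun_{\Gc_\Omega}\to\varprojlim_\ff\Bun_{\Gc_\ff}$ from Theorem~\ref{thmBunGImm}. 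The remaining map $\Sht^\Zc_{\Gc_\Omega}\to Y$ is a closed immersion: combining the local model theorem (Proposition~\ref{thmLocModThm}) with the factorisation of Lemma~\ref{lemFunAffGrassBT} of $Z_{\Gc_\Omega}\hookrightarrow\varprojlim_\ff Z_{\Gc_\ff}$ as a closed immersion $Z_{\Gc_\Omega}\hookrightarrow\Gr_{\Gc_\Omega}^{I_\bullet}\times_{\varprojlim\Gr_{\Gc_\ff}^{I_\bullet}}\varprojlim Z_{\Gc_\ff}$ followed by an open immersion, one sees that imposing $Z_{\Gc_\Omega}$-boundedness on top of compatible $Z_{\Gc_\ff}$-boundednesses (the defining condition of $Y$) is a closed condition. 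The composition $\Sht^\Zc_{\Gc_\Omega}\to Y\to\varprojlim\Sht^\Zc_{\Gc_\ff}$ is then a quasi-compact locally closed immersion.

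For the assertion over $(X\setminus\{x_0\})^I$: by Lemma~\ref{lemFunAffGrassBT}, $Z_{\Gc_\Omega}\to\varprojlim_\ff Z_{\Gc_\ff}$ is an isomorphism over $(X\setminus\{x_0\})^I$, hence the closed immersion $\Sht^\Zc_{\Gc_\Omega}\to Y$ becomes an isomorphism there and $\rho_{\Omega,\ast}|_{(X\setminus\{x_0\})^I}$ reduces to the open immersion $Y\to\varprojlim\Sht^\Zc_{\Gc_\ff}$. For closedness, I would apply Lemma~\ref{lemLim} once more but now with the property \emph{proper}: over $(X\setminus\{x_0\})^I$, each level map is finite \'etale by Corollary~\ref{corLvlMapParahoric}, in particular proper, so the lemma yields that $\rho_{\Omega,\ast}|_{(X\setminus\{x_0\})^I}$ is proper; being also a monomorphism (as a locally closed immersion), it is a closed immersion, giving the open-and-closed conclusion. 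Finally, when $\Omega$ is the closure of a single facet $\ff_0$, the poset $\{\ff\prec\Omega\}$ has $\ff_0$ as its maximum, so the limit reduces to $\Sht^\Zc_{\Gc_{\ff_0}}=\Sht^\Zc_{\Gc_\Omega}$ and $\rho_{\Omega,\ast}$ is the identity.

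The main obstacle I anticipate is correctly establishing the closed immersion $\Sht^\Zc_{\Gc_\Omega}\to Y$ in the second paragraph. One must lift the closed immersion $Z_{\Gc_\Omega}\hookrightarrow\Gr_{\Gc_\Omega}^{I_\bullet}\times_{\varprojlim\Gr_{\Gc_\ff}^{I_\bullet}}\varprojlim Z_{\Gc_\ff}$ from the affine Grassmannian to the corresponding loci of shtukas in a Cartesian way via the smooth local model map, and verify that the boundedness condition built into $Y$ precisely corresponds to the intermediate closed subscheme appearing in Lemma~\ref{lemFunAffGrassBT} rather than to some strictly larger auxiliary substack.
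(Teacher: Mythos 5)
Your proposal is correct and follows essentially the same route as the paper: schematicity and finite type via Corollary \ref{corLvlMapParahoric} and Lemma \ref{lemLim}, the open immersion on the unbounded level coming from Theorem \ref{thmBunGImm}, boundedness by $\Zc$ as a closed condition on top (your intermediate stack $Y$ just makes explicit what the paper leaves implicit), and the open-and-closed statement over $(X\setminus\{x_0\})^I$ via Lemma \ref{lemFunAffGrassBT} together with finiteness/properness of the level maps away from $x_0$. The only cosmetic difference is that the paper phrases the last step as finiteness rather than properness and does not invoke the local model theorem, relying instead directly on the definition of the bounded locus as a closed substack.
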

\begin{proof}
	The assertion in the case that $\Omega$ is a facet is clear because the index set $\{\ff \prec \Omega\}$ has the final object $\Omega$.
	By Corollary \ref{corLvlMapParahoric} and Lemma \ref{lemLim}, the map is schematic, separated and of finite type.
	By Theorem \ref{thmBunGImm}, the corresponding map on the unbounded moduli stacks of shtukas is representable by an open immersion.
	Hence,  $ \rho_{\Omega, \ast}$ is a locally closed immersion as being bounded by $\Zc$ is a closed condition.
	
	Over $(X \setminus x_0)^I$, an element of $\Sht_{\Gc_\Omega}$ is bounded by $\Zc$ if and only if its image under $\rho_{\ff, \Omega, \ast}$ for one (or equivalently all) facet $\ff \prec \Omega$ is bounded by $\Zc$ by the second part of Lemma \ref{lemFunAffGrassBT}.
	Thus, $\rho_{\Omega, \ast}$ is an open immersion over $(X \setminus x_0)^I$.
	Moreover, the map $\rho_{\Omega, \ast}$ is finite away from $x_0$ by Lemma \ref{lemLim}, hence also a closed immersion. 
	\end{proof}
	
	\begin{definition}
		\label{defnIntMod}
		The integral model $\SSht^{\Zc}_{\Gc_\Omega} = \SSht^{I_\bullet, \Zc}_{\Gc_\Omega, X}$ of the moduli space of shtukas with $\Gc_\Omega$-level 
		is defined to be the schematic image in the sense of \cite{Emerton2021} of the map
		$$ \rho_{\Omega, \ast} \colon  \Sht^{\Zc}_{\Gc_\Omega} \to \varprojlim_{\ff \prec \Omega} \Sht^{\Zc}_{\Gc_\ff}.$$
	\end{definition}
	By Theorem \ref{thmImmersion}, we have $\SSht^{\Zc}_{\Gc_\ff} = \Sht^{\Zc}_{\Gc_\ff}$ in the parahoric case. Moreover, the inclusion $\Sht^{\Zc}_{\Gc_\Omega}  \to \SSht^{\Zc}_{\Gc_\Omega} $ is an isomorphism away from $x_0$ by Theorem \ref{thmImmersion} together with the fact that the schematic closure commutes with flat base change.
	\begin{remark}
		As the map $\rho_{\Omega, \ast}$ is schematic and the schematic image commutes with flat base change and is fpqc-local on the target (compare \cite[Remark 3.1.2]{Emerton2021}), the schematic image of $\rho_{\Omega, \ast}$ is given by descending the schematic image of the map of schemes $\rho_{\Omega, \ast, S} \colon S \times_{\varprojlim_{\ff \prec \Omega} \Sht^{\Zc}_{\Gc_\ff}} \Sht^{\Zc}_{\Gc_\Omega} \to S$ for any smooth atlas $S \to \varprojlim_{\ff \prec \Omega} \Sht^{\Zc}_{\Gc_\ff}$. 
	\end{remark} 
	
	By construction, we have level maps $\overline{\rho}_{\ff, \Omega}\colon \SSht^{\Zc}_{\Gc_\Omega} \to \Sht^{\Zc}_{\Gc_\ff}$ for all facets $\ff \prec \Omega$. 
	In particular, for $\Omega' \prec \Omega$ we obtain a map $\overline{\rho}_{\Omega', \Omega} \colon\SSht^{\Zc}_{\Gc_\Omega}\to \varprojlim_{\ff \prec \Omega'} \Sht^{\Zc}_{\Gc_\ff}$ that factors through $\SSht^{\Zc}_{\Gc_{\Omega'}}$ by construction.
	\begin{theorem}
		\label{thmLvlMapGeneral}
		Let $\Omega, \Omega'$ be two bounded connected subsets of an apartment in the Bruhat-Tits building of $G_{K_{x_0}}$ such that $\Omega' \prec \Omega$. Then, the level map
		$$\overline{\rho}_{\Omega', \Omega} \colon \SSht^{\Zc}_{\Gc_\Omega} \to \SSht^{\Zc}_{\Gc_{\Omega'}}$$
		is schematic, proper, surjective and finite \'etale away from $x_0$.
	\end{theorem}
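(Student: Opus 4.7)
The plan is to analyse $\overline\rho_{\Omega',\Omega}$ as the restriction to $\SSht^{\Zc}_{\Gc_\Omega}$ of the ambient projection
$$\pi\colon \varprojlim_{\ff \prec \Omega}\Sht^{\Zc}_{\Gc_\ff}\longrightarrow \varprojlim_{\ff' \prec \Omega'}\Sht^{\Zc}_{\Gc_{\ff'}}.$$
First I would show that $\pi$ is schematic and proper. Every transition map $\Sht^{\Zc}_{\Gc_{\ff_1}}\to\Sht^{\Zc}_{\Gc_{\ff_2}}$ for facets $\ff_2\prec\ff_1\prec\Omega$ is schematic, proper and surjective by Corollary \ref{corLvlMapParahoric} (its ``parahoric'' conclusion applies because $\Gc_{\ff_1}$ is parahoric). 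Since each $\Sht^{\Zc}_{\Gc_\ff}$ is separated, Lemma \ref{lemLimProj} then shows that the projection $\varprojlim_{\ff\prec\Omega}\Sht^{\Zc}_{\Gc_\ff}\to \Sht^{\Zc}_{\Gc_{\ff'}}$ is schematic and proper for each $\ff'\prec\Omega'\prec\Omega$, and these assemble into a cone over $(\Sht^{\Zc}_{\Gc_{\ff'}})_{\ff'\prec\Omega'}$. Lemma \ref{lemLim}, applied with $\mathbf{P} = $ proper (which satisfies the required stability hypotheses), then yields that $\pi$ is schematic and proper. Composing the closed immersion $\SSht^{\Zc}_{\Gc_\Omega}\hookrightarrow \varprojlim_{\ff\prec\Omega}\Sht^{\Zc}_{\Gc_\ff}$ with $\pi$ gives a schematic proper map that factors, by construction of $\overline\rho_{\Omega',\Omega}$, through the closed substack $\SSht^{\Zc}_{\Gc_{\Omega'}}\hookrightarrow \varprojlim_{\ff'\prec\Omega'}\Sht^{\Zc}_{\Gc_{\ff'}}$. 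Schematicity of $\overline\rho_{\Omega',\Omega}$ then follows from Lemma \ref{lemSubSchm}, and properness from the standard fact that if $h\circ f$ is proper and $h$ is separated then $f$ is proper.

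For the finite-\'etale statement away from $x_0$, Theorem \ref{thmImmersion} produces isomorphisms $\Sht^{\Zc}_{\Gc_\Omega}|_{(X\setminus\{x_0\})^I}\xrightarrow{\cong}\SSht^{\Zc}_{\Gc_\Omega}|_{(X\setminus\{x_0\})^I}$ and similarly for $\Omega'$. Since $\Gc_\Omega\to\Gc_{\Omega'}$ is the identity on $X\setminus\{x_0\}$, Corollary \ref{corLvlMapParahoric} (whose ``finite \'etale and surjective over $U^I\times_{X^I}\widetilde X^I_\Zc$'' conclusion does not require $\Gc$ to be parahoric) shows that $\overline\rho_{\Omega',\Omega}$ is finite \'etale and surjective away from $x_0$. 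In particular its image contains the open substack $\SSht^{\Zc}_{\Gc_{\Omega'}}|_{(X\setminus\{x_0\})^I}$.

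To upgrade this to global surjectivity I would combine it with properness: the image is a closed substack of $\SSht^{\Zc}_{\Gc_{\Omega'}}$, so it suffices to prove that $\SSht^{\Zc}_{\Gc_{\Omega'}}|_{(X\setminus\{x_0\})^I}$ is dense in $\SSht^{\Zc}_{\Gc_{\Omega'}}$. I expect this density step to be the main obstacle of the proof. Since $\Sht^{\Zc}_{\Gc_{\Omega'}}\hookrightarrow\SSht^{\Zc}_{\Gc_{\Omega'}}$ is an open, scheme-theoretically dense immersion by the very definition of schematic image, it is enough to show density of $\Sht^{\Zc}_{\Gc_{\Omega'}}|_{(X\setminus\{x_0\})^I}$ in $\Sht^{\Zc}_{\Gc_{\Omega'}}$. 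Here I would invoke the generic definedness of $\Zc$ from Assumption \ref{assBT}: by Lemma \ref{lemGen} the representative $Z_{\Gc_{\Omega'}}$ is the scheme-theoretic closure of its generic fibre over each connected component of $\widetilde X^I_\Zc$, so every irreducible component of $Z_{\Gc_{\Omega'}}$ dominates $\widetilde X^I_\Zc$, and hence the open locus over $(X\setminus\{x_0\})^I\subseteq \widetilde X^I_\Zc$ is dense in $Z_{\Gc_{\Omega'}}$. The local model theorem (Proposition \ref{thmLocModThm}, or its \'etale-local refinement) then transfers this density through the local model roof to $\Sht^{\Zc}_{\Gc_{\Omega'}}$, completing the proof.
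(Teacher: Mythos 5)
Your proposal is correct and follows essentially the same route as the paper: schematicity and properness via Lemmas \ref{lemLimProj}, \ref{lemLim} and \ref{lemSubSchm} applied to the ambient projection of limits, the finite \'etale statement via the identification $\Sht^{\Zc}_{\Gc_\Omega}\cong\SSht^{\Zc}_{\Gc_\Omega}$ away from $x_0$ combined with Corollary \ref{corLvlMapParahoric}, and surjectivity from generic surjectivity plus properness plus the local-model/generic-definedness input. Your density formulation of the last step is just a repackaging of the paper's argument (which lifts a generalisation over $U^I$ through the smooth local model map and then lifts the specialisation back along the proper level map, exactly as in the proof of Theorem \ref{thmLvlMapGen}(4)).
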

	\begin{proof}
		As a first step, we show that $\overline{\rho}_{\Omega', \Omega}$ is schematic. By Lemmas \ref{lemLimProj} and \ref{lemLim}, the map 
		$ \varprojlim_{\ff \prec \Omega} \Sht^{\Zc}_{\Gc_\ff} \to \varprojlim_{\ff' \prec \Omega'} \Sht^{\Zc}_{\Gc_{\ff'}} $
		is schematic. The claim for  $\overline{\rho}_{\Omega', \Omega}$ then follows from Lemma \ref{lemSubSchm}.
		
		That the map is finite \'etale away from $x_0$ follows from the fact that the map $\Sht^{\Zc}_{\Gc_\Omega} \to \SSht^{\Zc}_{\Gc_\Omega}$ is an isomorphism away from $x_0$ by the observation above together with Corollary \ref{corLvlMapParahoric}.
		
		Moreover, the map $\SSht^{\Zc}_{\Gc_{\Omega'}} \to \varprojlim_{\ff \prec \Omega'} \Sht^{\Zc}_{\Gc_\ff}$ is a closed immersion by definition. Thus, by Lemma \ref{lemLim}, it suffices to consider the level maps 
		$$ \SSht^{\Zc}_{\Gc_\Omega} \to \Sht^{\Zc}_{\Gc_\ff} $$
		for facets $\ff \prec \Omega$ to show the properness. Similarly, by construction of  $\SSht^{\Zc}_{\Gc_\Omega}$, it suffices to show the claim for the projections
		$$ \varprojlim_{\ff \prec \Omega} \Sht^{\Zc}_{\Gc_\ff} \to \Sht^{\Zc}_{\Gc_\ff}.$$
		But for the projections the claim follows from Lemma \ref{lemLimProj}.
		The surjectivity follows as in the parahoric case in the proof of Theorem \ref{thmLvlMapGen}.
	\end{proof}
	
	\begin{remark}
		In a similar fashion we define an integral model of the moduli space of bounded $\Gc$-shtukas for \emph{arbitrary} Bruhat-Tits group schemes $\Gc \to X$.
		More precisely, let $x_1, \ldots, x_n$ be closed points of $X$ such that $\Gc|_{X \setminus \{x_1, \ldots, x_n\}}$ is parahoric. We set $U = X \setminus \{x_1, \ldots, x_n\}$. 
		For each $1 \leq m \leq n$ let $\Omega_m \se \Bc(G_{K_{x_m}}, K_{x_m})$ be a bounded subset contained in an apartment with $\Omega_m = \mr{cl}(\Omega_m)$ such that $\Gc|_{\Oc_{x_m}} = \Gc_{\Omega_m}$. In this case, we write $\Gc= \Gc_{(\Omega_m)_{1 \leq m \leq n}}$ and define the corresponding integral model
		$$ \SSht^{\Zc}_{\Gc} = \mr{image}\left( \Sht^{\Zc}_{\Gc}  \hookrightarrow \varprojlim_{(\ff_m)_{m=1, \ldots, n} \prec (\Omega_m)_{m = 1, \ldots, n}} \Sht^{\Zc}_{\Gc_{(\ff_m)_{m=1, \ldots, n}}}  \right) $$ 
		as above as the schematic image in the sense of \cite{Emerton2021} of the embedding of the stack of bounded $\Gc$-shtukas into the limit of the corresponding stacks with parahoric level. By the same arguments as above, the analogous assertions from Theorem \ref{thmImmersion} and Theorem \ref{thmLvlMapGeneral} hold in this setting as well.
		In particular, the natural map $\Sht^{\Zc}_{\Gc} \to \SSht^{\Zc}_{\Gc}$ is an open immersion and an isomorphism over $U$, and for any map of Bruhat-Tits group schemes $f \colon \Gc \to \Gc'$ that is an isomorphism generically we obtain a proper, surjective and generically \'etale level map 
		$$f_\ast \colon \SSht^{\Zc}_{\Gc}\to \SSht^{\Zc}_{\Gc'}.$$ 
	\end{remark}
	
	\begin{example}
		Let us consider the Drinfeld case, that is $G = \GL_r$ and $\underline{\mu} = ((1,0,\ldots,0),(0, \ldots,0,-1))$. 
		In this case, \cite{Bieker2023} defines \emph{Drinfeld $\Gamma_0(\pf^n)$-level structures} for shtukas adapting the notion of Drinfeld $\Gamma_0(p^n)$-level structures for elliptic curves of \cite{Katz1985}. Moreover, the moduli space of Drinfeld shtukas with Drinfeld $\Gamma_0(\pf^n)$-level structure identifies with $\SSht^{ (1,2), \leq \underline \mu}_{\GL_{r,\Omega}, X}$ by \cite[Theorem 6.7]{Bieker2023} for the standard simplex $\Omega$ of side length $n$ in the Bruhat-Tits building of $\GL_r$.
	\end{example}

	\subsection{The case that $\Omega$ is not contained in a single apartment}
	\label{sectOmegaGeneral}
	
	In principle, our techniques should also extend to the more general situation where the set $\Omega$ is not necessarily contained in a single apartment. 
	Generalising our results might yield further interesting applications.
	For example, when $\Omega$ is the ball of radius $r$ around a point $x$ in the building, its (connected, pointwise) stabiliser $G(\breve{F}_{x_0})^0_{\Omega}$ should be closely related (up to the centre) to the Moy-Prasad subgroup $G(\breve{F}_{x_0})_{x, r}$. 
	However, in this situation analogues of the results from Bruhat-Tits theory that go into the proofs in Section \ref{sectBT} (in particular of Lemma \ref{lemTorsDefo}) do not seem to be available.
	We present the extent to which we can generalise our main results.
	
	In the situation of Assumption \ref{assBT}, we drop the assumption that $\Omega$ is contained in a single apartment. In this section, let $\Omega \se \Bf(G_{F_{x_0}}, F_{x_0})$ be a non-empty bounded subset of the Bruhat-Tits building of $G_{F_{x_0}}$. 
	Recall that the smoothening of the closure of 
	$$ G \xrightarrow{\Delta} \prod_{\ff \prec \Omega} \Gc_{\ff},$$
	where $\Delta$ denotes the diagonal embedding in the generic fibre, is by construction a smooth affine $\Oc_{x_0}$-group scheme $\Gc_{\Omega}$ with generic fibre $G$ and such that $\Gc_{\Omega}(\breve{\Oc}_{x_0}) = \bigcap_{\ff \prec \Omega} G(\breve{F}_{x_0})_\ff^0$.
	As in the case where $\Omega$ is contained in an apartment, we denote by $\mr{cl}(\Omega)$ the maximal subset of $\Bf(G, F_{x_0})$ that is pointwise stabilised by $\Gc_{\Omega}(\breve{\Oc}_{x_0})$. Then $\mr{cl}(\Omega)$ is convex and closed.
	
	We consider the following conditions.
	\begin{condition}
		\begin{enumerate}
			\item The group scheme $\Gc_\Omega$ is a smooth affine group scheme with geometrically connected fibres. Moreover, the natural map $ \Gc_\Omega \xrightarrow{\cong} \varprojlim_{\ff \prec \Omega} \Gc_\ff$
			is an isomorphism.
			\item The map
			$ \rho_{\Omega, \ast} \colon \Bun_{\Gc_\Omega} \to \varprojlim_{\ff \prec \Omega} \Bun_{\Gc_\ff} $
			is schematic and representable by a quasi-compact immersion.
		\end{enumerate}
		\label{condGOmega}
	\end{condition}
	Recall that when $\Omega$ is contained in a single apartment, the group schemes $\Gc_\Omega$ satisfy the two conditions by Theorems \ref{thmBTGS} and \ref{thmBunGImm}. We do not know if the properties remain satisfied in general.
	
	\begin{proposition}
		Let $\Omega = \mr{cl}(\Omega)$ be a bounded subset of the Bruhat-Tits building (not necessarily contained in a single apartment).
		\begin{enumerate}
			\item   
			The map 
			$\rho_{\Omega, \ast} \colon \Sht^{\Zc}_{\Gc_\Omega} \to  \varprojlim_{\ff \prec \Omega} \Sht^{\Zc}_{\Gc_\ff}$ 
			is schematic, separated and of finite type, and a monomorphism.
			Moreover, it is an open and closed immersion over $(X\setminus\{x_0\})^I$.
			\label{propShtOmGenRep}
			\item 
			Assume that Condition \ref{condGOmega} holds for $\Gc_\Omega$. Then $\rho_{\Omega, \ast}$ is a quasi-compact immersion and an isomorphism over $(X \setminus \{x_0\})^I$.
			\label{propShtOmGenImm}
		\end{enumerate}
		\label{propShtOmGen}
	\end{proposition}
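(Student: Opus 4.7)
For part (\ref{propShtOmGenRep}), the plan is to follow the structure of the proof of Theorem \ref{thmImmersion}, isolating the steps that do not rely on Condition \ref{condGOmega}. The schematic, separated, and finite-type properties follow from Corollary \ref{corLvlMapParahoric} (the required generic isomorphism $\Gc_\Omega|_{X \setminus \{x_0\}} \cong \Gc_\ff|_{X \setminus \{x_0\}}$ is built into the construction of $\Gc_\Omega$) applied to each projection $\Sht^{\Zc}_{\Gc_\Omega} \to \Sht^{\Zc}_{\Gc_\ff}$, combined with Lemma \ref{lemLim}. Over the open subset $(X \setminus \{x_0\})^I$ each projection is in fact an isomorphism by the same generic identification of group schemes, so $\rho_{\Omega, \ast}$ restricts to an isomorphism—hence trivially to an open and closed immersion—there.

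For the monomorphism property, which is the one place where Theorems \ref{thmBTGS} and \ref{thmBunGImm} are no longer at our disposal, the plan is to argue directly at the level of bundles using density of the generic fibre. Given two $\Gc_\Omega$-shtukas $\underline{\Ec}_1, \underline{\Ec}_2$ over a scheme $S$ together with a compatible system of isomorphisms $(f_\ff)_{\ff \prec \Omega}$ between the induced $\Gc_\ff$-shtukas, the components $f_\ff$ assemble into a single morphism at the level of associated torsors $\Ec_{1,j} \to \varprojlim_\ff \Ec_{2,j,\ff}$, where $\Ec_{i,j,\ff} := \Ec_{i,j} \times^{\Gc_\Omega} \Gc_\ff$. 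Since $\Gc_\Omega$ and each $\Gc_\ff$ agree on $(X \setminus \{x_0\})_S$, this map already factors through $\Ec_{2,j}$ over the dense open. Smoothness of $\Gc_\Omega$-torsors over $X_S$ implies that the $\Ec_{i,j}$ coincide with the scheme-theoretic closures of their generic fibres, so any such factorization is unique; existence reduces to verifying that the image of $\Ec_{1,j}$ inside $\varprojlim_\ff \Ec_{2,j,\ff}$ lies in the closed subsheaf $\Ec_{2,j}$—a closure argument based on the fact that the canonical map $\Gc_\Omega \to \varprojlim_\ff \Gc_\ff$ is a monomorphism of group schemes with common generic fibre $G$.

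For part (\ref{propShtOmGenImm}), Condition \ref{condGOmega} furnishes precisely the analogues of Theorems \ref{thmBTGS} and \ref{thmBunGImm} employed in the proof of Theorem \ref{thmImmersion}, so that argument transports verbatim: the quasi-compact immersion $\Bun_{\Gc_\Omega} \to \varprojlim_{\ff \prec \Omega} \Bun_{\Gc_\ff}$ provided by the condition induces, via the Cartesian diagram defining $\Sht$, a quasi-compact immersion on the unbounded moduli stacks of shtukas, and restricting to the closed substack cut out by the (generically defined) bound $\Zc$ preserves this property. The isomorphism over $(X \setminus \{x_0\})^I$ is obtained as in part (\ref{propShtOmGenRep}). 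I expect the main obstacle to lie in the monomorphism step of part (\ref{propShtOmGenRep}), namely in verifying that the factorization through $\Ec_{2,j}$ persists across $x_0$ without access to $\Gc_\Omega = \varprojlim_\ff \Gc_\ff$; once this closure argument is in place, the remaining assertions reduce to routine applications of the limit formalism for algebraic stacks developed above.
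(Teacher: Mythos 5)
Your overall architecture matches the paper in part: the schematic/separated/finite-type assertions do come from Corollary \ref{corLvlMapParahoric} applied to each projection together with Lemma \ref{lemLim}, and part (\ref{propShtOmGenImm}) is indeed obtained by observing that Condition \ref{condGOmega} supplies exactly the inputs (the analogues of Theorems \ref{thmBTGS} and \ref{thmBunGImm}) used in the proof of Theorem \ref{thmImmersion}. However, your treatment of the behaviour over $(X\setminus\{x_0\})^I$ contains a genuine error. The projections $\Sht^{\Zc}_{\Gc_\Omega}\to\Sht^{\Zc}_{\Gc_\ff}$ are \emph{not} isomorphisms there: restricting the legs to $(X\setminus\{x_0\})^I$ does not change the fact that the underlying torsors live on all of $X_S$, including over $x_0$, where $\Gc_\Omega$ and $\Gc_\ff$ differ. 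By Theorem \ref{thmLvlMapGen}(3) these level maps are, away from $x_0$, finite \'etale with fibre the constant scheme $\underline{\Gc_\ff(\Oc_{x_0})/\Gc_\Omega(\Oc_{x_0})}$, which has more than one point in general. (What is an isomorphism away from $x_0$ is the map on affine Grassmannians, Lemma \ref{lemFunAffGrassBT}, not the map on shtuka spaces; compare also Remark \ref{remCounterLimTors}.) Consequently your derivation of the ``open and closed immersion over $(X\setminus\{x_0\})^I$'' collapses. The correct mechanism, as in the final paragraph of the proof of Theorem \ref{thmImmersion}, is to combine the monomorphism property with the fact that $\rho_{\Omega,\ast}$ is finite and \'etale away from $x_0$ (both via Lemma \ref{lemLim}): a finite monomorphism is a closed immersion and an \'etale monomorphism is an open immersion.

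The second issue is the monomorphism property itself, which you correctly single out as the crux but do not establish. The paper deduces it from Lemma \ref{lemPTor}, i.e.\ from full faithfulness of the functor sending a torsor to its compatible system of pushforwards, proved there by showing the unit of the limit adjunction is an isomorphism on genuine torsors after trivialisation. Your substitute --- extending the factorisation of $\Ec_{1,j}\to\varprojlim_\ff\Ec_{2,j,\ff}$ through $\Ec_{2,j}$ from the dense open $(X\setminus\{x_0\})_S$ across $x_0$ ``based on the fact that $\Gc_\Omega\to\varprojlim_\ff\Gc_\ff$ is a monomorphism of group schemes'' --- does not work as stated: to propagate a factorisation through a subsheaf from a schematically dense open to all of $X_S$ one needs $\Ec_{2,j}\hookrightarrow\varprojlim_\ff\Ec_{2,j,\ff}$ to be a \emph{closed} immersion, which amounts to $\Gc_\Omega\to\varprojlim_\ff\Gc_\ff$ being a closed immersion. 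In the general situation of this proposition $\Gc_\Omega$ is only the \emph{smoothening} of the closure of the diagonal in $\prod_{\ff\prec\Omega}\Gc_\ff$, and a smoothening map is affine and birational but not a closed immersion; so this is precisely the information that is unavailable without Condition \ref{condGOmega}(1). You should instead argue as in Lemma \ref{lemPTor}, checking full faithfulness of the groupoid functor directly after an \'etale trivialisation near $x_0$ (and, if you want to be more careful than the paper's one-line citation, address there the discrepancy between $\Gc_\Omega$ and $\varprojlim_\ff\Gc_\ff$ in the non-apartment case).
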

	\begin{proof}
		We proceed as in the proof of Theorem \ref{thmImmersion}.
		Assertion (\ref{propShtOmGenRep}) follows from Lemma \ref{lemPTor}, Corollary \ref{corLvlMapParahoric}, and Lemma \ref{lemLim}. Moreover, (\ref{propShtOmGenImm}) follows from Condition \ref{condGOmega}.
	\end{proof}

	\begin{remark}
		In the situation of Proposition \ref{propShtOmGen}, one could define an integral model $\SSht^{\Zc}_{\Gc_\Omega}$ again as the schematic image of the map $\rho_{\Omega, \ast}$. The argument in the proof of Theorem \ref{thmLvlMapGeneral} clearly generalises to this case. Hence, the models $\SSht^{\Zc}_{\Gc_\Omega}$ defined in this way again admit proper, surjective and generically \'etale level maps.
		Assuming Condition \ref{condGOmega},  $\SSht^{\Zc}_{\Gc_\Omega}$ is again a relative compactification of  $\Sht^{\Zc}_{\Gc_\Omega}$ as before by Proposition \ref{propShtOmGen} (\ref{propShtOmGenImm}). 
	\end{remark}


\subsection{Newton stratification}
We recall the Newton stratification on stacks of global shtukas and define a Newton stratification on our integral models with deep level. 
We show that the expected closure relations of Newton strata are satisfied in the hyperspecial case.

Let $K \cong k \dbr{t}$ be a local field in characteristic $p$ with ring of integers $\Oc \cong k \dsq{t}$ and finite residue field $k$. We denote by $\bar{K} = K^{\mr{sep}}$ a fixed separable closure and by $\breve{K} \cong k^{\mr{alg}} \dbr{t}$ the completion of the maximal unramified extension of $K$.
Let $G/K$ be a reductive group and 
let us fix $T \se G$ be a maximal torus defined over $K$.
Recall that $G_{\breve{K}} = G \times_K \breve{K}$ is quasi-split by a theorem of Steinberg and Borel-Springer, compare \cite[Theorem 1.9]{Steinberg1965} \cite[Section 8]{Borel1968}. We choose a Borel $B \se G_{\breve{K}}$ containing $T_{\breve{K}}$.  
We denote by $X_\ast(T)$ its group of geometric cocharacters and by $\pi_1(G)$ the algebraic fundamental group of $G$ given by the quotient of the cocharacter lattice by the coroot lattice.

We denote by $B(G)$ the set of $\sigma$-conjugacy classes in $G(\breve{K}) = LG(k^{\mr{alg}})$.  
By \cite{Kottwitz1985, Kottwitz1997, Rapoport1996}, the elements of $B(G)$ are classified by two invariants: the \emph{Kottwitz map} denoted by 
$$ \kappa \colon B(G) \to \pi_1(G)_{\Gal({\bar K}/K)} $$
and the \emph{Newton map} denoted by 
$$ \nu \colon B(G) \to (\Hom(\D_{\bar K}, G_{\bar K})/G({\bar K}))^{\Gal({\bar K}/K)},$$
where $\D$ denotes the pro-torus with character group $\Q$ and $G({\bar K})$ acts by conjugation. Note that we can identify 
$$ (\Hom(\D_{\bar K}, G_{\bar K})/G({\bar K}))^{\Gal({\bar K}/K)} = X_\ast(T)_{\Q}^{+, \Gal({\bar K}/K)} = X_\ast(T)_{\Q, \Gal({\bar K}/K)}^{+}$$ with the set of rational dominant (with respect to the choice of $B$) Galois-invariant cocharacters, and that $\kappa(b) = \nu(b)$ in $\pi_1(G)_{\Q, {\Gal({\bar K}/K)}}$.

The choice of Borel determines a set of simple positive roots and consequently defines the dominance order on $X_\ast(T)_{\Q}$ by $\mu_1 \leq \mu_2$ if $\mu_2 - \mu_1$ is a $\Q$-linear combination of positive simple roots with non-negative coefficients.
Via $\kappa$ and $\nu$ the dominance order induces a partial order on $B(G)$ by 
$b_1 \leq b_2$ if and only if $\kappa(b_1) = \kappa(b_2)$ and $\nu(b_1) \leq \nu(b_2)$.

Let $\Gc \to \Spec(\Oc)$ be a smooth affine group scheme such that $\Gc_K = G$.
Note that for an algebraically closed extension $\ell$ of $k$ the set of $\sigma$-conjugacy classes in $LG(\ell)$ does not depend on the choice of $\ell$ by \cite[Lemma 1.3]{Rapoport1996}.
It classifies quasi-isogeny classes of local $\Gc$-shtukas by associating to $(L^+\Gc,b^{-1})$ the class $[b] \in B(G)$. 
For a local $\Gc$-shtuka $\underline{\Ec}$ over $S = \Spec(R)$ and a point $s \in S$ we denote by $[\underline{\Ec}_s] \in B(G)$ the corresponding element. This does not depend on the choice of an algebraic closure of the residue field at $s$. 

Let us shift perspective back to the global setting again and consider a smooth affine group scheme $\Gc \to X$ with generic fibre $\Gc_F = G$ a reductive group. Let us moreover fix a tuple $\underline{y} = (y_i)_{i \in I}$ of pairwise distinct closed points of $X$. 
Let us fix a (global) bound $\Zc$ and points  $\underline{y}' = (y'_i)_{i \in I} \in X_\Zc^I$ lying over $\underline{y}$. 
We denote by $\Sht^{I_\bullet, \Zc}_{\Gc, X, \F_{\underline{y}'}} = \Sht^{\Zc}_{\Gc, \F_{\underline{y}'}} = \Sht^{\Zc}_{\Gc, \underline{y}'} \times_{X_\Zc^I} \Spec(\F_{\underline{y}'})$ the special fibre of the moduli space of shtukas at $\underline{y}'$.

\begin{definition}[{\cite[Definition 4.12]{Breutmann2019}}]
	Let $\ell$ be an algebraically closed extension of $\F_{\underline{y}'}$. The global-to-local functor induces maps
	$$
	\delta_{\Gc, y_i, \ell} \colon \Sht^{\Zc}_{\Gc, \F_{\underline{y}'}}(\ell)  \to B(G_{y_i}), \qquad 
	{\underline{\Ec}}  \mapsto [\widehat{\underline{\Ec}_{y_i}}]
	$$
	for all $i \in I$ and 
	$$ \delta_{\Gc, \underline{y}, \ell} = \prod_{i \in I} \delta_{\Gc, y_i, \ell}\colon   \Sht^{\Zc}_{\Gc, \F_{\underline{y}'}}(\ell) \to \prod_{i \in I} B(G_{y_i}).$$
	Let $\underline{b} = (b_i)_{i \in I} \in \prod_{i \in I} B(G_{y_i})$. The locus in  $\Sht^{\Zc}_{\Gc, \F_{\underline{y}'}}$ where $\delta_{\Gc, \underline{y}}$ maps to $\underline{b}$ is locally closed by \cite[Theorem 7.11]{Hartl2011}, compare also \cite{Rapoport1996}.
	The reduced substack on this locally closed subset is denoted by $\Sht^{\Zc, \underline{b}}_{\Gc, \F_{\underline{y}'}}$ and called the \emph{Newton stratum} associated to $\underline{b}$. 
\end{definition}

The Newton map is compatible with changing the group scheme in the following sense.
\begin{lemma}[compare {\cite[Section 5.2]{Breutmann2019}}]
	\label{lemNewtonStratComp}
	Let $G/F$ be a reductive group and let $\Gc$ and $\Gc'$ be two smooth affine models of $G$ over $X$. 
	Let $f \colon (\Gc, \Zc) \to (\Gc', \Zc')$ be a map of shtuka data such that $f \colon \Gc \to \Gc'$ is given by the identity on $G$ in the generic fibre. Assume that $\Zc'$ admits a representative over its reflex scheme.   
	The induced map 
	$ f_\ast \colon \Sht^{\Zc}_{\Gc} \times_{\widetilde X^I_\Zc} \widetilde X^I_{\Zc. \Zc'} \to \Sht^{\Zc'}_{\Gc'} \times_{\widetilde X^I_{\Zc'}} \widetilde X^I_{\Zc. \Zc'} $
	satisfies 
	$ \delta_{\Gc', \underline{y}} \circ f_\ast = \delta_{\Gc, \underline{y}}.$
\end{lemma}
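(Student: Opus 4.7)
The plan is to reduce the question to a purely local statement at each leg, where the global-to-local functor makes the behaviour of the Newton map transparent. Since by definition $\delta_{\Gc, \underline{y}} = \prod_{i \in I} \delta_{\Gc, y_i}$, it suffices to prove $\delta_{\Gc', y_i} \circ f_\ast = \delta_{\Gc, y_i}$ separately for each $i \in I$. So I would fix an algebraically closed extension $\ell$ of $\F_{\underline{y}'}$ and a point $\underline{\Ec} \in \Sht^{\Zc}_{\Gc, \F_{\underline{y}'}}(\ell)$. The first step is to verify that the global-to-local functor commutes with $f_\ast$, in the sense that
$$\widehat{(f_\ast \underline{\Ec})_{y_i}} = f_\ast \widehat{\underline{\Ec}_{y_i}}$$
as local $\Gc'_{\Oc_{y_i}}$-shtukas. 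This is essentially tautological: both pushforward along $f$ and formal completion at $y_i$ act bundle-by-bundle (extension of structure groups commutes with restriction to any subscheme), and the assembly of the local Frobenius $\tilde\theta$ out of $\theta$ and the $\ph_j$ restricted to the components $V(\af_\ell)$ is functorial in the group scheme.

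With this reduction the question becomes a purely local assertion: for a local $\Gc_{\Oc_{y_i}}$-shtuka $\underline{\Fc}$ over $\ell$, one has $[f_\ast \underline{\Fc}] = [\underline{\Fc}]$ in $B(G_{y_i})$. Since $\ell$ is algebraically closed and $L^+\Gc_{\Oc_{y_i}}$ is pro-smooth affine, a version of Lang's theorem yields a trivialisation of the underlying $L^+\Gc_{\Oc_{y_i}}$-torsor of $\Fc$, and this induces a compatible trivialisation of $f_\ast \Fc = \Fc \times^{L^+\Gc_{\Oc_{y_i}}} L^+\Gc'_{\Oc_{y_i}}$. Under these trivialisations the Frobenius of $\underline{\Fc}$ is represented by an element $b \in L\Gc_{\Oc_{y_i}}(\ell) = \Gc_{\Oc_{y_i}}(\ell\dbr{\varpi_{y_i}})$, while the Frobenius of $f_\ast \underline{\Fc}$ is represented by $f(b) \in L\Gc'_{\Oc_{y_i}}(\ell)$.

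The decisive point, and the only place where the hypothesis on $f$ enters, is that $\ell\dbr{\varpi_{y_i}}$ is an $F_{y_i}$-algebra (as $\varpi_{y_i}$ is invertible there), so both $L\Gc_{\Oc_{y_i}}(\ell)$ and $L\Gc'_{\Oc_{y_i}}(\ell)$ are canonically identified with $G(\ell\dbr{\varpi_{y_i}})$, and under this identification the map induced by $f$ is just the identity. Hence $f(b) = b$, and the two $\sigma$-conjugacy classes in $B(G_{y_i})$ coincide. I do not anticipate any serious obstacle; the argument boils down to unpacking definitions and exploiting the fact that $B(G_{y_i})$ depends only on the common generic fibre of $\Gc_{\Oc_{y_i}}$ and $\Gc'_{\Oc_{y_i}}$.
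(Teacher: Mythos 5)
Your argument is correct and is exactly the one the paper intends: the paper's own proof is a one-line reference to \cite[Section 5.2]{Breutmann2019}, and the content of that reference is precisely your two steps --- the global-to-local functor commutes with pushforward along $f$, and after trivialising over the algebraically closed field the local Frobenius lands in $LG(\ell)$, where $f$ acts as the identity because it is the identity on the generic fibre, so the $\sigma$-conjugacy class in $B(G_{y_i})$ is unchanged. (Only cosmetic quibble: the trivialisation of the $L^+\Gc_{\Oc_{y_i}}$-torsor over $\ell$ follows from smoothness of the $L^{(r)}\Gc$ and a limit argument rather than Lang's theorem, which is the finite-field statement.)
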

\begin{proof}
	The proof of {\cite[Section 5.2]{Breutmann2019}} carries over to this situation.
\end{proof}

Using the compatibility of the Newton map with the change in level structures, we define a Newton stratification on our integral models for deeper level structures.
Let us now consider the situation of Assumption \ref{assBT}. In particular, let $\Omega = \mr{cl}(\Omega)$ be a bounded subset of an apartment of the Bruhat-Tits building of $G_{F_{x_0}}$ for a fixed closed point $x_0$ of $X$. Let $\Gc_\Omega$ be the corresponding Bruhat-Tits group scheme.
Let $\Zc$ be a generically defined bound for $G$. Let moreover $\underline{y}' = (y'_i)$ be a tuple of closed points of $X_{\Zc}$ lying over $\underline{y}$. 
In order to define a Newton stratification on $\SSht^{\Zc}_{\Gc_\Omega, \F_{\underline{y}'}}$, we note that by construction and by the previous lemma the map
$$ \delta_{\Gc_\ff, \underline{y}} \circ \rho_{\ff, \Omega} \colon \SSht^{\Zc }_{\Gc_\Omega, \F_{\underline{y}'}} \to \Sht^{\Zc}_{\Gc_\ff, \F_{\underline{y}'}} \to \prod_{i \in I} B(G_{y_i}) $$
does not depend on the choice of the facet $\ff \prec \Omega$. Hence, we obtain a well-defined map 
$$ \bar \delta_{\Gc_\Omega, \underline{y}} \colon \SSht^{\Zc}_{\Gc_\Omega, \F_{\underline{y}'}} \to \prod_{i \in I} B(G_{y_i}).$$
Let $\underline{b} = (b_i)_{i \in I} \in \prod_{i \in I} B(G_{y_i})$. The locus in  $\SSht^{\Zc}_{\Gc_\Omega, \F_{\underline{y}'}}$ where $\bar \delta_{\Gc_\Omega, \underline{y}}$ maps to $\underline{b}$ is again locally closed by the result in the parahoric case together with Lemma \ref{lemNewtonStratComp}.
\begin{definition}
	\label{defnNewtonDeep}
	Let $\underline{b} = (b_i)_{i \in I} \in \prod_{i \in I} B(G_{y_i})$. The \emph{Newton stratum} in $\SSht^{\Zc}_{\Gc_\Omega, \F_{\underline{y}'}} $ associated to $\underline{b}$ is the reduced locally closed substack on the set of points where $\bar \delta_{\Gc, \underline{y}}$ maps to $\underline{b}$. 
	It is denoted by $\SSht^{\Zc, \underline{b}}_{\Gc_\Omega, \F_{\underline{y}'}}$. 
\end{definition}

We have the obvious analogue of Lemma \ref{lemNewtonStratComp} for deep level, in other words, the Newton stratification for deep levels is still compatible with the level maps.
\begin{corollary}
	\label{corNewtonLevel}
	Let $\Omega' \prec \Omega$ be two connected bounded subsets of the Bruhat-Tits building. Then 
	$$ \bar{\delta}_{\Gc_{\Omega'}, \underline{y}} \circ \bar{\rho}_{\Omega', \Omega} = \bar{\delta}_{\Gc_{\Omega}, \underline{y}}.$$
	In particular, $\SSht^{\Zc, \underline{b}'}_{\Gc_\Omega, \F_{\underline{y}'}} \cap \overline{\SSht^{\Zc, \underline{b}}_{\Gc_\Omega, \F_{\underline{y}'}}} \neq \emptyset$ only if $\underline{b}' \leq \underline{b}$.
\end{corollary}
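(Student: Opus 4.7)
The first identity is essentially a tautology once one unpacks the construction of $\bar{\delta}_{\Gc_\Omega, \underline{y}}$. The plan is as follows: pick any facet $\ff' \prec \Omega'$. Since $\Omega' \prec \Omega$, we also have $\ff' \prec \Omega$, and the universal property of the projective limit defining $\SSht^{\Zc}_{\Gc_\Omega, \F_{\underline{y}'}}$ gives the factorisation $\bar{\rho}_{\ff', \Omega} = \bar{\rho}_{\ff', \Omega'} \circ \bar{\rho}_{\Omega', \Omega}$. Using that by definition
$$\bar{\delta}_{\Gc_\Omega, \underline{y}} = \delta_{\Gc_{\ff'}, \underline{y}} \circ \bar{\rho}_{\ff', \Omega} \qquad \text{and} \qquad \bar{\delta}_{\Gc_{\Omega'}, \underline{y}} = \delta_{\Gc_{\ff'}, \underline{y}} \circ \bar{\rho}_{\ff', \Omega'},$$
(both being independent of the chosen facet by the discussion preceding Definition \ref{defnNewtonDeep}), the desired identity drops out by composition.

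For the second assertion, the plan is to reduce to the known closure relations in the parahoric case. Let $\ff \prec \Omega$ be any facet; it suffices to show that the image under $\bar{\rho}_{\ff, \Omega}$ of our intersection lies in a configuration that forces $\underline{b}' \leq \underline{b}$ at parahoric level. By the first part (applied to $\Omega' = \ff$), the preimage under $\bar{\rho}_{\ff, \Omega}$ of a Newton stratum $\Sht^{\Zc, \underline{c}}_{\Gc_\ff, \F_{\underline{y}'}}$ is exactly the union of the Newton strata $\SSht^{\Zc, \underline{c}}_{\Gc_\Omega, \F_{\underline{y}'}}$ with matching invariant $\underline{c}$. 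Hence
$$\bar{\rho}_{\ff, \Omega}\bigl(\SSht^{\Zc, \underline{b}'}_{\Gc_\Omega, \F_{\underline{y}'}}\bigr) \subseteq \Sht^{\Zc, \underline{b}'}_{\Gc_\ff, \F_{\underline{y}'}} \qquad \text{and} \qquad \bar{\rho}_{\ff, \Omega}\bigl(\SSht^{\Zc, \underline{b}}_{\Gc_\Omega, \F_{\underline{y}'}}\bigr) \subseteq \Sht^{\Zc, \underline{b}}_{\Gc_\ff, \F_{\underline{y}'}}.$$
Because $\bar{\rho}_{\ff, \Omega}$ is continuous (it is even proper by Theorem \ref{thmLvlMapGeneral}), it sends closures into closures, so a point in $\SSht^{\Zc, \underline{b}'}_{\Gc_\Omega, \F_{\underline{y}'}} \cap \overline{\SSht^{\Zc, \underline{b}}_{\Gc_\Omega, \F_{\underline{y}'}}}$ is mapped into $\Sht^{\Zc, \underline{b}'}_{\Gc_\ff, \F_{\underline{y}'}} \cap \overline{\Sht^{\Zc, \underline{b}}_{\Gc_\ff, \F_{\underline{y}'}}}$.

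Finally I invoke the closure inclusion $\overline{\Sht^{\Zc, \underline{b}}_{\Gc_\ff, \F_{\underline{y}'}}} \subseteq \bigcup_{\underline{c} \leq \underline{b}} \Sht^{\Zc, \underline{c}}_{\Gc_\ff, \F_{\underline{y}'}}$, which is the standard Grothendieck-type specialisation statement for the Newton stratification on the (parahoric) moduli space of shtukas recalled in the introduction (and used already in Corollary~\ref{corNewtonLevel}'s set-up, via \cite{Hartl2011, Breutmann2019}). This forces $\underline{b}' \leq \underline{b}$, as required. No step here is subtle once the first identity is in place; the main conceptual input is simply the compatibility of the Newton invariant with the projections, together with properness/continuity of the level maps from Theorem \ref{thmLvlMapGeneral}.
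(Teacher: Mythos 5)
Your proposal is correct and follows essentially the same route as the paper: the first identity is obtained by unwinding the definition of $\bar\delta_{\Gc_\Omega,\underline{y}}$ via a common facet $\ff'\prec\Omega'\prec\Omega$ and the factorisation of the level maps, and the second assertion is reduced to the known parahoric closure relations using compatibility of the Newton invariant with $\bar\rho_{\ff,\Omega}$ and continuity of that map. The paper's proof is just a terser version of exactly this argument.
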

\begin{proof}
	This follows from the construction and Lemma \ref{lemNewtonStratComp}. The second statement then follows directly from the parahoric case in \cite[Proposition 4.11, Section 5]{Breutmann2019}, compare also \cite[Theorem 7.11]{Hartl2011}.
\end{proof}

We conclude by showing the strong stratification property of the Newton stratification in the hyperspecial case. 

\begin{theorem}
	\label{thmNewtonHyp}
	Let $\Gc \to X$ be a parahoric group scheme that is hyperspecial at $y_i$ for all $i \in I$. Let $\underline{\mu} = (\mu_i)_{i \in I}$ be an $I$-tuple of conjugacy classes of geometric cocharacters of $G$. Then the Newton stratification at $\underline{y}'$ satisfies the strong stratification property in the sense that
	$$\overline{\Sht^{\leq \underline{\mu}, \underline{b}}_{\Gc, \F_{\underline{y}'}}}  = \bigcup_{\underline{b}' \leq \underline{b}} \Sht^{\leq \underline{\mu},  \underline{b'}}_{\Gc, \F_{\underline{y}'}}$$
	for all $ \underline{b} \in \prod_{i \in I} B(G_{y_i})$.
\end{theorem}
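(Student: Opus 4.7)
The inclusion $\bigcup_{\underline{b}' \leq \underline{b}} \Sht^{\leq \underline{\mu}, \underline{b}'}_{\Gc, \F_{\underline{y}'}} \supseteq \overline{\Sht^{\leq \underline{\mu}, \underline{b}}_{\Gc, \F_{\underline{y}'}}}$ is already contained in Corollary \ref{corNewtonLevel}, so the content lies in the reverse inclusion. The plan is to combine the Serre-Tate theorem for iterated global shtukas (Proposition \ref{propSerreTate}) with the known strong stratification for local shtukas in the hyperspecial case from \cite{Viehmann2011}.

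Concretely, fix $\underline{b}' \leq \underline{b}$ and let $x \in \Sht^{\leq \underline{\mu}, \underline{b}'}_{\Gc, \F_{\underline{y}'}}(\ell)$ be a geometric point over some algebraically closed $\ell$, corresponding to a global shtuka $\underline{\Ec}_x$. In order to show that $x$ lies in $\overline{\Sht^{\leq \underline{\mu}, \underline{b}}_{\Gc, \F_{\underline{y}'}}}$, it suffices to construct a complete discrete valuation ring $R$ with residue field $\ell$ and a morphism $\Spec(R) \to \Sht^{\leq \underline{\mu}}_{\Gc, \F_{\underline{y}'}}$ carrying the closed point to $x$ and the generic point into the Newton stratum for $\underline{b}$. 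Since the $y_i$ are pairwise distinct, Proposition \ref{propSerreTate} identifies the bounded formal deformation functor of $\underline{\Ec}_x$ with the product, over $i \in I$, of the bounded formal deformation functors of its local shtukas $\widehat{\underline{\Ec}_{x, y_i}}$, and this identification preserves the Newton tuple leg-by-leg by the very definition of the global-to-local functor.

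At each leg $y_i$, the hyperspecial assumption ensures that $\Gc_{\Oc_{y_i}}$ is a hyperspecial model of the unramified reductive group $G_{F_{y_i}}$, and by local-global compatibility (Proposition \ref{propBoundLocGlobComp}) the associated local bound is the Schubert variety for $\mu_i$ in the affine Grassmannian. After base change to the completion of the maximal unramified extension (harmless since this leaves the formal deformation space and its Newton stratification unchanged on geometric points) we may assume $G_{F_{y_i}}$ is split, and the strong stratification property proved in \cite{Viehmann2011} applies: since $b'_i \leq b_i$, there exists a discrete valuation ring $R_i$ with residue field $\ell$ and a morphism from $\Spec(R_i)$ to the local moduli space of local shtukas bounded by $\mu_i$, sending the closed point to $\widehat{\underline{\Ec}_{x, y_i}}$ and the generic point to a local shtuka with Newton point $b_i$. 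Passing to a common DVR dominating all the $R_i$ and combining the local deformations via the Serre-Tate equivalence yields the desired global deformation, completing the argument.

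The main technical obstacle is the precise invocation of \cite{Viehmann2011}, which is formulated for split reductive groups, whereas our assumption only guarantees that the local group is unramified; this is addressed by faithfully flat descent along the unramified base change, using that Newton points and closure relations are invariants of the geometric reduction. A secondary point is that Proposition \ref{propSerreTate} is stated for Artinian bases, so the passage to a formal DVR base requires invoking the pro-representability of the bounded local deformation functors together with the formal smoothness of the local models in the hyperspecial setting.
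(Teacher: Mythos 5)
Your proposal is correct and follows essentially the same route as the paper: reduce to the formal deformation space of a closed point via the bounded Serre--Tate theorem (Proposition \ref{propSerreTate}), which splits it as a product of local deformation spaces at the legs, and then invoke the strong stratification result of \cite{Viehmann2011} on each factor. The paper concludes slightly more directly — since the universal deformation ring is local, its closed point automatically lies in the closure of the (nonempty) product Newton stratum, so the explicit DVR paths you construct are not strictly needed — but your added remarks on the split-versus-unramified issue in \cite{Viehmann2011} and on pro-representability are sensible refinements of points the paper leaves implicit.
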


\begin{proof}
	Let $\underline{b}, \underline{b}' \in \prod_{i \in I} B(G_{y_i})$ with $\underline{b}' \leq \underline{b}$. It suffices to show that every closed point $ \bar{s} = \underline{\Ec} \in \Sht^{\leq \underline{\mu},  \underline{b'}}_{\Gc, \F_{\underline{y}'}} (\F^{\mr{alg}}_{\underline{y}'})$ lies in the closure of $ \Sht^{\leq \underline{\mu},  \underline{b}}_{\Gc, \F_{\underline{y}'}}$.	
	Let $R$ be the $\Oc_{\underline{y}'}$-algebra pro-representing the deformation functor of $\bar{s}$. 
	Then $\bar{s}$ lies in the closure of $\overline{\Sht^{\leq \underline{\mu}, \underline{b}}_{\Gc, \F_{\underline{y}'}}}$ if and only if the same is true in the Newton stratification on $\Spec R$.
	By the bounded Serre-Tate Theorem (Proposition \ref{propSerreTate}) the universal deformation ring factors as $\Spec R = \prod_{i \in I} \Spec R_i$, where $R_i$ is the universal deformation ring of the corresponding bounded local shtuka at $y_i$. Under this isomorphism we have $\Spec(R)_{\underline{b} } = \prod_{i \in I} \Spec(R_i)_{b_i}$, where we denote by $\Spec(R_i)_{{b}_{i}}$ the corresponding Newton strata in $\Spec R_i$ for $i \in I$.
	On $\Spec R_i$ the closure properties hold by \cite[Theorem 2, Lemma 21 (2)]{Viehmann2011}, and thus they hold on $\Spec R$. This proves the assertion.
\end{proof}

	\printbibliography	
\end{document}